\newtheorem{theorem}{Theorem}[section]
\newtheorem{proposition}[theorem]{Proposition}
\newtheorem{lemma}[theorem]{Lemma}
\newtheorem{corollary}[theorem]{Corollary}
\theoremstyle{definition}
\newtheorem{definition}[theorem]{Definition}
\newtheorem{example}[theorem]{Example}
\theoremstyle{remark}
\newtheorem{remark}[theorem]{Remark}
\newcommand\scalemath[2]{\scalebox{#1}{\mbox{\ensuremath{\displaystyle #2}}}}
\DeclareMathOperator{\rank}{rank}
\DeclareMathOperator{\im}{im}
\DeclareMathOperator{\Euc}{Euc}
\DeclareMathOperator{\Trans}{Trans}
\newcommand{\prob}{{\rm (P}_{G,\omega}{\rm )}}
\tikzstyle{labelsty}=[font=\scriptsize]
\tikzstyle{edge}=[line width=1.5pt]
\tikzstyle{dedge}=[edge,-latex]
\tikzstyle{redge}=[edge,dashed]
\tikzstyle{vertex}=[fill=black,circle,inner sep=0pt, minimum size=4pt]
\title{Uniquely realizable crystalline structures}
\author{Sean Dewar\thanks{School of Mathematics, University of Bristol. E-mail: \texttt{sean.dewar@bristol.ac.uk}} \qquad Bernd Schulze \thanks{School of Mathematical Sciences, Lancaster University. E-mail: \texttt{b.schulze@lancaster.ac.uk}}\qquad Shin-ichi Tanigawa \thanks{Department of Mathematical Informatics, University of Tokyo.
     E-mail: \texttt{tanigawa@mist.i.u-tokyo.ac.jp}} \qquad Louis Theran \thanks{School of Mathematics and Statistics, University of St Andrews. E-mail: \texttt{louis.theran@st-andrews.ac.uk}}
}
\begin{document}
\date{}
\maketitle

\begin{abstract}
	We construct infinite periodic versions of the stress matrix and establish sufficient conditions for periodic tensegrity frameworks  to be globally rigid  in $\mathbb{R}^d$  in the cases when the lattice is either fixed, fully flexible, or flexible with a volume constraint for the fundamental domain. For the fixed and fully flexible lattice variants, we also establish necessary and sufficient conditions for generic infinite periodic bar-joint frameworks to be globally rigid in $\mathbb{R}^d$. These results provide periodic versions of the fundamental results of  Connelly, as well as Gortler, Healy and Thurston on the global rigidity of generic finite bar-joint frameworks.
\end{abstract}

{\small \noindent \textbf{MSC2020:} 52C25, 05C62, 14P99}

{\small \noindent \textbf{Keywords:} periodic framework, tensegrity, global rigidity, equilibrium stress, stress matrix}

\section{Introduction}

\subsection{Background and motivation}

Global rigidity is concerned with determining when 
the edge lengths of a (bar-joint) framework determine its shape uniquely within a given space, up to congruence. This question has been extensively studied in the classical (finite) setting, where global rigidity theory is now well developed and remains a highly active area of research; see e.g. \cite[Chapter 63]{HandDCG},  \cite[Chapters 14, 16, 21]{HandMeera} and  \cite{grsum}  for summaries of recent results.  

The problem is motivated by a range of applications: in sensor network localization, one seeks to recover positions from distance data \cite{AspnesEGMWYAB2006}; in bio-chemistry and materials science, global rigidity is related to the stability of molecular and crystalline structures \cite{hermans2017rigidity,Whiteley2005Counting}. A central concept in global rigidity theory is an equilibrium stress, where the forces along the bars balance at each node. Such stresses are not only key to the theoretical characterizations of global rigidity but are also fundamental in practical applications, such as the design of efficient structural systems (e.g., gridshell roofs, tensegrities, or mechanical meta-materials).

Although much is known about the global rigidity of finite frameworks, the theory is far less developed for infinite periodic frameworks. Periodic  frameworks arise naturally in applications, as crystalline solids, biomolecular assemblies, and repetitive man-made structures all have inherently periodic geometries. 
This has motivated extensive research into their rigidity. See for example \cite{ww88,Ross2014,RossDCG,BS10,bsbull,MT13,tanigawa15}, as well as \cite[Chapter 62]{HandDCG} and \cite[Chapter 25]{HandMeera} for summaries of results. 

Since periodic frameworks have infinite underlying graphs, there are several possible definitions of local and global rigidity.
A widely studied one is the periodicity-forced rigidity introduced by Borcea and Streinu~\cite{BS10}, in which all deformations are required to preserve the given periodicity, that is, motions breaking the underlying lattice symmetry are not permitted.
Even within this periodicity-forced setting, the rigidity behavior of the framework depends on whether the representation of the underlying lattice is fixed or allowed to vary.
For example, the periodic framework shown on the right of \Cref{fig:hex} is flexible when the lattice is allowed to deform continuously,
since it admits a non-trivial motion that preserves both edge lengths and $\mathbb{Z}^2$-periodicity.
However, as we will see, the same framework is globally rigid when considered under a fixed-lattice representation, with the lattice given as on the left of \Cref{fig:hex}.

Over the past two decades,  substantial progress has been made in understanding periodicity-forced rigidity.
However, most of the papers listed above have focused on \emph{local rigidity} (i.e., rigidity within a neighborhood of the given framework).
In contrast, the \emph{global rigidity} of periodic frameworks has remained largely unresolved, with only a few recent works obtaining results -- specifically, combinatorial conditions for generic periodic rigidity under fixed-lattice representations (see \cite{kst21,kks22}). A central obstacle to further progress has been the lack of an appropriate analog of the stress matrix -- a fundamental tool in the theory of finite framework global rigidity introduced by Connelly \cite{Connelly1982}. As a result, periodic global rigidity has remained a long-standing open area of research.

\subsection{Our contributions}

In this paper, we introduce periodic versions of the stress matrix for the cases when the lattice is fully flexible, fixed, or has a lower bound for its volume. This allows a systematic analysis of global rigidity for periodic bar–joint frameworks and tensegrity structures in all of these settings  in arbitrary dimensions. (For simplicity of the exposition, throughout the paper we  restrict attention to fully $d$-periodic frameworks and tensegrities in $\mathbb{R}^d$, and do not consider $k$-periodic structures with $k<d$.)

Using our new stress matrices, we establish sufficient conditions for the global rigidity of periodic tensegrity frameworks for all the  lattice types under consideration (see \Cref{thm:suffpsd} for the fully flexible lattice, \Cref{thm:fixedpsd} for the fixed lattice,  and \Cref{thm:2}  for the lower-bounded volume lattice). Remarkably, in the lower-bounded volume case, the proof involves analyzing a non-convex optimization problem arising from the volume constraint, for which we show that every local minimizer is in fact a global minimizer.

Additionally, we prove the following main statement about generic periodic frameworks with a fully flexible lattice, where \Cref{t:flexible lattice sufficient} shows sufficiency  and \Cref{thm:ght}  shows necessity. This is the periodic analog of the theorems of Connelly \cite{conggr} and Gortler, Healy and Thurston \cite{gortler2010characterizing}.
Our result is stated in the language of \emph{$\mathbb{Z}^d$-gain graphs} (see \Cref{def:gain}), and all relevant framework terminology can be found in \Cref{sec:periodic} and \Cref{sec:equilibrium_stress}.

\begin{theorem}\label{mainthm:flexlattice}
    Let $(G,p,L)$ be a generic $\mathbb{Z}^d$-framework.
    Then the following are equivalent:
    \begin{enumerate}
        \item $(G,p,L)$ is globally rigid,
        \item $(G,p,L)$ is infinitesimally rigid and has an equilibrium stress $\omega$ where $\dim \ker \mathcal{L}_{\mathbb{Z}^d}(G,\omega) = d + 1$.
    \end{enumerate}
\end{theorem}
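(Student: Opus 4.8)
The statement is an equivalence whose two implications are genuinely different in character, so the plan is to prove them separately: (2) $\Rightarrow$ (1) as a periodic analogue of Connelly's sufficient condition for generic global rigidity, and (1) $\Rightarrow$ (2) as a periodic analogue of the Gortler--Healy--Thurston necessity theorem. These are precisely the contents of \Cref{t:flexible lattice sufficient} and \Cref{thm:ght}, so at this level the theorem is the assembly of those two results; I describe how I would attack each. Throughout, the key structural input is that $d+1$ is the minimal possible value of $\dim\ker\mathcal{L}_{\mathbb{Z}^d}(G,\omega)$, so hypothesis (2) is exactly a ``maximal rank'' condition on the periodic stress matrix.

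For sufficiency I would adapt Connelly's stress-matrix argument with $\mathcal{L}_{\mathbb{Z}^d}(G,\omega)$ in place of the finite stress matrix. The entry point is the stress-energy identity: since $\omega$ is an equilibrium stress of $(G,p,L)$, its associated quadratic energy vanishes at $(G,p,L)$, and because that energy depends only on the squared orbit edge lengths (and the lattice Gram data) it also vanishes at any equivalent $(G,q,M)$. Under the maximal-rank hypothesis one then wants to conclude that $(G,q,M)$ is a periodic-affine image of $(G,p,L)$: in the positive semidefinite (super-stable) case this is immediate, while the general case needs the extra genericity input forcing an equivalent configuration into the kernel. Since $\dim\ker\mathcal{L}_{\mathbb{Z}^d}(G,\omega)=d+1$ makes that kernel consist exactly of the trivial/affine configurations, the only competitors are periodic-affine images; genericity then upgrades affine equivalence to congruence, because a generic affine map preserving every orbit edge length and compatible with the deforming lattice must be an isometry.

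For necessity I would follow the algebraic-geometry route of Gortler--Healy--Thurston. Infinitesimal rigidity is the easy half, since a nontrivial flex of a generic framework produces nearby non-congruent realisations, contradicting global rigidity. The substantive task is to produce a stress whose periodic stress matrix attains the minimal kernel dimension $d+1$. I would set up the periodic measurement map sending a placement together with a lattice Gram matrix to the tuple of squared orbit edge lengths, pass to the complexification, and use that generic global rigidity forces the generic fibre to be a single congruence orbit. Identifying equilibrium stresses with the conormal directions to the image variety, a dimension count on that variety should then yield a stress of maximal rank at a generic point; by genericity this transfers to the given $\omega$.

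The main obstacle is the necessity direction, and within it the bookkeeping forced by the flexible lattice. The configuration space is now placements together with the cone of positive definite lattice Gram matrices, so both the trivial-motion dimension and the target dimension of the measurement variety pick up lattice contributions; arranging the count so that the generic kernel is exactly $d+1$, rather than some shifted value, is the delicate point. I would also need a periodic analogue of the Gortler--Healy--Thurston transversality (Sard-type) input over this mixed placement-plus-lattice parameter space, together with irreducibility of the complexified measurement variety, so that ``maximal rank'' is a Zariski-generic condition and therefore holds at the given generic $(G,p,L)$.
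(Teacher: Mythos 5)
Your top-level decomposition is exactly the paper's: sufficiency is \Cref{t:flexible lattice sufficient}, necessity is \Cref{thm:ght}, and the infinitesimal rigidity part of (2) follows from \Cref{t:gen rig equiv}. Your sufficiency sketch is essentially the paper's route, with one caveat worth flagging: for a stress that is not positive semidefinite, vanishing of the energy at an equivalent framework gives you nothing (the zero set of an indefinite quadratic form strictly contains its kernel), and the mechanism you only gesture at — ``the extra genericity input forcing an equivalent configuration into the kernel'' — is a specific lemma: \Cref{l:connelly lemma}, which says that for a polynomial map with coefficients in $\mathbb{Q}(S)$ and a point $y$ generic over $\mathbb{Q}(S)$, any $x$ with $f(x)=f(y)$ has the same left kernel of the Jacobian. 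Applied to $f_G$ this shows equivalent frameworks share \emph{all} equilibrium stresses; \Cref{prop:afftrans} then gives the affine image, and the upgrade to congruence goes through infinitesimal rigidity (via \Cref{l:conic inf rig}, generic infinitesimally rigid frameworks have edge directions avoiding every conic at infinity), not through genericity of the affine map itself. These are fillable details, so I count sufficiency as essentially correct.

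The genuine gap is in necessity. You propose to ``use that generic global rigidity forces the generic fibre to be a single congruence orbit'' and then extract a maximal-rank stress from ``a dimension count'' on the conormal/dual variety. A dimension count cannot close this argument: the paper proves that $\dim f_G(A(\omega)) = \dim \mathcal{L}(\omega)$ for $\omega$ generic in $\overline{f_G(\mathcal{R}(G))}^*$ (\Cref{proposition:2.23}) \emph{regardless} of whether the stress kernel is deficient, so the dimensions always match and no contradiction can be read off from them. What actually does the work is a parity argument. Assuming every stress has $\dim\ker\mathcal{L}_{\mathbb{Z}^d}(G,\omega)\geq d+2$, the quotient $A(\omega)/\Euc(d)$ is a smooth stratified space with singularities of codimension at least $2$ (\Cref{prop:2.13} — this is precisely where the deficiency hypothesis enters, the codimension being $k-d$); the map $f_\omega$ is proper and has a well-defined mod-two degree (\Cref{cor:2.36}), which is $0$ because the image lies in $\mathbb{R}^E_{\geq 0}$ while $\mathcal{L}(\omega)$ contains points outside it (\Cref{lem:2.38}); and $f_G(p,L)$ is a regular value (\Cref{lem:2.39}, again via \Cref{l:connelly lemma}). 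Hence the fibre through $[(p,L)]$ has even, positive cardinality, yielding a second, non-congruent equivalent framework. Without this degree step or a substitute, nothing in your outline rules out an even number of preimage orbits — exactly the failure mode the degree detects. Relatedly, passing to the complexification is the wrong move: parity of \emph{real} preimages is the crux, and the paper's apparatus (integral semi-algebraic sets, real Zariski closures, stratified spaces) is deliberately real. Your ``bookkeeping'' worry about lattice contributions to the counts is legitimate but minor (\Cref{prop:2.12,prop:2.13} absorb it); the missing idea is the mod-two degree.
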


\Cref{mainthm:flexlattice} can be further simplified when $|V|=1$:
as every matrix $\mathcal{L}_{\mathbb{Z}^d}(G,\omega)$ has zero for each entry (\Cref{ex:flex1}),
we have that global rigidity and infinitesimal rigidity are equivalent.
We note here that this statement remains true even if $(G,p,L)$ has only trivial equilibrium stresses, indicating that each single vertex orbit periodic framework plays a similar role as the simplex in classical finite framework global rigidity.

The analogous fixed-lattice variant of \Cref{mainthm:flexlattice} also holds, with \Cref{t:fixed lattice sufficient} showing sufficiency and \Cref{thm:ghtfixed} showing necessity:

\begin{theorem}\label{mainthm:fixedlattice}
    Let $(G,p,L)$ be a generic $\mathbb{Z}^d$-framework.
    Then the following are equivalent:
    \begin{enumerate}
        \item $(G,p,L)$ is fixed-lattice globally rigid,
        \item $(G,p,L)$ has a fixed-lattice equilibrium stress $\omega$ where $\dim \ker \mathcal{L}(G,\omega) = 1$.
    \end{enumerate}
\end{theorem}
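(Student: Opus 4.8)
The plan is to establish the two implications separately, following the template of Connelly's generic sufficiency theorem \cite{conggr} for (ii)$\Rightarrow$(i) and the Gortler--Healy--Thurston necessity theorem \cite{gortler2010characterizing} for (i)$\Rightarrow$(ii), adapted to the fixed-lattice periodic category. The key structural difference from the finite case is that, with the representation $L$ held fixed, the only admissible trivial motions are the $d$ ambient translations; in the quotient these span $\langle \mathbf 1 \rangle \otimes \mathbb{R}^d$, which is exactly why the extremal kernel condition reads $\dim \ker \mathcal{L}(G,\omega) = 1$ rather than $d+1$. Throughout I write $P,Q$ for the matrices whose rows are the positions $p_i,q_i$, and I use the fact that the fixed-lattice equilibrium condition for $\omega$ can be written as $\mathcal{L}(G,\omega)P = B_\omega$, where $B_\omega$ collects the gain contributions $\sum_e \pm\,\omega_e L g_e$ and therefore depends only on $\omega$, the gains, and $L$ --- crucially, not on the positions. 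Note also that $\mathbf 1 \in \ker \mathcal{L}(G,\omega)$ always, since the stress matrix has zero row sums, so $\dim \ker \mathcal{L}(G,\omega)=1$ is the extremal (maximal rank) case.

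For sufficiency (ii)$\Rightarrow$(i), let $(G,q,L)$ be any fixed-lattice framework with the same edge lengths as $(G,p,L)$. The central step is a genericity lemma: for generic $p$, every fixed-lattice equilibrium stress of $(G,p,L)$ is also an equilibrium stress of $(G,q,L)$. I would prove this through the measurement variety $M_G$, the Zariski closure of the image of the squared-edge-length map $\ell$. Since the stresses are exactly $\operatorname{coker}(d\ell_p)$, genericity of $p$ gives $\operatorname{coker}(d\ell_p) = N_{\ell(p)}M_G$, where $N_{\ell(p)}M_G := (T_{\ell(p)}M_G)^\perp$, because $d\ell_p$ attains the maximal rank $\dim M_G$; while for any $q$ one always has $\operatorname{im}(d\ell_q) \subseteq T_{\ell(q)}M_G$ and hence $\operatorname{coker}(d\ell_q) \supseteq N_{\ell(q)}M_G = N_{\ell(p)}M_G$, using $\ell(p)=\ell(q)$. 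Thus $\omega$ is a stress of $(G,q,L)$, so $\mathcal{L}(G,\omega)Q = B_\omega$ as well. Subtracting the two equilibrium identities yields $\mathcal{L}(G,\omega)(P-Q)=0$, so every column of $P-Q$ lies in $\ker \mathcal{L}(G,\omega) = \langle \mathbf 1 \rangle$; that is, $q = p + t$ for a single translation $t$. Since the admissible congruences in the fixed-lattice problem are precisely the translations, $(G,p,L)$ is globally rigid. This argument requires no separate hypothesis of infinitesimal rigidity, consistent with the statement of (ii).

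For necessity (i)$\Rightarrow$(ii), I would run the Gortler--Healy--Thurston argument over $M_G$, using that generic fixed-lattice global rigidity is equivalent to the generic fiber of $\ell$ being a single translation orbit. The heart of the matter is to relate the maximal rank attained by a stress matrix $\mathcal{L}(G,\omega)$ over the stress space to the dimension of this fiber: if no stress attains $\dim\ker = 1$ (equivalently rank $|V|-1$), then the common nullity of all stress matrices exceeds the translational dimension, and from the resulting extra conormal directions one extracts either a positive-dimensional family or a discrete second branch of equivalent-but-noncongruent frameworks, contradicting global rigidity. Concretely this needs a periodic version of the dimension and generic-smoothness statements for $M_G$, the correct count of $\dim M_G$ in terms of $|V|$, $|E|$, $d$ and the translational freedom, and a tangent-cone analysis showing that deficient stress rank forces a nontrivial intersection of the fiber with a transverse slice.

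The main obstacle is the necessity direction. The sufficiency argument transfers almost verbatim once the conormal description of stresses is in place, and the position-independence of $B_\omega$ makes the final kernel step clean. By contrast, porting the Gortler--Healy--Thurston machinery to the fixed-lattice periodic setting demands genuine care in (a) defining the periodic measurement variety and proving its irreducibility and expected dimension, (b) tracking the gains so that precisely the translational symmetry (and only it) is quotiented, and (c) checking that the generic-smoothness and tangent-space arguments underlying the original proof survive for the equivariant map $\ell$. I expect essentially all of the real work to lie in (a)--(c).
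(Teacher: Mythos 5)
Your sufficiency direction is essentially the paper's own argument in different clothing. The conormal identity $\operatorname{coker}(d\ell_p)=N_{\ell(p)}M_G$ at a generic point, applied to an equivalent configuration $q$ with $\ell(q)=\ell(p)$, is exactly what \Cref{l:connelly lemma} delivers in the paper (proved there via Tarski--Seidenberg rather than through the measurement variety): equivalent frameworks share the same fixed-lattice equilibrium stresses. Your final step, that $\mathcal{L}(G,\omega)(P-Q)=0$ together with $\dim\ker\mathcal{L}(G,\omega)=1$ and $\mathbf{1}\in\ker\mathcal{L}(G,\omega)$ forces $q=p+t$, is \Cref{prop:fltransl}, and your observation that no infinitesimal rigidity hypothesis is needed matches \Cref{t:fixed lattice sufficient}. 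One small caution: since the coefficients of $f_{G,L}$ involve the entries of $L$, the genericity you invoke must be genericity of $p$ over the field generated by $\mathbb{Q}$ and the entries of $L$ ($L$-genericity in the paper's terminology); full genericity of $(p,L)$ supplies this, so the statement as given is fine.

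The genuine gap is the necessity direction, which you do not prove: you present a plan and explicitly defer ``essentially all of the real work'' to your items (a)--(c), namely re-deriving the entire Gortler--Healy--Thurston apparatus (measurement variety, stratified quotients, mod-two degree) in the fixed-lattice category. The paper never does this; instead \Cref{thm:ghtfixed} \emph{reduces} to the already-proved flexible-lattice necessity theorem \Cref{thm:ght}. Concretely: pick a vertex $u$ and add a set $F$ of $\binom{d+1}{2}$ loops at $u$ whose gains make $\{\gamma\gamma^\top : (u,u,\gamma)\in F\}$ a basis of the symmetric $d\times d$ matrices. By \Cref{lem:extend} the fixed-lattice stresses of $(G,p,L)$ correspond bijectively to the flexible-lattice stresses of $(G+F,p,L)$, and by \Cref{lem:fixedhassamerank} (whose proof uses non-singularity of $L$ to show the last $d$ rows of $\mathcal{L}_{\mathbb{Z}^d}$ lie in the span of the first $|V|$) one has $\rank\mathcal{L}_{\mathbb{Z}^d}(G+F,(\omega,\mu))=\rank\mathcal{L}(G,\omega)$. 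Thus the hypothesis ``no fixed-lattice stress with $\dim\ker\mathcal{L}(G,\omega)=1$'' becomes ``every equilibrium stress of $(G+F,p,L)$ has $\dim\ker\mathcal{L}_{\mathbb{Z}^d}>d+1$'', and \Cref{thm:ght} yields an equivalent non-congruent $(G+F,q,M)$. The loop lengths then force $\gamma^\top(L^\top L-M^\top M)\gamma=0$ for a basis of $\gamma\gamma^\top$, hence $L^\top L=M^\top M$, so $L=AM$ with $A$ orthogonal, and $p'=Aq$ gives a fixed-lattice counterexample. This reduction sidesteps your worries (a)--(c) entirely: the quotienting by symmetries is handled once, in the flexible-lattice proof, by $\Euc(d)$. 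Moreover, your heuristic that deficient stress rank yields ``either a positive-dimensional family or a discrete second branch'' via a tangent-cone analysis is not how the argument runs in GHT or in the paper; the non-congruent partner comes from the mod-two degree of $f_\omega$ being zero (\Cref{lem:2.38}) at a regular value (\Cref{lem:2.39}), and until you actually establish fixed-lattice analogues of \Cref{prop:2.12,prop:2.13,lem:2.21,lem:2.24,proposition:2.23} (or find the paper's loop-addition shortcut), the implication (i)$\Rightarrow$(ii) remains unproven.
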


It follows immediately from \Cref{mainthm:flexlattice}  and \Cref{prop:generic stresses} (see \Cref{sec:suffgen}) that we obtain the following key result showing that global rigidity for periodic frameworks with a fully flexible lattice is in fact a generic property.

\begin{theorem}\label{thm:ggr}
    Let $G$ be a $\mathbb{Z}^d$-gain graph.
    Then either every generic $d$-dimensional realization of $G$ is globally rigid,
    or every generic $d$-dimensional realization of $G$ is not globally rigid.
\end{theorem}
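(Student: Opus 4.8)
The plan is to derive the dichotomy directly from the characterization in \Cref{mainthm:flexlattice}. That theorem reduces the global rigidity of a generic $\mathbb{Z}^d$-framework $(G,p,L)$ to the conjunction of two conditions on $(G,p,L)$: infinitesimal rigidity, and the existence of an equilibrium stress $\omega$ with $\dim \ker \mathcal{L}_{\mathbb{Z}^d}(G,\omega) = d+1$. I would show that each of these two conditions is in fact a property of the gain graph $G$ alone — that is, it holds at one generic realization of $G$ if and only if it holds at every generic realization — and then conclude that their conjunction, which by \Cref{mainthm:flexlattice} is equivalent to global rigidity, is likewise determined by $G$. Since a generic realization has coordinates (and lattice entries) that are algebraically independent over $\mathbb{Q}$, any two generic realizations agree on the vanishing or non-vanishing of every polynomial with rational coefficients evaluated at the configuration, so it suffices to express each condition in such terms.

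For infinitesimal rigidity I would use the standard rank argument. The relevant rigidity matrix has entries that are polynomial functions, with integer coefficients, of the entries of $p$ and $L$; its rank is maximized on a Zariski-open set, and a generic realization attains this maximal (generic) rank because the non-vanishing of some fixed maximal minor is a $\mathbb{Q}$-polynomial condition. Whether the generic rank equals the value needed to rule out all non-trivial infinitesimal flexes depends only on $G$, so infinitesimal rigidity holds either at all generic realizations or at none. For the stress condition I would invoke \Cref{prop:generic stresses}, which provides the periodic analog of the finite generic-stress-matrix theory: it yields a well-defined generic value for the minimum of $\dim \ker \mathcal{L}_{\mathbb{Z}^d}(G,\omega)$ taken over all equilibrium stresses $\omega$, attained simultaneously at every generic realization. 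As $d+1$ is the smallest kernel dimension the matrix can have, the existence of a stress with $\dim \ker \mathcal{L}_{\mathbb{Z}^d}(G,\omega) = d+1$ is equivalent to this generic minimum equalling $d+1$, which is again a property of $G$ alone.

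Combining the two parts, both conditions in (ii) of \Cref{mainthm:flexlattice} are constant across the generic realizations of $G$, hence so is their conjunction; applying the equivalence in \Cref{mainthm:flexlattice} at each generic realization then shows that global rigidity is constant across generic realizations, which is exactly the claimed dichotomy. I expect the infinitesimal-rigidity half to be routine, and the genuine obstacle to lie in the stress condition: one must ensure that the minimal stress-matrix kernel dimension really is a generic invariant and is attained at generic points, which requires the upper semicontinuity of the kernel dimension under specialization together with a construction of an equilibrium stress defined over the field generated by the configuration. This is precisely the content I would defer to \Cref{prop:generic stresses}, so that once that proposition is in hand the present statement follows at once.
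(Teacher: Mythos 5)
Your proposal is correct and takes essentially the same route as the paper, which derives \Cref{thm:ggr} precisely by combining the characterization in \Cref{mainthm:flexlattice} with \Cref{prop:generic stresses}, together with the standard fact (noted in \Cref{subsec:rigmat}) that infinitesimal rigidity is a generic property of the gain graph. Your deferral of the stress-condition invariance to \Cref{prop:generic stresses} is exactly how the paper handles it, so no gap remains.
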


Similarly, the combination of \Cref{mainthm:fixedlattice}  and \Cref{prop:fixed lattice generic stresses} (see \Cref{sec:suffgen}) shows that fixed-lattice global rigidity is also a generic property:

\begin{theorem}\label{thm:ggr fixedlattice}
    Let $G$ be a $\mathbb{Z}^d$-gain graph.
    Then either every generic $d$-dimensional realization of $G$ is fixed-lattice globally rigid,
    or every generic $d$-dimensional realization of $G$ is not fixed-lattice globally rigid.
\end{theorem}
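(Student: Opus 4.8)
The plan is to deduce Theorem~\ref{thm:ggr fixedlattice} directly from the equivalence established in \Cref{mainthm:fixedlattice} together with the genericity of stresses provided by \Cref{prop:fixed lattice generic stresses}, exactly paralleling the argument that yields \Cref{thm:ggr} from \Cref{mainthm:flexlattice} and \Cref{prop:generic stresses}. The key observation is that \Cref{mainthm:fixedlattice} reformulates fixed-lattice global rigidity of a \emph{particular} generic realization $(G,p,L)$ as a purely algebraic condition on the existence of a fixed-lattice equilibrium stress $\omega$ whose matrix $\mathcal{L}(G,\omega)$ has a one-dimensional kernel. Since this condition is intrinsic to the gain graph $G$ once we fix genericity, the dichotomy should follow once I know that the truth value of ``$G$ admits such a stress at a generic point'' does not depend on which generic realization is chosen.

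First I would fix an arbitrary generic $d$-dimensional realization $(G,p,L)$ of $G$ and invoke \Cref{mainthm:fixedlattice} to replace the geometric statement ``$(G,p,L)$ is fixed-lattice globally rigid'' with the algebraic statement ``there exists a fixed-lattice equilibrium stress $\omega$ with $\dim\ker\mathcal{L}(G,\omega)=1$.'' Next I would appeal to \Cref{prop:fixed lattice generic stresses}, which I expect to say that the maximal value of $\dim\ker\mathcal{L}(G,\omega)$ over equilibrium stresses $\omega$ — equivalently, the minimal corank achievable, or the relevant rank of the associated stress matrix — is the same for all generic realizations of $G$; this is the genericity transfer step. Combining these, the property ``some generic realization of $G$ is fixed-lattice globally rigid'' is equivalent to a genericity-independent condition on $G$, and hence either holds for every generic realization or for none.

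The structure of the argument is therefore: genericity of the realization $\Rightarrow$ the stress-matrix kernel dimension attains its generic (maximal) value $\Rightarrow$ the existence of a stress certifying global rigidity is detected by this generic value $\Rightarrow$ the dichotomy. I would phrase the final step as a short contrapositive: if one generic realization $(G,p,L)$ is globally rigid and another generic realization $(G,p',L')$ is not, then by \Cref{mainthm:fixedlattice} the first admits a stress with one-dimensional kernel while the second admits no such stress, contradicting the genericity invariance guaranteed by \Cref{prop:fixed lattice generic stresses}.

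The main obstacle I anticipate is ensuring that the existence of a stress \emph{with the exact kernel dimension one} — rather than merely the generic rank of some stress matrix — is what \Cref{prop:fixed lattice generic stresses} controls; in the finite theory the subtlety is that the relevant quantity is the \emph{maximum} kernel dimension over all stresses, and one must verify that this maximum is a generic invariant and that achieving the value $1$ is equivalent to the certificate in \Cref{mainthm:fixedlattice}. I would address this by reading off the precise statement of \Cref{prop:fixed lattice generic stresses} and checking that it is phrased as an invariance of exactly this maximal-kernel quantity, so that the reduction to a condition on $G$ alone is immediate. Provided that is the case, the proof is a clean two-line deduction with no further computation required.
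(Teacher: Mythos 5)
Your proposal is correct and follows essentially the same route as the paper, which obtains \Cref{thm:ggr fixedlattice} precisely by combining \Cref{mainthm:fixedlattice} with \Cref{prop:fixed lattice generic stresses} (noting that every generic framework is non-flat and $L$-generic, so both results apply, and the contrapositive runs exactly as you sketch). The one detail to tidy is that \Cref{prop:fixed lattice generic stresses} is phrased not as invariance of a maximal-kernel quantity but as a one-way transfer of the existence of a fixed-lattice equilibrium stress with $\dim\ker \mathcal{L}(G,\omega)=1$ --- the \emph{minimal} possible value, since $\ker \mathcal{L}(G,\omega)\supset \ker I(G)$ forces $\dim\ker \mathcal{L}(G,\omega)\geq 1$ --- and this one-way statement is exactly what your argument needs.
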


For the lower-bounded volume lattice, it turns out that global rigidity is  not a generic property, as explained in \Cref{sec:boundedlat}, and hence we have no analogous result to \Cref{mainthm:flexlattice} or \Cref{mainthm:fixedlattice} in that case. 

\subsection{Structure of the paper}

We begin by establishing the necessary background and notation for periodic graphs and their group-labeled quotient (or ``gain'') graphs  in \Cref{sec:periodicgraphs}. In \Cref{sec:periodic}, we then define global rigidity for periodic frameworks under fixed and 
flexible lattice representations, and review key notions from the corresponding local rigidity theory. In \Cref{sec:equilibrium_stress} we introduce  stress matrices for periodic frameworks under fixed and flexible lattice representations, and establish some basic properties of these new matrices. \Cref{sec:suff} is concerned with sufficient conditions (given in terms of the new stress matrices) for global rigidity of tensegrity frameworks for both the fixed and the flexible lattice. Sufficient and necessary conditions for \emph{generic} periodic frameworks to be globally rigid are established in \Cref{sec:suffgen} and \Cref{sec:necessgen}, respectively. Finally, in \Cref{sec:boundedlat} we  give a sufficient condition for a periodic tensegrity to be globally rigid under a flexible lattice representation with a volume constraint.

\section{\texorpdfstring{$\mathbb{Z}^d$}{Zd}-gain graphs and periodic graphs}\label{sec:periodicgraphs}

\subsection{Basic definitions}

Fix $V$ to be a finite set and $d$ to be some positive integer.
Define $\sim$ to be the equivalence relation on the set $V \times V \times \mathbb{Z}^d$ where $(u,v,\gamma) \sim (u',v',\gamma')$ if and only if $u'=u$, $v'=v$ and $\gamma' = \gamma$, or $u'=v$, $v'=u$ and $\gamma' = -\gamma$.
It is immediate that each equivalence class either contains exactly two elements or is of the form $\{(u,u,0)\}$ for some $u \in V$ (later on this special case will also be discounted).
By abuse of notation,
each equivalence class of $V \times V \times \mathbb{Z}^d/ \sim$ is denoted by one of the two possible representations for it;
this representation will for the most part be chosen to always be optimal for us (this will become clear later on).

\begin{definition}\label{def:gain}
    A \emph{$\mathbb{Z}^d$-gain graph} (sometimes also called a $\mathbb{Z}^d$-labeled graph) is a pair $G=(V,E)$,
    where $V$ is a finite set, $E^+ \subset V \times V \times \mathbb{Z}^d$ is a finite subset where $E = E^+/\sim$, and for any element $e = (u,v,\gamma) \in E^+$ (which we now say is an \emph{edge} of $G$) we have $\gamma \neq 0$ if $u=v$, and $(v,u,-\gamma) \in E^+$.
\end{definition}

\begin{remark}
    Our slightly strange equivalence relation on the set $V \times V \times \mathbb{Z}^d$ now allows us to ``flip'' any edge we wish to be in whatever orientation we require.
    Note that we now cannot have two edges with the same orientation and same gains,
    or with opposite orientation and gains that sum to zero (i.e., $(u,v,\gamma)$ and $(v,u, -\gamma)$ cannot both occur since $E$ is not a multiset and the two edges are equivalent under $\sim$).
\end{remark}

For an edge $e=(u,v,\gamma)$,
we define the element $\gamma$ to be the \emph{gain} of $e$.
If $u=v$ then we say $e$ is a \emph{loop}.
For any vertex $x \in V$,
we define the set of non-loop edges at $x$ by
\begin{align*}
    \delta_G(x) := \{ (u,x,\gamma) : (u,x,\gamma) \in E, ~ x \neq u \}.
\end{align*}
We define a $\mathbb{Z}^d$-gain graph $H$ to be a \emph{subgraph} of $G$ if its vertex set (denoted $V(H)$) is contained in $V$ and its edge set (denoted $E(H)$) is contained in $E$.
On a similar note, any terminology surrounding finite directed multigraphs (e.g., connectivity) can be directly applied to $\mathbb{Z}^d$-gain graphs by ``forgetting'' the gain assigned to each edge.

The  \emph{covering graph}  (or \emph{derived graph}) of a $\mathbb{Z}^d$-gain graph $G$ is the (undirected, simple) graph $\tilde{G}=(\tilde{V},\tilde{E})$ with vertex set  $\tilde{V} =V\times \mathbb{Z}^d$ and edge set $\tilde{E}=E \times \mathbb{Z}^d$,
where two vertices $(u,\alpha),(v,\beta)$ of $\tilde{G}$ are adjacent if and only if $(u,v,\beta-\alpha) \in E$ or $(v,u,\alpha-\beta) \in E$.
The $\mathbb{Z}^d$-gain graph $G$ is also called the \emph{quotient $\mathbb{Z}^d$-gain graph} of $\tilde{G}$, and $\tilde{G}$ is called a \emph{$d$-periodic graph}.
Note that this is well-defined since the covering graph $\tilde{G}$ is identical no matter which representative we choose for each edge of $G$. 
See  \cite{BS10,rossthesis} for more details. (See also \cite{jkt2016}, for example, for the corresponding notions for finite symmetric graphs.)

\begin{remark}\label{rem:switching}
    We can alternatively define a simple undirected graph $\tilde{G}=(\tilde{V},\tilde{E})$ to be a $d$-periodic graph if the group $\mathbb{Z}^d$ acts freely on $\tilde{G}=(\tilde{V},\tilde{E})$,
    and the action produces only finitely many vertex (respectively, edge) orbits.
    From this we see that the quotient $\mathbb{Z}^d$-gain graph of $G$ is formed from the quotient graph of $\tilde{G}/\mathbb{Z}^d$,
    with an assignment of gains to each edge orbit so as to be able to reconstruct $\tilde{G}$.
    While we do have an infinite choice of possible ways of choosing this assignment of gains for $\tilde{G}/\mathbb{Z}^d$,
    they are all equivalent under so-called switching operations;
    see \cite{rossthesis} for more details.
    Furthermore,
    the rank of the incidence matrix of a $\mathbb{Z}^d$-gain graph is invariant under switching operations.
\end{remark}

\subsection{Subgraph rank}

A \emph{walk} in a $\mathbb{Z}^d$-gain graph $G=(V,E)$ is a finite sequence of edges $W =(e_1,\ldots,e_t)$, each of the form $e_i = (u_i,v_i,\gamma_i)$,
such that $u_{i+1} = v_i$ for each $1 \leq i \leq t-1$.
We say that the walk $W$ \emph{passes through $v$} if $v$ is one of the ends of any edge contained in $W$.
If $v_t=u_1$ then $W$ is said to be a \emph{closed walk}.
Using our fixed orientation for each edge in the walk,
we define the \emph{gain map}
\begin{align*}
    z(W) :=\sum_{i=1}^t \gamma_i.
\end{align*}
For a subgraph $H$ of $G$ and $v\in V(H)$, we let $\langle H \rangle_{v}$ be the subgroup of $\mathbb{Z}^d$ defined by 
\begin{align*}
    \langle H\rangle_{v} = \left\{z(W) : W \text{ is a closed walk in } H \text{ passing through } v \right\}.
\end{align*}
If $u$ and $v$ are in the same connected component of $H$ of $G$, then the groups $\langle H\rangle_u$ and 
$\langle H\rangle_v$ are conjugate in $\mathbb{Z}^d$.  Since all subgroups of $\mathbb{Z}^d$ are normal, 
this implies that $\langle H\rangle_u = \langle H\rangle_v$.  So, for a connected graph $G$, 
we may write simply $\langle G\rangle$.  

\begin{definition}
    Let $H$ be a subgraph of the $\mathbb{Z}^d$-gain graph $G$ (here allowing the special case of $H=G$).
    If $H$ is connected then the \emph{rank} of $H$ (denoted by $\rank H$) is defined to be the rank of the free group $\langle H\rangle$, i.e., the smallest cardinality of a generating set for $\langle H\rangle$.
    If $H$ is disconnected with connected components $H_1,\ldots, H_c$ then the rank of $H$ is defined to be
    \begin{align*}
        \rank H := \max_{i\in \{1,\ldots, c\}} \rank H_i.
    \end{align*}
\end{definition}

\subsection{Incidence matrices}\label{subsec:incidence}

We now fix an arbitrary orientation for each edge of our $\mathbb{Z}^d$-gain graph $G=(V,E)$.
An \emph{incidence matrix} $I(G)$ of $G$ is defined to be a matrix of size $|E|\times |V|$ such that each row corresponds to an edge of $G$ and 
each column corresponds to a vertex of $G$.
More specifically,
the row of $I(G)$ associated with an edge $e=(u,v, \gamma)\in E$, $u \neq v$, is of the form
\begin{align*}
    \kbordermatrix{
&& & & u&&&&v& & & \\
(u,v,\gamma)&0& \dots& 0& -1 &0&\dots&0& 1 & 0& \dots&0},
\end{align*}
while a loop ($u=v$) gives an all zero row.
Note that our choice of orientation for each edge of $G$ corresponds to multiplying rows by $\pm 1$.
The \emph{$\mathbb{Z}^d$-incidence matrix} of $G$ is the $|E| \times (|V|+d)$ matrix $I_{\mathbb{Z}^d}(G)$ 
such that each row corresponds to an edge of $G$, 
each of the first $|V|$ columns corresponds to a vertex of $G$, and each of the last $d$ columns corresponds to a coordinate of the labeling space $\mathbb{Z}^d$. 
More specifically,
the row of $I_{\mathbb{Z}^d}(G)$ associated with an edge $e=(u,v, \gamma)\in E$, $u\neq v$,  is of the form
\begin{align*}
    \kbordermatrix{
&& & & u&&&&v& & && \mathbb{Z}^d \\
(u,v,\gamma)&0& \dots& 0& -1 &0&\dots&0& 1 & 0& \dots&0& \gamma^{\top}},
\end{align*}
while we have zero for the column corresponding to vertex $v$ if $u=v$.
Alternatively,
if we fix $M(G)$ to be the $d \times |E|$ matrix where column $e=(u,v,\gamma)$ is given by the vector $\gamma$,
we have
\begin{equation*}
    I_{\mathbb{Z}^d}(G) = \left[ I(G) ~ ~ M(G)^\top \right].
\end{equation*}

We now argue that $I_{\mathbb{Z}^d}(G)$ is a natural $\mathbb{Z}^d$-gain graph analog of ordinary incidence matrices of graphs.
Fix the column vector $\hat{\mathbf{1}}:=(1,\dots, 1,0\dots, 0)^{\top}\in\mathbb{R}^{|V|+d}$, i.e., $\hat{\mathbf{1}}$ is the concatenation of the $|V|$-dimensional all-ones vector and the $d$-dimensional zero vector.
The kernel of $I_{\mathbb{Z}^d}(G)$ always contains $\hat{\mathbf{1}}$,
and hence $\rank I_{\mathbb{Z}^d}(G) \leq |V|-1+d$ holds.  
For $d\leq 2$, Malestein and Theran observed that the rank of $I_{\mathbb{Z}^d}(G)$ can be characterized by a simple combinatorial condition; see \cite[Lemma 2.5]{MT13}.
We now provide a proof of this statement for higher dimension gain graphs.

\begin{proposition}\label{prop:1}
    Let $G$ be a $\mathbb{Z}^d$-gain graph.
    Then the following are equivalent.
    \begin{enumerate}
        \item \label{prop:1:item1} $G$ is connected (equivalently, $\rank I(G) = |V| - 1$) and the rank of $G$ is $d$.
        \item \label{prop:1:item2} $\rank I_{\mathbb{Z}^d}(G) =|V|-1+d$.
    \end{enumerate}
\end{proposition}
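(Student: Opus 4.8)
The plan is to reduce the whole statement to a single dimension count on the kernel of $I_{\mathbb{Z}^d}(G)$. Since $\hat{\mathbf{1}}$ always lies in this kernel and the matrix has $|V|+d$ columns, assertion \ref{prop:1:item2} is equivalent to $\dim \ker I_{\mathbb{Z}^d}(G) = 1$. I would therefore read the kernel off the rows directly from the block form $I_{\mathbb{Z}^d}(G)=[\,I(G)\ \ M(G)^{\top}\,]$: a vector $(x,t)\in\mathbb{R}^{|V|}\times\mathbb{R}^d$ lies in the kernel precisely when $x_v-x_u+\gamma\cdot t=0$ for every non-loop edge $(u,v,\gamma)$, and $\gamma\cdot t=0$ for every loop $(u,u,\gamma)$.

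First I would dispose of the disconnected case without any reference to the rank hypothesis. For each connected component one takes the $\{0,1\}$-indicator vector of its vertex set and extends it by $t=0$; each such vector trivially satisfies every kernel equation, and these indicators are linearly independent. Hence $\dim\ker I_{\mathbb{Z}^d}(G)\ge c$, where $c$ is the number of components, so \ref{prop:1:item2} forces $c=1$. Equivalently, a disconnected $G$ can never satisfy \ref{prop:1:item2}. This proves the connectivity half of \ref{prop:1:item1} from \ref{prop:1:item2} and reduces everything to the connected case.

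Assuming $G$ connected, I would analyze the kernel via the projection onto the $t$-coordinate. Fixing $t$, the remaining equations ask for a potential $x$ on the connected graph realizing the prescribed edge differences $-\gamma\cdot t$; by the standard solvability criterion such an $x$ exists if and only if these differences sum to zero around every closed walk, i.e. $z(W)\cdot t=0$ for all closed walks $W$ (the loop conditions being the special case of length-one closed walks, hence automatically subsumed). Since $\langle G\rangle$ is generated by the gains $z(W)$, this is exactly the condition $t\perp\langle G\rangle$, so the image of the projection is $\langle G\rangle^{\perp}$, of dimension $d-\dim\spa_{\mathbb{R}}\langle G\rangle = d-\rank G$ (a $\mathbb{Z}$-basis of $\langle G\rangle$ is $\mathbb{R}$-linearly independent, so the real span has dimension $\rank G$). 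For each admissible $t$ the fiber of solutions $x$ is a single coset of $\spa\{\mathbf{1}\}$, contributing one more dimension. Therefore $\dim\ker I_{\mathbb{Z}^d}(G)=(d-\rank G)+1$, which equals $1$ exactly when $\rank G=d$; combined with the connectivity established above, this yields \ref{prop:1:item1} $\iff$ \ref{prop:1:item2}.

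The main obstacle is the connected-case step: correctly identifying the solvability condition for the discrete potential problem and translating it into orthogonality to the gain group $\langle G\rangle$, together with the routine but necessary verification that $\spa_{\mathbb{R}}\langle G\rangle$ has dimension $\rank G$. Everything else is bookkeeping about the block structure of $I_{\mathbb{Z}^d}(G)$ and the kernel equations.
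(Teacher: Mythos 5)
Your proof is correct, and it takes a genuinely different route from the paper's. The paper works with row ranks and switching: it uses the fact that $\langle G\rangle$ is generated by the gains of fundamental cycles with respect to a spanning tree $T$, relabels the gains so that every tree edge carries gain zero (rank being invariant under switching, per Remark 2.3), and then reads the rank off the resulting block lower triangular matrix $\bigl[\begin{smallmatrix} I(T) & \mathbf{0} \\ I(G\setminus T) & X^\top \end{smallmatrix}\bigr]$ whose bottom-right rows generate $\langle G\rangle$. You instead compute $\ker I_{\mathbb{Z}^d}(G)$ directly with the original labels: the projection of the kernel onto the lattice coordinate $t$ has image $\langle G\rangle^{\perp}$ by the discrete-potential solvability criterion (closed-walk sums must vanish, loops being length-one closed walks), each fiber is a coset of $\spa\{\mathbf{1}\}$ by connectivity, and the component indicator vectors dispose of the disconnected case. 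The two verifications you flag are indeed the load-bearing steps and both are sound; in particular, the observation that a $\mathbb{Z}$-basis of a subgroup of $\mathbb{Z}^d$ is $\mathbb{R}$-linearly independent (a rational dependence would clear to an integral one) correctly gives $\dim\spa_{\mathbb{R}}\langle G\rangle = \rank G$, and your use of arbitrary closed walks rather than based ones is harmless since $\mathbb{Z}^d$ is abelian and $G$ is connected at that stage. What your approach buys: it needs no switching machinery at all, it handles the implication \ref{prop:1:item2}$\Rightarrow$\ref{prop:1:item1} with full care (the paper's one-line inequality $\rank I_{\mathbb{Z}^d}(G) \leq \rank I(G) + \rank M(G)^\top$ is terse, since $\rank M(G)$ can exceed $\rank G$ before switching), and it yields strictly more than the proposition states, namely the exact nullity formula $\dim \ker I_{\mathbb{Z}^d}(G) = 1 + d - \rank G$ for connected $G$ together with an explicit description of the kernel. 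What the paper's route buys is brevity, given that the switching and homology facts are already set up in Section 2.
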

\begin{proof}
    It is immediate that $\rank I_{\mathbb{Z}^d}(G) \leq \rank I(G) + \rank M(G)^\top$,
    and hence \ref{prop:1:item2} implies \ref{prop:1:item1}.
    Now suppose that \ref{prop:1:item1} holds.
    The group $\langle G\rangle$ is known to be generated by the images under the gain map $z$ of any basis for the homology group $H_1(G;\mathbb{Z})$ (here with $G$ being considered solely as a multigraph).  
    Since $G$ is connected, there is such a 
    basis of the following form: pick a spanning tree $T$ of $G$; for each edge $e\in E\setminus T$, 
    there is a fundamental cycle $C_e$ with respect to $T$. 
    We now relabel $G$ so that an edge $e$ has gain $0$ if $e \in T$ and $e$ has gain $z(C_e)$ if $e \in E \setminus T$.
    Let us call the new gain graph $G'$.
    By the previous fact, $z(W) = z(W')$ for every pair of 
    corresponding closed walks in $G$ and $G'$.  Hence the row matroids of $I_{\mathbb{Z}^d}(G)$
    and $I_{\mathbb{Z}^d}(G')$ are isomorphic.  We thus have (after maybe permuting rows)
    \[
        I_{\mathbb{Z}^d}(G') = 
        \begin{bmatrix}
            I(T) & \mathbf{0}_{|T| \times d} \\
            I(G\setminus T) & X^\top
        \end{bmatrix}
    \]
    where the rows of $X^\top$ generate the group $\langle G' \rangle \cong \mathbb{Z}^d$.
    Hence, the rank of $I_{\mathbb{Z}^d}(G')$ is at least $\rank I(G) + \rank(M') = n - 1 + d$.  This shows \ref{prop:1:item2}.
\end{proof}

\section{Periodic frameworks}\label{sec:periodic}

For this section we fix a $d$-periodic graph $\tilde{G}=(\tilde{V},\tilde{E})$ and its quotient $\mathbb{Z}^d$-gain graph $G=(V,E)$.
Importantly,
this implies every vertex of $\tilde{G}$ is of the form $(v,\gamma)$ for some $v \in V$ and some $\gamma \in \mathbb{Z}^d$.

\subsection{Global rigidity of periodic frameworks}

Let $L$ be a $d \times d$ (real-valued) matrix.
(In the following, we will often refer to any $d \times d$ matrix as a \emph{lattice}.)
An \emph{$L$-periodic framework} in $\mathbb{R}^d$ is a pair $(\tilde{G},\tilde{p})$ of a simple graph $\tilde{G}$ and a realization $\tilde{p}: \tilde{V} \rightarrow \mathbb{R}^d$ such that for each vertex $(v, \alpha) \in \tilde{V}$ and every integer vector $\gamma \in \mathbb{Z}^d$ we have
\begin{equation*}
    \tilde{p}((v,\alpha))+L\gamma = \tilde{p}((v,\alpha+\gamma)).
\end{equation*}
We say that a framework $(\tilde{G}, \tilde{p})$ is \emph{$d$-periodic} in $\mathbb{R}^d$ if it is an $L$-periodic framework for some lattice $L$.
A $d$-periodic framework is \emph{non-flat} if the lattice $L$ is nonsingular,
and \emph{affinely spanning} if the affine span of the image of $\tilde{p}$ is $\mathbb{R}^d$.
Although every non-flat $d$-periodic framework is affinely spanning, the converse is not always true.

We now define global rigidity for an $L$-periodic framework in the analogous way as for finite frameworks \cite{kst21}. 
An $L$-periodic framework $(\tilde{G},\tilde{p})$ and an $L'$-periodic framework $(\tilde{G},\tilde{q})$ in $\mathbb{R}^d$ are said to be \emph{equivalent} if
\begin{equation*}
\label{eq:periodic_equiv}
\left\| \tilde p(x)- \tilde p(y)\right\|=\left\| \tilde q(x)-\tilde q(y)\right\|\qquad \text{for all } xy\in \tilde{E},
\end{equation*}
and they are said to be \emph{congruent} if
\begin{equation*}
\label{eq:periodic_congr}
\left\| \tilde p(x)- \tilde p(y)\right\|=\left\| \tilde q(x)-\tilde q(y)\right\|\qquad \text{for all } x,y\in \tilde{V}.
\end{equation*}
An $L$-periodic framework $(\tilde{G},\tilde{p})$ is called \emph{globally rigid} if every $d$-periodic framework $(\tilde{G},\tilde{q})$ in $\mathbb{R}^d$ which is equivalent to $(\tilde{G},\tilde{p})$ is also congruent to $(\tilde{G},\tilde{p})$.
Occasionally, we will wish to use the following weaker variant of global rigidity:
an $L$-periodic framework $(\tilde{G},\tilde{p})$ is called \emph{globally $L$-rigid} if every $L$-periodic framework $(\tilde{G},\tilde{q})$ in $\mathbb{R}^d$ which is equivalent to $(\tilde{G},\tilde{p})$ is also congruent to $(\tilde{G},\tilde{p})$ (note that the second framework now has an identical lattice to the first).

\subsection{Global rigidity of \texorpdfstring{$\mathbb{Z}^d$}{d}- frameworks}

It is convenient to analyse the global rigidity of periodic frameworks in terms of their quotient structure.
Let $(\tilde{G}, \tilde{p})$ be an $L$-periodic framework.
We define the \emph{quotient $\mathbb{Z}^d$-framework} as the triple $(G, p, L)$ with $p: V \rightarrow \mathbb{R}^d$ by setting $p(v) := \tilde{p}(v,0)$ for each $v \in V$.

In general, a \emph{$\mathbb{Z}^d$-framework} (or simply \emph{framework}) is defined to be a triple $(G, p, L)$ of a $\mathbb{Z}^d$-gain graph $G$, a map $p:V \rightarrow \mathbb{R}^d$, and a lattice $L \in \mathbb{R}^{d \times d}$.
The pair $(p,L)$ are said to be a \emph{realization} of $G$.
The \emph{covering} of $(G,p,L)$ is the $d$-periodic framework $(\tilde{G}, \tilde{p})$,
where $\tilde{G}$ is the covering graph of $G$ and $\tilde{p} :\tilde{V} \rightarrow \mathbb{R}^d$ is the uniquely determined realization where $\tilde{p}(v,\gamma) = p(v) +L\gamma$ for each vertex $(v,\gamma) \in \tilde{V}$.
We say that a $\mathbb{Z}^d$-framework is \emph{non-flat} (respectively, \emph{affinely spanning}) if its covering is non-flat (respectively, affinely spanning).
Equivalently,
a $\mathbb{Z}^d$-framework $(G,p,L)$ is: (i) non-flat if and only if $L$ is nonsingular,
and (ii) affinely spanning if and only if the set $\{p(v) + L\mu : v \in V, ~ \mu \in \mathbb{Z}^d\}$ is affinely spanning.

The set of all possible pairs of $(p,L)$ is called the \emph{realization space} ${\cal R}(G)$ of a $\mathbb{Z}^d$-gain graph $G$.
The subspace of non-flat realizations is denoted by ${\cal R}^*(G)$.
We denote the set of all possible maps $p: V \rightarrow \mathbb{R}^d$ by $(\mathbb{R}^d)^V$.

We define the \emph{measurement map} of $G$ to be the map
\begin{align*}
    f_G: \mathcal{R}(G) \rightarrow \mathbb{R}^E, ~ (p,L) \mapsto \left( \|p(v) + L \gamma - p(u)\|^2 \right)_{(u,v,\gamma) \in E}.
\end{align*}
Observe that if $(q,L') \in \mathcal{R}(G)$ satisfies $f_G(p,L) = f_G(q,L')$ for some $\mathbb{Z}^d$-framework $(G,p,L)$,
then $(G,q,L')$ is also a $\mathbb{Z}^d$-framework.

We say that two $\mathbb{Z}^d$-frameworks $(G,p,L)$ and $(G,q,L')$ are \emph{equivalent} if $f_G(p,L) = f_G(q,L')$.
Furthermore,
if there exists an orthogonal $d \times d$ matrix $M$ and a vector $x \in \mathbb{R}^d$ such that $q(v) = M p(v) +x$ and $L' = M L$ then we say that $(G,p,L)$ and $(G,q,L')$ are \emph{congruent}.
With these definitions we define the following rigidity concepts.

\begin{definition}
    We say a $\mathbb{Z}^d$-framework $(G,p,L)$ is:
    \begin{enumerate}
        \item \emph{locally rigid} there exists $\varepsilon > 0$ such that every $\mathbb{Z}^d$-framework $(G,q,L')$ with $\|q(v) - p(v)\|< \varepsilon$ and $\|L'-L\| <\varepsilon$\footnote{Here we can use whatever norm is preferred; for example, the operator norm.} that is equivalent to $(G,p,L)$ is congruent to $(G,p,L)$; and
        \item \emph{globally rigid} if every $\mathbb{Z}^d$-framework that is equivalent to $(G,p,L)$ is congruent to $(G,p,L)$.
    \end{enumerate}
\end{definition}

We likewise define the following fixed-lattice variants.

\begin{definition}
    We say a $\mathbb{Z}^d$-framework $(G,p,L)$ is:
    \begin{enumerate}
        \item \emph{fixed-lattice locally rigid} there exists $\varepsilon > 0$ such that every $\mathbb{Z}^d$-framework $(G,q,L)$ with $\|q(v) - p(v)\|< \varepsilon$ that is equivalent to $(G,p,L)$ is congruent to $(G,p,L)$; and
        \item \emph{fixed-lattice globally rigid} if every $\mathbb{Z}^d$-framework with lattice $L$ that is equivalent to $(G,p,L)$ is congruent to $(G,p,L)$.
    \end{enumerate}
\end{definition}

One readily verifies
that an $L$-periodic framework $(\tilde{G},\tilde{p})$ is globally rigid (respectively, globally $L$-rigid) if and only if its quotient $\mathbb{Z}^d$-framework $(G,p,L)$ is globally rigid (respectively, fixed-lattice globally rigid). 

\begin{proposition}\label{prop:defgr}
An $L$-periodic framework $(\tilde{G},\tilde p)$ is globally rigid if and only if its quotient $\mathbb{Z}^d$-framework  $(G,p,L)$ is globally rigid.
\end{proposition}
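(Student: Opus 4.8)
The plan is to prove \Cref{prop:defgr} by establishing a dictionary between the $L$-periodic framework $(\tilde{G},\tilde{p})$ and its quotient $\mathbb{Z}^d$-framework $(G,p,L)$, and showing that the notions of equivalence and congruence are preserved in both directions. First I would verify that the covering operation gives a bijective correspondence between $\mathbb{Z}^d$-frameworks $(G,q,L')$ and $d$-periodic frameworks $(\tilde{G},\tilde{q})$: the covering $\tilde{q}(v,\gamma) = q(v) + L'\gamma$ produces an $L'$-periodic framework, and conversely any $d$-periodic framework $(\tilde{G},\tilde{q})$ (being $L'$-periodic for some lattice $L'$) arises as the covering of the quotient $(G,q,L')$ obtained by restricting to the fundamental domain $q(v) := \tilde{q}(v,0)$. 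This correspondence is the backbone of the argument.

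Next I would translate the equivalence relations. A $d$-periodic framework $(\tilde{G},\tilde{q})$ is equivalent to $(\tilde{G},\tilde{p})$ exactly when $\|\tilde p(x)-\tilde p(y)\| = \|\tilde q(x)-\tilde q(y)\|$ for all edges $xy \in \tilde{E}$. Because $\tilde{E} = E \times \mathbb{Z}^d$ and the realizations are periodic, each edge of $\tilde{G}$ is a translate of an edge in the fundamental domain: for an edge $(u,v,\gamma) \in E$ the corresponding edge lengths in $\tilde{G}$ all equal $\|\tilde{p}(v,\gamma) - \tilde{p}(u,0)\| = \|p(v) + L\gamma - p(u)\|$. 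Hence the infinitely many edge-length equations for $\tilde{G}$ collapse to the finitely many coordinates of the measurement map $f_G$, so $(\tilde{G},\tilde{q})$ is equivalent to $(\tilde{G},\tilde{p})$ if and only if $f_G(q,L') = f_G(p,L)$, i.e. $(G,q,L')$ is equivalent to $(G,p,L)$.

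The more delicate translation is congruence. On the periodic side, congruence is defined by preservation of \emph{all} pairwise distances $\|\tilde p(x)-\tilde p(y)\|$ over $x,y \in \tilde{V}$, whereas on the quotient side congruence is defined algebraically via an orthogonal matrix $M$ and translation $x$ with $q = Mp + x$ and $L' = ML$. I would argue that two $d$-periodic frameworks preserving all pairwise distances must differ by a global isometry of $\mathbb{R}^d$, and since the image of $\tilde{p}$ is affinely spanning (or, if not, one restricts to its affine span), this isometry is an orthogonal map plus translation; applying it to the periodicity relation $\tilde{p}(v,\gamma) = p(v) + L\gamma$ forces $L' = ML$ and $q(v) = Mp(v) + x$, giving quotient-congruence. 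The converse is straightforward: an algebraic congruence $q = Mp + x$, $L' = ML$ lifts to the rigid motion $\tilde{q}(x) = M\tilde{p}(x) + x$ on the covering, which preserves all distances.

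I expect the main obstacle to be the congruence direction, specifically justifying that a distance-preserving map between the two \emph{countable} point sets $\tilde{p}(\tilde{V})$ and $\tilde{q}(\tilde{V})$ extends to a genuine isometry of the ambient $\mathbb{R}^d$ and that this isometry intertwines the two lattices. The subtlety is that the Beckman--Quarles type extension of an isometry from a point set to $\mathbb{R}^d$ requires the point set to affinely span $\mathbb{R}^d$; one must therefore handle the non-affinely-spanning (flat) case carefully, either by restricting to the affine hull or by appealing to the standing assumption that the frameworks under consideration are affinely spanning. Once the isometry is in hand, reading off $L' = ML$ from how the translation vectors $L\gamma$ transform is routine. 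With both the equivalence and congruence dictionaries in place, the equivalence of the two global rigidity statements follows immediately from the definitions, and the parenthetical globally $L$-rigid / fixed-lattice assertion follows the same way upon fixing $L' = L$.
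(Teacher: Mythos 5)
Your proposal is correct and takes essentially the same route as the paper's proof: equivalence is translated through the finite measurement map $f_G$ using periodicity, congruence is passed between the periodic and quotient settings via the standard fact that agreement of all pairwise distances yields an ambient isometry $(M,x)$, and $L' = ML$ is then read off from the periodicity relation $\tilde{p}(v,\gamma) = p(v) + L\gamma$. One small clarification: the extension of a distance-preserving map on a point set to an isometry of $\mathbb{R}^d$ always \emph{exists} (affine spanning is needed only for its \emph{uniqueness}), so the flat case you flag as the main obstacle in fact requires no special treatment -- once any such isometry is fixed, $L'\gamma = ML\gamma$ for all $\gamma \in \mathbb{Z}^d$ follows from $\tilde{q}(v,\gamma) - \tilde{q}(v,0) = L'\gamma$, exactly as in the paper.
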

\begin{proof}
 Suppose $(\tilde{G},\tilde p)$ is globally rigid. 
Consider its quotient $\mathbb{Z}^d$-framework $(G,p,L)$. Let $(G,q,L')$ be a $\mathbb{Z}^d$-framework with $f_G(p,L) = f_G(q,L')$.  
By periodicity, we have that $(\tilde{G},\tilde{p})$ is equivalent to $ (\tilde{G},\tilde{q})$.  
By assumption, $(\tilde{G},\tilde{p})$ is then congruent to $ (\tilde{G},\tilde{q})$, so \emph{all} vertex pair distances agree.  
It is well known that this then implies there exists an isometry $(M,x)$ such that
\[
\tilde{q}(v) = M \tilde{p}(v) + x \text{ for all } v \in \tilde{V}, \qquad L' = M L.
\]
This now implies $(G,p,L)$ and $(G,q,L')$ are congruent with the same $M$ and $x$.

Suppose now that $(G,p,L)$ is globally rigid. 
Consider the $L$-periodic covering  framework $(\tilde{G},\tilde{p})$.  
Let $(\tilde{G},\tilde{q})$ be an $L'$-periodic framework that is equivalent to $(\tilde{G},\tilde{p})$.  
Then, in particular we have $f_G(p,L) = f_G(q,L')$.  
By assumption, there exists an isometry $(M,x)$ such that
\[
q(v) = M p(v) + x \text{ for all } v \in V, \qquad L' = M L.
\]
From this, it follows that 
\begin{equation*}
\left\| \tilde q(x)- \tilde q(y)\right\|=\left\| (M\tilde p(x) + x)- (M\tilde p(y) + x)\right\|=\left\| \tilde p(x)-\tilde p(y)\right\|\qquad \text{for all } x,y\in \tilde{V}.
\end{equation*}
and so $(\tilde{G},\tilde{p})$ is congruent to $ (\tilde{G},\tilde{q})$.
\end{proof}

An analogous proof applies to the fixed-lattice version of \Cref{prop:defgr}:
\begin{proposition}
An $L$-periodic framework $(\tilde{G},\tilde p)$ is  globally $L$-rigid if and only if its quotient $\mathbb{Z}^d$-framework  $(G,p,L)$ is fixed-lattice globally rigid.
\end{proposition}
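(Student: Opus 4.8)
The plan is to mirror the two-directional argument used for \Cref{prop:defgr}, with the single modification that every framework appearing in the proof is required to share the fixed lattice $L$. Recall that an $L$-periodic framework is globally $L$-rigid precisely when no $L$-periodic framework with the same lattice is equivalent but non-congruent to it, and that a $\mathbb{Z}^d$-framework is fixed-lattice globally rigid precisely when no $\mathbb{Z}^d$-framework with lattice $L$ is equivalent but non-congruent to it. The correspondence between an $L$-periodic framework and its quotient $\mathbb{Z}^d$-framework preserves the lattice on both sides, so the same dictionary relating equivalence and congruence upstairs and downstairs should apply verbatim.

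For the forward direction, I would assume $(\tilde{G},\tilde{p})$ is globally $L$-rigid and take any $\mathbb{Z}^d$-framework $(G,q,L)$ with the same lattice satisfying $f_G(p,L)=f_G(q,L)$. By periodicity its covering $(\tilde{G},\tilde{q})$ is an $L$-periodic framework equivalent to $(\tilde{G},\tilde{p})$, and hence congruent to it by hypothesis, yielding an isometry $(M,x)$ with $\tilde{q}(w)=M\tilde{p}(w)+x$ for all $w\in\tilde{V}$. Evaluating this at $w=(v,0)$ gives $q(v)=Mp(v)+x$, while evaluating at $w=(v,\gamma)$ and using $\tilde{q}(v,\gamma)=q(v)+L\gamma$ together with $\tilde{p}(v,\gamma)=p(v)+L\gamma$ forces $ML\gamma = L\gamma$ for every $\gamma\in\mathbb{Z}^d$, i.e. $ML=L$. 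This is exactly the congruence condition $L'=ML$ with $L'=L$, so $(G,q,L)$ is congruent to $(G,p,L)$. The converse direction runs symmetrically: assuming $(G,p,L)$ is fixed-lattice globally rigid, an equivalent $L$-periodic framework $(\tilde{G},\tilde{q})$ restricts to an equivalent $(G,q,L)$ with the same lattice, the hypothesis supplies $(M,x)$ with $q(v)=Mp(v)+x$ and $ML=L$, and propagating this through the periodicity relations shows that the coverings are congruent.

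I expect the only genuine subtlety to be bookkeeping around the lattice constraint: one must check that the isometry obtained from congruence of the coverings automatically respects the lattice in the precise sense demanded by the definition of congruence for $\mathbb{Z}^d$-frameworks, namely $L'=ML$. This is the role of the computation forcing $ML\gamma=L\gamma$ for all $\gamma$; the point is that the shared periodicity relation pins down the action of the linear part $M$ on the lattice, so no extra freedom enters when the lattice is held fixed. Everything else is a verbatim repetition of the argument for \Cref{prop:defgr}.
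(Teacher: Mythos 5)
Your proof is correct and takes essentially the same approach as the paper, which does not write out this proof at all but simply states that the argument for \Cref{prop:defgr} applies analogously; your writeup is exactly that analogue. In particular, your explicit derivation of $ML\gamma = L\gamma$ for all $\gamma \in \mathbb{Z}^d$, hence $ML = L$, correctly supplies the one point where the fixed-lattice case requires a check beyond a verbatim repetition of \Cref{prop:defgr}.
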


When dealing with fixed-lattice global rigidity,
we often benefit from considering only the point-configuration component $p$.
For a chosen lattice $L$, we define the \emph{$L$-measurement map} of $G$ to be the map 
\begin{equation*}
    f_{G,L}: (\mathbb{R}^d)^V \rightarrow \mathbb{R}^E, ~ p \mapsto f_G(p,L) = \left( \|p(v) + L \gamma - p(u)\|^2 \right)_{(u,v,\gamma) \in E}.
\end{equation*}
It now follows that a non-flat $\mathbb{Z}^d$-framework $(G,p,L)$ is fixed-lattice globally rigid if and only if for every $q \in (\mathbb{R}^d)^V$ where $f_{G,L}(p) = f_{G,L}(q)$,
there exists a vector $x \in \mathbb{R}^d$ such that $q(v) = p(v)+x$ for all $v \in V$.

\subsection{Coordinate representations and genericity}\label{subsec:coord}

It is sometimes convenient to identify  ${\cal R}(G)$ with $\mathbb{R}^{dn+d^2}$,
where $n = |V|$.
To avoid confusion, throughout the paper we do the identification through the following rule.
First,
we label the vertices of $G$ as $V = \{v_1,\ldots,v_n\}$.
Next,
we consider a map $p \in (\mathbb{R}^d)^V$ as a column vector in $\mathbb{R}^{dn}$ by concatenating the $d$-dimensional vectors $p(v_1),\ldots,p(v_n)$ according to our pre-determined ordering of $V$.
This vector is now said to be the \emph{point-configuration vector},
which we denote, by abuse of notation, as $p$.
For a lattice $L \in \mathbb{R}^{d \times d}$, we denote the $i$-th column vector of $L$ by $\ell_i$, that is, $L= [\ell_1 ~ \cdots ~ \ell_d ]$. 
The \emph{lattice vector} $\ell\in \mathbb{R}^{d^2}$ is now obtained by concatenating $\ell_1, \dots, \ell_d$.
We now define the concatenation 
\begin{align*}
    \begin{bmatrix} 
        p \\ 
        \ell
    \end{bmatrix} = [ p^\top ~ \ell^\top]^\top
    \in \mathbb{R}^{dn+d^2}
\end{align*}
to be the \emph{vector representation} of $(p,L)\in {\cal R}(G)$.

A $\mathbb{Z}^d$-framework $(G,p,L)$ is said to be \emph{generic} if the coordinates of the vector representation of $(p,L)$ are algebraically independent over the rationals.
A $\mathbb{Z}^d$-framework $(G,p,L)$ is said to be \emph{$L$-generic} if the coordinates of the point-configuration vector $p$ are algebraically independent over the field generated by the rationals and the entries of $L$.
It is immediate that every generic framework $(G,p,L)$ is also $L$-generic;
however the reverse is not true in general.

\subsection{The rigidity matrix}\label{subsec:rigmat}
The derivative of $f_G$ evaluated at $(p,L)$, denoted by $df_G(p,L)$, is the linear map
\begin{align*}
    df_G(p,L) : \mathcal{R}(G) \rightarrow \mathbb{R}^E, ~ (m,M) \mapsto \Big( 2 (p(v) + L \gamma - p(u) ) \cdot  (m(v) + M\gamma - m(u) ) \Big)_{(u,v,\gamma) \in E}.
\end{align*}
Any element of $\ker df_{G}(p,L)$ is said to be an \emph{infinitesimal motion} of $(G,p,L)$.
For any skew-symmetric $d \times d$ matrix $A$ (i.e., an element of the tangent space of orthogonal matrices at $I_d$) and for any vector $z \in \mathbb{R}^d$,
we can construct an infinitesimal motion $(m,M)$   by setting $m(v) = Ap(v) +x$ for each $v \in V$ and $M= AL$;
any such infinitesimal motion is said to be \emph{trivial}.
The trivial infinitesimal motions of a $\mathbb{Z}^d$-framework correspond to the rigid body motions in $\mathbb{R}^d$.

The derivative $df_G(p,L)$ plays an important role in the rigidity analysis of a framework $(G,p,L)$.
Since the simplified map $\frac{1}{2}df_G(p,L)$ is a real linear map,
we naturally wish to also be able to consider it as a real valued matrix.
More specifically, the \emph{rigidity matrix} $R(G,p,L)$ of $(G,p,L)$ is the matrix of size $|E|\times (d|V|+d^2)$ where the row of $R(G,p,L)$ associated with an edge $e=(u,v,\gamma)$, $u\neq v$, is of the form 
\begin{align*}
    \kbordermatrix{
    && & & u&&&&v& & && \ell \\
    e = (u,v,\gamma) &0& \dots& 0& -\nu (e)^{\top}&0&\dots&0& \nu(e)^{\top}& 0& \dots&0& (\gamma \otimes \nu(e))^{\top}},
\end{align*}
where  $\nu (e) :=p(v) + L\gamma -p(u)$ and, given $\gamma = (\gamma_1,\ldots,\gamma_d)$,
\begin{align*}
    \gamma \otimes \nu(e) := \begin{bmatrix} \gamma_1 \nu(e)\\ \vdots \\ \gamma_d \nu(e)\end{bmatrix}\in \mathbb{R}^{d^2}.
\end{align*}
If the edge $e$ is a loop, i.e., $u=v$, then the row of $R(G,p,L)$ associated with an edge $e=(v,v,\gamma)$, is of the form 
\begin{align*}
    \kbordermatrix{
    &&& &v& & && \ell \\
    e = (u,v,\gamma) &0& \dots&0& \mathbf{0}^\top& 0& \dots&0& (\gamma \otimes \nu(e))^{\top}},
\end{align*}
where $\mathbf{0} \in \mathbb{R}^d$ is the all-zeroes vector.

From our construction we note that a pair $(m,M) \in \mathcal{R}(G)$ is an element of the kernel of $df_G(p,L)$ if and only if the vector representation of $(m,M)$ (see \Cref{subsec:coord}) is an element of the kernel of $R(G,p,L)$.

The rigidity matrix $R(G,p,L)$ was first introduced by Borcea and Streinu in \cite{BS10}, and it is now a fundamental tool in the theory of periodic frameworks.
Since the trivial infinitesimal motions corresponding to the rigid body motions in $\mathbb{R}^d$ are always elements of the kernel of $df_G(p,L)$, 
we have (assuming $(G,p,L)$ is affinely spanning) that nullity of $R(G,p,L)\geq \binom{d+1}{2}$. 
An affinely spanning $\mathbb{Z}^d$-framework $(G,p,L)$ is called \emph{infinitesimally rigid} if the nullity of $R(G,p,L)$ is $\binom{d+1}{2}$, or equivalently, $\textrm{rank } R(G,p,L)=dn+d^2-\binom{d+1}{2}= dn+\binom{d}{2}$.

The concepts of infinitesimal rigidity and local rigidity are closely linked.
The following can be proven using similar techniques to those employed by Asimow and Roth \cite{AsimowRothI,AsimowRothII};
see for example Appendix A in the 2010 arxiv version of \cite{MT13}.

\begin{theorem}\label{t:gen rig equiv}
    Infinitesimal rigidity implies local rigidity for all $\mathbb{Z}^d$-frameworks.
    Additionally, 
    local rigidity implies infinitesimal rigidity for all generic $\mathbb{Z}^d$-frameworks.
\end{theorem}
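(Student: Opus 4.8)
The plan is to adapt the classical argument of Asimow and Roth to the periodic setting, the essential tools being the constant rank theorem applied to the smooth (indeed polynomial) measurement map $f_G$ together with a dimension count for the orbit of congruences. Throughout I work with affinely spanning frameworks, as required by the definition of infinitesimal rigidity. The two relevant geometric objects are the \emph{fiber} $f_G^{-1}(f_G(p,L))$, whose points are exactly the frameworks equivalent to $(G,p,L)$, and the \emph{congruence orbit}, the image of the parametrization $\Phi \colon O(d)\ltimes\mathbb{R}^d \to \mathcal{R}(G)$, $(M,x)\mapsto (Mp+x,\, ML)$, whose points are exactly the frameworks congruent to $(G,p,L)$. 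Local rigidity is precisely the statement that these two sets coincide on a neighborhood of $(p,L)$; since the orbit is always contained in the fiber, it is equivalent to say that the fiber is contained in the orbit near $(p,L)$. I would first record that $\Phi$ is an immersion near the identity: its differential is injective precisely because the only trivial infinitesimal motion $(m,M) = (Ap+x, AL)$ vanishing identically on the affinely spanning covering configuration is the zero motion, so the stabilizer is infinitesimally trivial. Hence the orbit is a $\binom{d+1}{2}$-dimensional immersed submanifold, and its tangent space at $(p,L)$ is exactly the space of trivial infinitesimal motions, a subspace of $\ker R(G,p,L)$.

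For the first implication (infinitesimal rigidity $\Rightarrow$ local rigidity, with no genericity hypothesis), I would use that infinitesimal rigidity forces $\rank R(G,p,L) = dn + d^2 - \binom{d+1}{2}$, which is the maximum rank possible since the nullity is always at least $\binom{d+1}{2}$. As the rank of a matrix of polynomials is lower semicontinuous, it is locally constant at such a point, so the constant rank theorem shows that the fiber is, near $(p,L)$, an embedded submanifold of dimension exactly $\binom{d+1}{2}$. The congruence orbit is a $\binom{d+1}{2}$-dimensional immersed submanifold lying inside this fiber; an immersion between manifolds of equal dimension is a local diffeomorphism and hence an open map, so the orbit contains a relatively open neighborhood of $(p,L)$ in the fiber. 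Intersecting with a small ambient neighborhood, every equivalent framework close to $(p,L)$ is congruent to it, which is local rigidity.

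For the converse (local rigidity $\Rightarrow$ infinitesimal rigidity for generic frameworks), I would first argue that a generic framework is a regular point of $f_G$. Because the entries of $R(G,p,L)$ are polynomials in the coordinates of $(p,\ell)$ with integer coefficients (the gains are integral), some $r_{\max}\times r_{\max}$ minor, where $r_{\max}$ is the maximal rank attained on $\mathcal{R}(G)$, is a nonzero polynomial over $\mathbb{Q}$; at a point whose coordinates are algebraically independent this minor cannot vanish, so $\rank R(G,p,L) = r_{\max}$ and the rank is locally constant. The constant rank theorem then makes the fiber an embedded submanifold of dimension $dn + d^2 - r_{\max} \ge \binom{d+1}{2}$ near $(p,L)$. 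If $(p,L)$ were not infinitesimally rigid, this dimension would strictly exceed $\binom{d+1}{2} = \dim \Phi$, so the fiber would strictly contain the congruence orbit in every neighborhood, producing frameworks equivalent but not congruent to $(G,p,L)$ arbitrarily close to it --- contradicting local rigidity. Hence local rigidity forces $r_{\max} = dn + d^2 - \binom{d+1}{2}$, i.e.\ infinitesimal rigidity.

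The step I expect to require the most care is the verification that the congruence orbit is a well-behaved $\binom{d+1}{2}$-dimensional submanifold in the periodic setting, since this clean dimension count is used in both directions. Unlike the finite case, the isometry group acts on $(p,L)$ asymmetrically --- translations move only the point configuration $p$, while rotations act on both $p$ and the lattice $L$ --- so one must check carefully that affine spanning of the covering really does force the (infinitesimal) stabilizer to be trivial, yielding $\binom{d}{2} + d = \binom{d+1}{2}$ as both the orbit dimension and the dimension of the space of trivial infinitesimal motions. The remaining ingredients (polynomiality of $f_G$, lower semicontinuity of rank, and genericity avoiding proper $\mathbb{Q}$-algebraic subsets) are standard and transfer essentially verbatim from the Euclidean theory of Asimow and Roth.
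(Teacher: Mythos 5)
Your proposal is correct and is essentially the argument the paper itself invokes: the paper gives no proof of \Cref{t:gen rig equiv} but states it follows by the techniques of Asimow and Roth (citing Appendix A of the 2010 arXiv version of \cite{MT13}), and your fiber-versus-congruence-orbit argument via the constant rank theorem, with genericity forcing maximal rank of $R(G,p,L)$ through a nonvanishing rational minor, is exactly that adaptation. Your attention to the periodic-specific points --- the asymmetric action $(M,x)\mapsto (Mp+x, ML)$, the injectivity of the trivial-motion map for affinely spanning frameworks, and the openness of the affine-spanning condition needed for local constancy of rank --- covers precisely the modifications required.
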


It is well known that if a $\mathbb{Z}^d$-framework $(G,p,L)$ is infinitesimally rigid, then so too is every generic $\mathbb{Z}^d$-framework $(G,p',L')$.
When $p$ is generic and $d=2$, a combinatorial characterization of the rank of $R(G,p, L)$ was established by Malestein and Theran in \cite{MT13}.

\subsection{The fixed-lattice rigidity matrix}

The derivative of $f_{G,L}$ evaluated at $p$, denoted by $df_{G,L}(p)$, is the linear map
\begin{align*}
    df_{G,L}(p) : (\mathbb{R}^d)^V \rightarrow \mathbb{R}^E, ~ u \mapsto \Big( 2 (p(v) + L \gamma - p(u) ) \cdot  (u(v) - u(u) ) \Big)_{(u,v,\gamma) \in E}.
\end{align*}
As in the previous section,
it is advantageous to consider the simplified map $\frac{1}{2}df_{G,L}(p)$ as a real valued matrix.
The \emph{$L$-rigidity matrix} $R_L(G,p)$ of $(G,p,L)$ is the matrix of size $|E|\times d|V|$ where the row of $R(G,p,L)$ associated with the edge $e=(u,v,\gamma)$ is of the form
\begin{align*}
    \kbordermatrix{
    && & & u&&&&v& & && \\
    e = (u,v,\gamma) &0& \dots& 0& -(p(v) + L\gamma -p(u))^{\top}&0&\dots&0& (p(v) + L\gamma -p(u))^{\top}& 0& \dots&0& }.
\end{align*}
From our construction we note that $m \in (\mathbb{R}^d)^V$ is an element of the kernel of $df_{G,L}(p)$ if and only if the point-configuration vector form of $m$ (see \Cref{subsec:coord}) is an element of the kernel of $R_L(G,p)$.
From this correspondence we see that the kernel of $R_L(G,p)$ is isomorphic to the space of ``infinitesimal motions'' of $(G,p,L)$ that do not ``infinitesimally deform'' its lattice.

Again, fixed-lattice infinitesimal rigidity and fixed-lattice local rigidity are closely linked.
A proof of the following can be found in  \cite[Section 3.3]{rossthesis}.

\begin{theorem}\label{t:gen rig equiv fl}
    Fixed-lattice infinitesimal rigidity implies fixed-lattice local rigidity for all $\mathbb{Z}^d$-frameworks.
    Additionally, for any lattice $L$, 
    fixed-lattice local rigidity implies fixed-lattice infinitesimal rigidity for all $L$-generic $\mathbb{Z}^d$-frameworks.
\end{theorem}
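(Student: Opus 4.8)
The plan is to adapt the classical Asimow--Roth argument (as in \cite{AsimowRothI,AsimowRothII}, and as used for \Cref{t:gen rig equiv}) to the fixed-lattice setting, exploiting the fact that the relevant congruences here are simply the translations. Indeed, when $L$ is nonsingular the only fixed-lattice congruences of $(G,p,L)$ are the maps $p \mapsto p + x\mathbf{1}$ with $x \in \mathbb{R}^d$ (since $q(v)=Mp(v)+x$ and $L=ML$ force $M=I_d$), so the congruence orbit of $p$ is a $d$-dimensional affine subspace $\mathcal{T}_p \subseteq (\mathbb{R}^d)^V$, whose tangent space coincides with the $d$-dimensional space of trivial fixed-lattice infinitesimal motions, namely the translations, which always lie in $\ker R_L(G,p)$. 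I would first reduce to this nonsingular case (the flat case being handled analogously after replacing $\mathcal{T}_p$ by the appropriate space of trivial fixed-lattice motions). In particular, fixed-lattice infinitesimal rigidity is exactly the statement $\dim\ker R_L(G,p)=d$, equivalently $\rank R_L(G,p)=d(|V|-1)$.

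For the first implication I would argue by a local immersion. Choose a linear complement $W$ to the space of translations inside $(\mathbb{R}^d)^V$. Fixed-lattice infinitesimal rigidity gives that $\ker df_{G,L}(p)$ equals the tangent space of $\mathcal{T}_p$, so the restriction $df_{G,L}(p)|_W$ is injective; hence the restriction of $f_{G,L}$ to the slice $p+W$ has injective derivative at $p$ and is therefore locally injective near $p$. Now given any $q$ close to $p$ with $f_{G,L}(q)=f_{G,L}(p)$, I would write $q-p = t + w$ with $t$ a (small) translation and $w \in W$ small; since $f_{G,L}$ is invariant under translations, $q-t=p+w$ lies in the slice and satisfies $f_{G,L}(p+w)=f_{G,L}(p)$, so local injectivity forces $w=0$ and $q=p+t\in\mathcal{T}_p$. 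Thus $q$ is congruent to $p$, establishing fixed-lattice local rigidity; note this half uses no genericity.

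For the converse I would argue the contrapositive and use $L$-genericity to force the rank of $R_L$ to be locally constant. The entries of $R_L(G,\cdot)$ are polynomials in the point-configuration coordinates with coefficients in $\mathbb{Q}$ and the entries of $L$, so the maximal-rank locus $\{p' : \rank R_L(G,p')=r^\ast\}$ is a nonempty Zariski-open set cut out by the non-vanishing of some maximal minor, a polynomial over the field generated by $\mathbb{Q}$ and the entries of $L$. Since $p$ is $L$-generic, no such nonzero polynomial vanishes at $p$, so $\rank R_L(G,p)=r^\ast$ and, the locus being Euclidean-open, the rank is constant equal to $r^\ast$ on a neighborhood of $p$. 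If $(G,p,L)$ is not fixed-lattice infinitesimally rigid then $r^\ast<d(|V|-1)$. By the constant rank theorem, $f_{G,L}^{-1}(f_{G,L}(p))$ is, near $p$, a smooth manifold of dimension $d|V|-r^\ast > d|V|-d(|V|-1)=d$. Since the congruence orbit $\mathcal{T}_p$ is a $d$-dimensional submanifold of this fiber, every neighborhood of $p$ in the fiber contains points outside $\mathcal{T}_p$; these are frameworks equivalent but not congruent to $(G,p,L)$ and arbitrarily close to it, so $(G,p,L)$ is not fixed-lattice locally rigid.

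The main obstacle is the genericity step in the converse: one must guarantee that $\rank R_L$ is actually constant on a full Euclidean neighborhood (not merely maximal at $p$), which is precisely what $L$-genericity buys via the field-of-definition argument above, and one must then verify that the constant-rank theorem applies to produce a fiber of the predicted dimension. The remaining care is bookkeeping: checking that the trivial fixed-lattice motion space really is $d$-dimensional (i.e. that $L$ is nonsingular, or else replacing $\mathcal{T}_p$ by the correct orbit) so that the inequality $d|V|-r^\ast>d$ is equivalent to the failure of infinitesimal rigidity, and that the congruence orbit is a smooth submanifold of the fiber --- both of which are comparatively painless here because the congruences form an affine subspace rather than a curved orbit as in the finite or flexible-lattice cases.
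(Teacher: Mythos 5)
Your proposal is correct and follows essentially the same route as the proof the paper points to (it cites \cite[Section 3.3]{rossthesis}, which is precisely this Asimow--Roth adaptation): the slice/local-injectivity argument for the first implication, and the $L$-generic maximal-rank locus plus the constant rank theorem for the converse, with the simplification that for nonsingular $L$ the fixed-lattice congruences reduce to translations. Your flagged reductions (the flat-lattice orbit and the field-of-definition step for $L$-genericity) are handled exactly as you sketch, so nothing essential is missing.
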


Fixed-lattice infinitesimal (and local) rigidity are $L$-generic properties of the $\mathbb{Z}^d$-gain graph, and a combinatorial characterization for $d=2$ is given in \cite{RossDCG}.

\begin{remark}
    Similarly to (fixed-lattice) global rigidity, one can also define infinitesimal and local (fixed-lattice) rigidity for the covering framework;
    see, for example, \cite{BS10,MT13,rossthesis}.
    Since we focus on the quotient frameworks whenever this is needed, we omit these definitions.
\end{remark}


\section{Equilibrium stresses of periodic frameworks}\label{sec:equilibrium_stress}

Let $(G,p,L)$ be a $\mathbb{Z}^d$-framework and let $\omega:E \rightarrow \mathbb{R}$ be an edge weighting.
We say that $\omega$ is a \emph{fixed-lattice equilibrium stress of $(G,p,L)$} if for every vertex $x \in V$ we have
\begin{equation}\label{eq:1}
    \sum_{e = (u,x,\gamma) \in \delta_G(x)} \omega(e) (p(x) + L\gamma -p(u)) = \mathbf{0},
\end{equation}
where $\mathbf{0} \in \mathbb{R}^d$ is the all-zeroes vector and $\delta_G(x)$ denotes the set of non-loop incident edges at $x$ as described in \Cref{sec:periodic}.
If $\omega:E \rightarrow \mathbb{R}$ is an edge weighting that satisfies \cref{eq:1} and 
\begin{equation}\label{eq:2}
\sum_{e = (u,v,\gamma)\in E} \omega(e) (p(v) + L\gamma -p(u)) \gamma^{\top}=\mathbf{0}_{d \times d},
\end{equation}
where $\mathbf{0}_{d \times d} \in \mathbb{R}^{d \times d}$ is the all-zeroes matrix, 
then we say $\omega$ is an \emph{equilibrium stress of $(G,p,L)$}.
(Here each term of the summation is a $d\times d$ matrix.)

\begin{remark}
Since \cref{eq:1} is equivalent to the equilibrium condition for the covering, in \cite{BorStreinuDCG15}, Borcea and Streinu referred to an $\omega$ satisfying \cref{eq:1} as an ``(ordinary) self-stress'', and an $\omega$ satisfying both \cref{eq:1} and \cref{eq:2} as a ``periodic stress''.
We adopt the terminology ``fixed-lattice equilibrium stress'' to make our discussion clearer, since we will deal with both the flexible-lattice and the fixed-lattice model in this paper.
\end{remark}

Note that \cref{eq:2} holds if and only if 
\begin{equation}\label{eq:2.1}
\sum_{e = (u,v,\gamma)\in E} \omega(e) (p(v) + L\gamma -p(u)) \otimes \gamma = \mathbf{0}
\end{equation}
where $\mathbf{0} \in \mathbb{R}^{d^2}$ is the all-zeroes vector.
Thus, an edge weight $\omega$ may be defined to be an equilibrium stress if it satisfies $\omega^{\top} R(G,p,L)=\mathbf{0}^\top$, and the space of equilibrium stresses is just the left kernel of $R(G,p,L)$ \cite{BS10}. Similarly, fixed-lattice equilibrium stresses are exactly the elements of the left kernel of the $L$-rigidity matrix.


\subsection{Stress matrices}\label{subsec:stressmatrix}

In \cite{Connelly1982},
Connelly established a sufficient condition for the global rigidity of  frameworks  in Euclidean space in terms of Laplacian matrices of undirected graphs weighted by equilibrium stresses, called \emph{stress matrices}.
Using the matrices $I_{\mathbb{Z}^d}(G)$ and $I(G)$ defined in \Cref{subsec:incidence}, one can naturally extend his idea as follows.

Given a  $\mathbb{Z}^d$-gain graph $G=(V,E)$ with edge weight $\omega:E \rightarrow \mathbb{R}$, 
the \emph{weighted Laplacian} of $G$ and the \emph{weighted $\mathbb{Z}^d$-Laplacian} of $G$ are the respective matrices
\begin{align*}
    \mathcal{L}(G,\omega):=I(G)^{\top} {\rm diag}(\omega) I(G), \qquad \mathcal{L}_{\mathbb{Z}^d}(G,\omega):=I_{\mathbb{Z}^d}(G)^{\top} {\rm diag}(\omega) I_{\mathbb{Z}^d}(G),
\end{align*}
where ${\rm diag}(\omega)$ denotes the diagonal $|E| \times |E|$ matrix whose diagonal vector is equal to $\omega$.
Alternatively,
\begin{equation*}
    \mathcal{L}_{\mathbb{Z}^d}(G,\omega)
    =
    \begin{bmatrix}
        \mathcal{L}(G,\omega) & I(G)^\top {\rm diag}(\omega) M(G)^\top \\
        M(G) {\rm diag}(\omega) I(G) & M(G) {\rm diag}(\omega) M(G)^\top
    \end{bmatrix},
\end{equation*}
where $M(G)$ is the $d \times |E|$ matrix where the column corresponding to the edge $e=(u,v,\gamma)$ is given by the vector $\gamma$.
By construction, $\ker \mathcal{L}_{\mathbb{Z}^d}(G,\omega) \supset \ker I_{\mathbb{Z}^d}(G)$ and $\ker \mathcal{L}(G,\omega) \supset \ker I(G)$, and hence $\dim \ker \mathcal{L}_{\mathbb{Z}^d}(G,\omega) \geq 1$ and $\dim \ker \mathcal{L}(G,\omega) \geq 1$.
It should be noted that any two arbitrary edge orientations chosen when constructing $I_{\mathbb{Z}^d}(G)$ (i.e., $(u,v, \gamma)$ vs $(v,u,-\gamma)$) will give the same weighted Lapacian and weighted $\mathbb{Z}^d$-Laplacian.

Now we give an explicit link between the kernel of these Laplacian matrices and the various equilibrium conditions. 
For this purpose it is convenient to use the following matrix representations of realizations:
given a realization $(p, L)\in \mathcal{R}(G)$,
the \emph{coordinate matrix} of $p$ is 
\begin{align*}
    P:=
    \begin{bmatrix}
        & \cdots & p(v) & \cdots &
    \end{bmatrix}
    \in \mathbb{R}^{d\times |V|}.
\end{align*}
We will frequently use this notation $P$ together with $L$, like $[P ~ L]\in \mathbb{R}^{d\times (|V|+d)}$, and
we will refer to it as the \emph{matrix representation} of $(p,L)$.

\begin{proposition}\label{prop:suff1}
    Let $(G,p, L)$ be a $\mathbb{Z}^d$-framework with an edge weighting $\omega :E \rightarrow \mathbb{R}$.
    \begin{enumerate}
        \item \label{prop:suff1item1} $\omega$ is an equilibrium stress of $(G,p,L)$ if and only if $[P ~ L] \mathcal{L}_{\mathbb{Z}^d}(G,\omega) = \mathbf{0}_{d \times (|V|+d)}$.
        \item \label{prop:suff1item2} $\omega$ is a fixed-lattice equilibrium stress of $(G,p,L)$ if and only if 
        \begin{align*}
            P \mathcal{L}(G,\omega) + L M(G) {\rm diag}(\omega) I(G) = \mathbf{0}_{d \times |V|}.
        \end{align*}
    \end{enumerate}
\end{proposition}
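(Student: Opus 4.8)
The plan is to prove both equivalences by direct matrix computation, reducing each Laplacian expression to a matrix whose columns are, entry by entry, the left-hand sides of the equilibrium conditions \cref{eq:1} and \cref{eq:2}. The ingredient that does the real work is a single auxiliary matrix: let $N$ be the $d \times |E|$ matrix whose column indexed by $e = (u,v,\gamma)$ is $\nu(e) = p(v) + L\gamma - p(u)$. I would first record two factorizations of this same matrix, namely
\begin{align*}
    N = [P ~ L]\, I_{\mathbb{Z}^d}(G)^\top = P\, I(G)^\top + L\, M(G).
\end{align*}
Both follow immediately by reading off the $e$-th column: applying $[P~L]$ to the $e$-th row of $I_{\mathbb{Z}^d}(G)$ (which carries $-1$ at $u$, $+1$ at $v$, and $\gamma$ in the last $d$ coordinates) returns $-p(u)+p(v)+L\gamma$, while $P\,I(G)^\top$ contributes $p(v)-p(u)$ and $L\,M(G)$ contributes $L\gamma$.

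For part \ref{prop:suff1item1}, I would substitute $\mathcal{L}_{\mathbb{Z}^d}(G,\omega) = I_{\mathbb{Z}^d}(G)^\top {\rm diag}(\omega) I_{\mathbb{Z}^d}(G)$ and use the first factorization to obtain
\begin{align*}
    [P ~ L]\, \mathcal{L}_{\mathbb{Z}^d}(G,\omega) = N\, {\rm diag}(\omega)\, I_{\mathbb{Z}^d}(G).
\end{align*}
The matrix $N\,{\rm diag}(\omega)$ has columns $\omega(e)\nu(e)$, so the right-hand side expands as $\sum_e \omega(e)\,\nu(e)\,(\text{$e$-th row of } I_{\mathbb{Z}^d}(G))$. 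Reading off the block structure $I_{\mathbb{Z}^d}(G) = [\,I(G)~M(G)^\top\,]$: the column indexed by a vertex $x$ accumulates $\sum_{e\in\delta_G(x)}\omega(e)\nu(e)$ (with each edge oriented to have $x$ as its head), which is exactly the left-hand side of \cref{eq:1}; and the final $d$ columns accumulate $\sum_e \omega(e)\,\nu(e)\,\gamma^\top$, which is exactly the left-hand side of \cref{eq:2}. Hence the whole product vanishes if and only if both \cref{eq:1} and \cref{eq:2} hold, that is, if and only if $\omega$ is an equilibrium stress.

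For part \ref{prop:suff1item2}, the argument runs in parallel using $\mathcal{L}(G,\omega) = I(G)^\top{\rm diag}(\omega)I(G)$ and the second factorization:
\begin{align*}
    P\, \mathcal{L}(G,\omega) + L\, M(G)\, {\rm diag}(\omega)\, I(G) = \big(P\,I(G)^\top + L\,M(G)\big)\,{\rm diag}(\omega)\,I(G) = N\,{\rm diag}(\omega)\,I(G).
\end{align*}
Exactly as before, the column indexed by $x$ equals $\sum_{e\in\delta_G(x)}\omega(e)\nu(e)$, so this matrix vanishes if and only if \cref{eq:1} holds for every $x$, that is, if and only if $\omega$ is a fixed-lattice equilibrium stress. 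Note that no gain-bearing block appears here, which is precisely why only \cref{eq:1} and not \cref{eq:2} is recovered. The main obstacle is the orientation bookkeeping rather than the algebra: the matrices $I(G)$, $M(G)$ and the vectors $\nu(e)$ all change sign when an edge is reoriented from $(u,v,\gamma)$ to $(v,u,-\gamma)$, while $\omega$ is a function on unoriented edges, so I would verify that the products $\omega(e)\nu(e)$ paired against the incidence entries are invariant under this flip. Once that invariance is confirmed, the vertex-column sums are well defined and may legitimately be reorganized as sums over $\delta_G(x)$ with $x$ as the head, after which matching the vertex block to \cref{eq:1} and the lattice block to \cref{eq:2} is immediate.
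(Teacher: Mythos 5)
Your proof is correct and follows essentially the same route as the paper: both expand the Laplacian as $\sum_e \omega(e)\,\nu(e)\,x_e$ (your matrix identity $[P~L]\,I_{\mathbb{Z}^d}(G)^\top = N$ is just the aggregated form of the paper's observation $[P~L]\,x_e^\top = \nu(e)$) and then read off the vertex columns against \cref{eq:1} and the final $d$ columns against \cref{eq:2}, with your part \ref{prop:suff1item2} factorization matching the paper's ``delete the last $d$ columns'' step. Your explicit check of orientation invariance is a nice touch the paper delegates to its earlier remark that the weighted Laplacians are independent of the chosen edge orientations.
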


\begin{proof}
    \ref{prop:suff1item1}:
    For each edge $e = (u,v,\gamma) \in E$, define $x_e$ to be the row of $I_{\mathbb{Z}^d}(G)$ that corresponds to the edge $e$.
    Then $\mathcal{L}_{\mathbb{Z}^d}(G,\omega) =\sum_{e\in E} \omega(e) x_e^{\top} x_e$.
    Observe that, for each edge $e=(u,v,\gamma)$,  
    \begin{align*}
        [P ~ L] x_e^{\top} =p(v) + L\gamma - p(u) := \nu(e),
    \end{align*}
    and hence 
    \begin{align*}
        [P\ L] x_e^{\top} x_e = \nu(e) x_e = \left[ 0 \quad \dots \quad 0 \quad -\nu(e) \quad 0 \quad \dots \quad 0 \quad \nu(e) \quad 0 \quad \dots \quad 0 \quad \nu(e)\gamma^{\top} \right].
    \end{align*}
    Thus the column of $[P\ L] \mathcal{L}_{\mathbb{Z}^d}(G,\omega)$ corresponding to $x \in V$ is equal to the left side of \cref{eq:1},
    and the final $d\times d$ block of $[P\ L] \mathcal{L}_{\mathbb{Z}^d}(G,\omega)$ is equal to the left side of \cref{eq:2}. 
    Hence the statement follows.
    
    \ref{prop:suff1item2}:
    If we delete the $d$ rightmost columns of the matrix $[P\ L] \mathcal{L}_{\mathbb{Z}^d}(G,\omega)$,
    we obtain the matrix $P \mathcal{L}(G,\omega) + L M(G) {\rm diag}(\omega) I(G)$.
    The statement now follows from the previous part of the proof.
\end{proof}

If a $\mathbb{Z}^d$-framework $(G,p,L)$ is affinely spanning then the row vector $\hat{\mathbf{1}}^\top$ (see \Cref{subsec:incidence}) is not spanned by the row vectors of $[P\ L]$.
Together with \Cref{prop:suff1} this implies that for any affinely spanning $\mathbb{Z}^d$-framework $(G,p,L)$ with equilibrium stress $\omega$, we have $\dim \ker \mathcal{L}_{\mathbb{Z}^d}(G,\omega) \geq d+1$.
If this inequality is an equality,
then $(G,p,L)$ has some special properties.

\begin{proposition}\label{prop:afftrans}
    Let $(G,p, L)$ be an affinely spanning $\mathbb{Z}^d$-framework with an equilibrium stress $\omega :E \rightarrow \mathbb{R}$.
    Suppose that $\omega$ is also an equilibrium stress of a $\mathbb{Z}^d$-framework $(G,p',L')$.
    If the nullity of $\mathcal{L}_{\mathbb{Z}^d}(G, \omega)$ is $d+1$,
    then $(G,p',L')$ is an affine transformation of $(G,p,L)$.
\end{proposition}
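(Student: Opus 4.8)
The plan is to exploit the characterization of equilibrium stresses from \Cref{prop:suff1} together with a dimension count on $\ker \mathcal{L}_{\mathbb{Z}^d}(G,\omega)$. The starting observation is that $\mathcal{L}_{\mathbb{Z}^d}(G,\omega)$ is symmetric, so its left kernel coincides with its kernel. By \Cref{prop:suff1}, the hypothesis that $\omega$ is an equilibrium stress of $(G,p,L)$ says exactly that $[P~L]\,\mathcal{L}_{\mathbb{Z}^d}(G,\omega) = \mathbf{0}$, i.e. every one of the $d$ rows of the matrix representation $[P~L]$ lies in $\ker \mathcal{L}_{\mathbb{Z}^d}(G,\omega)$; likewise, since $\omega$ is also an equilibrium stress of $(G,p',L')$, every row of $[P'~L']$ lies in $\ker \mathcal{L}_{\mathbb{Z}^d}(G,\omega)$. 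Finally, as recorded in \Cref{subsec:stressmatrix}, $\hat{\mathbf{1}} \in \ker I_{\mathbb{Z}^d}(G) \subseteq \ker \mathcal{L}_{\mathbb{Z}^d}(G,\omega)$.

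Next I would show that the $d+1$ vectors consisting of $\hat{\mathbf{1}}^\top$ together with the $d$ rows of $[P~L]$ are linearly independent. A relation $a\,\hat{\mathbf{1}}^\top + b^\top [P~L] = \mathbf{0}$ with $a \in \mathbb{R}$ and $b \in \mathbb{R}^d$ unpacks, by reading off the first $|V|$ and the last $d$ columns separately, into $a + b^\top p(v) = 0$ for all $v \in V$ and $L^\top b = \mathbf{0}$. Subtracting these equations pairwise shows that $b$ is orthogonal to every difference $p(v) - p(w)$ and to every column $\ell_j$ of $L$; since $(G,p,L)$ is affinely spanning, these vectors span $\mathbb{R}^d$, forcing $b = \mathbf{0}$ and hence $a = 0$. (This is the unified form of the remark preceding the proposition, that $\hat{\mathbf{1}}^\top$ is not in the row span of $[P~L]$ for affinely spanning frameworks.) Thus these $d+1$ vectors are linearly independent elements of $\ker \mathcal{L}_{\mathbb{Z}^d}(G,\omega)$, and as the nullity is assumed to be exactly $d+1$, they form a basis of the kernel.

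With the basis in hand, the conclusion is immediate: each of the $d$ rows of $[P'~L']$ lies in the kernel, hence is a linear combination of this basis. Collecting the coefficients into a matrix $A \in \mathbb{R}^{d \times d}$ (for the $[P~L]$ rows) and a vector $t \in \mathbb{R}^d$ (for $\hat{\mathbf{1}}^\top$) yields $[P'~L'] = A\,[P~L] + t\,\hat{\mathbf{1}}^\top$. Reading off the two column blocks gives $P' = AP + t\,\mathbf{1}^\top$ and $L' = AL$, that is, $p'(v) = A\,p(v) + t$ for all $v \in V$ and $L' = AL$. This is precisely the statement that $(G,p',L')$ is the image of $(G,p,L)$ under the affine map $x \mapsto Ax + t$.

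I expect the only genuinely delicate step to be the linear-independence argument of the second paragraph, where the geometric hypothesis of being affinely spanning must be converted into the algebraic nondegeneracy that pins down the nullity; everything else is bookkeeping with \Cref{prop:suff1} and symmetry of the Laplacian. It is worth being careful about the precise form of affine spanning invoked, namely that $\{p(v) + L\mu : v \in V,\ \mu \in \mathbb{Z}^d\}$ affinely spans $\mathbb{R}^d$ if and only if $\{\ell_j\} \cup \{p(v) - p(w)\}$ spans $\mathbb{R}^d$, which is exactly what makes the orthogonality conclusion $b = \mathbf{0}$ valid.
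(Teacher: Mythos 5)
Your proposal is correct and follows essentially the same route as the paper's proof: apply \Cref{prop:suff1} to place the rows of $[P~L]$, the rows of $[P'~L']$, and $\hat{\mathbf{1}}^\top$ in the kernel of the symmetric matrix $\mathcal{L}_{\mathbb{Z}^d}(G,\omega)$, use the nullity hypothesis to conclude that $\hat{\mathbf{1}}^\top$ together with the rows of $[P~L]$ spans the kernel, and express $[P'~L'] = A[P~L] + t\,\hat{\mathbf{1}}^\top$. Your second paragraph merely spells out the linear-independence step that the paper asserts from the affinely spanning hypothesis without detail, and your verification of it is valid.
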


\begin{proof}
    Given $[P ~ L]$ and $[P' ~ L']$ are the matrix representations of $(p,L)$ and $(p',L')$ respectively,
    it follows from \Cref{prop:suff1}\ref{prop:suff1item1} that 
    \begin{align*}
        [P\ L] \mathcal{L}_{\mathbb{Z}^d}(G,\omega) = \mathbf{0}_{d \times (|V|+d)} = [P'\ L'] \mathcal{L}_{\mathbb{Z}^d}(G,\omega).
    \end{align*}
    Since $(p,L)$ is affinely spanning,
    the row vectors of $[P ~ L]$ plus the row vector $\hat{\mathbf{1}}^\top$ (see \Cref{subsec:incidence}) form the basis of a $(d+1)$-dimensional linear space $X \subset \mathbb{R}^{1 \times (|V|+d)}$ contained in the left kernel of $\mathcal{L}_{\mathbb{Z}^d}(G,\omega)$.
    As $\mathcal{L}_{\mathbb{Z}^d}(G, \omega)$ is symmetric with $\dim \ker \mathcal{L}_{\mathbb{Z}^d}(G, \omega) = d+1$,
    it follows that $X$ is exactly the left kernel of $\mathcal{L}_{\mathbb{Z}^d}(G,\omega)$.
    It now follows that the rows of $[P'\ L']$ are contained in $X$,
    which is equivalent to $(G,p',L')$ being an affine transformation of $(G,p,L)$.
\end{proof}

A stronger statement holds when we fix our choice of lattice.

\begin{proposition}\label{prop:fltransl}
    Let $(G,p, L)$ be a $\mathbb{Z}^d$-framework with a fixed-lattice equilibrium stress $\omega :E \rightarrow \mathbb{R}$.
    Suppose that $\omega$ is also a fixed-lattice equilibrium stress of a $\mathbb{Z}^d$-framework $(G,p',L)$.
    If the nullity of $\mathcal{L}(G, \omega)$ is 1,
    then $(G,p',L)$ is a translated copy of $(G,p,L)$.
\end{proposition}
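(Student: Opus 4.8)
The plan is to mirror the argument used for \Cref{prop:afftrans}, but to exploit the fact that fixing the lattice makes the situation strictly simpler. First I would record the fixed-lattice equilibrium condition for both frameworks using \Cref{prop:suff1}\ref{prop:suff1item2}. Writing $[P]$ and $[P']$ for the coordinate matrices of $p$ and $p'$, the hypothesis that $\omega$ is a fixed-lattice equilibrium stress of both $(G,p,L)$ and $(G,p',L)$ gives the two matrix equations
\begin{align*}
    P \mathcal{L}(G,\omega) + L M(G) {\rm diag}(\omega) I(G) &= \mathbf{0}_{d \times |V|}, \\
    P' \mathcal{L}(G,\omega) + L M(G) {\rm diag}(\omega) I(G) &= \mathbf{0}_{d \times |V|}.
\end{align*}
The essential observation is that the lattice-dependent term $L M(G) {\rm diag}(\omega) I(G)$ is identical in both equations: it depends only on the common lattice $L$ and on $G$ and $\omega$, but not on the point-configuration. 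This is precisely what the fixed-lattice assumption buys us, and it is what is unavailable in the flexible case treated in \Cref{prop:afftrans}.

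Subtracting the two equations therefore eliminates the lattice term and yields
\begin{align*}
    (P - P') \mathcal{L}(G,\omega) = \mathbf{0}_{d \times |V|},
\end{align*}
so each of the $d$ rows of $P - P'$ lies in the left kernel of $\mathcal{L}(G,\omega)$. Since $\mathcal{L}(G,\omega) = I(G)^\top {\rm diag}(\omega) I(G)$ is symmetric, its left and right kernels coincide, so each row of $P - P'$ lies in $\ker \mathcal{L}(G,\omega)$.

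Next I would pin down this kernel. As noted in \Cref{subsec:stressmatrix}, we always have $\ker \mathcal{L}(G,\omega) \supset \ker I(G)$, and the all-ones vector $\mathbf{1} \in \mathbb{R}^{|V|}$ lies in $\ker I(G)$ because every row of $I(G)$ either sums to zero or is identically zero. Combined with the hypothesis that the nullity of $\mathcal{L}(G,\omega)$ equals $1$, this forces $\ker \mathcal{L}(G,\omega) = \spa\{\mathbf{1}\}$. Consequently each row of $P - P'$ is a scalar multiple of $\mathbf{1}^\top$; writing $x = (x_1,\dots,x_d)^\top$ for the vector of these scalars (one per coordinate row), we obtain $p(v) - p'(v) = x$ for every $v \in V$. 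Thus $p'(v) = p(v) - x$ for all $v$, which is exactly the statement that $(G,p',L)$ is a translated copy of $(G,p,L)$.

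I do not expect a genuine obstacle here: once the subtraction is performed, the argument is a short linear-algebra computation. The only points that require a moment of care are verifying that the lattice contribution truly cancels (which relies on both frameworks sharing the same $L$) and invoking symmetry of $\mathcal{L}(G,\omega)$ to pass between its left and right kernels. In particular, note that unlike \Cref{prop:afftrans} we do not need an affinely spanning hypothesis, since the relevant kernel is one-dimensional and spanned by $\mathbf{1}$ alone, ruling out any nontrivial linear (as opposed to translational) part.
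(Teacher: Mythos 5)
Your proof is correct and follows essentially the same route as the paper's: subtract the two fixed-lattice equilibrium equations from \Cref{prop:suff1}\ref{prop:suff1item2} to cancel the lattice term, obtain $(P-P')\mathcal{L}(G,\omega) = \mathbf{0}_{d\times|V|}$, and then use symmetry of $\mathcal{L}(G,\omega)$ together with nullity one and $\mathbf{1} \in \ker I(G) \subset \ker \mathcal{L}(G,\omega)$ to conclude $P' = P + z\mathbf{1}^\top$ for some $z \in \mathbb{R}^d$. Your explicit justification that $\mathbf{1}$ lies in the kernel, and your remark that no affinely spanning hypothesis is needed here, are both accurate refinements of details the paper leaves implicit.
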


\begin{proof}
    Given $P$ and $P'$ are the coordinate matrices of $p$ and $p'$ respectively,
    it follows from \Cref{prop:suff1}\ref{prop:suff1item2} that 
    \begin{equation*}
        (P - P') \mathcal{L}(G,\omega) = \mathbf{0}_{d \times |V|} .
    \end{equation*}
    Since $\mathcal{L}_{\mathbb{Z}^d}(G, \omega)$ is symmetric, $\dim \ker \mathcal{L}(G,\omega) = 1$ and the left kernel of $\mathcal{L}(G,\omega)$ contains the all one row vector $\mathbf{1} = [1 ~ \cdots ~ 1]$, 
    we have $P' = P + z \mathbf{1}$ for some $z \in \mathbb{R}^d$.
    Hence $(G,p',L)$ a translated copy of $(G,p,L)$.
\end{proof}

We make the following observation for the special case where a $\mathbb{Z}^d$-framework $(G,p,L)$ is flat (i.e., $\rank L \leq d-1$) but affinely spanning.
If we choose a fixed-lattice equilibrium stress $\omega$ of $(G,p,L)$,
then we must have $\dim \ker \mathcal{L}(G,\omega) \geq 2$.
This stems from the following observation.
Choose $x \in \ker L \setminus \{\mathbf{0}\}$ and define the reflection $T$ where $x \mapsto -x$ and $y \mapsto y$ if $y \perp x$,
which implies $TL = L$.
If we now fix $p' = T \circ p$ then \Cref{prop:suff1}\ref{prop:suff1item2} implies that the $\mathbb{Z}^d$-framework $(G,p', L)$ will have the same fixed-lattice equilibrium stresses as $(G,p,L)$,
however $(G,p',L)$ is not a translated copy of $(G,p,L)$ since the latter is affinely spanning.

\subsection{Converting fixed-lattice equilibrium stresses into equilibrium stresses}

An obvious method for considering fixed-lattice rigidity using the flexible-lattice model is to add enough loops to our $\mathbb{Z}^d$-framework so as to ``lock'' the lattice into a single possibility.
In this subsection we describe a method of doing this so that it provides a bijection between the fixed-lattice equilibrium stresses of the original $\mathbb{Z}^d$-framework and the equilibrium stresses of the $\mathbb{Z}^d$-framework with added loops.

We begin with the following lemma.
For the following we use the shorthand $A^{-\top} = (A^{-1})^\top$.

\begin{lemma}\label{lem:sub}
    If $\omega$ is a fixed-lattice equilibrium stress of a non-flat $\mathbb{Z}^d$-framework $(G,p,L)$, then
    \begin{equation*}
        \mathcal{L}_{\mathbb{Z}^d}(G,\omega)
        =
        \begin{bmatrix}
            \mathcal{L}(G,\omega) & -\mathcal{L}(G,\omega)P^\top L^{-\top} \\
            -L^{-1}P \mathcal{L}(G,\omega) & M(G) {\rm diag}(\omega) M(G)^\top
        \end{bmatrix}.
    \end{equation*}
\end{lemma}

\begin{proof}
    Apply the substitution $M(G) {\rm diag}(\omega) I(G) = -L^{-1} P \mathcal{L}(G,\omega)$ (a consequence of \Cref{prop:suff1}\ref{prop:suff1item2}) to $\mathcal{L}_{\mathbb{Z}^d}(G,\omega)$.
\end{proof}

\begin{lemma}\label{lem:extend}
    Let $\omega$ be a fixed-lattice equilibrium stress of the non-flat $\mathbb{Z}^d$-framework $(G,p,L)$,
    and let $F$ be a set of $\binom{d+1}{2}$ loops at an arbitrary vertex $u$ so that the set
    \begin{equation*}
        \Gamma := \left\{ \gamma \gamma^\top : (u,u,\gamma) \in F\right\}
    \end{equation*}
    is a basis of the space of $d \times d$ symmetric matrices.
    Then there exists $\mu \in \mathbb{R}^F$ so that $(\omega,\mu)$ is an equilibrium stress of $(G+F,p,L)$.
\end{lemma}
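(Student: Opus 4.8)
The plan is to exploit the fact that a loop contributes to the lattice equation \cref{eq:2} but not to the vertex equation \cref{eq:1}, so the new weights $\mu$ only have to repair the failure of $\omega$ to satisfy \cref{eq:2}.

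First I would note that since loops are excluded from $\delta_G(x)$, for every vertex $x$ the left-hand side of \cref{eq:1} for $(G+F,p,L)$ under the weighting $(\omega,\mu)$ agrees with that for $(G,p,L)$ under $\omega$, which vanishes by hypothesis. Hence $(\omega,\mu)$ satisfies \cref{eq:1} for \emph{any} choice of $\mu$, and it remains only to arrange \cref{eq:2}. Writing $S := \sum_{e=(u,v,\gamma)\in E}\omega(e)(p(v)+L\gamma-p(u))\gamma^\top$ for the left-hand side of \cref{eq:2} associated with $G$, and observing that each loop $(u,u,\gamma)\in F$ has $p(u)+L\gamma-p(u)=L\gamma$ and so contributes $\mu(e)\,L\gamma\gamma^\top$, equation \cref{eq:2} for $(G+F,p,L)$ reads
\[
S + L\Big(\sum_{(u,u,\gamma)\in F}\mu(e)\,\gamma\gamma^\top\Big) = \mathbf{0}_{d\times d}.
\]
Since $(G,p,L)$ is non-flat, $L$ is invertible, so this is equivalent to $\sum_{(u,u,\gamma)\in F}\mu(e)\,\gamma\gamma^\top = -L^{-1}S$. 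As $\Gamma$ is a basis of the space of symmetric $d\times d$ matrices, the left-hand side ranges over precisely the symmetric matrices as $\mu$ varies over $\mathbb{R}^F$; thus a (unique) $\mu$ exists if and only if $-L^{-1}S$ is symmetric. The whole lemma therefore reduces to the symmetry of $L^{-1}S$.

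The symmetry of $L^{-1}S$ is the crux, and it is exactly where the fixed-lattice hypothesis enters. Here I would invoke \Cref{lem:sub}: since $\omega$ is a fixed-lattice equilibrium stress and $(G,p,L)$ is non-flat, the top-right and bottom-right blocks of $\mathcal{L}_{\mathbb{Z}^d}(G,\omega)$ are $-\mathcal{L}(G,\omega)P^\top L^{-\top}$ and $M(G){\rm diag}(\omega)M(G)^\top$ respectively. Because $S$ is precisely the final $d\times d$ block of $[P\ L]\mathcal{L}_{\mathbb{Z}^d}(G,\omega)$ (as in the proof of \Cref{prop:suff1}), this yields $S = -P\mathcal{L}(G,\omega)P^\top L^{-\top} + L\,M(G){\rm diag}(\omega)M(G)^\top$, and hence
\[
L^{-1}S = -(L^{-1}P)\mathcal{L}(G,\omega)(L^{-1}P)^\top + M(G){\rm diag}(\omega)M(G)^\top.
\]
Both summands are manifestly symmetric, so $L^{-1}S$ is symmetric, the required $\mu$ exists, and the argument is complete.

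I expect the main obstacle to be recognizing and establishing the symmetry of $L^{-1}S$. An equivalent and perhaps more transparent formulation is that the cross term $\sum_{e=(u,v,\gamma)\in E}\omega(e)(p(v)+L\gamma-p(u))(p(v)-p(u))^\top$ vanishes, which follows immediately from the fixed-lattice equilibrium equation of \Cref{prop:suff1}\ref{prop:suff1item2}; granting this, $SL^\top$ reduces to $\sum_e\omega(e)(p(v)+L\gamma-p(u))(p(v)+L\gamma-p(u))^\top$, which is visibly symmetric. Everything else — the automatic validity of \cref{eq:1} and the passage from symmetry to solvability for $\mu$ via the basis property of $\Gamma$ — is routine bookkeeping.
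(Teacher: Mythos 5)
Your proposal is correct and follows essentially the same route as the paper: both reduce the problem, via \Cref{lem:sub} and \Cref{prop:suff1}, to solving $\sum_{(u,u,\gamma)\in F}\mu(f)\,\gamma\gamma^\top = -L^{-1}S$ (the paper states the $L$-conjugated form $\sum_f \mu(f)L\gamma\gamma^\top L^\top = P\mathcal{L}(G,\omega)P^\top - LM(G)\,\mathrm{diag}(\omega)\,M(G)^\top L^\top$) and then invoke the basis property of $\Gamma$. Your only addition is to make explicit the symmetry of $L^{-1}S$ needed for solvability — which the paper leaves implicit in the manifestly symmetric right-hand side — and your alternative cross-term argument for it is also correct.
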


\begin{proof}
    By abuse of notation, we consider $M(F) = M(H)$ where $H = (V,F)$.
    For any $\mu \in \mathbb{R}^E$, we have by \Cref{lem:sub} that 
    \begin{equation*}
        \mathcal{L}_{\mathbb{Z}^d}(G+F,\omega)
        =
        \begin{bmatrix}
            \mathcal{L}(G,\omega) & -\mathcal{L}(G,\omega)P^\top L^{-\top} \\
            -L^{-1}P \mathcal{L}(G,\omega) & M(G) {\rm diag}(\omega) M(G)^\top + M(F) {\rm diag}( \mu) M(F)^\top
        \end{bmatrix} \\
    \end{equation*}
    By \Cref{prop:suff1},
    we have that $(\omega,\mu)$ is an equilibrium stress of $(G+F,p,L)$ if and only if 
    \begin{gather*}
        -P \mathcal{L}(G,\omega) P^\top L^{-\top} + LM(G) {\rm diag}(\omega) M(G)^\top = -L M(F) {\rm diag}( \mu) M(F)^\top \\
        \Updownarrow \\
        \sum_{f=(u,u,\gamma) \in F} \mu(f) L \gamma \gamma^\top L^\top  = P \mathcal{L}(G,\omega) P^\top - LM(G) {\rm diag}(\omega) M(G)^\top L^\top.
    \end{gather*}
    Since $\Gamma$ spans the space of $d \times d$ symmetric matrices, we now choose $\mu$ so that the above equality holds.
\end{proof}

\subsection{Tensegrities}

One can extend all the above definitions to tensegrities.
Roughly speaking, a tensegrity is obtained from a framework by replacing
some (or all) of the equalities for stiff bars by inequalities for cables and struts, where a cable allows the distance between its endpoints to shrink (but not expand) and a strut allows the distance between its endpoints to expand (but not shrink), see e.g.  \cite{rothwhi,Connelly1982,conguest}.
With this we now give the formal definition for the periodic analog to a tensegrity.

\begin{definition}
    A \emph{$\mathbb{Z}^d$-tensegrity} is a $\mathbb{Z}^d$-framework $(G, p, L)$, where the edge set $E$ is partitioned into the sets $E_0$, $E_+$ and $E_-$ representing \emph{bars}, \emph{cables} and \emph{struts}, respectively. 
\end{definition}

A $\mathbb{Z}^d$-tensegrity $(G,p,L)$ \emph{dominates} a different $\mathbb{Z}^d$-tensegrity $(G,p', L')$ if 
\begin{align*}
    \|p(v) + L \gamma -p(u)\| &= \|p'(v) + L' \gamma -p'(u)\| \qquad \textrm{ for all } e = (u,v,\gamma) \in E_0\\
    \|p(v) + L \gamma -p(u)\| &\geq \|p'(v) + L' \gamma -p'(u)\| \qquad \textrm{ for all } e = (u,v,\gamma) \in E_+\\
    \|p(v) + L \gamma -p(u)\| &\leq \|p'(v) + L' \gamma -p'(u)\| \qquad \textrm{ for all } e = (u,v,\gamma) \in E_-.
\end{align*}
We now define a $\mathbb{Z}^d$-tensegrity $(G,p,L)$ to be \emph{globally rigid} if every $\mathbb{Z}^d$-tensegrity dominated by $(G,p,L)$ is congruent to $(G,p,L)$.
Similarly, $(G,p,L)$ is \emph{fixed-lattice globally rigid} if every $\mathbb{Z}^d$-tensegrity with lattice $L$ that is dominated by $(G,p,L)$ is congruent to $(G,p,L)$.

An edge weighting $\omega:E \rightarrow \mathbb{R}$ of a $\mathbb{Z}^d$-tensegrity $(G,p,L)$ is {\em proper} if $\omega(e) \geq 0$ for all $e \in E_+$ and $\omega(e) \leq 0$ for all $e \in E_-$.
An edge weighting is now said to be (i) a \emph{fixed-lattice equilibrium stress} of $(G,p,L)$ if it is proper and satisfies \cref{eq:1},
and (ii) an \emph{equilibrium stress} of $(G,p,L)$ if it is proper and satisfies both \cref{eq:1} and \cref{eq:2}.

\section{Sufficient global rigidity conditions for tensegrities}\label{sec:suff}

\subsection{A sufficient global rigidity condition for tensegrities}

We begin the subsection by introducing the following concept.
Recall the vector representation $[p^\top ~ \ell^\top]^\top$ for each pair $(p,L) \in \mathcal{R}(G)$ as described in \Cref{subsec:coord}.
For a $\mathbb{Z}^d$-gain graph $G$ with edge weight $\omega:E \rightarrow \mathbb{R}$, 
we define its \emph{energy function} ${\cal E}_{G,\omega} : {\cal R}(G) \rightarrow \mathbb{R}$ by 
\begin{equation}\label{eq:4}
    {\cal E}_{G,\omega}(p,L):= \frac{1}{2}
    \begin{bmatrix} 
        p^{\top} & \ell^{\top} 
    \end{bmatrix} 
    (\mathcal{L}_{\mathbb{Z}^d}(G,\omega) \otimes I_d) 
    \begin{bmatrix}
        p \\ \ell 
    \end{bmatrix}
\end{equation}
(here $\otimes$ represents the Kronecker product).
It is immediate that each energy function is a quadratic form.
As such,
an energy function is convex if and only if the weighted $\mathbb{Z}^d$-Laplacian $\mathcal{L}_{\mathbb{Z}^d}(G,\omega)$ is positive semidefinite.

We can rewrite \cref{eq:4} in terms of the matrix representation of a pair $(p,L)$ (see \Cref{subsec:stressmatrix}):
\begin{equation}\label{eq:4.1}
    {\cal E}_{G,\omega}(p,L) = \frac{1}{2}{\rm Tr}\left([P\ L] \mathcal{L}_{\mathbb{Z}^d}(G,\omega) [P\ L]^{\top}\right).
\end{equation}
Given $x_e$ is the row vector of $I_{\mathbb{Z}^d}(G)$ corresponding to the edge $e = (u,v, \gamma)$, 
we recall from \Cref{prop:suff1} that $[P ~ L] x_e^\top =\nu(e)$, 
where $\nu(e) := p(v) + L\gamma - p(u)$.
Since $\mathcal{L}_{\mathbb{Z}^d}(G,\omega) = \sum_{e \in E} \omega(e) x_e^\top x_e$,
\cref{eq:4.1} shows that
\begin{align*}
    {\cal E}_{G,\omega}(p,L)&= \frac{1}{2}{\rm Tr}\left([P\ L] \mathcal{L}_{\mathbb{Z}^d}(G,\omega) [P\ L]^{\top}\right)= \frac{1}{2}\sum_{e\in E}\omega(e){\rm Tr}\left([P\ L]x_e^{\top} x_e [P\ L]^{\top}\right) \\
    &=\frac{1}{2}\sum_{e\in E}\omega(e) {\rm Tr}\left(\nu(e)\nu(e)^{\top}\right)=\frac{1}{2}\sum_{e\in E}\omega(e) \|\nu(e)\|^2.
\end{align*}
Hence another equivalent representation of the energy function for a pair $(p,L) \in \mathcal{R}(G)$ is the equation
\begin{equation}\label{eq:4.2}
    {\cal E}_{G,\omega}(p,L)=\frac{1}{2}\sum_{e=(u,v,\gamma) \in E} \omega(e) \|p(v) + L\gamma - p(u)\|^2 = \frac{1}{2}\omega^\top f_{G}(p,L).
\end{equation}
This last representation allows us to easily see that the gradient of $\mathcal{E}_{G,\omega}$ at $(p,L) \in \mathbb{R}(G)$ is the vector
\begin{equation}\label{eq:4.3}
     \nabla \mathcal{E}_{G,\omega}(p,L) = (\mathcal{L}_{\mathbb{Z}^d}(G,\omega) \otimes I_d) 
    \begin{bmatrix}
        p \\ \ell 
    \end{bmatrix}
    = \omega^\top R(G,p,L)
\end{equation}
Hence an edge weighting $\omega :E \rightarrow \mathbb{R}$ is an equilibrium stress of a $\mathbb{Z}^d$-framework $(G,p,L)$ if and only if $\nabla \mathcal{E}_{G,\omega}(p,L) = 0$.

With the concept of energy functions in place,
we are almost ready to prove a sufficient global rigidity condition.
We first need the following important definition.

\begin{definition}\label{def:conic}
    For a $\mathbb{Z}^d$-tensegrity in $\mathbb{R}^d$,
    we say that the edge directions of $(G,p,L)$ \emph{lie on a conic at infinity} if there exists a non-zero symmetric $d\times d$ matrix $Q$ satisfying
    \begin{align*}
        (p(v) +L\gamma - p(u))^\top Q (p(v) +L\gamma - p(u)) = 0
    \end{align*}
    for all $e=(u,v,\gamma) \in E$.    
\end{definition}

\Cref{def:conic} can also be characterised via equivalent affine transformations.

\begin{lemma}\label{l:conic equiv}
    Let $(G,p,L)$ be a $\mathbb{Z}^d$-framework.
    Then there exists a $d \times d$ matrix $A$ with $A^\top A - I_d$ such that $(G,p,L)$ is equivalent to $(G,p',L')$ when $p'(v) = Ap(v)$ for all $v \in V$ and $L' = AL$ if and only if the edge directions of $(G,p,L)$ do not lie on a conic at infinity with respect to the non-zero symmetric $d \times d$ matrix $A^\top A -I_d$.
\end{lemma}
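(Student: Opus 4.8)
The plan is to translate the equivalence condition directly into the conic-at-infinity condition of \Cref{def:conic} via a one-line computation, and then argue the two implications separately; the only delicate point will be a rescaling in the reverse direction.

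First I would record the edge vectors of the transformed framework. Since $p'(v) = Ap(v)$ and $L' = AL$, for each edge $e = (u,v,\gamma)$ we have
\[
    p'(v) + L'\gamma - p'(u) = A\big(p(v) + L\gamma - p(u)\big) = A\,\nu(e),
\]
where $\nu(e) := p(v)+L\gamma-p(u)$ as in \Cref{subsec:rigmat}. Hence $(G,p,L)$ and $(G,p',L')$ are equivalent exactly when $\|A\nu(e)\| = \|\nu(e)\|$ for every edge, which after squaring becomes
\[
    \nu(e)^\top\big(A^\top A - I_d\big)\nu(e) = 0 \qquad \text{for all } e \in E.
\]
Setting $Q := A^\top A - I_d$, this is precisely the statement that the edge directions of $(G,p,L)$ lie on a conic at infinity with respect to $Q$.

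For the forward implication I would take the given $A$ with $A^\top A - I_d \neq 0$ and simply set $Q = A^\top A - I_d$; this is a non-zero symmetric matrix, and the displayed identity shows that the edge directions lie on a conic at infinity with respect to it. For the reverse implication, suppose the edge directions lie on a conic at infinity with respect to some non-zero symmetric $Q$. One cannot naively choose $A$ with $A^\top A = I_d + Q$, because $A^\top A$ is forced to be positive semidefinite whereas $I_d + Q$ need not be. The key step is to exploit scale-invariance: since $\nu(e)^\top (tQ)\,\nu(e) = 0$ for every scalar $t$, the matrix $tQ$ also witnesses the conic at infinity, so I may replace $Q$ by $tQ$ for a sufficiently small $t>0$ that makes $I_d + tQ$ positive definite. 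Then $A := (I_d + tQ)^{1/2}$, the positive-definite symmetric square root, satisfies $A^\top A - I_d = tQ \neq 0$ and yields the desired equivalence by the computation above.

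The main obstacle is exactly this positivity mismatch in the reverse direction: the conic matrix supplied by \Cref{def:conic} is an arbitrary non-zero symmetric matrix, while $A^\top A - I_d$ must keep $A^\top A$ positive semidefinite. The rescaling resolves this at no cost, since the conic-at-infinity condition is homogeneous in $Q$.
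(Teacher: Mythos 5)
Your proposal is correct, and its core is exactly the paper's proof: since $p'(v)=Ap(v)$ and $L'=AL$ give transformed edge vectors $A\nu(e)$, equivalence of the two frameworks amounts to $\nu(e)^\top(A^\top A-I_d)\nu(e)=0$ for all $e\in E$, and that single squared-norm rearrangement is the entirety of the paper's argument for \Cref{l:conic equiv}. Where you genuinely diverge is the reverse direction: the paper reads the lemma per given $A$ (the conic matrix is tied to $A^\top A-I_d$ itself), so its proof never needs to manufacture $A$ from an arbitrary non-zero symmetric $Q$. Your rescale-then-square-root step --- replacing $Q$ by $tQ$ with $t>0$ small enough that $I_d+tQ$ is positive definite, then taking $A=(I_d+tQ)^{1/2}$, justified by homogeneity of the conic condition --- is correct, and it is in substance the construction the paper defers to the proof of \Cref{l:conic inf rig}, where $Q$ is scaled so that $\lambda_d\le 1$ and $A_t=X^\top M_t X$ with $M_t=\mathrm{diag}\bigl(\sqrt{1-t\lambda_1},\ldots,\sqrt{1-t\lambda_d}\bigr)$ is precisely a symmetric square root of $I_d-tQ$. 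So your proof packages into one lemma what the paper splits across \Cref{l:conic equiv} and \Cref{l:conic inf rig}, at the cost of proving slightly more than the literal statement requires; nothing is lost, and your positivity discussion correctly identifies the one obstacle in that extra half. Incidentally, your existence-flavoured reading also silently repairs the statement as printed, which is garbled: it says ``a matrix $A$ with $A^\top A-I_d$'' (clearly intending $A^\top A-I_d\neq 0$) and ``do not lie on a conic at infinity,'' whereas the paper's own proof and its later invocations (e.g.\ in \Cref{t:flexible lattice sufficient}, where non-existence of a conic forces $A^\top A=I_d$) show the intended polarity is that equivalence holds if and only if the edge directions \emph{do} lie on the conic determined by $A^\top A-I_d$.
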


\begin{proof}
    Choose any matrix $A$ and construct $(G,p',L')$ from $A$.
    Then $(G,p,L)$ and $(G,p',L')$ are equivalent if and only if for each edge $e=(u,v,\gamma)$ we have
    \begin{align*}
        (p'(v) + L'\gamma - p'(u))^\top (p'(v) + L'\gamma - p'(u)) &= (p(v) + L\gamma - p(u))^\top A^\top A (p(v) + L\gamma - p(u))\\
        &= (p(v) + L\gamma - p(u))^\top (p(v) + L\gamma - p(u)).
    \end{align*}
    By rearranging these equations,
    we see that
    \begin{align*}
        (p(v) + L\gamma - p(u))^\top (A^\top A -I_d) (p(v) + L\gamma - p(u))= 0
    \end{align*}
    for every edge $e=(u,v,\gamma)$.
\end{proof}

\begin{lemma}\label{lem:affinerigid}
    Let $(G,p,L)$ and $(G,p',L')$ be two $\mathbb{Z}^d$-frameworks.
    If
    \begin{itemize}
        \item $(G,p,L)$ and $(G,p',L')$ are equivalent,
        \item $(G,p',L')$ is an affine transformation of $(G,p,L)$, and
        \item the edge directions of $(G,p,L)$ do not lie on a conic,
    \end{itemize}
    then $(G,p,L)$ and $(G,p',L')$ are congruent.
\end{lemma}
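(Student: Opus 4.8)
The plan is to show that if the three hypotheses hold, then the affine map carrying $(G,p,L)$ to $(G,p',L')$ must in fact be a rigid congruence. Since $(G,p',L')$ is an affine transformation of $(G,p,L)$, by definition there is a $d \times d$ matrix $A$ and a translation vector $x \in \mathbb{R}^d$ such that $p'(v) = Ap(v) + x$ for all $v \in V$ and $L' = AL$. The translation $x$ contributes a trivial congruence, so I may absorb it and assume $p'(v) = Ap(v)$ without loss of generality. The goal then reduces to proving that $A$ is orthogonal, i.e., $A^\top A = I_d$, for this is exactly the condition that $(G,p,L)$ and $(G,p',L')$ are congruent.

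First I would invoke \Cref{l:conic equiv}. Applying the affine map $A$ to both $p$ and $L$ produces an equivalent framework precisely when the edge directions of $(G,p,L)$ lie on a conic at infinity with respect to the symmetric matrix $Q := A^\top A - I_d$. Concretely, unwinding the equivalence condition $f_G(p,L) = f_G(p',L')$ together with $\nu'(e) = A\nu(e)$ gives, for every edge $e = (u,v,\gamma)$,
\begin{equation*}
    \nu(e)^\top (A^\top A - I_d) \nu(e) = 0,
\end{equation*}
where $\nu(e) = p(v) + L\gamma - p(u)$. This is the statement that the edge directions lie on a conic at infinity with respect to $Q = A^\top A - I_d$. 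Since $Q$ is symmetric, either $Q$ is the zero matrix, or $Q$ is a nonzero symmetric matrix witnessing a genuine conic at infinity.

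The key step is to rule out the second possibility using the third hypothesis. The hypothesis that the edge directions of $(G,p,L)$ do not lie on a conic means exactly that there is no nonzero symmetric $d \times d$ matrix $Q$ satisfying $\nu(e)^\top Q\, \nu(e) = 0$ for all edges $e$ (this is \Cref{def:conic}). Combined with the equivalence computation above, which forces $Q = A^\top A - I_d$ to satisfy precisely this conic equation, the no-conic hypothesis forces $Q = \mathbf{0}_{d \times d}$, i.e., $A^\top A = I_d$. Thus $A$ is orthogonal, and restoring the translation $x$, the map $v \mapsto Av + x$ is an isometry carrying $p$ to $p'$ and $L$ to $AL = L'$. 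By the definition of congruence for $\mathbb{Z}^d$-frameworks, $(G,p,L)$ and $(G,p',L')$ are congruent.

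The main obstacle is largely bookkeeping rather than conceptual: I must verify carefully that the matrix $A$ furnished by the affine-transformation hypothesis is the \emph{same} matrix whose orthogonality is controlled by the conic condition, and that the combination of equivalence plus the affine relation genuinely yields the identity $\nu(e)^\top(A^\top A - I_d)\nu(e) = 0$ for every edge. In particular, I should confirm that absorbing the translation does not interfere with the lattice relation $L' = AL$ (it does not, since translations act only on the point configuration and leave the lattice unchanged). Once the conic equation is correctly derived, the conclusion is an immediate application of \Cref{def:conic} and the no-conic hypothesis.
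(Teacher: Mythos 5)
Your proof is correct and follows essentially the same route as the paper's: both extract the affine map $(A,x)$, observe that the edge vectors transform as $\nu'(e) = A\nu(e)$ (the translation cancelling in each edge vector), derive $\nu(e)^\top(A^\top A - I_d)\nu(e) = \|\nu'(e)\|^2 - \|\nu(e)\|^2 = 0$ from equivalence, and conclude $A^\top A = I_d$ from the no-conic hypothesis via \Cref{def:conic}. The only cosmetic difference is that the paper carries the translation through the computation rather than absorbing it first, which changes nothing.
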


\begin{proof}
    Fix $d \times d$ matrix $M$ and a vector $z \in \mathbb{R}^d$ such that $p'(v) = Mp(v) +z$ for all $v \in V$ and $L' = ML$.
    For each $e=(u,v,\gamma)$,
    define the vectors $\nu(e) := (p(v) +L\gamma - p(u))$ and $\nu'(e) := (p'(v) +L'\gamma - p'(u)) = M\nu(e)$.
    We now note that for each $e=(u,v,\gamma)$,
    \begin{align*}
        \nu(e)^\top (M^\top M - I_d) \nu(e) = \nu(e)^\top M^\top M \nu(e) - \nu(e)^\top \nu(e)  =
        \|\nu'(e)\|^2 - \|\nu(e)\|^2,
    \end{align*}
    with the last equality following from $(G,p,L)$ and $(G,p',L')$ being equivalent.
    Since the edge directions of $(G,p,L)$ do not lie on a conic at infinity, it follows that $M^{\top}M=I_d$, that is, $M$ is orthogonal and $(G,p,L)$ is congruent to $(G,p',L')$.
\end{proof}

With this final piece in place,
we are now ready to prove the  main result of the section.

\begin{theorem}\label{thm:suffpsd}
    An affinely spanning $\mathbb{Z}^d$-tensegrity $(G,p,L)$ is globally rigid if it has an equilibrium stress $\omega:E \rightarrow \mathbb{R}$ such that
    \begin{itemize}
        \item $\dim \ker \mathcal{L}_{\mathbb{Z}^d}(G,\omega) = d+1$ and 
        \item $\mathcal{L}_{\mathbb{Z}^d}(G,\omega)$ is positive semidefinite,
    \end{itemize}
    and the edge directions of $(G,p,L)$ do not lie on a conic at infinity.
\end{theorem}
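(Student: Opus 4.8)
The plan is to show that any $\mathbb{Z}^d$-tensegrity $(G,q,L')$ dominated by $(G,p,L)$ must in fact be congruent to it, by exploiting the positive semidefiniteness of the stress matrix together with the energy function $\mathcal{E}_{G,\omega}$. The key idea, following Connelly's classical argument, is that $\mathcal{E}_{G,\omega}$ attains its global minimum (which is $0$) precisely on the affine image described by the $(d+1)$-dimensional kernel of $\mathcal{L}_{\mathbb{Z}^d}(G,\omega)$, and that domination forces the competitor to also minimize the energy.

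First I would observe that since $\omega$ is an equilibrium stress, \Cref{prop:suff1}\ref{prop:suff1item1} gives $[P\ L]\mathcal{L}_{\mathbb{Z}^d}(G,\omega) = \mathbf{0}$, so by \cref{eq:4.1} we have $\mathcal{E}_{G,\omega}(p,L) = 0$. Because $\mathcal{L}_{\mathbb{Z}^d}(G,\omega)$ is positive semidefinite, the energy function is a convex quadratic form with minimum value $0$, so $(p,L)$ is a global minimizer. Next I would use the properness of $\omega$ together with domination to show that $\mathcal{E}_{G,\omega}(q,L') \leq \mathcal{E}_{G,\omega}(p,L) = 0$: from \cref{eq:4.2} we have $\mathcal{E}_{G,\omega}(q,L') = \frac{1}{2}\sum_e \omega(e)\|q(v)+L'\gamma-q(u)\|^2$, and term-by-term the bar terms are unchanged while the cable terms (with $\omega(e)\geq 0$) have shrunk and the strut terms (with $\omega(e)\leq 0$) have grown, so each product $\omega(e)\|\nu'(e)\|^2$ is at most $\omega(e)\|\nu(e)\|^2$. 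Hence $\mathcal{E}_{G,\omega}(q,L') \leq 0$, but positive semidefiniteness forces $\mathcal{E}_{G,\omega}(q,L') \geq 0$, so $(q,L')$ is also a global minimizer with energy exactly $0$.

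Since $(q,L')$ minimizes the convex quadratic $\mathcal{E}_{G,\omega}$, the gradient vanishes there, so by \cref{eq:4.3} the weighting $\omega$ is also an equilibrium stress of $(G,q,L')$. Equivalently, $[P'\ L']\mathcal{L}_{\mathbb{Z}^d}(G,\omega) = \mathbf{0}$, placing the rows of $[P'\ L']$ in the left kernel of the stress matrix. At this point I would invoke \Cref{prop:afftrans}: since $(G,p,L)$ is affinely spanning, $\omega$ is an equilibrium stress of both frameworks, and $\dim\ker\mathcal{L}_{\mathbb{Z}^d}(G,\omega) = d+1$, we conclude that $(G,q,L')$ is an affine transformation of $(G,p,L)$. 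Moreover, the fact that $\mathcal{E}_{G,\omega}(q,L') = 0$ with no slack means each dominated inequality is actually an equality, so $(G,p,L)$ and $(G,q,L')$ are in fact \emph{equivalent} (all edge lengths agree, not merely dominated).

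Finally I would apply \Cref{lem:affinerigid}: we now have that $(G,p,L)$ and $(G,q,L')$ are equivalent, that the latter is an affine transformation of the former, and that the edge directions of $(G,p,L)$ do not lie on a conic at infinity (by hypothesis). The lemma then yields that the two frameworks are congruent, completing the proof. The main obstacle I anticipate is the step converting domination into equivalence: one must argue carefully that, because each term $\omega(e)\|\nu'(e)\|^2 \leq \omega(e)\|\nu(e)\|^2$ yet the total sum cannot have dropped below $0$, every individual inequality must be tight. This requires combining the sign information from properness with the global lower bound from positive semidefiniteness, and it is precisely this rigidity in the energy that promotes mere domination to genuine metric equivalence before \Cref{lem:affinerigid} can be applied.
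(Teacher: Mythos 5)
Your proposal is correct and follows essentially the same route as the paper's proof: both use convexity of the energy function $\mathcal{E}_{G,\omega}$ coming from positive semidefiniteness of $\mathcal{L}_{\mathbb{Z}^d}(G,\omega)$, deduce from domination and properness that the dominated framework is also a minimizer (so each term-wise inequality is tight, giving equivalence, and $\omega$ is an equilibrium stress of the competitor), and then conclude via \Cref{prop:afftrans} and \Cref{lem:affinerigid}. Your only variation --- computing the minimum value to be exactly $0$ from $[P \ L]\mathcal{L}_{\mathbb{Z}^d}(G,\omega)=\mathbf{0}$ rather than just invoking the vanishing-gradient characterization of minimizers --- is cosmetic.
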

    
\begin{proof}
    Consider the energy function ${\cal E}_{G,\omega}$.
    Since $\mathcal{L}_{\mathbb{Z}^d}(G,\omega)$ is positive semidefinite, 
    the quadratic form  ${\cal E}_{G,\omega}$ is convex.
    Hence it follows from \cref{eq:4.2} that $(p',L') \in \mathcal{R}(G)$ is a minimizer of ${\cal E}_{G,\omega}$ if and only if $\omega$ is an equilibrium stress of $(G,p',L')$.
    
    Fix a $\mathbb{Z}^d$-tensegrity $(G,p',L')$ that is dominated by $(G,p,L)$.
    By the definition of equilibrium stresses of $\mathbb{Z}^d$-tensegrities we have
    \begin{equation}\label{eq:stressineq}
        \omega(e)\| p'(v) + L' \gamma - p' (v) \|^2 \leq \omega(e)\| p(v) + L \gamma - p (v) \|^2
    \end{equation}
    for each edge $e = (u,v,\gamma) \in E$.
    Hence from \cref{eq:4.2} we have ${\cal E}_{G,\omega}(p',L') \leq {\cal E}_{G,\omega}(p,L)$.
    Since $(p,L)$ is a minimizer of ${\cal E}_{G,\omega}$,
    it follows that ${\cal E}_{G,\omega}(p',L') = {\cal E}_{G,\omega}(p,L)$.
    This implies two things:
    (i) $(G,p',L')$ is equivalent to $(G,p,L)$ since \cref{eq:stressineq} must be an equality for each $e \in E$,
    and (ii) $(p',L')$ is a minimizer of ${\cal E}_{G,\omega}$.
    As shown above, the latter property implies $\omega$ is an equilibrium stress of $(G,p',L')$.
    By \Cref{prop:afftrans},
    $(G,p',L')$ is an affine transformation of $(G,p,L)$.
    Hence $(G,p,L)$ and $(G,p',L')$ are congruent by \Cref{lem:affinerigid}.
\end{proof}

\begin{remark}
 A finite tensegrity is called \emph{super stable} if it satisfies  conditions analogous to those in \Cref{thm:suffpsd} \cite[Chapter 63]{HandDCG}.
  Super stable tensegrities are in particular \emph{universally rigid} \cite{Connelly1982}, i.e., any other tensegrity on the given graph, with the same partition of edges into cables, struts and bars, \emph{in any dimension}, that is dominated by the given tensegrity, is congruent to it. Similarly, we may call a $\mathbb{Z}^d$-tensegrity which satisfies the conditions of \Cref{thm:suffpsd}  {\em super stable}.
\end{remark}

\begin{example}\label{ex:flex1}
    For a single-vertex $\mathbb{Z}^d$-framework, global rigidity is achieved once the lattice is fully constrained, i.e., after adding $\binom{d+1}{2}$ independent loops.  Any additional loop then creates an equilibrium stress without imposing further constraints. Geometrically, this means the equilibrium stress is entirely determined by the gain assignments of the loops and is independent of the vertex position. Algebraically, this is reflected in the fact that the associated stress matrix $\mathcal{L}_{\mathbb{Z}^d}(G,\omega)$ is the $(d+1)\times (d+1)$ zero matrix, and this matrix is trivially positive semidefinite. Its kernel has dimension $d+1$, and since there are $\binom{d+1}{2}$ independent loops, the edge directions do not lie on a conic at infinity. Hence,  \Cref{thm:suffpsd} certifies global rigidity. We note that global rigidity and infinitesimal rigidity are the same for this special example; see \cite[Theorem 3.12]{BS10}.
        \end{example}

\begin{example}\label{ex:flex2}
     Consider the \(\mathbb Z^2\)-framework  shown in \Cref{fig:two-vertex-framework}
    consisting of two vertices $v_1$ and $v_2$ placed at $p(v_1)=(0,0)$ and $p_2=(0.5,0)$, with lattice $L=I_2$ and edges $e_1=(v_1,v_2,(0,0))$, $e_2=(v_1,v_2,(-1,0))$, $e_3=(v_1,v_1,(0,1))$, $e_4=(v_1,v_1,(1,1))$, and $e_5=(v_1,v_1,(-1,1))$. Define the edge weight \(\omega:E\to\mathbb R\) by
\[
\omega(e_1)=4,\qquad \omega(e_2)=4,\qquad \omega(e_3)=2,\qquad \omega(e_4)=-1,\qquad \omega(e_5)=-1.
\] It is easy to check that $\omega$ is an equilibrium stress since both \cref{eq:1} and \cref{eq:2} hold.

Now, we have 
\[
I_{\mathbb Z^2}(G)=
\begin{pmatrix}
-1 & 1 & 0 & 0 \\
 -1 &  1 & -1 & 0 \\
 0 & 0 & 0 & 1 \\
 0 &  0 & 1 & 1\\
 0 &  0 & -1 & 1
\end{pmatrix} \quad
\textrm{ and }\quad 
\mathcal{L}_{\mathbb Z^2}(G,\omega)=
\begin{pmatrix}
8 & -8 &  4 & 0\\
-8 &  8 & -4 & 0\\
 4 & -4 &  2 & 0\\
 0 &  0 &  0 & 0
\end{pmatrix}.
\]
Since  $\mathcal{L}_{\mathbb Z^2}(G,\omega)$ satisfies the conditions of  \Cref{thm:suffpsd} and the edge directions do not lie on a conic of infinity, we may conclude that the $\mathbb{Z}^2$-framework is globally rigid. (If the bars corresponding to $e_1,e_2$ and $e_3$ are replaced by cables and the remaining two bars are replaced by struts, then the resulting tensegrity is also globally rigid.) Note that perturbing $p(v_2)$ so that it is no longer collinear with its neighbors in the covering framework yields a $\mathbb{Z}^2$-framework that is not globally rigid. This is because the vertex  $v_2$ has degree two, allowing  it to be locally reflected in the line defined by its two neighbors. This provides an equivalent but non-congruent realization.
\end{example}

\begin{figure}[htp]
\begin{center}
\begin{tikzpicture}[scale=1.5,
    vertex/.style={circle,fill=black,inner sep=1.2pt},
    edge/.style={line width=0.6pt},
    dedge/.style={line width=1pt, line cap=round}
]

\node[vertex] (p1) at (0,0) {};
\node[vertex] (p2) at (1,0) {};

\draw[->, >=stealth, bend left=20] (p1) to (p2);   
\draw[->, >=stealth, bend right=20] (p1) to (p2);  

\draw[->, >=stealth] (p1) .. controls (-1,0.5) and (-1,-0.5) .. (p1); 
\draw[->, >=stealth] (p1) .. controls (-0.5,-1) and (0.5,-1) .. (p1);  
\draw[->, >=stealth] (p1) .. controls (-0.5,1) and (0.5,1) .. (p1);    

      \node [draw=white, fill=white] (a) at (-1.1,0) {\tiny{$(1,1)$}}; 
 \node [draw=white, fill=white] (a) at (0,-1) {\tiny{$(-1,1)$}}; 
  \node [draw=white, fill=white] (a) at (0,1) {\tiny{$(0,1)$}}; 
   \node [draw=white, fill=white] (a) at (0.6,0.3) {\tiny{$(0,0)$}}; 
    \node [draw=white, fill=white] (a) at (0.6,-0.3) {\tiny{$(-1,0)$}}; 

\begin{scope}[xshift=4cm, yshift=0cm, scale=0.5]

\coordinate (a1) at (1,0);
\coordinate (a2) at (0,1);

\coordinate (w1) at (0,0);
\coordinate (w2) at (0.5,0);

\foreach \i in {-2,-1,0,1,2}{
  \foreach \j in {-2,-1,0,1,2}{
    \pgfmathsetmacro{\dx}{\i*1}
    \pgfmathsetmacro{\dy}{\j*1}

    \path (w1)++(\dx,\dy) coordinate (p1);
    \path (w2)++(\dx,\dy) coordinate (p2);

    \draw[line width=0.5pt] (p1)--(p2); 
    \draw[line width=0.5pt] (p1)--($(p2)+(-1,0)$); 
    \draw[line width=0.5pt] (p1)--($(p1)+(0,1)$); 
    \draw[line width=0.5pt] (p1)--($(p1)+(1,1)$); 
    \draw[line width=0.5pt] (p1)--($(p1)+(-1,1)$); 

    \node[vertex] at (p1) {};
    \node[vertex] at (p2) {};
  }
}
\end{scope}

\end{tikzpicture}
\end{center}
\caption{Shown on the left is the globally rigid $\mathbb{Z}^2$-framework described in \Cref{ex:flex2}. The corresponding covering framework is shown on the right.} 
\label{fig:two-vertex-framework}
\end{figure}
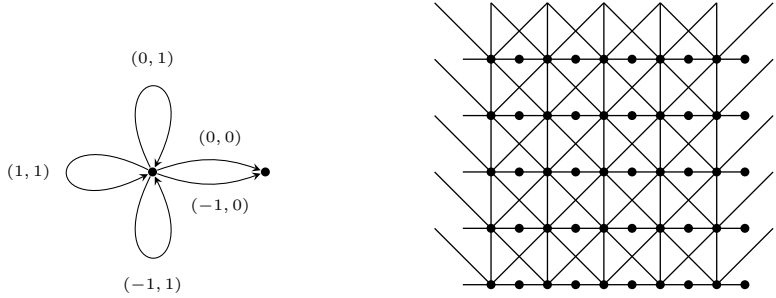

In \Cref{appb} we describe  a general procedure to construct globally rigid (in fact, super stable) $\mathbb{Z}^d$-tensegrities from finite super stable tensegrities.

\subsection{A sufficient fixed-lattice global rigidity condition for tensegrities}

It is relatively straightforward to adapt the proof of \Cref{thm:suffpsd} to obtain an analogous result for the case of fixed-lattice global rigidity.
For a $\mathbb{Z}^d$-gain graph $G$ with edge weight $\omega: E \rightarrow \mathbb{R}$ and lattice $L$,
we define the \emph{$L$-energy function} by
\begin{align*}
    \mathcal{E}_{G,\omega,L} : (\mathbb{R}^d)^V \rightarrow \mathbb{R}, ~ p' \mapsto \mathcal{E}_{G,\omega} (p',L).
\end{align*}
We now make two important observations about $\mathcal{E}_{G,\omega,L}$.
Firstly,
it follows from \cref{eq:4.2} that
\begin{equation}\label{eq:fl1}
    \mathcal{E}_{G,\omega,L}(p) = \mathcal{E}_{G,\omega}(p,L) = \frac{1}{2}\omega^\top f_{G}(p) = \frac{1}{2}\omega^\top f_{G,L}(p),
\end{equation}
hence
\begin{equation}\label{eq:fl2}
    \nabla \mathcal{E}_{G,\omega,L}(p)  = \omega^\top R_L(G,p).
\end{equation}
From the observation $R_L(G,p)p = \frac{1}{2}f_{G,L}(p)$,
it follows that $\mathcal{E}_{G,\omega,L}(p) = 0$ and $\nabla \mathcal{E}_{G,\omega,L}(p) = 0$ if $\omega$ is a fixed-lattice equilibrium stress of $(G,p,L)$.
Secondly,
there exist $b \in \mathbb{R}^{d|V|}$ and $c \in \mathbb{R}$ such that for any $p \in \mathbb{R}^{d|V|}$ we have
\begin{equation}\label{eq:energyfixedlattice}
    {\cal E}_{G,\omega,L}(p) = p^\top (\mathcal{L}(G,\omega) \otimes I_d) p + b^\top p + c.
\end{equation}
Hence ${\cal E}_{G,\omega,L}$ is convex if and only if the weighted Laplacian $\mathcal{L}(G,\omega)$ is positive semidefinite.
By combining our two observations we note that, if $\mathcal{L}(G,\omega)$ is positive semidefinite,
then $p$ is a minimizer of ${\cal E}_{G,\omega,L}$ if and only if $\omega$ is a fixed-lattice equilibrium stress of $(G,p,L)$.

\begin{theorem}\label{thm:fixedpsd}
    A $\mathbb{Z}^d$-tensegrity $(G,p,L)$ is fixed-lattice globally rigid if it has a fixed-lattice equilibrium stress $\omega:E \rightarrow \mathbb{R}$ such that
    \begin{itemize}
        \item $\dim \ker \mathcal{L}(G,\omega) = 1$ and 
        \item $\mathcal{L}(G,\omega)$ is positive semidefinite.
    \end{itemize}
\end{theorem}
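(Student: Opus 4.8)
The plan is to mirror the proof of \Cref{thm:suffpsd}, but working entirely within the fixed-lattice model where the lattice $L$ is held constant and only the point-configuration $p$ is allowed to vary. The key tool is the $L$-energy function $\mathcal{E}_{G,\omega,L}$ and the two observations recorded just before the statement: that $\mathcal{E}_{G,\omega,L}$ is convex precisely when $\mathcal{L}(G,\omega)$ is positive semidefinite (from \cref{eq:energyfixedlattice}), and that, under this convexity, $p'$ is a minimizer of $\mathcal{E}_{G,\omega,L}$ if and only if $\omega$ is a fixed-lattice equilibrium stress of $(G,p',L)$.

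First I would fix an arbitrary $\mathbb{Z}^d$-tensegrity $(G,p',L)$ with lattice $L$ that is dominated by $(G,p,L)$. Using the properness of $\omega$ together with the domination inequalities, exactly as in \cref{eq:stressineq}, I would obtain for each edge $e = (u,v,\gamma)$ the inequality $\omega(e)\|p'(v)+L\gamma-p'(u)\|^2 \leq \omega(e)\|p(v)+L\gamma-p(u)\|^2$. Summing these and invoking the representation \cref{eq:fl1} of the $L$-energy as $\tfrac{1}{2}\omega^\top f_{G,L}$, this yields $\mathcal{E}_{G,\omega,L}(p') \leq \mathcal{E}_{G,\omega,L}(p)$. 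Since $\omega$ is by hypothesis a fixed-lattice equilibrium stress of $(G,p,L)$, $p$ is a minimizer of the convex function $\mathcal{E}_{G,\omega,L}$, so in fact $\mathcal{E}_{G,\omega,L}(p') = \mathcal{E}_{G,\omega,L}(p)$. This forces each edgewise inequality to be an equality, which shows both that $(G,p',L)$ is equivalent to $(G,p,L)$ and that $p'$ is itself a minimizer of $\mathcal{E}_{G,\omega,L}$.

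Because $p'$ is a minimizer of the convex $L$-energy, the second observation gives that $\omega$ is a fixed-lattice equilibrium stress of $(G,p',L)$ as well. At this point I would apply \Cref{prop:fltransl}: since $\omega$ is a fixed-lattice equilibrium stress of both $(G,p,L)$ and $(G,p',L)$ and $\dim\ker\mathcal{L}(G,\omega) = 1$, the proposition concludes that $(G,p',L)$ is a translated copy of $(G,p,L)$, i.e.\ $p'(v) = p(v) + x$ for some $x \in \mathbb{R}^d$. A translation is a congruence in the fixed-lattice sense (orthogonal part equal to $I_d$, so that $L = I_d L$ is preserved), so $(G,p',L)$ is congruent to $(G,p,L)$. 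As the dominated tensegrity was arbitrary, $(G,p,L)$ is fixed-lattice globally rigid.

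The comparison with \Cref{thm:suffpsd} is instructive and points to where the fixed-lattice argument is genuinely simpler: in the flexible-lattice case one only concludes that the competitor is an \emph{affine} transformation (via \Cref{prop:afftrans}), and must then rule out conic degeneracies using \Cref{lem:affinerigid}, which is why that theorem carries the extra conic-at-infinity hypothesis. Here, fixing $L$ eliminates the affine freedom entirely---\Cref{prop:fltransl} delivers a pure translation directly---so no conic hypothesis is needed. Consequently there is no serious obstacle in the argument; the only point requiring mild care is the bookkeeping that a nullity-one kernel of $\mathcal{L}(G,\omega)$, which always contains the all-ones vector, indeed reduces the minimizer set modulo translation to a single orbit, and this is exactly the content of \Cref{prop:fltransl} that I invoke.
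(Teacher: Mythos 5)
Your proposal is correct and follows essentially the same route as the paper's own proof: both arguments use convexity of the $L$-energy function $\mathcal{E}_{G,\omega,L}$ (via positive semidefiniteness of $\mathcal{L}(G,\omega)$), the characterization of minimizers as configurations admitting $\omega$ as a fixed-lattice equilibrium stress, the domination inequalities to force $p'$ to be a minimizer as well, and finally \Cref{prop:fltransl} to conclude that $p'$ is a translate of $p$. Your additional remarks---that the edgewise equalities yield equivalence, and that fixing $L$ removes the affine freedom and hence the conic-at-infinity hypothesis needed in \Cref{thm:suffpsd}---are accurate and consistent with the paper's presentation.
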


\begin{proof}
    Consider the fixed-lattice energy function ${\cal E}_{G,\omega,L}$.
    Since $\mathcal{L}(G,\omega)$ is positive semidefinite,
    ${\cal E}_{G,\omega,L}$ is convex.
    Hence $p' \in (\mathbb{R}^d)^V$ is a minimizer of ${\cal E}_{G,\omega,L}$ if and only if $\omega$ is a fixed-lattice equilibrium stress of $(G,p',L)$.
    
    Fix a $\mathbb{Z}^d$-tensegrity $(G,p',L)$ that is dominated by $(G,p,L)$.
    By the definition of fixed-lattice equilibrium stresses of $\mathbb{Z}^d$-tensegrities we have
    \begin{equation}\label{eq:stressineq2}
        \omega(e)\| p'(v) + L' \gamma - p' (v) \|^2 \leq \omega(e)\| p(v) + L \gamma - p (v) \|^2
    \end{equation}
    for each edge $e = (u,v,\gamma) \in E$.
    Hence from \cref{eq:4.2} we have ${\cal E}_{G,\omega,L}(p') \leq {\cal E}_{G,\omega}(p)$.
    Since $p$ is a minimizer of ${\cal E}_{G,\omega,L}$,
    it follows that ${\cal E}_{G,\omega,L}(p') = {\cal E}_{G,\omega,L}(p)$.
    This implies $p'$ is also a minimizer of ${\cal E}_{G,\omega,L}$,
    and so $\omega$ is also a fixed-lattice equilibrium stress of $(G,p',L)$.
    Hence $(G,p,L)$ and $(G,p',L')$ are congruent by \Cref{prop:fltransl}.
\end{proof}

\begin{example}\label{ex:hex} 
    Consider the $\mathbb{Z}^2$-framework and its covering shown in \Cref{fig:hex}. This is the crystal structure of graphene.
    Up to scalar multiplication, there is only one fixed-lattice equilibrium stress $\omega$, and this stress assigns the value 1 to every edge.
    The weighted Laplacian matrix $\mathcal{L}(G,\omega)$ is now the actual Laplacian matrix $\mathcal{L}(G)$, and hence $\mathcal{L}(G,\omega)$ is positive semidefinite with nullity 1 by the connectivity of $G$.
    Hence by \Cref{thm:fixedpsd},
    $(G,p,L)$ is fixed-lattice globally rigid.
    However, no generic realization of $G$ is globally rigid since the $\mathbb{Z}^2$-gain graph does not satisfy the combinatorial characterization described in \cite[Theorem 4.2]{kst21}.
    Observe that $(G,p,L)$ is still globally rigid if every bar is replaced by a cable; however nothing can be concluded if any cable is replaced by a strut (since then $\omega$ is no longer proper).
\end{example}

\begin{figure}[htp]
\begin{center}
    \begin{tikzpicture}[scale = 1.25,
        vertex/.style={circle,fill=black,inner sep=1.2pt},
        edge/.style={line width=0.6pt},
        dedge/.style={line width=1pt, line cap=round}
      ]

    \coordinate (A) at (-2.25, -1.5*0.866);
    \coordinate (B) at (0.75, -1.5*0.866);
    \coordinate (C) at (2.25, 1.5*0.866);
    \coordinate (D) at (-0.75, 1.5*0.866);
    \draw[thick, gray!50, fill=gray!50] (A) -- (B) -- (C) -- (D) -- cycle;

    \draw[dedge,-stealth] (A)--(B);
    \draw[dedge,-stealth] (A)--(D);
    
\node [draw=white, fill=white] (name1) at (-0.8,-1.6) {\textcolor{black}{\tiny{$(1,0)$}}};

\node [draw=white, fill=white] (name1) at (-2.5, 0) {\textcolor{black}{\tiny{$(1/2,\sqrt{3}/2)$}}};

\node [draw=gray!50, fill=gray!50] (a) at (0.5,-0.2) {\textcolor{red}{\tiny{$(1,0)$}}};

\node [draw=gray!50, fill=gray!50] (a) at (-0.5,-0.3) {\textcolor{RoyalBlue}{\tiny{$(1,1)$}}};

\node [draw=gray!50, fill=gray!50] (a) at (0,-0.7) {\textcolor{ForestGreen}{\tiny{$(-1,1)$}}};

    \node[vertex] (1) at (1,0) {};
    \node[vertex] (2) at (0.5,0.866) {};
    \node[vertex] (3) at (-0.5,0.866) {};
    \node[vertex] (4) at (-1,0) {};
    \node[vertex] (5) at (-0.5,-0.866) {};
    \node[vertex] (6) at (0.5,-0.866) {};

    \draw[edge] (1)--(2);
    \draw[edge] (2)--(3);
    \draw[edge] (3)--(4);
    \draw[edge] (4)--(5);
    \draw[edge] (5)--(6);
    \draw[edge] (6)--(1);

    \draw[dedge,red,-stealth] (1)--(4);
    \draw[dedge,RoyalBlue,-stealth] (2)--(5);
    \draw[dedge,ForestGreen,-stealth] (3)--(6);

    \hspace{1.5cm}
    \begin{scope}[xshift=4.2cm, yshift=0cm, scale=0.3]
      \coordinate (a1) at (3,0);         
      \coordinate (a2) at (1.5,2.598);   

      \coordinate (v1) at (1,0);
      \coordinate (v2) at (0.5,0.866);
      \coordinate (v3) at (-0.5,0.866);
      \coordinate (v4) at (-1,0);
      \coordinate (v5) at (-0.5,-0.866);
      \coordinate (v6) at (0.5,-0.866);

      \foreach \i in {-2,...,2}{
        \foreach \j in {-2,...,2}{
          \pgfmathsetmacro{\x}{\i*3 + \j*1.5}
          \pgfmathsetmacro{\y}{\j*2.598}

          \path (v1)++(\x,\y) coordinate (w1);
          \path (v2)++(\x,\y) coordinate (w2);
          \path (v3)++(\x,\y) coordinate (w3);
          \path (v4)++(\x,\y) coordinate (w4);
          \path (v5)++(\x,\y) coordinate (w5);
          \path (v6)++(\x,\y) coordinate (w6);

          \draw (w1)--(w2)--(w3)--(w4)--(w5)--(w6)--(w1);

          \draw[red, line width=1pt] (w1) -- ($(w4)+(a1)$);
          \draw[RoyalBlue, line width=1pt] (w2) -- ($(w5)+(a2)$);
          \draw[ForestGreen, line width=1pt] (w3) -- ($(w6)-(a1)+(a2)$);

          \foreach \p in {w1,w2,w3,w4,w5,w6}{
            \node[vertex] at (\p) {};
          }
        } 
      } 
    \end{scope}

    \end{tikzpicture}
\end{center}
\caption{Shown on the left is the $\mathbb{Z}^2$-framework described in \Cref{ex:hex} (together with the two vectors spanning the lattice), where the black edges of the hexagon have trivial gain and arbitrary orientation. The corresponding covering framework is shown on the right.}
\label{fig:hex}
\end{figure}
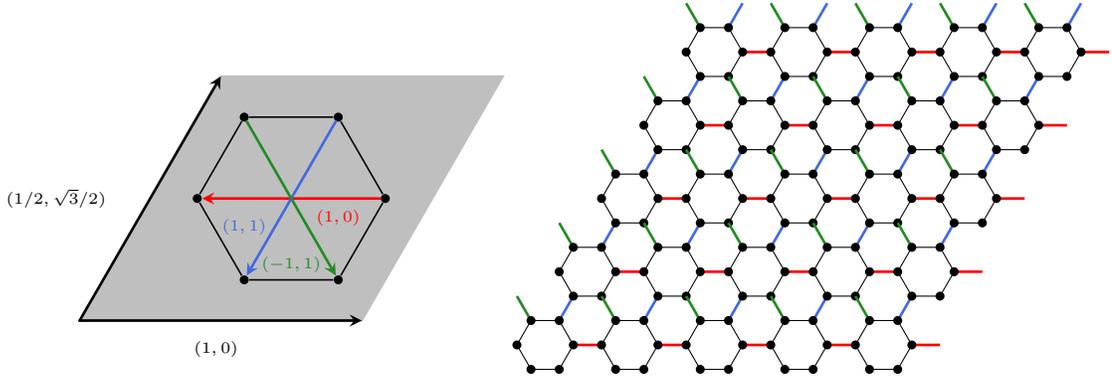

We conclude the section with a special family of tensegrities.
The $\mathbb{Z}^d$-tensegrity given in \Cref{ex:hex} is a special type of tensegrity known as a \emph{spiderweb}:
a non-flat $\mathbb{Z}^d$-tensegrity $(G,p,L)$ for which $G$ is connected with rank $d$ and each edge is a cable.
The sufficient condition for fixed-lattice global rigidity given in \Cref{thm:fixedpsd} can be simplified for this class of tensegrities.

\begin{corollary}\label{cor:spiderweb}
    Let $(G,p,L)$ be a spiderweb.
    If $(G,p,L)$ has an equilibrium stress $\omega$ with $\omega(e)>0$ for all $e\in E$, 
    then $(G,p,L)$ is fixed-lattice globally rigid.
\end{corollary}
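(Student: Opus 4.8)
The plan is to apply \Cref{thm:fixedpsd} directly, so the task reduces to verifying its two hypotheses for a spiderweb with a strictly positive equilibrium stress $\omega$. First I would note that since every edge of a spiderweb is a cable, the weighting $\omega$ with $\omega(e) > 0$ for all $e \in E$ is automatically proper, so $\omega$ is genuinely a fixed-lattice equilibrium stress in the tensegrity sense. It then remains to show that $\mathcal{L}(G,\omega)$ is positive semidefinite with $\dim \ker \mathcal{L}(G,\omega) = 1$.

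For positive semidefiniteness, I would use the identity $\mathcal{L}(G,\omega) = I(G)^\top \mathrm{diag}(\omega) I(G)$ from \Cref{subsec:stressmatrix}. For any vector $x \in \mathbb{R}^{|V|}$ we have $x^\top \mathcal{L}(G,\omega) x = (I(G)x)^\top \mathrm{diag}(\omega) (I(G)x) = \sum_{e \in E} \omega(e) (I(G)x)_e^2 \geq 0$, since every $\omega(e) > 0$. Thus $\mathcal{L}(G,\omega)$ is positive semidefinite, and moreover $x \in \ker \mathcal{L}(G,\omega)$ if and only if $I(G)x = 0$, i.e.\ $\ker \mathcal{L}(G,\omega) = \ker I(G)$. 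The strict positivity of $\omega$ is exactly what forces the kernel of the Laplacian to coincide with the kernel of the (unweighted) incidence matrix.

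The main step is then the nullity computation. Since a spiderweb is connected (by definition), $\rank I(G) = |V| - 1$ by \Cref{prop:1}\ref{prop:1:item1}, so $\dim \ker I(G) = 1$. Combining this with the kernel identity $\ker \mathcal{L}(G,\omega) = \ker I(G)$ gives $\dim \ker \mathcal{L}(G,\omega) = 1$, as required. Both hypotheses of \Cref{thm:fixedpsd} are now met, so $(G,p,L)$ is fixed-lattice globally rigid.

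The only subtle point — and the one I would be careful about — is that $\mathcal{L}(G,\omega)$ is the \emph{weighted} Laplacian built from the incidence matrix $I(G)$ of the gain graph, where loops contribute all-zero rows. A spiderweb is connected as a multigraph (forgetting gains), so $\rank I(G) = |V|-1$ holds regardless of the gains on non-loop edges, and the connectivity hypothesis in the definition of spiderweb is precisely what guarantees this. No further appeal to the rank-$d$ condition of $G$ or to non-flatness is needed for the fixed-lattice conclusion, since these only enter through the standing assumptions ensuring $(G,p,L)$ is a legitimate non-flat framework; the essential content is just connectivity plus strict positivity of the stress.
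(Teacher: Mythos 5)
Your proof is correct and takes essentially the same route as the paper's: connectivity of $G$ gives $\rank \mathcal{L}(G,\omega) = |V|-1$, strict positivity of $\omega$ gives positive semidefiniteness, and \Cref{thm:fixedpsd} then applies. Your additional details --- the properness of $\omega$, the kernel identity $\ker \mathcal{L}(G,\omega) = \ker I(G)$, and the observation that loops contribute zero rows to $I(G)$ --- merely make explicit what the paper's two-line argument leaves implicit.
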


\begin{proof}
    Since $G$ is connected, its Laplacian matrix has rank $n-1$, 
    and hence the same is true for its weighted Laplacian matrix $\mathcal{L}(G,\omega)$.
    Moreover, $\mathcal{L}(G,\omega)$ is positive semidefinite since $\omega$ is positive on each edge. Thus, \Cref{thm:fixedpsd} applies.
\end{proof}

\begin{remark}
    \Cref{cor:spiderweb} was previously observed by Delgado-Friedrichs; see \cite[Theorem 4]{DF05}.
\end{remark}

A spiderweb $(G,p,L)$ with a strictly positive equilibrium stress $\omega$ collapses to a point if the lattice $L$ is not fixed. 
To see this, note that in this case we have $\rank I_{\mathbb{Z}^d}(G) = |V| +d-1$ by \Cref{prop:1},
 and hence $\rank \mathcal{L}_{\mathbb{Z}^d}(G,\omega) = |V| + d - 1$, 
or equivalently $\dim \ker \mathcal{L}_{\mathbb{Z}^d}(G,\omega) = 1$, by the positivity of $\omega$. 
It now follows easily from \Cref{prop:suff1} and the structure of the elements in the kernel of $\mathcal{L}_{\mathbb{Z}^d}(G,\omega)$ that all points of $(G,p,L)$ are coincident and the lattice is the zero matrix.

\section{Sufficient global rigidity conditions for generic periodic frameworks}\label{sec:suffgen}

Both \Cref{thm:suffpsd} and \Cref{thm:fixedpsd} require that the Laplacian matrix in question is positive semidefinite.
This is unfortunately a rather strong assumption, and many frameworks exist that are (fixed-lattice) globally rigid with no positive semidefinite (fixed-lattice) equilibrium stresses.
In this section we prove that this assumption can be dropped if the $\mathbb{Z}^d$-framework in question is generic.
These results provide the sufficiency conditions for \Cref{mainthm:flexlattice} and \Cref{mainthm:fixedlattice}.
Our proof for these results follows a similar technique as Connelly's original proof for finite frameworks \cite{conggr}, with some adjustments to account for the periodic structure.

For this section we require the following important technical result, a proof of which can be found in \Cref{app:int}:

\begin{restatable}{lemma}{connellylemma}\label{l:connelly lemma}
	Let $f:\mathbb{R}^m \rightarrow \mathbb{R}^n$ be a polynomial map such that,
    given some finite set $S \subset \mathbb{R}$,
    every coefficient is contained in the field $\mathbb{Q}(S)$.
	Let $y \in \mathbb{R}^m$ be a point whose coordinates form an algebraically independent set of size $m$ over the field $\mathbb{Q}(S)$.
    Then for any $x \in \mathbb{R}^m$ with $f(x) = f(y)$,
    the left kernel of the $m \times n$ Jacobian $df(x)$ is equal to the left kernel of the $m \times n$ Jacobian $df(y)$,
    i.e., $\ker df(x)^\top = \ker df(y)^\top$.
\end{restatable}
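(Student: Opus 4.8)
The plan is to reinterpret the left kernel geometrically. Let $k := \mathbb{Q}(S)$ and let $Y := \overline{f(\mathbb{R}^m)}$ be the Zariski closure of the image, a variety defined over $k$ of dimension $r$. For the derivative map $df(z)\colon \mathbb{R}^m \to \mathbb{R}^n$ we have the standard identity $\ker df(z)^\top = (\im df(z))^\perp$, so the left kernel is the orthogonal complement of $\im df(z)$. The key observation is that whenever $z$ is a regular point of $f$ (i.e. $\rank df(z) = r$) lying over a smooth point $f(z)$ of $Y$, the inclusion $\im df(z) \subseteq T_{f(z)}Y$ (which holds since $f$ maps into $Y$) is an equality of $r$-dimensional spaces, so $\ker df(z)^\top = (T_{f(z)}Y)^\perp$ is the normal space to $Y$ at $f(z)$. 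Crucially this depends only on the value $f(z)$, not on $z$. Since $f(x) = f(y)$, the lemma reduces entirely to showing that both $x$ and $y$ are regular points lying over a smooth point of $Y$.

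Genericity enters through two standard facts. First, the $r \times r$ minors of $df(z)$ are polynomials in $z$ with coefficients in $k$; since the coordinates of $y$ are algebraically independent over $k$, any such minor that vanishes at $y$ vanishes identically, whence $\rank df(y)$ equals the generic rank $r = \dim Y$ and $y$ is regular. Second, $c := f(y)$ is a generic point of $Y$ over $k$: its coordinates have transcendence degree $\dim Y$ over $k$, so $c$ lies in no proper $k$-closed subset of $Y$; in particular $c$ avoids the singular locus of $Y$ and so is a smooth point.

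The hard part, and the technical heart of the argument, is to control every $x$ in the fibre $f^{-1}(c)$, not merely the generic $y$. Here I would invoke generic smoothness (the algebraic form of Sard's theorem, valid in characteristic $0$): the critical locus $\Sigma := \{z : \rank df(z) < r\}$ is $k$-closed, and there is a dense open $U \subseteq Y$ over which $f$ is smooth, so that $f(\Sigma) \subseteq Y \setminus U$, a proper $k$-closed subset. Because $c$ is generic in $Y$ it avoids $Y \setminus U$, and hence $f^{-1}(c) \cap \Sigma = \emptyset$; that is, $\rank df(x) = r$ for every $x$ with $f(x) = c$. Combining this with the first paragraph, both $\ker df(x)^\top$ and $\ker df(y)^\top$ equal the normal space $(T_c Y)^\perp$, which gives the claimed equality.

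An equivalent packaging, closer to Connelly's original specialization argument, is to note that $(T_c Y)^\perp$ is defined over the field $k(c)$, being spanned by the gradients at $c$ of the defining polynomials of $Y$; a single basis with coordinates in $k(c) = k(f(x)) = k(f(y))$ then simultaneously describes the left kernel at both points, once the regularity established above guarantees that this span is indeed the full left kernel there.
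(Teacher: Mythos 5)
Your proposal is correct, but it takes a genuinely different route from the paper's. You pass to the Zariski closure $Y$ of the image, defined over $\mathbb{Q}(S)$, and identify \emph{both} left kernels with the conormal space $(T_c Y)^\perp$ at the common image point $c = f(x) = f(y)$: the minor argument gives $\rank df(y) = r = \dim Y$, the specialization argument shows $c$ is a generic point of $Y$ (hence smooth), and generic smoothness in characteristic zero forces every point of the fibre $f^{-1}(c)$ to be regular, whence $\im df(x) = T_c Y = \im df(y)$. This is essentially the Gortler--Healy--Thurston viewpoint (equilibrium stresses as normal vectors to the measurement variety), and it meshes well with \Cref{lem:2.21} and \Cref{lem:2.24} in the main text. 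The paper instead stays entirely inside real semi-algebraic geometry: \Cref{l:semialg0} (via Tarski--Seidenberg) keeps every constructed set $S$-integral, \Cref{l:semialg1} states that an $S$-integral semi-algebraic set containing an $S$-generic point has non-empty interior, and \Cref{l:semialg2} establishes your fibre-regularity step using the semi-algebraic Sard theorem together with the constant rank theorem, exactly where you invoke generic smoothness. The endgames also differ: rather than naming the common kernel, the paper argues by contradiction -- if the kernels differ, the local images $f(U_y)$ and $f(U_z)$ are $r$-dimensional manifolds with distinct tangent planes at $c$, so $A := f^{-1}\bigl(f(U_y) \cap f(U_z)\bigr) \cap U_y$ is an $S$-integral semi-algebraic set with empty interior containing the generic point $y$, contradicting \Cref{l:semialg1}. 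What each buys: your route is conceptually cleaner, since it explains \emph{what} the shared left kernel is, but it leans on algebro-geometric inputs that must be checked over a non-algebraically-closed base -- that the real Zariski closure of $f(\mathbb{R}^m)$ is defined over $\mathbb{Q}(S)$ and is $\mathbb{Q}(S)$-irreducible (needed for "generic avoids every proper $\mathbb{Q}(S)$-closed subset", including the singular locus), that the smooth locus of the morphism is a dense open defined over the base field, and the real-versus-complex dimension bookkeeping at smooth real points -- whereas the paper's argument is self-contained over $\mathbb{R}$ and never complexifies. One sentence of yours should be tightened: deducing genericity of $c$ from its transcendence degree is circular as written; the direct argument is that any polynomial $g$ over $\mathbb{Q}(S)$ with $g(c)=0$ pulls back to $g \circ f$, which vanishes at the generic point $y$, hence identically, hence on all of $Y$.
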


\subsection{Flexible-lattice case}

We begin with the following lemma regarding generic $\mathbb{Z}^d$-frameworks (see \Cref{subsec:coord}).

\begin{lemma}\label{l:flexible lattice sufficient}
    Let $(G,p,L)$ be a generic $\mathbb{Z}^d$-framework in $\mathbb{R}^d$.
    If $(G,p,L)$ has an equilibrium stress $\omega$ with $\dim\ker \mathcal{L}_{\mathbb{Z}^d}(G,\omega) = d+1$,
    then every equivalent $\mathbb{Z}^d$-framework is an affine transformation of $(G,p,L)$.
\end{lemma}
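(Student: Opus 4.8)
The plan is to combine the generic-kernel result of \Cref{l:connelly lemma} with the affine-transformation characterization of \Cref{prop:afftrans}. The key observation is that the space of equilibrium stresses of a $\mathbb{Z}^d$-framework $(G,p,L)$ is exactly the left kernel of the rigidity matrix $R(G,p,L)$, which in turn is (up to the factor $\tfrac12$) the Jacobian $df_G(p,L)$ of the measurement map. Since $(G,p,L)$ is generic, its vector representation has algebraically independent coordinates over $\mathbb{Q}$, so $f_G$ is a polynomial map to which \Cref{l:connelly lemma} applies (with $S = \emptyset$).

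First I would let $(G,q,L')$ be any $\mathbb{Z}^d$-framework equivalent to $(G,p,L)$, i.e.\ $f_G(q,L') = f_G(p,L)$. Writing $y$ for the (generic) vector representation of $(p,L)$ and $x$ for that of $(q,L')$, \Cref{l:connelly lemma} gives that the left kernel of $df_G(x)$ equals the left kernel of $df_G(y)$. Translating back through the identification of $\tfrac12 df_G$ with the rigidity matrix, this says precisely that $(G,p,L)$ and $(G,q,L')$ have exactly the same space of equilibrium stresses. In particular, the given stress $\omega$ with $\dim\ker\mathcal{L}_{\mathbb{Z}^d}(G,\omega) = d+1$ is also an equilibrium stress of $(G,q,L')$.

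Next I would invoke \Cref{prop:afftrans}. For this I need $(G,p,L)$ to be affinely spanning: since it is generic, its $dn + d^2$ coordinates are algebraically independent, which forces the lattice $L$ to be nonsingular and hence the framework to be non-flat, so it is affinely spanning. With $\omega$ an equilibrium stress of both $(G,p,L)$ and $(G,q,L')$ and $\dim\ker\mathcal{L}_{\mathbb{Z}^d}(G,\omega) = d+1$, \Cref{prop:afftrans} concludes directly that $(G,q,L')$ is an affine transformation of $(G,p,L)$. As $(G,q,L')$ was an arbitrary equivalent framework, this proves the claim.

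\textbf{Main obstacle.} The only delicate point is applying \Cref{l:connelly lemma} correctly: one must verify that the measurement map $f_G$ is genuinely a polynomial map with rational coefficients in the vector-representation coordinates, that genericity supplies the required algebraic independence of the full tuple of size $m = dn + d^2$, and — crucially — that the \emph{left} kernels of the Jacobians that the lemma equates really do correspond to equilibrium stresses of the two frameworks under the identification $\tfrac12 df_G(p,L) \leftrightarrow R(G,p,L)$. Everything after this identification is a clean invocation of \Cref{prop:afftrans}, so the conceptual weight of the argument sits entirely in setting up the hypotheses of the Connelly-type lemma.
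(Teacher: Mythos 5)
Your proposal is correct and follows essentially the same route as the paper: apply \Cref{l:connelly lemma} to $f_G$ at the generic point $(p,L)$ and at $(q,L')$ to conclude that equivalent frameworks share the same space of equilibrium stresses, then invoke \Cref{prop:afftrans}. Your extra verification that genericity implies $(G,p,L)$ is affinely spanning is a detail the paper leaves implicit, and it is a valid (indeed needed) check for the hypotheses of \Cref{prop:afftrans}.
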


\begin{proof}	
    Choose an equivalent $\mathbb{Z}^d$-framework $(G,p',L')$.
    By applying \Cref{l:connelly lemma} to $f_G$, $(p,L)$ and $(p',L')$,
    we see that $(G,p,L)$ and $(G,p',L')$ have the same equilibrium stresses;
    in particular, $\omega$ is an equilibrium stress of both.
    Hence $(G,p',L')$ is an affine transformation of $(G,p,L)$ by \Cref{prop:afftrans}.
\end{proof}

Our next aim is to strengthen \Cref{l:flexible lattice sufficient} so that our equilibrium stress condition implies global rigidity.
The following result is similar to an argument given in \cite[Proposition 21.1]{Connelly2013}.

\begin{lemma}\label{l:conic inf rig}
    Let $(G,p,L)$ be a generic $\mathbb{Z}^d$-framework.
    If $(G,p,L)$ is infinitesimally rigid,
    then the edge directions of $(G,p,L)$ do not lie on a conic at infinity.
\end{lemma}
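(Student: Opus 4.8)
The plan is to prove the contrapositive: I will assume that the edge directions of $(G,p,L)$ \emph{do} lie on a conic at infinity and produce from this a non-trivial infinitesimal motion, so that $(G,p,L)$ cannot be infinitesimally rigid. The only place genericity enters is to guarantee that $L$ is non-singular: since the entries of $L$ lie among the algebraically independent coordinates of the vector representation of $(p,L)$, the polynomial $\det L$ cannot vanish. Thus $(G,p,L)$ is non-flat, hence affinely spanning, and the trivial infinitesimal motions form a space of dimension exactly $\binom{d+1}{2}$.

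By \Cref{def:conic} there is a non-zero symmetric $d\times d$ matrix $Q$ with $\nu(e)^\top Q\,\nu(e)=0$ for every edge $e=(u,v,\gamma)$, where $\nu(e):=p(v)+L\gamma-p(u)$. The key step is to build the \emph{affine} infinitesimal motion $(m,M)$ given by $m(v):=Q p(v)$ for all $v\in V$ and $M:=QL$. For each edge one computes $m(v)+M\gamma-m(u)=Q\nu(e)$, whence $2\,\nu(e)\cdot\big(m(v)+M\gamma-m(u)\big)=2\,\nu(e)^\top Q\,\nu(e)=0$; therefore $(m,M)\in\ker df_G(p,L)$, i.e.\ it is a genuine infinitesimal motion of $(G,p,L)$.

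The remaining step, which I regard as the crux, is to certify that $(m,M)$ is not trivial. Here the lattice block alone settles the matter: were $(m,M)$ trivial we would have $M=AL$ for some skew-symmetric $A$, so $QL=AL$, and the non-singularity of $L$ would force $Q=A$; comparing the symmetric matrix $Q$ with the skew-symmetric matrix $A$ then gives $Q=0$, contradicting the choice of $Q$. Hence $(m,M)$ is a non-trivial infinitesimal motion, the nullity of $R(G,p,L)$ strictly exceeds $\binom{d+1}{2}$, and $(G,p,L)$ is not infinitesimally rigid, as required.

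I do not anticipate a serious obstacle beyond careful bookkeeping: one must confirm that the trivial motions genuinely exhaust a $\binom{d+1}{2}$-dimensional subspace of $\ker df_G(p,L)$ — which again rests on $L$ being invertible, via injectivity of the map $(A,x)\mapsto(Ap+x,\,AL)$ on skew-symmetric $A$ and $x\in\mathbb{R}^d$ — so that exhibiting a single motion outside this subspace suffices. It is worth noting that, apart from supplying non-flatness, genericity is not otherwise needed: both the construction of $(m,M)$ and its non-triviality are valid for every non-flat realization of $G$.
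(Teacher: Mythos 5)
Your proof is correct, but it takes a genuinely different route from the paper. The paper also argues by contraposition, yet instead of producing an infinitesimal motion it produces an actual flex: using the spectral decomposition $X^\top Q X = D$ it builds a one-parameter family $A_t = X^\top M_t X$ with $M_t = \mathrm{diag}(\sqrt{1-t\lambda_1},\dots,\sqrt{1-t\lambda_d})$, so that $I_d - A_t^\top A_t = tQ$; by \Cref{l:conic equiv} each affine image $(G,p_t,L_t)$ is equivalent to $(G,p,L)$, and since $A_t$ is not orthogonal for small $t>0$ and the framework is affinely spanning, these are non-congruent, showing failure of \emph{local} rigidity; the paper then invokes \Cref{t:gen rig equiv} (infinitesimal rigidity implies local rigidity) to conclude. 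You instead differentiate this picture at $t=0$, so to speak: the pair $(m,M)=(Qp,QL)$ lies in $\ker df_G(p,L)$ directly from \Cref{def:conic}, and non-triviality falls out of the lattice block alone ($QL=AL$ with $L$ invertible forces the symmetric $Q$ to equal a skew-symmetric $A$, hence $Q=0$), together with your dimension count for the trivial motions. Your argument is more elementary — pure linear algebra, no square-root construction (and hence no need for the scaling $\lambda_d\le 1$ that keeps $1-t\lambda_i\ge 0$), and no appeal to the Asimow--Roth-type \Cref{t:gen rig equiv} — and, as you observe, it proves the slightly stronger statement that the conclusion holds for every non-flat realization, genericity being used only to guarantee $\det L\neq 0$. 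What the paper's route buys in exchange is a stronger geometric conclusion along the way: a continuous family of equivalent, pairwise non-congruent realizations, i.e., failure of local (not merely infinitesimal) rigidity, which mirrors the finite-framework argument of Connelly that the authors cite.
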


\begin{proof}
    We proceed by proving the contrapositive statement.
    Suppose that the edge directions of $(G,p,L)$ do lie on a conic at infinity with respect to the non-zero symmetric matrix $Q$.
    It follows from the spectral theorem for symmetric matrices that there exists an orthogonal $d \times d$ matrix $X$ such that $X^\top Q X = D$,
    where $D$ is the diagonal matrix with diagonal $(\lambda_1,\ldots, \lambda_d)$,
    where $\lambda_1 \leq \ldots \leq \lambda_d$ are the eigenvalues of $Q$.
    By scaling $Q$,
    we may suppose that $\lambda_d \leq 1$.
    For each $t \in [0,1]$,
    define $M_t$ to be the diagonal real-valued matrix with diagonal
    \begin{align*}
        \left(\sqrt{1- t\lambda_1}, ~\ldots ~, ~\sqrt{1 - t \lambda_d} \right),
    \end{align*}
    and fix $A_t = X^\top M_t X$.
    
    Define for each $t \in [0,1]$ the $\mathbb{Z}^d$-framework $(G,p_t,L_t)$ by setting $p_t(v) = A_tp(v)$ for each $v \in V$ and $L_t = A_t L$.
    We note that for each $t \in [0,1]$ we have
    \begin{align*}
        I_d - A^\top_t A_t = X^\top \left( I_d - M_t^\top M_t  \right) X = X^\top \left( I_d - M_t^2 \right) X = X^\top (tD) X = t Q.
    \end{align*}
    Hence each $(G,p_t,L_t)$ is equivalent to $(G,p,L)$ by \Cref{l:conic equiv}.
    Since each $A_t$ is not an orthogonal matrix and $(G,p,L)$ is affinely spanning (as it is generic),
    it follows that each $(G,p_t,L_t)$ is equivalent but not congruent to $(G,p,L)$ for all sufficiently small values of $t$.
    Hence $(G,p,L)$ is not locally rigid.
    By \Cref{t:gen rig equiv},
    it now follows that $(G,p,L)$ is not infinitesimally rigid.
\end{proof}

With this we can now prove the following.

\begin{theorem}\label{t:flexible lattice sufficient}
    Let $(G,p,L)$ be a generic infinitesimally rigid $\mathbb{Z}^d$-framework in $\mathbb{R}^d$.
    If $(G,p,L)$ has an equilibrium stress $\omega$ with $\dim\ker \mathcal{L}_{\mathbb{Z}^d}(G,\omega) = d+1$,
    then $(G,p,L)$ is globally rigid.
\end{theorem}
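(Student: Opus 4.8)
The plan is to combine the two lemmas that immediately precede the statement, together with the conic-avoidance lemma, in a short wrapper argument. Let $(G,q,L')$ be any $\mathbb{Z}^d$-framework equivalent to $(G,p,L)$; the goal is to show it is congruent. First I would invoke \Cref{l:flexible lattice sufficient}: since $(G,p,L)$ is generic and carries an equilibrium stress $\omega$ with $\dim \ker \mathcal{L}_{\mathbb{Z}^d}(G,\omega) = d+1$, every equivalent framework $(G,q,L')$ is an affine transformation of $(G,p,L)$. This reduces the problem from an arbitrary equivalent realization to an affine image, which is exactly the hypothesis needed to apply \Cref{lem:affinerigid}.

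Next I would supply the remaining hypothesis of \Cref{lem:affinerigid}, namely that the edge directions of $(G,p,L)$ do not lie on a conic at infinity. This is precisely the content of \Cref{l:conic inf rig}, which applies because $(G,p,L)$ is assumed generic and infinitesimally rigid. With both hypotheses of \Cref{lem:affinerigid} now verified — that $(G,q,L')$ is equivalent to $(G,p,L)$, that it is an affine transformation of $(G,p,L)$, and that the edge directions avoid a conic at infinity — the lemma yields that $(G,p,L)$ and $(G,q,L')$ are congruent. Since $(G,q,L')$ was an arbitrary equivalent framework, this establishes global rigidity.

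I expect the proof itself to be essentially a three-line citation of the prior results, so there is no computational obstacle internal to this theorem; the real work has already been delegated to \Cref{l:flexible lattice sufficient} (which rests on \Cref{l:connelly lemma} and \Cref{prop:afftrans}) and to \Cref{l:conic inf rig}. The one point requiring mild care is the logical quantifier structure: I must be sure to fix an \emph{arbitrary} equivalent $(G,q,L')$ at the outset and carry it through both lemma applications, rather than proving congruence only for a single chosen realization. The genericity hypothesis is doing double duty here — it feeds \Cref{l:flexible lattice sufficient} directly and it is also what lets \Cref{l:conic inf rig} convert infinitesimal rigidity into conic-avoidance — so I would make explicit that both lemmas are legitimately invoked under the single standing assumption that $(G,p,L)$ is generic. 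No genericity or spanning property of $(G,q,L')$ is needed, which is worth noting since the equivalent framework is given with no regularity assumptions of its own.
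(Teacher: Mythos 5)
Your proposal is correct and takes essentially the same route as the paper: fix an arbitrary equivalent $(G,q,L')$, apply \Cref{l:flexible lattice sufficient} to get an affine transformation, apply \Cref{l:conic inf rig} (using genericity plus infinitesimal rigidity) for conic-avoidance, and conclude congruence. The only cosmetic difference is that the paper finishes by invoking \Cref{l:conic equiv} directly to deduce $A^\top A = I_d$, whereas you cite \Cref{lem:affinerigid}, which packages exactly that step.
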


\begin{proof}	
    Choose any $\mathbb{Z}^d$-framework $(G,p',L')$ in $\mathbb{R}^d$ that is equivalent to $(G,p,L)$.
    By \Cref{l:flexible lattice sufficient},
    there exists a linear transform $A$ and a translational vector $x \in \mathbb{R}^d$ so that $L' = AL$ and $p'(v)= Ap(v) +x$ for every $v \in V$.
By \Cref{l:conic inf rig}, the edge directions of $(G, p, L)$ do not lie on a conic at infinity. Then \Cref{l:conic equiv} implies
\[
A^\top A - I_d = 0
\]
so $A$ is orthogonal.
Hence $(G, p', L')$ is congruent to $(G, p, L)$, proving global rigidity.
\end{proof}

Similar to the classical  framework situation,
the genericity condition is required for \Cref{t:flexible lattice sufficient}, even if the $\mathbb{Z}^d$-framework in question is infinitesimally rigid;
this can be shown by adapting known non-periodic frameworks with full rank equilibrium stresses (e.g., \cite[Figure 3.10(e)]{baker2025geometry}) to be periodic frameworks.
However,
the existence of an infinitesimally rigid $\mathbb{Z}^d$-framework with a full rank equilibrium stress implies that all sufficiently close generic $\mathbb{Z}^d$-frameworks will also have a full rank equilibrium stress.
Additionally, the existence of a single generic $\mathbb{Z}^d$-framework with a full rank equilibrium stress implies that \emph{all} generic $\mathbb{Z}^d$-frameworks have a full rank equilibrium stress.
This latter observation is an important step in proving that global rigidity is a generic property for periodic frameworks (\Cref{thm:ggr}).

\begin{proposition}\label{prop:generic stresses}
    Let $(G,p,L)$ be an infinitesimally rigid $\mathbb{Z}^d$-framework.
    If $(G,p,L)$ has an equilibrium stress $\omega$ with $\dim\ker \mathcal{L}_{\mathbb{Z}^d}(G,\omega) = d+1$,
	then every generic $\mathbb{Z}^d$-framework also has an equilibrium stress $\omega'$ with $\dim\ker \mathcal{L}_{\mathbb{Z}^d}(G,\omega') = d+1$.
\end{proposition}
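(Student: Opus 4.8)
The plan is to reformulate the conclusion as the statement that having an equilibrium stress $\omega'$ with $\dim\ker\mathcal{L}_{\mathbb{Z}^d}(G,\omega')=d+1$ defines a nonempty Zariski-open subset of $\mathcal{R}(G)\cong\mathbb{R}^{d|V|+d^2}$ that is defined over $\mathbb{Q}$; every generic realization then lies in this set, since a point with algebraically independent coordinates avoids the vanishing locus of every nonzero polynomial over $\mathbb{Q}$. Throughout I write $r(p,L):=\max\{\rank\mathcal{L}_{\mathbb{Z}^d}(G,\omega):\omega\text{ an equilibrium stress of }(G,p,L)\}$, and note that $\mathcal{L}_{\mathbb{Z}^d}(G,\omega)$ is a $(|V|+d)\times(|V|+d)$ matrix depending linearly on $\omega$ with integer coefficients, while $R(G,p,L)$ has entries that are degree-one polynomials in $(p,L)$ with integer coefficients (the gains $\gamma$ being integral). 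Since $\dim\ker\mathcal{L}_{\mathbb{Z}^d}(G,\omega)=d+1$ is equivalent to $\rank\mathcal{L}_{\mathbb{Z}^d}(G,\omega)=|V|-1$, the goal is to produce at every generic realization an equilibrium stress attaining $r(p,L)=|V|-1$.

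First I would fix the ambient open set. As $(G,p,L)$ is infinitesimally rigid, every generic realization of $G$ is infinitesimally rigid, and the locus $U$ of infinitesimally rigid realizations --- those that are affinely spanning with $\rank R=d|V|+\binom{d}{2}$ --- is a nonempty $\mathbb{Q}$-Zariski-open subset of $\mathcal{R}(G)$ containing $(p,L)$ and all generic realizations. On $U$ the rank of $R$ is constant, so the space of equilibrium stresses (the left kernel of $R(G,p,L)$) has constant dimension $s$. Moreover each realization in $U$ is affinely spanning, so by the bound following \Cref{prop:suff1} every equilibrium stress there has $\dim\ker\mathcal{L}_{\mathbb{Z}^d}\ge d+1$, i.e. $\rank\mathcal{L}_{\mathbb{Z}^d}\le|V|-1$. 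Thus $r\le|V|-1$ on all of $U$, while $r(p,L)=|V|-1$ by hypothesis, so the maximum of $r$ over $U$ equals $|V|-1$.

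The key step is to verify that $\{(p,L)\in U:r(p,L)\ge|V|-1\}$ is $\mathbb{Q}$-Zariski-open. Because $\rank R$ is constant on $U$, I would cover $U$ by the $\mathbb{Q}$-open sets $U_\alpha=\{\mu_\alpha\ne0\}$, where $\mu_\alpha$ runs over the maximal minors of $R$; on $U_\alpha$ Cramer's rule supplies a basis $\omega_1^\alpha(p,L),\dots,\omega_s^\alpha(p,L)$ of the stress space whose entries are rational functions over $\mathbb{Q}$ with denominators a power of $\mu_\alpha$. A general stress is $\sum_i t_i\,\omega_i^\alpha(p,L)$, so the entries of $\mathcal{L}_{\mathbb{Z}^d}(G,\sum_i t_i\omega_i^\alpha)$ are polynomials in $t$ with coefficients rational over $\mathbb{Q}$ in $(p,L)$. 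For a fixed $(p,L)$ one has $r(p,L)\ge|V|-1$ exactly when some $(|V|-1)\times(|V|-1)$ minor of this matrix is a nonzero polynomial in $t$, equivalently when some $t$-coefficient of some such minor is nonzero at $(p,L)$. Hence on each $U_\alpha$ the locus $\{r\ge|V|-1\}$ is the non-vanishing locus of finitely many $\mathbb{Q}$-rational functions, so it is $\mathbb{Q}$-open; taking the union over $\alpha$ shows $\{r\ge|V|-1\}$ is $\mathbb{Q}$-open in $U$. It is nonempty as it contains $(p,L)$, hence it is a nonempty $\mathbb{Q}$-Zariski-open subset of $\mathcal{R}(G)$, so every generic realization lies in it and attains $r=|V|-1$, producing the required $\omega'$.

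The main obstacle is exactly this openness: $r(p,L)$ is defined by an existential quantifier over the stress space, and that space itself moves with $(p,L)$, so a priori $\{r\ge|V|-1\}$ is only a projection and hence merely constructible. Establishing that it is genuinely $\mathbb{Q}$-open --- which is what allows genericity to finish the argument --- rests on the two structural inputs above: infinitesimal rigidity pins the stress space to locally constant dimension (so Cramer's rule yields honest $\mathbb{Q}$-rational bases), and maximal rank within a linear family of matrices is detected by non-vanishing of the coefficients of its minors. Once openness is established, genericity guarantees that every generic realization attains this maximal value $|V|-1$, which is exactly the desired stress condition.
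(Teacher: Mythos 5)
Your proof is correct, and it takes a genuinely different route from the paper's. The paper first reduces to the case where $(p,L)$ is itself generic (via the perturbation technique of \cite{ConnellyWhiteley2010}), and then runs the Gortler--Healy--Thurston dual-variety machinery: by \Cref{lem:2.21} the stress $\omega$ is normal to the measurement set, hence lies in the irreducible dual variety $\overline{f_G(\mathcal{R}(G))}^*$; since the integral algebraic set $Z=\{\omega':\dim\ker\mathcal{L}_{\mathbb{Z}^d}(G,\omega')>d+1\}$ misses $\omega$, irreducibility forces every generic point of $\overline{f_G(\mathcal{R}(G))}^*$ to miss $Z$, and \Cref{lem:2.24} supplies, at each generic framework, an equilibrium stress that is such a generic point. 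You bypass all of this with a direct semi-algebraic argument: infinitesimal rigidity makes the rank of $R(G,p,L)$ constant on a nonempty $\mathbb{Q}$-Zariski-open set, Cramer's rule gives $\mathbb{Q}$-rational local frames for the stress space there (one small point you gloss: on $\{\mu_\alpha\neq 0\}$ the rank is a priori only $\ge d|V|+\binom{d}{2}$, but it can exceed this value nowhere, since the affinely spanning locus is dense and rank is lower semicontinuous, so your Cramer vectors really do span the whole left kernel; also ``maximal minors'' should be read as minors of size equal to this constant rank), and since $\mathcal{L}_{\mathbb{Z}^d}(G,\cdot)$ is linear in the stress, the existentially quantified condition that some stress satisfies $\rank\mathcal{L}_{\mathbb{Z}^d}\ge|V|-1$ is detected by non-vanishing of the $t$-coefficients of the $(|V|-1)\times(|V|-1)$ minors, hence is $\mathbb{Q}$-open rather than merely constructible; the universal bound $\rank\mathcal{L}_{\mathbb{Z}^d}\le|V|-1$ at affinely spanning realizations then upgrades the inequality to equality. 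Your route is more elementary and self-contained: it needs neither the preliminary reduction to a generic base point nor the irreducibility of the dual variety, and it yields slightly more, namely that the stress condition holds on a whole $\mathbb{Q}$-Zariski-open set, which recovers the paper's remark following \Cref{t:flexible lattice sufficient} about frameworks sufficiently close to an infinitesimally rigid one with a full-rank stress. What the paper's route buys is economy within the larger development: \Cref{lem:2.21,lem:2.24} and the dual-variety apparatus are needed anyway for the necessity direction \Cref{thm:ght}, where openness of the stress-rank condition alone would not suffice, so the paper obtains \Cref{prop:generic stresses} almost for free from lemmas it must prove regardless.
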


Before we prove \Cref{prop:generic stresses}, we require the following concepts from real algebraic geometry, which will also be used in \Cref{sec:necessgen}:
\begin{itemize}
    \item We denote the real Zariski closure of a semi-algebraic set $M$ by $\overline{M}$.
    \item A point $x$ in an integral semi-algebraic set $M$ (see \Cref{app:int}) is a \emph{generic point of $M$} if its coordinates do not satisfy any algebraic equation with coefficients in $\mathbb{Q}$ besides those that are satisfied by every point in $M$.
    This definition ties into our previous definition of a generic point under the following observation:
    a point $x \in \mathbb{R}^n$ is a generic point if and only if it is a generic point of $\mathbb{R}^n$.
    \item For a homogeneous algebraic set $M \subset \mathbb{R}^n$, the \emph{dual variety} $M^*$ is the real Zariski closure of the set of all points $y \in \mathbb{R}^n$ which are a tangent vector to some smooth point $x \in M$.
    \item A vector $\phi \in \mathbb{R}^n$ is \emph{normal} to an algebraic set $M \subset \mathbb{R}^n$ at a smooth point $x$ if $T_x M \subset \ker \phi$.
    The \emph{conormal bundle} of an algebraic set $M \subset \mathbb{R}^n$ is the set
    \begin{equation*}
        C(M) := \overline{\left\{ (x,\phi) \in \mathbb{R}^n \times \mathbb{R}^n : x \in M \text{ smooth}, ~  \text{$\phi$ is normal to $M$ at $x$}  \right\}}.
    \end{equation*}
\end{itemize}

Using this language, it is easy to see that equilibrium stresses are (for the most part) normal vectors to the image of $f_G$.
The following lemma is an immediate adaption of \cite[Lemma 2.21]{gortler2010characterizing} to the periodic setting.

\begin{lemma}\label{lem:2.21}
    Let $(p,L) \in \mathcal{R}(G)$ be a realization where $f_G(p,L)$ is a smooth point of the semi-algebraic set $f_G(\mathcal{R}(G))$.
    If $\omega \in \mathbb{R}^E$ is normal to $f_G(\mathcal{R}(G))$ at $f_G(p,L)$,
    then $\omega$ is an equilibrium stress of $(G,p,L)$.
    If furthermore $(G,p,L)$ is infinitesimally rigid, then any equilibrium stress $\omega$ of $(G,p,L)$ is normal to $f_G(\mathcal{R}(G))$ at $f_G(p,L)$.
\end{lemma}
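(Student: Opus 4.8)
The plan is to translate everything into linear algebra via the derivative of the measurement map, and then compare the tangent space of the image $M := f_G(\mathcal{R}(G))$ at $y := f_G(p,L)$ with the image of $df_G(p,L)$. Throughout I identify $\mathbb{R}^E$ with its own dual through the standard inner product, so that $\omega \in \mathbb{R}^E$ is normal to $M$ at the smooth point $y$ exactly when $\omega$ is orthogonal to the tangent space $T_y M$. Recall from \Cref{subsec:rigmat} that $\tfrac{1}{2} df_G(p,L)$ is represented by the rigidity matrix $R(G,p,L)$, and that $\omega$ is an equilibrium stress of $(G,p,L)$ precisely when $\omega^\top R(G,p,L) = \mathbf{0}^\top$, i.e.\ when $\omega$ is orthogonal to $\im df_G(p,L)$.

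First I would establish the one inclusion that holds without any rigidity assumption, namely $\im df_G(p,L) \subseteq T_y M$. This follows by lifting tangent vectors: for $v$ in the domain, the line $t \mapsto (p,L) + tv$ is mapped by $f_G$ to a smooth curve in $M$ through $y$, whose velocity at $t=0$ is $df_G(p,L)(v)$; since $y$ is a smooth point of $M$, this velocity lies in $T_y M$. Granting this inclusion, the first assertion is immediate: if $\omega$ is normal to $M$ at $y$, then $\omega \perp T_y M \supseteq \im df_G(p,L)$, so $\omega^\top R(G,p,L) = \mathbf{0}^\top$ and $\omega$ is an equilibrium stress.

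For the converse direction I would upgrade the inclusion to an equality $T_y M = \im df_G(p,L)$ using infinitesimal rigidity, via a dimension count. On one hand $\dim T_y M = \dim M$ equals the generic (equivalently, maximal) rank of $df_G$, since the dimension of the image of a polynomial map is the maximal rank of its Jacobian. On the other hand, for any affinely spanning realization the nullity of the rigidity matrix is at least $\binom{d+1}{2}$, so $\rank df_G \le dn + \binom{d}{2}$ everywhere, with equality exactly at infinitesimally rigid realizations. Hence $(G,p,L)$ being infinitesimally rigid forces the generic rank, and therefore $\dim T_y M$, to equal $dn + \binom{d}{2} = \rank df_G(p,L) = \dim \im df_G(p,L)$. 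Combined with the inclusion from the previous paragraph, this yields $T_y M = \im df_G(p,L)$, and then any equilibrium stress $\omega$ satisfies $\omega \perp \im df_G(p,L) = T_y M$, i.e.\ $\omega$ is normal to $M$ at $y$.

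The routine parts are the identifications of equilibrium stresses with the left kernel of $R(G,p,L)$ and of normal vectors with orthogonality to $T_y M$, both of which are already in place. The main obstacle is the dimension equality $T_y M = \im df_G(p,L)$: the crux is to argue that the rank of the rigidity matrix at the infinitesimally rigid realization $(p,L)$ coincides with the dimension of the image $M$, i.e.\ that infinitesimal rigidity realizes the maximal rank of $df_G$. This is exactly the place where infinitesimal rigidity — rather than mere smoothness of $y$ — is used, and it is what prevents the converse implication from holding in general.
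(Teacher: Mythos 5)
Your proof is correct and takes essentially the same route as the paper, which gives no argument of its own but defers to Gortler--Healy--Thurston's Lemma 2.21, whose proof is exactly your comparison $\im df_G(p,L) \subseteq T_y M$ at a smooth point together with the dimension count $\dim M = \max \rank df_G = \rank df_G(p,L)$ forced by infinitesimal rigidity. One cosmetic remark: your bound $\rank df_G \le dn + \binom{d}{2}$ is justified directly only at affinely spanning realizations, but since the maximal rank of $df_G$ is attained on a Zariski-open dense set, which meets the affinely spanning locus, your application of the bound to the maximal rank is sound as stated.
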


Each of the real (semi-)algebraic sets $f_G(\mathcal{R}(G))$, $\overline{f_G(\mathcal{R}(G))}$, $\overline{f_G(\mathcal{R}(G))}^*$ and $C(\overline{f_G(\mathcal{R}(G))})$ are homogenous and integral.
Additionally, since $\overline{f_G(\mathcal{R}(G))}$ is irreducible,
both $\overline{f_G(\mathcal{R}(G))}^*$ and $C(\overline{f_G(\mathcal{R}(G))})$ are also irreducible by \cite[Lemma 2.18]{gortler2010characterizing}.
We can bring equilibrium stresses into the picture using the following lemma that is an immediate adaption of \cite[Lemma 2.24]{gortler2010characterizing} to the periodic setting.

\begin{lemma}\label{lem:2.24}
    If $(G,p,L)$ is generic, then there exists an equilibrium stress $\omega$ of $(G,p,L)$ such that:
    \begin{enumerate}
        \item $(f_G(p,L),\omega)$ is a generic point of $C(\overline{f_G(\mathcal{R}(G))})$;
        \item $\omega$ is a generic point of $\overline{f_G(\mathcal{R}(G))}^*$.
    \end{enumerate}
\end{lemma}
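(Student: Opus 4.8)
The plan is to transport the Gortler--Healy--Thurston argument for their Lemma 2.24 essentially verbatim, organizing everything around the conormal bundle $C(M)$ of $M := \overline{f_G(\mathcal{R}(G))}$ and its two coordinate projections
\[
    \pi_1 : C(M) \to M, \qquad \pi_2 : C(M) \to \overline{f_G(\mathcal{R}(G))}^*.
\]
All three varieties and both projections are defined over $\mathbb{Q}$, since $f_G$ is a polynomial map with rational coefficients, and $C(M)$ is irreducible by the discussion immediately preceding the lemma. The only framework-specific ingredient is \Cref{lem:2.21}, which tells us that at a smooth point $x = f_G(p,L)$ every vector in the fiber $\pi_1^{-1}(x)$ --- that is, every vector normal to $M$ at $x$ --- is an equilibrium stress of $(G,p,L)$. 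This is precisely the single direction we need, and it holds without any infinitesimal rigidity hypothesis.

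First I would observe that, because $f_G$ is defined over $\mathbb{Q}$ and $(p,L)$ is generic, the image $f_G(p,L)$ is a generic point of $M$; in particular it avoids the (proper, $\mathbb{Q}$-defined) singular locus of $M$ and so is a smooth point, whence the normal space to $M$ at $f_G(p,L)$ is exactly the conormal fiber $\pi_1^{-1}(f_G(p,L))$. Next I would invoke the standard lifting principle for dominant morphisms of irreducible $\mathbb{Q}$-varieties: since $\pi_1$ is dominant, a point $(x,\phi) \in C(M)$ is generic in $C(M)$ whenever $x$ is generic in $M$ and $\phi$ is generic in the fiber $\pi_1^{-1}(x)$ over the field generated by the coordinates of $x$. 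Concretely, choosing $\omega$ generic in $\pi_1^{-1}(f_G(p,L))$ over $\mathbb{Q}(f_G(p,L))$ makes $(f_G(p,L),\omega)$ a generic point of $C(M)$, which is item (i). Since $\omega$ then lies in the normal space to $M$ at the smooth point $f_G(p,L)$, \Cref{lem:2.21} guarantees that $\omega$ is an equilibrium stress of $(G,p,L)$, as required.

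Finally, for item (ii), I would apply the same principle through $\pi_2$: because $\pi_2$ is a dominant morphism defined over $\mathbb{Q}$ and $(f_G(p,L),\omega)$ is a generic point of $C(M)$, its image $\omega = \pi_2(f_G(p,L),\omega)$ is a generic point of the dual variety $\overline{f_G(\mathcal{R}(G))}^*$.

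The main obstacle will be justifying the lifting step rigorously: namely, that a generic point of the base $M$ extends to a generic point of the total space $C(M)$ via a generic choice in the fiber, and that this fiber is genuinely the conormal (normal) space rather than something enlarged by the Zariski closure in the definition of $C(M)$. This is settled by a transcendence-degree count, $\operatorname{trdeg}_{\mathbb{Q}} \mathbb{Q}(f_G(p,L),\omega) = \dim M + \dim \pi_1^{-1}(f_G(p,L)) = \dim C(M)$, together with the fact that a point of an irreducible $\mathbb{Q}$-variety whose transcendence degree over $\mathbb{Q}$ equals the dimension is generic. Since these are exactly the ingredients used in \cite[Lemma 2.24]{gortler2010characterizing}, and none of them is sensitive to the periodic structure once \Cref{lem:2.21} supplies the fact that normal vectors are equilibrium stresses, the adaptation is immediate.
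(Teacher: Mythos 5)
Your proposal is correct and takes essentially the same route as the paper, which offers no independent argument but simply declares the lemma an immediate adaptation of \cite[Lemma 2.24]{gortler2010characterizing}: your reconstruction --- genericity of $f_G(p,L)$ in $\overline{f_G(\mathcal{R}(G))}$, lifting to a generic point of the irreducible conormal bundle via a transcendence-degree count over the fiber of $\pi_1$, pushing forward through the dominant $\mathbb{Q}$-defined projection $\pi_2$, and invoking \Cref{lem:2.21} to convert normal vectors into equilibrium stresses --- is precisely that adaptation. The one point you gloss, that $f_G(p,L)$ is a smooth point of the semi-algebraic image $f_G(\mathcal{R}(G))$ itself (as \Cref{lem:2.21} requires) and not merely of its Zariski closure, is standard for generic points and handled by the same Gortler--Healy--Thurston machinery, so it does not affect correctness.
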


With this, we are now ready to prove \Cref{prop:generic stresses}.

\begin{proof}[Proof of \Cref{prop:generic stresses}]
    We can use a similar technique to that described in the proof of \cite[Theorem 5]{ConnellyWhiteley2010} to find a generic 
    $\mathbb{Z}^d$-framework that has an equilibrium stress with nullity $d+1$.
    Hence, we may suppose $(G,p,L)$ is also generic.
    
    Fix the integral algebraic set
    \begin{equation*}
        Z := \left\{ \omega' \in \mathbb{R}^E : \dim \ker \mathcal{L}_{\mathbb{Z}^d}(G,\omega') > d+1 \right\}.
    \end{equation*}
    As $\omega \notin Z$ and $\overline{f_G(\mathcal{R}(G))}^*$ is irreducible,
    we have that $\dim \ker \mathcal{L}_{\mathbb{Z}^d}(G,\omega') = d+1$ for every generic point $\omega' \in \overline{f_G(\mathcal{R}(G))}^*$.
    The result now follows from \Cref{lem:2.24}.
\end{proof}

\subsection{Fixed-lattice case}

The case where the lattice is fixed is notably simpler and only requires the weaker property of $L$-genericity (see \Cref{subsec:coord}) since the coefficients of $f_{G,L}$ are contained in the smallest field generated by the rationals and the coefficients of $L$.

\begin{theorem}\label{t:fixed lattice sufficient}
    Let $(G,p,L)$ be an $L$-generic $\mathbb{Z}^d$-framework in $\mathbb{R}^d$.
    If $(G,p,L)$ has a fixed-lattice equilibrium stress $\omega$ with $\dim\ker \mathcal{L}(G,\omega) = 1$,
    then $(G,p,L)$ is fixed-lattice globally rigid.
\end{theorem}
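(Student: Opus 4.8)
The plan is to mirror the structure of the flexible-lattice argument (\Cref{t:flexible lattice sufficient}), but the fixed-lattice setting turns out to be substantially cleaner: with the lattice held fixed, the only congruences preserving $L$ that we need to produce are translations, so the conic-at-infinity machinery of \Cref{l:conic inf rig} and \Cref{l:conic equiv} becomes unnecessary, and \Cref{prop:fltransl} already delivers the desired translation directly. Concretely, I would fix an arbitrary $\mathbb{Z}^d$-framework $(G,q,L)$ with lattice $L$ that is equivalent to $(G,p,L)$, i.e.\ $f_{G,L}(q) = f_{G,L}(p)$, and aim to show that $q$ is a translate of $p$.

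The central step is to apply \Cref{l:connelly lemma} to the $L$-measurement map $f_{G,L} : (\mathbb{R}^d)^V \to \mathbb{R}^E$. The key point is that every coefficient of $f_{G,L}$ lies in the field $\mathbb{Q}(S)$, where $S$ is the finite set of entries of $L$: expanding $\|p(v)+L\gamma-p(u)\|^2$ gives a polynomial in the vertex coordinates whose coefficients are rational-coefficient polynomials in the entries of $L$. By $L$-genericity (\Cref{subsec:coord}), the coordinates of $p$ are algebraically independent over exactly this field $\mathbb{Q}(S)$, so $p$ plays the role of the generic point $y$ in \Cref{l:connelly lemma}.

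\Cref{l:connelly lemma} then yields that the left kernel of $df_{G,L}(q)$ equals the left kernel of $df_{G,L}(p)$. Since $R_L(G,p) = \tfrac12\, df_{G,L}(p)$ and fixed-lattice equilibrium stresses are precisely the elements of the left kernel of the $L$-rigidity matrix (\Cref{sec:equilibrium_stress}), this means $(G,p,L)$ and $(G,q,L)$ have identical spaces of fixed-lattice equilibrium stresses; in particular, the given stress $\omega$ is also a fixed-lattice equilibrium stress of $(G,q,L)$. Finally I would invoke \Cref{prop:fltransl}: since $\omega$ is a fixed-lattice equilibrium stress of both frameworks and $\dim\ker\mathcal{L}(G,\omega)=1$ by hypothesis, $(G,q,L)$ is a translated copy of $(G,p,L)$, hence congruent to it. As $(G,q,L)$ was arbitrary, this establishes fixed-lattice global rigidity.

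I do not expect a genuine obstacle here — the argument is essentially a routine transfer of the stress across equivalent realizations followed by an application of \Cref{prop:fltransl}. The only point requiring care is the coefficient-field bookkeeping for \Cref{l:connelly lemma}: one must confirm that $L$-genericity is exactly genericity over $\mathbb{Q}(S)$ with $S$ finite. This is precisely how $L$-genericity is defined, and it is also the reason the \emph{weaker} hypothesis of $L$-genericity (rather than full genericity) suffices, namely because the entries of $L$ are permitted into the coefficient field.
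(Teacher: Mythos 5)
Your proposal is correct and follows essentially the same route as the paper's proof: apply \Cref{l:connelly lemma} to the $L$-measurement map $f_{G,L}$ (with $p$ generic over the field generated by $\mathbb{Q}$ and the entries of $L$, which is exactly the definition of $L$-genericity) to transfer $\omega$ to any equivalent framework $(G,q,L)$, and then conclude via \Cref{prop:fltransl}. Your additional remarks — that the conic-at-infinity machinery is unneeded here and that $L$-genericity is precisely what makes the coefficient-field bookkeeping work — correctly identify why the fixed-lattice case is simpler, matching the paper's own framing.
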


\begin{proof}
    Choose an equivalent $\mathbb{Z}^d$-framework $(G,p',L)$.
    By applying \Cref{l:connelly lemma} to $f_{G,L}$, $p$ and $p'$,
    we see that $(G,p,L)$ and $(G,p',L)$ have the same fixed-lattice equilibrium stresses;
    in particular, $\omega$ is a fixed-lattice equilibrium stress of both.
    Hence $(G,p',L)$ is congruent to (and in fact a translated copy of) $(G,p,L)$ by \Cref{prop:fltransl}.
\end{proof}

The natural analog of \Cref{prop:generic stresses} is also true for the fixed-lattice case.

\begin{proposition}\label{prop:fixed lattice generic stresses}
    Let $(G,p,L)$ be a non-flat $\mathbb{Z}^d$-framework that is either fixed-lattice infinitesimally rigid or $L$-generic.
    If $(G,p,L)$ has a fixed-lattice equilibrium stress $\omega$ with $\dim\ker \mathcal{L}(G,\omega) = 1$,
	then for any non-singular lattice $L'$, every $L'$-generic $\mathbb{Z}^d$-framework has a fixed-lattice equilibrium stress $\omega'$ with $\dim\ker \mathcal{L}(G,\omega') = 1$.
\end{proposition}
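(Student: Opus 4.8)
The plan is to transfer the statement to the flexible-lattice setting, where \Cref{prop:generic stresses} is available, by locking the lattice with loops as in \Cref{lem:extend}, and then to pass from a generic lattice to an arbitrary non-singular $L'$ by a genericity argument. The bridge is a kernel-dimension identity. Suppose $\omega$ is a fixed-lattice equilibrium stress of a non-flat $(G,p,L)$, let $F$ be a set of $\binom{d+1}{2}$ loops at one vertex for which $\{\gamma\gamma^\top : (u,u,\gamma)\in F\}$ is a basis of the symmetric $d\times d$ matrices, and let $(\omega,\mu)$ be the equilibrium stress of $(G+F,p,L)$ furnished by \Cref{lem:extend}. Since loops give only zero rows of $I(G)$, we have $\mathcal{L}(G+F,(\omega,\mu)) = \mathcal{L}(G,\omega)$, and \Cref{lem:sub} gives the block form of $\mathcal{L}_{\mathbb{Z}^d}(G+F,(\omega,\mu))$ with bottom-right block $B := M(G)\,\mathrm{diag}(\omega)\,M(G)^\top + M(F)\,\mathrm{diag}(\mu)\,M(F)^\top$. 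As $(\omega,\mu)$ is an equilibrium stress, \Cref{prop:suff1}\ref{prop:suff1item1} gives $[P~L]\,\mathcal{L}_{\mathbb{Z}^d}(G+F,(\omega,\mu)) = \mathbf{0}$, and its second block column shows the Schur complement $B - L^{-1}P\,\mathcal{L}(G,\omega)\,P^\top L^{-\top}$ vanishes. A direct kernel computation for the block matrix then yields $\dim\ker\mathcal{L}_{\mathbb{Z}^d}(G+F,(\omega,\mu)) = d + \dim\ker\mathcal{L}(G,\omega)$; in particular $\dim\ker\mathcal{L}(G,\omega) = 1$ if and only if $\dim\ker\mathcal{L}_{\mathbb{Z}^d}(G+F,(\omega,\mu)) = d+1$.

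\textbf{Reduction to the generic-lattice case.} First I would record that under either hypothesis $(G,p,L)$ is fixed-lattice infinitesimally rigid: in the $L$-generic case this follows from \Cref{t:fixed lattice sufficient} (which gives fixed-lattice global, hence local, rigidity) together with \Cref{t:gen rig equiv fl}, while in the other case it is assumed. I would then verify that $(G+F,p,L)$ is flexible-lattice infinitesimally rigid: for any infinitesimal flex $(m,M)$ the loop rows force the symmetric part of $L^\top M$ to vanish (since $\{\gamma\gamma^\top\}$ spans the symmetric matrices), hence $M=AL$ for some skew $A$, and subtracting the corresponding trivial motion reduces to a fixed-lattice infinitesimal motion of $(G,p,L)$, which is trivial. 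Thus $(G+F,p,L)$ is infinitesimally rigid and, by the identity above, carries an equilibrium stress of nullity $d+1$. Applying \Cref{prop:generic stresses} to $G+F$, every generic realization of $G+F$ has an equilibrium stress of nullity $d+1$; for a generic realization $(\hat p,\hat L)$ of $G$ the restriction $\hat\omega$ of such a stress satisfies \cref{eq:1} for $(G,\hat p,\hat L)$ (loops do not enter \cref{eq:1}), so it is a fixed-lattice equilibrium stress, and the kernel identity read backwards gives $\dim\ker\mathcal{L}(G,\hat\omega)=1$. Hence the conclusion holds for every generic realization of $G$.

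\textbf{Passage to an arbitrary lattice.} It remains to upgrade ``generic lattice'' to an arbitrary non-singular $L'$ with $L'$-generic $p'$. Set $Z := \{\omega'\in\mathbb{R}^E : \dim\ker\mathcal{L}(G,\omega') > 1\}$, a proper $\mathbb{Q}$-algebraic subset of $\mathbb{R}^E$ (proper because the hypothesis supplies an $\omega\notin Z$). Possessing a good fixed-lattice stress is exactly the condition that the fixed-lattice stress space $\ker R_{L'}(G,p')^\top$ is not contained in $Z$. I would show that the set $U_{L'}$ of configurations $p'$ for which this holds is Zariski-open and defined over $\mathbb{Q}(L')$: on the locus where $\mathrm{rank}\,R_{L'}(G,\cdot)$ is generic the stress space has locally constant dimension and varies algebraically, and $\mathbb{R}^E\setminus Z$ is open, so non-containment is open. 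Since every $L'$-generic point lies in each non-empty $\mathbb{Q}(L')$-Zariski-open set, it then suffices to prove $U_{L'}\neq\emptyset$.

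\textbf{The main obstacle.} The crux is this non-emptiness for a \emph{fixed, possibly special} $L'$: the previous step only produces good stresses at generic lattices, and the image of the slice $\{L=L'\}$ under the measurement map of $G+F$ is a proper subvariety of $\overline{f_{G+F}(\mathcal{R}(G+F))}$, so the generic-point statement \Cref{lem:2.24} cannot be invoked there directly. To resolve it I would use that fixed-lattice infinitesimal rigidity---and more generally the generic rank of $R_L(G,\cdot)$---is a property of the gain graph that is independent of the non-singular lattice, which keeps the fixed-lattice stress-space dimension constant as $L$ varies. Working over the function field of the universal $L'$-generic framework, the good stress obtained for the fully generic lattice can be specialized along the lattice variables to $L'$: because the generic rank does not drop, a basis of the stress space specializes to a basis of $\ker R_{L'}(G,p')^\top$, and the $\mathbb{Q}$-polynomial witnessing non-containment in $Z$ at the generic lattice remains nonzero after setting the lattice to $L'$, again by lattice-independence, producing a point of $U_{L'}$. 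I expect making this specialization rigorous---in particular establishing the lattice-independence of the generic fixed-lattice stress structure in all dimensions, and controlling the behaviour of $Z$ under it---to be the principal technical difficulty.
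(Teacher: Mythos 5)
Your first three steps are sound and run essentially parallel to the paper's proof: your Schur-complement kernel identity $\dim\ker\mathcal{L}_{\mathbb{Z}^d}(G+F,(\omega,\mu)) = d + \dim\ker\mathcal{L}(G,\omega)$ is exactly the content of \Cref{lem:fixedhassamerank} (which the paper proves separately and uses in \Cref{subsec:fixednec}); your loop-locking construction is \Cref{lem:extend}; your explicit verification that $(G+F,p,L)$ is flexible-lattice infinitesimally rigid correctly supplies a detail the paper leaves implicit; and invoking \Cref{prop:generic stresses} for $G+F$ is a legitimate shortcut where the paper instead re-runs the dual-variety argument with the set $Z$ defined directly via $\mathcal{L}(G,\cdot)$ and \Cref{lem:2.24}. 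Up to the statement ``the conclusion holds for every generic realization of $G$'' your argument is complete and correct.

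The genuine gap is your final step, and you have identified it yourself: the passage from generic lattices to an arbitrary non-singular $L'$ is not proved but deferred to a function-field specialization whose key ingredient (``lattice-independence of the generic fixed-lattice stress structure'') is left as a conjecture. That machinery is unnecessary, and the paper dispatches this step in one sentence via affine invariance, which your proposal overlooks. Left-multiplying the equation of \Cref{prop:suff1}\ref{prop:suff1item2} by any invertible $d\times d$ matrix $A$ shows that $\omega$ is a fixed-lattice equilibrium stress of $(G,p,L)$ if and only if it is one of $(G,Ap,AL)$, with the \emph{same} matrix $\mathcal{L}(G,\omega)$ (so the nullity-one condition is untouched), and the rank of $R_L(G,p)$ is likewise preserved. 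This immediately yields the non-emptiness of your set $U_{L'}$ that you flag as the main obstacle: choose $(q_0,L_0)$ with coordinates algebraically independent over $\mathbb{Q}(L')$ (in particular jointly generic, so your generic-lattice step provides a fixed-lattice stress $\omega_0$ with $\dim\ker\mathcal{L}(G,\omega_0)=1$), and set $A = L'L_0^{-1}$; then $(G, Aq_0, L')$ carries the same stress $\omega_0$, so $Aq_0 \in U_{L'}$. Combined with your own (correct) observation that $U_{L'}$ is open and defined over $\mathbb{Q}(L')$, hence contains every $L'$-generic configuration once non-empty, this closes the proof. In short: the architecture of your argument matches the paper's, but as written it is incomplete at exactly the step where the paper's simple observation --- that the entire fixed-lattice stress theory is invariant under $(p,L)\mapsto(Ap,AL)$ --- does all the work.
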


\begin{proof}
    If $(G,p,L)$ is $L$-generic, then it is fixed-lattice infinitesimally rigid by \Cref{t:fixed lattice sufficient} and \Cref{t:gen rig equiv fl}.
    Similarly, if $(G,p,L)$ is fixed-lattice infinitesimally rigid,
    then we may replace it with an $L$-generic $\mathbb{Z}^d$-framework using a similar technique to that described in the proof of \cite[Theorem 5]{ConnellyWhiteley2010}.
    Affine transformations on $(p,L)$ do not change a $\mathbb{Z}^d$-framework's fixed-lattice equilibrium stresses (a consequence of \Cref{prop:suff1}\ref{prop:suff1item2}), nor do they alter the nullity of $R_L(G,p)$.
    Hence,
    it suffices to prove the result in the particular case where $L' = L$ and $(G,p,L)$ is both generic and fixed-lattice infinitesimally rigid.

    By \Cref{lem:extend}, there exist $\binom{d+1}{2}$ loops $F$ which we can add to $G$ and a vector $\mu \in \mathbb{R}^F$ so that $(G+F,p,L)$ is infinitesimally rigid and $(\omega,\mu)$ is an equilibrium stress of $(G+F,p,L)$.
    Fix the integral algebraic set
    \begin{equation*}
        Z := \left\{ (\omega', \mu') \in \mathbb{R}^{E \cup F} : \dim \ker \mathcal{L}(G,\omega') > 1 \right\}.
    \end{equation*}
    As $(\omega,\mu) \notin Z$ and $\overline{f_{G+F}(\mathcal{R}(G+F))}^*$ is irreducible,
    we have $\dim \ker \mathcal{L}(G,\omega') = 1$  for every generic point $(\omega',\mu') \in \overline{f_{G+F}(\mathcal{R}(G+F))}^*$.
    
    Now choose an $L$-generic $q \in (\mathbb{R}^d)^V$.
    Since $(G,q,L)$ is generic, \Cref{lem:2.24} guarantees that $(G,q,L)$ has an equilibrium stress $(\omega',\mu')$ which is a generic point of 
    $\overline{f_{G+F}(\mathcal{R}(G+F))}^*$.
    Hence $(G,q,L)$ has a fixed-lattice equilibrium stress $\omega'$ where $\dim \ker \mathcal{L}(G,\omega') = 1$, which concludes the proof.
\end{proof}

\section{Necessary global rigidity conditions for generic periodic frameworks}\label{sec:necessgen}

In this section we prove the necessary condition for \Cref{mainthm:flexlattice}.
This proof follows essentially the same method to that given by Gortler, Healy and Thurston's landmark result for classical non-periodic frameworks \cite{gortler2010characterizing}.

\subsection{Flexible-lattice case}
 
To prove the necessary condition for \Cref{mainthm:flexlattice}, we will prove the following analog of \cite[Theorem 1.14]{gortler2010characterizing}:

\begin{theorem}\label{thm:ght}
    Let $(G,p,L)$ be a generic $\mathbb{Z}^d$-framework.
    If $(G,p,L)$ has no equilibrium stress $\omega$ with $\dim\ker \mathcal{L}_{\mathbb{Z}^d}(G,\omega) = d+1$,
    then $(G,p,L)$ is not globally rigid.
\end{theorem}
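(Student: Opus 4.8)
## Proof Proposal for Theorem~\ref{thm:ght}

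\textbf{Overview of the approach.}
The plan is to follow the strategy of Gortler, Healy and Thurston, adapting their argument to the periodic setting via the measurement map $f_G$ and its associated algebraic geometry over $\mathcal{R}(G)$. The key object is the \emph{equilibrium stress variety}, realised here as the dual variety $\overline{f_G(\mathcal{R}(G))}^*$ together with the conormal bundle $C(\overline{f_G(\mathcal{R}(G))})$ introduced above. The central dichotomy is: if the generic fibre of $f_G$ (up to the action of the affine group and the trivial motions) is a single point, then $(G,p,L)$ is globally rigid; and this single-point property is controlled by the dimension of the dual variety, hence by the maximal kernel dimension achievable by equilibrium stresses. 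Since we have already established in \Cref{prop:generic stresses} that the existence of an equilibrium stress with $\dim \ker \mathcal{L}_{\mathbb{Z}^d}(G,\omega) = d+1$ is a generic property, the contrapositive we must prove is: if \emph{no} generic framework attains this minimal kernel dimension (equivalently, $\dim \ker \mathcal{L}_{\mathbb{Z}^d}(G,\omega) > d+1$ for \emph{every} equilibrium stress $\omega$), then the generic framework is flexible.

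\textbf{Key steps.}
First, I would set up the \emph{measurement variety} $M_G := \overline{f_G(\mathcal{R}(G))}$ and establish its basic invariants: its dimension equals $dn + d^2 - \dim(\text{trivial motions + affine stabiliser})$ for generic points, computed via the rank of $R(G,p,L)$ at a generic realization. By \Cref{lem:2.24}, at a generic $(p,L)$ there is an equilibrium stress $\omega$ that is a generic point of the dual variety $M_G^*$; by \Cref{lem:2.21}, the stress space at an infinitesimally rigid framework is exactly the normal space to $M_G$. The second step is to relate the condition ``$\dim\ker \mathcal{L}_{\mathbb{Z}^d}(G,\omega) > d+1$ for all equilibrium stresses'' to a drop in the dimension of $M_G^*$, i.e.\ to $M_G$ not being \emph{equilibrium-stress-determined} in the sense needed to pin down the affine-equivalence class of the configuration. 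Concretely, \Cref{prop:afftrans} tells us that the kernel-dimension-$(d+1)$ condition is precisely what forces equivalent frameworks sharing $\omega$ to be affine images of one another; its failure opens room for equivalent-but-not-affinely-related frameworks. The third step is the genericity transfer: using \Cref{l:connelly lemma}, the left kernel of $df_G$ (the stress space) is the same at any $(p',L')$ with $f_G(p',L') = f_G(p,L)$ as at the generic $(p,L)$, so the entire equivalence class carries the same stress structure, and I can run the measurement-variety argument uniformly over the fibre.

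\textbf{The main construction and the hard part.}
The heart of the argument — and the step I expect to be the main obstacle — is constructing, from the hypothesis that every equilibrium stress has kernel dimension exceeding $d+1$, an explicit equivalent but non-congruent $\mathbb{Z}^d$-framework. Following Gortler--Healy--Thurston, this goes through the observation that when the generic stress matrix always has an oversized kernel, the image $M_G$ has a dual variety of too-small dimension, which by a tangency/genericity argument forces the generic fibre of $f_G$ (modulo trivial motions and reflections) to be positive-dimensional, rather than a finite set of points. The delicate points are: (i) correctly accounting for the lattice degrees of freedom and the $\binom{d+1}{2}$-dimensional trivial motion space, so that the dimension count distinguishing ``rigid'' from ``flexible'' fibres is exact in the periodic setting rather than the classical one; and (ii) ensuring that the extra fibre components produced are genuinely non-congruent (not merely affine or reflective images that happen to preserve lengths), which requires invoking \Cref{l:conic inf rig} to rule out degenerate conic-at-infinity configurations. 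Once a positive-dimensional component of the fibre (modulo congruence) is exhibited, genericity of $(p,L)$ guarantees that it lifts to an actual equivalent framework not congruent to $(G,p,L)$, contradicting global rigidity and completing the proof of the contrapositive.
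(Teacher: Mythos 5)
Your proposal correctly identifies the ambient machinery (measurement variety, dual variety, conormal bundle, \Cref{lem:2.21}, \Cref{lem:2.24}, \Cref{l:connelly lemma}), but the ``main construction'' step --- which you rightly flag as the heart of the matter --- is wrong, and the error is not repairable along the lines you sketch. You claim that the hypothesis forces ``the generic fibre of $f_G$ (modulo trivial motions and reflections) to be positive-dimensional, rather than a finite set of points,'' i.e.\ you conflate ``not globally rigid'' with ``flexible.'' This cannot work: in the only interesting case the framework $(G,p,L)$ is locally rigid (otherwise the theorem is trivial), and by \Cref{t:gen rig equiv} it is then infinitesimally rigid, so the fibre of $f_G$ through $(p,L)$, modulo $\Euc(d)$, is discrete near every point obtained via \Cref{l:connelly lemma}. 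No tangency or dimension-drop argument on $M_G^*$ can produce a positive-dimensional fibre component there; what must be produced is a \emph{second isolated point} of the fibre, and that is a parity statement, not a dimension statement.

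The paper's actual mechanism, following Gortler--Healy--Thurston, is a mod-two degree argument that is entirely absent from your outline. One picks an equilibrium stress $\omega$ of $(G,p,L)$ that is a generic point of $\overline{f_G(\mathcal{R}(G))}^*$ (\Cref{lem:2.24}) and restricts attention to the map $f_\omega \colon A(\omega)/\Euc(d) \to \mathcal{L}(\omega)$, where $A(\omega)$ is the space of realizations sharing the stress $\omega$. The hypothesis that every equilibrium stress satisfies $\dim\ker \mathcal{L}_{\mathbb{Z}^d}(G,\omega) \geq d+2$ is used at precisely one place, and it is a place your proposal never touches: it makes the singularities of the stratified space $A(\omega)/\Euc(d)$ have codimension $k - d \geq 2$ (\Cref{prop:2.13}), which is exactly the condition needed for the mod-two degree of the proper map $f_\omega$ to be well defined (\Cref{cor:2.36}), with $\dim f_G(A(\omega)) = \dim \mathcal{L}(\omega)$ supplied by \Cref{proposition:2.23}. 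The degree is then $0$ because $\im f_\omega \subset \mathbb{R}^E_{\geq 0}$ while $\mathcal{L}(\omega)$ contains points outside the nonnegative orthant, so some regular value has empty preimage (\Cref{lem:2.38}); since $f_G(p,L)$ is itself a regular value (\Cref{lem:2.39}, via \Cref{l:connelly lemma}), its fibre has even cardinality and contains $[(p,L)]$, yielding a second equivalence class $[(p',L')]$ --- automatically non-congruent, since points of the quotient are congruence classes, so your proposed appeal to \Cref{l:conic inf rig} is also unnecessary here (that lemma belongs to the sufficiency direction). Without the degree argument, or some substitute parity mechanism, your outline does not reach the conclusion.
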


To simplify the following lemmas,
we will fix the following assumptions throughout the subsection.
We pick $G$ to be a $\mathbb{Z}^d$-gain graph with a generic realization $(p,L)$ where every equilibrium stress $\omega$ satisfies $\dim\ker \mathcal{L}_{\mathbb{Z}^d}(G,\omega) > d+1$ (this implies that $|V| \ge 2$).
We may assume that $(G,p,L)$ is locally rigid,
as otherwise the $\mathbb{Z}^d$-framework is trivially not globally rigid.
Hence, by \Cref{t:gen rig equiv}, $(G,p,L)$ will be infinitesimally rigid throughout.

We additionally introduce the following terminology we use throughout:
\begin{enumerate}
    \item We denote by $\Euc(d)$ the isometries of $\mathbb{R}^d$, and we denote by $\Trans(d)$ the group of translations of $\mathbb{R}^d$.
    Both groups have a natural action on $\mathcal{R}(G)$.
    \item For any equilibrium stress $\omega$ of $(G,p,L)$, we fix $A(\omega) \subset \mathcal{R}(G)$ to be the linear subspace of $\mathbb{Z}^d$-frameworks $(q,L')$ where $\omega$ is also an equilibrium stress of $(G,q,L')$.
    It follows from \Cref{prop:suff1}\ref{prop:suff1item1} that $A(\omega) = \prod_{i=1}^d \ker \mathcal{L}_{\mathbb{Z}^d}(G,\omega)$.
    \item We fix $\mathcal{L}(\omega)$ to be the smallest linear space containing $f_G(A(\omega))$.
    Since $f_G$ is positive homogenous (i.e., $f_G(\lambda p',\lambda L') = |\lambda| f_G(p',L')$) and $A(\omega)$ is a linear space, the set $f_G(A(\omega))$ is a closed semi-algebraic subset of $\mathcal{L}(\omega)$.
    \item We fix the map
    \begin{align*}
        f_\omega : A(\omega)/\Euc (d) \rightarrow \mathcal{L}(\omega), ~ [(p',L')] \rightarrow f_G(p',L').
    \end{align*}
    Since $G$ is connected (a consequence of $(G,p,L)$ being locally rigid),
    a similar method to \cite[Lemma 2.34]{gortler2010characterizing} gives that $f$ is a proper map (i.e., the preimage of any compact set is itself compact).
\end{enumerate}

Our main aim is to be able to show that, for a ``generic'' equilibrium stress, the map $f_{\omega}$ has a well-defined concept of degree modulo 2 that corresponds to the number of equivalent but non-congruent realizations modulo isometries.
From this, the basic idea is to show that this degree must be even, since this will then imply $(G,p,L)$ is not globally rigid.

\subsubsection{Domain of \texorpdfstring{$f_\omega$}{f-omega}}

We first deal with the domain of $f_\omega$.

To quote Gortler, Healy and Thurston,
a \emph{smooth stratified space} is \emph{``loosely speaking, a space which is decomposed into smooth manifolds of differing dimensions, limiting onto each other in a nice way''}. For a more rigorous definition, see \cite{Pflaum}.

With this geometric concept in mind, we now require the following two results regarding stratified spaces, which can be seen to be periodic analogs to  \cite[Proposition 2.12]{gortler2010characterizing} and  \cite[Proposition 2.13]{gortler2010characterizing} respectively.

\begin{proposition}\label{prop:2.12}
The quotient space $\mathcal{R}(G)/Euc(d)$ is a smooth stratified space
with singularities of codimension $|V|$ or higher. Furthermore, the singularities occur at classes of frameworks that are not affinely spanning.
\end{proposition}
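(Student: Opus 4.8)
The plan is to realise $\mathcal{R}(G)/\Euc(d)$ as the orbit space of a proper, smooth Lie group action and then invoke the standard fact that such orbit spaces are smooth stratified spaces, stratified by orbit type (see the references collected in \cite{Pflaum}, and the parallel treatment in \cite{gortler2010characterizing}). Writing $\Euc(d) = \Trans(d) \rtimes O(d)$, the translation subgroup $\Trans(d)$ acts freely and properly on $\mathcal{R}(G)$ (it acts freely on the point-configuration coordinates already, since $V \neq \emptyset$, and leaves $L$ fixed), so $\mathcal{R}(G)/\Trans(d)$ is a smooth manifold on which the \emph{compact} group $O(d)$ acts. Compactness of $O(d)$ makes the composite $\Euc(d)$-action proper, so the orbit-type stratification applies. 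First I would record that the principal (top-dimensional, manifold) stratum is exactly the locus of trivial stabiliser, and that every lower stratum is genuinely singular: by the slice theorem the local model near a point with stabiliser $H$ is a slice quotient by $H$, which fails to be a manifold as soon as $H$ acts non-trivially on the slice (for example by a reflection).

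Next I would compute stabilisers. An isometry $(M,x)$ fixes $(p,L)$ precisely when $Mp(v)+x=p(v)$ for all $v \in V$ and $ML=L$; using $ML=L$ one checks this forces $(M,x)$ to fix every covering point $p(v)+L\mu$, hence to fix the affine hull $A$ of the covering pointwise. Conversely, any isometry fixing $A$ pointwise fixes $(p,L)$, because the columns of $L$ lie in the direction space $W$ of $A$, on which $M$ restricts to the identity. Thus the stabiliser equals the group of isometries fixing $A$ pointwise, which is isomorphic to $O(k)$ where $k := d - \dim A$ is the corank of the affine span. In particular the stabiliser is trivial iff $k=0$ iff $(G,p,L)$ is affinely spanning, which is exactly the ``furthermore'' clause. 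Since the orbit type is the conjugacy class of $O(k) \hookrightarrow O(d)$, it depends only on $k$, so the strata are indexed by the corank.

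It then remains to bound the codimension of the stratum $S_k$ of frameworks of corank exactly $k$ (so $\dim A = d-k$). I would parametrise $S_k$ by the $(d-k)$-dimensional direction space $W$ (a Grassmannian point, dimension $k(d-k)$), the perpendicular offset of $A$ (dimension $k$), the lattice whose columns must lie in $W$ (dimension $d(d-k)$), and the $n$ vertices each constrained to $A$ (dimension $n(d-k)$), giving $\dim S_k = (d-k)(k+d+n)+k$. The orbit dimension on $S_k$ is $\binom{d+1}{2}-\binom{k}{2}$. Subtracting both from the principal quotient dimension $dn+d^2-\binom{d+1}{2}$ and simplifying yields that $S_k$ has codimension $kn+\binom{k}{2}$ in $\mathcal{R}(G)/\Euc(d)$. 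This is minimised over $k \geq 1$ at $k=1$, giving codimension exactly $|V|$, so every singular stratum has codimension $|V|$ or higher.

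The main obstacle is twofold. First, one must set up the orbit-type stratification rigorously for the non-compact $\Euc(d)$; the reduction to the compact $O(d)$ through the free proper translation action is the clean route, but it requires checking properness and that the two quotients compose correctly. Second, the codimension bookkeeping needs care: the naive codimension of $S_k$ inside $\mathcal{R}(G)$ is $k(n+k-1)$, and only after correctly accounting for the drop $\binom{k}{2}$ in orbit dimension does one recover the sharp value $kn+\binom{k}{2}$ in the quotient. Verifying that the affine-hull strata $S_k$ genuinely \emph{coincide} with the orbit-type strata, rather than merely refining them, is the point that most needs attention.
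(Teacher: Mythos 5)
Your proposal is correct and takes essentially the same route as the paper: the paper's proof of \Cref{prop:2.12} simply invokes \cite[Proposition 2.12]{gortler2010characterizing} ``with minor modifications'' and records the dimension count, and the orbit-type stratification of $\mathcal{R}(G)/\Euc(d)$ that you reconstruct (stabiliser $O(k)$ at corank-$k$ affine span of the covering, trivial exactly when affinely spanning) is precisely the argument behind that citation, with your numbers matching the paper's --- your $k=1$ quotient stratum has dimension $(d-1)|V|-\binom{d}{2}+d(d-1)$, exactly the paper's figure, and your codimension $k|V|+\binom{k}{2}$ is minimised at $k=1$, giving $|V|$. The only difference is granularity: the paper computes just the dimension of the full non-affinely-spanning locus (your $k=1$ case), whereas you verify every stratum $S_k$ and check the minimisation, which is a finer but equivalent bookkeeping of the same proof.
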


\begin{proof}
    The proof follows from \cite[Proposition 2.12]{gortler2010characterizing} with minor modifications: now the dimension of $\mathcal{R}(G)/\Euc(d)$ is $d|V| - \binom{d+1}{2} + d^2$ and the dimension of the subspace of all equivalence classes of not affinely spanning realizations in $\mathcal{R}(G)/\Euc(d)$ (which are exactly the singularities of the space) is $(d-1)|V| - \binom{d}{2} + d(d-1)$.
    Hence the singularities must have codimension 
    \begin{equation*}
        \left(d|V| - \binom{d+1}{2} + d^2 \right)- \left((d-1)|V| - \binom{d}{2} + d(d-1) \right) = |V| -d +d = |V|
    \end{equation*}
    or higher.
\end{proof}

\begin{proposition}\label{prop:2.13}
    Let $\omega$ be an equilibrium stress of $(G,p,L)$ and fix $k= \dim \ker \mathcal{L}_{\mathbb{Z}^d}(G,\omega)$.
    Then $A(\omega)/\Euc(d)$  is a smooth stratified space with singularities of codimension at least 2. Furthermore, the singularities occur at classes of frameworks that are not affinely spanning.
\end{proposition}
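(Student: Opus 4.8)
The plan is to mirror the proof of \Cref{prop:2.12}, adapting \cite[Proposition 2.13]{gortler2010characterizing} to the periodic setting. First I would use the identification $A(\omega) = \prod_{i=1}^d \ker \mathcal{L}_{\mathbb{Z}^d}(G,\omega)$ recorded in the standing assumptions: writing $K := \ker \mathcal{L}_{\mathbb{Z}^d}(G,\omega)$, a realization $(q,L')$ lies in $A(\omega)$ precisely when each of the $d$ rows of its matrix representation $[Q~L']$ lies in $K$ (by \Cref{prop:suff1}\ref{prop:suff1item1}), so $A(\omega)\cong K^d$ is a linear space of dimension $dk$. Next I would confirm that $\Euc(d)$ preserves $A(\omega)$: a rotation acts on the rows of $[Q~L']$ by an invertible linear map and so keeps them in $K$, while a translation adds a multiple of $\hat{\mathbf 1}^\top$ to the vertex block, and $\hat{\mathbf 1}\in K$ (see \Cref{subsec:incidence}). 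Since $\Trans(d)$ acts freely and properly and $O(d)$ is compact, the $\Euc(d)$-action on the manifold $A(\omega)$ is proper; the general theory of proper Lie group actions (as in \cite{Pflaum}) then gives that $A(\omega)/\Euc(d)$ is a smooth stratified space whose strata are the orbit types and whose principal stratum, the free orbits, is a smooth manifold.

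I would then identify the singular strata with the non-affinely-spanning classes. The stabilizer of $(q,L')$ is the set of isometries fixing every vertex image and fixing the lattice; this is trivial exactly when $(q,L')$ is affinely spanning, and otherwise contains the orthogonal group of the orthogonal complement of the affine span (which fixes the lattice since $\im L'$ lies in that span). Hence the principal, free stratum consists exactly of the affinely spanning classes, and every singularity occurs at a non-affinely-spanning class, as required.

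For the codimension bound I would argue as in \Cref{prop:2.12}. A realization in $A(\omega)\cong K^d$ fails to be affinely spanning iff the rows $r_1,\dots,r_d$ of $[Q~L']$ together with $\hat{\mathbf 1}$ are linearly dependent; decomposing $K = \langle \hat{\mathbf 1}\rangle \oplus K'$ with $\dim K' = k-1$ and projecting each $r_i$ to its $K'$-component $s_i$, this is equivalent to $s_1,\dots,s_d\in K'$ being linearly dependent, which is a determinantal locus of codimension $k-d$ in $A(\omega)$, the deeper rank-drop strata having strictly larger codimension. A generic non-affinely-spanning realization has only a $\mathbb{Z}/2$ reflection stabilizer, so its orbit still has dimension $\binom{d+1}{2}=\dim\Euc(d)$; hence the singular locus has codimension $k-d$ in the quotient, while a short check shows the stratum on which the affine span drops in codimension $j$ contributes codimension $j(k-1-d)+\binom{j+1}{2}$ in the quotient. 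By the standing assumption every equilibrium stress satisfies $k=\dim\ker\mathcal{L}_{\mathbb{Z}^d}(G,\omega)>d+1$, i.e.\ $k\ge d+2$, so $k-d\ge 2$ and every singular stratum has codimension at least $2$.

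The main obstacle I expect is the careful handling of the singular stratification: verifying that the quotient of $A(\omega)$ by the non-free proper $\Euc(d)$-action really is a stratified space with the claimed structure (leaning on the stratified-space machinery and the template of \cite[Proposition 2.13]{gortler2010characterizing}), together with confirming that passing to the quotient never lowers the codimension of a singular stratum below $2$. The latter requires tracking the simultaneous drop in orbit dimension caused by the larger stabilizers $O(j)$ against the growth in codimension of the corresponding rank-drop locus in $A(\omega)$, which is exactly what the formula $j(k-1-d)+\binom{j+1}{2}\ge 2$ controls.
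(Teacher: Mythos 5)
Your proposal is correct and follows essentially the same route as the paper, which simply invokes \cite[Proposition 2.13]{gortler2010characterizing} with the dimension counts $\dim A(\omega)/\Euc(d) = dk - \binom{d+1}{2}$ and $(d-1)k - \binom{d}{2}$ for the non-affinely-spanning classes, giving codimension $k-d \geq 2$ from the standing assumption $k \geq d+2$. Your reconstruction fills in the details the paper delegates to the GHT template (the identification $A(\omega)\cong K^d$, the stabilizer analysis, and the deeper-strata formula $j(k-1-d)+\binom{j+1}{2}$), and all of your counts agree with the paper's.
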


\begin{proof}
    The proof follows from \cite[Proposition 2.13]{gortler2010characterizing} with minor modifications: now the dimension of $A(\omega)/\Euc(d)$ is $dk - \binom{d+1}{2}$ and the dimension of the subspace of all equivalence classes of not affinely spanning realizations in $A(\omega)/\Euc(d)$ (which are exactly the singularities) is $(d-1)k - \binom{d}{2}$.
    Hence the singularities must have codimension 
    \begin{equation*}
        \left(dk - \binom{d+1}{2}\right)- \left((d-1)k - \binom{d}{2}\right) = k-d \geq 2
    \end{equation*}
    or higher, with the final inequality stemming from our assumption that $k \geq d+2$.
\end{proof}

\subsubsection{Codomain of \texorpdfstring{$f_\omega$}{f-omega} and generic equilibrium stresses}

We now show that when $\omega$ is generic, the image of $f_\omega$ has the same dimension as its codomain.

The following lemma is an immediate adaption of 
\cite[Lemma 2.22]{gortler2010characterizing} 
to the periodic setting.

\begin{lemma}\label{lem:2.22}
    Let $\omega$ be an equilibrium stress of a generic framework $(G,p',L')$,
    and fix the set
    \begin{align*}
        B^o(\omega) := \left\{ \ell \in f_G(\mathcal{R}(G)) \text{ smooth} : \omega \text{ is normal to $f_G(\mathcal{R}(G))$ at $\ell$}   \right\}.
    \end{align*}
    Then the Euclidean closure of $B^o(\omega)$ is $f_G(A(\omega))$.
\end{lemma}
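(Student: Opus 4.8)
The plan is to adapt the proof of \cite[Lemma 2.22]{gortler2010characterizing} to the periodic setting, establishing the equality $\overline{B^o(\omega)} = f_G(A(\omega))$ by proving two inclusions. The key players are the conormal structure linking equilibrium stresses to normals of the measurement variety (\Cref{lem:2.21}) and the affine-transformation description of $A(\omega)$ coming from \Cref{prop:afftrans}. Recall $A(\omega) = \prod_{i=1}^d \ker \mathcal{L}_{\mathbb{Z}^d}(G,\omega)$ is precisely the set of realizations sharing $\omega$ as an equilibrium stress.

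For the inclusion $\overline{B^o(\omega)} \subseteq f_G(A(\omega))$, I would start with any smooth point $\ell \in f_G(\mathcal{R}(G))$ at which $\omega$ is normal, so $\ell = f_G(q,L')$ for some realization. By \Cref{lem:2.21}, normality of $\omega$ at a smooth point forces $\omega$ to be an equilibrium stress of $(G,q,L')$, which means $(q,L') \in A(\omega)$ and hence $\ell \in f_G(A(\omega))$. Thus $B^o(\omega) \subseteq f_G(A(\omega))$. Since $f_G(A(\omega))$ is closed (it is positively homogeneous with $A(\omega)$ linear, as noted in the setup), taking Euclidean closures preserves the inclusion.

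The reverse inclusion $f_G(A(\omega)) \subseteq \overline{B^o(\omega)}$ is where the real work lies. I would argue that a \emph{generic} realization $(q,L') \in A(\omega)$ maps to a smooth point $\ell = f_G(q,L')$ of $f_G(\mathcal{R}(G))$ at which $\omega$ is normal, so that $\ell \in B^o(\omega)$; the whole image $f_G(A(\omega))$ then lies in the closure of this dense family. Concretely, since $(G,p',L')$ is generic and infinitesimally rigid, $\omega$ is normal to $f_G(\mathcal{R}(G))$ at the generic smooth point $f_G(p',L')$ by \Cref{lem:2.21}; the generic points of $A(\omega)$ form a dense subset whose images are smooth points of the variety with $\omega$ in the normal space, and every point of $f_G(A(\omega))$ is a limit of such images. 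A dimension count confirms this is consistent: $f_G(A(\omega))$ spans $\mathcal{L}(\omega)$, and the normal direction $\omega$ is locally constant across the relevant stratum of $A(\omega)$.

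The main obstacle I expect is the reverse inclusion, specifically verifying that generic points of the linear space $A(\omega)$ really do map to \emph{smooth} points of $f_G(\mathcal{R}(G))$ at which $\omega$ remains normal — this requires controlling how smoothness of the measurement variety interacts with the affine family of realizations in $A(\omega)$, and knowing that the normal direction does not degenerate as we move within $A(\omega)$. In the finite setting this is handled by the interplay between the conormal bundle and the genericity of the stress; since the stress matrix $\mathcal{L}_{\mathbb{Z}^d}(G,\omega)$ governs $A(\omega)$ exactly as in Gortler–Healy–Thurston, the argument should transfer verbatim once the periodic rigidity matrix $R(G,p,L)$ and the identification of equilibrium stresses with its left kernel are substituted for their finite analogs, so I would emphasize that the proof is an immediate adaptation and cite the corresponding lemma rather than reproduce the algebraic-geometric machinery.
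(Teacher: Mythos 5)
Your proposal is correct and takes the same route as the paper, which offers no independent proof of this lemma and simply records it as an immediate adaptation of \cite[Lemma 2.22]{gortler2010characterizing} --- exactly your concluding stance --- and your sketch of the easy inclusion ($B^o(\omega) \subseteq f_G(A(\omega))$ via \Cref{lem:2.21}, then closedness of $f_G(A(\omega))$, which the paper's setup provides since $A(\omega)$ is linear and $f_G$ positively homogeneous) is the standard one. One small caution on the reverse inclusion: ``generic points of $A(\omega)$'' is not quite the right device, since $A(\omega)$ is defined over $\mathbb{Q}(\omega)$ rather than $\mathbb{Q}$, so the paper's integral notion of genericity does not directly apply; the Gortler--Healy--Thurston argument instead anchors at the given generic framework $(p',L') \in A(\omega)$ and moves along segments $(1-t)(p',L') + t(q,L'')$ inside the linear space $A(\omega)$, where maximal rank of $df_G$, smoothness of the image point, and hence normality of $\omega$ can fail for only finitely many $t$ --- but since you explicitly flag this as the delicate step and defer to the cited lemma, just as the paper does, there is no genuine gap.
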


The following result is an immediate adaption of \cite[Proposition 2.23]{gortler2010characterizing} to the periodic setting that uses the previous result and \Cref{lem:2.21}.

\begin{proposition}\label{proposition:2.23}
    Let $\omega$ be an equilibrium stress of a generic framework $(G,p',L')$.
    If $\omega$ is generic in $\overline{f_G(\mathcal{R}(G))}^*$,
    then $\dim f_G(A(\omega)) = \dim \mathcal{L}(\omega)$.
\end{proposition}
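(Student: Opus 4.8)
The plan is to realize $f_G(A(\omega))$ as the closure of the contact locus $B^o(\omega)$, and then to show that the \emph{Zariski} closure of this locus is a \emph{linear} subspace of $\mathbb{R}^E$; the equality of dimensions then follows from a short sandwiching argument. Throughout write $M := \overline{f_G(\mathcal{R}(G))}$ and $n := |E|$, and recall from the discussion preceding the statement that $M$, $M^*$ and the conormal bundle $C(M)$ are all integral, with $M^*$ and $C(M)$ irreducible (by the adaptation of \cite[Lemma 2.18]{gortler2010characterizing}).

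First I would record the two containments that drive everything. Since $\omega$ is an equilibrium stress of the generic framework $(G,p',L')$, \Cref{lem:2.22} gives $f_G(A(\omega)) = \overline{B^o(\omega)}$, where the closure is Euclidean; and since $\omega$ is generic in $M^* = \overline{\pi_2(C(M))}$ it lies in the image $\pi_2(C(M))$, so $B^o(\omega)$ is nonempty. Consequently $f_G(A(\omega)) \subseteq V := \overline{B^o(\omega)}^{\mathrm{Zar}}$, and because the Euclidean and Zariski closures of the semi-algebraic set $B^o(\omega)$ share the same dimension, we obtain $\dim f_G(A(\omega)) = \dim V$.

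The heart of the argument is to show that $V$ is linear. For this I would pass to the two projections $\pi_1 \colon C(M) \to M$ and $\pi_2 \colon C(M) \to M^*$. By construction $V = \overline{\pi_1(\pi_2^{-1}(\omega))}$, and since the second coordinate is constant on a fiber of $\pi_2$, the map $\pi_1$ is injective there; hence $\dim V = \dim \pi_2^{-1}(\omega) = \dim C(M) - \dim M^*$ for generic $\omega \in M^*$, using irreducibility of $C(M)$. Invoking the biduality (reflexivity) theorem $C(M) = C(M^*)$ (after swapping the two factors), this same fiber is identified with $\{\ell : (\ell,\omega) \in C(M^*)\}$, which for a smooth point $\omega$ of $M^*$ is exactly the conormal space $(T_\omega M^*)^\perp$, a linear subspace. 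As $\omega$ is generic in $M^*$ it is a smooth point, so $V = (T_\omega M^*)^\perp$ is linear.

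With $V$ linear and $f_G(A(\omega)) \subseteq V$, the linear span $\mathcal{L}(\omega)$ of $f_G(A(\omega))$ also lies in $V$, giving $\dim \mathcal{L}(\omega) \le \dim V = \dim f_G(A(\omega))$. The reverse inequality $\dim f_G(A(\omega)) \le \dim \mathcal{L}(\omega)$ is immediate since $\mathcal{L}(\omega) \supseteq f_G(A(\omega))$, so all three quantities coincide and the proposition follows. The main obstacle is precisely the linearity step: one must justify the biduality identification $C(M) = C(M^*)$ and the resulting ``generic contact locus is linear'' phenomenon in the real semi-algebraic category. This is handled exactly as in the finite case \cite{gortler2010characterizing}, by proving the statements over the complexification (where reflexivity is classical) and descending to the real integral sets, whose real and complex dimensions agree. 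Nothing in the argument is specific to the periodic setting beyond replacing the finite measurement map by $f_G$ and its realization space $\mathcal{R}(G)$.
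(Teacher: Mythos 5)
Your proposal is correct and follows essentially the same route as the paper: the paper proves this proposition by declaring it an immediate adaptation of \cite[Proposition 2.23]{gortler2010characterizing} (combined with \Cref{lem:2.21} and \Cref{lem:2.22}), and your argument --- identifying $f_G(A(\omega))$ with the Euclidean closure of the contact locus $B^o(\omega)$ via \Cref{lem:2.22}, showing its Zariski closure is the linear conormal space $(T_\omega M^*)^\perp$ via the conormal bundle, biduality over the complexification, and irreducibility, then sandwiching $\mathcal{L}(\omega)$ between $f_G(A(\omega))$ and that linear space --- is precisely the Gortler--Healy--Thurston proof transplanted to the periodic setting. You have simply written out in full the citation the paper leaves implicit, including the correct caveat that reflexivity is established over $\mathbb{C}$ and descended to the real integral sets.
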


\subsubsection{Proof of \texorpdfstring{\Cref{thm:ght}}{Gortler-Healy-Thurston analogoue}}

We are now almost ready to prove \Cref{thm:ght}.
We first need the following key result that describes the concept of ``degree'' that we need.
Given a map $f : X \rightarrow Y$ between smooth stratified spaces, we recall that a \emph{regular value} is any point $y \in Y$ where for each $x \in f^{-1}(y)$, the point $x$ is a differentiable point of $f$ and $\rank df(x) \geq \rank df(x')$ for all other differentiable points $x' \in X$.

\begin{corollary}[{\cite[Corollary 2.36]{gortler2010characterizing}}]\label{cor:2.36}
    If $X$ is a smooth stratified space with singularities of codimension
    at least 2, $Y$ is a smooth, connected manifold of the same dimension as $X$,
    and $f: X \rightarrow Y$ is a proper, smooth map, then there is an element $\deg f \in \mathbb{Z}/2\mathbb{Z}$ which is invariant
    under proper isotopies of $f$. 
    Moreover, $\deg f$ is equal to the number of preimages of any regular value in $Y$, taken modulo 2.
\end{corollary}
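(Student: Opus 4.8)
The plan is to reduce the statement to the classical mod-$2$ degree theory for smooth proper maps between equidimensional manifolds (in the style of Milnor's transversality arguments), using the codimension hypothesis to confine all relevant fibres to the top-dimensional stratum of $X$. Write $X^{\mathrm{reg}}$ for the union of the top-dimensional strata of $X$, which is an ordinary smooth manifold with $\dim X^{\mathrm{reg}} = \dim Y =: n$, and set $\Sigma := X \setminus X^{\mathrm{reg}}$ for the singular set, so that by hypothesis $\dim \Sigma \le n-2$. I would define the candidate degree by choosing a regular value $y$ in the open set $Y \setminus f(\Sigma)$ and setting $\deg f := \# f^{-1}(y) \bmod 2$; the bulk of the proof is to show this is well defined and isotopy invariant.

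First I would record the basic dimension and finiteness facts. Since $f$ is smooth on each stratum and $\Sigma$ is a countable union of manifolds of dimension at most $n-2$, the image $f(\Sigma)$ has Hausdorff dimension at most $n-2$; in particular $Y \setminus f(\Sigma)$ is open, dense, and, crucially because $\Sigma$ has codimension at least two, connected, since removing a subset of codimension at least two from a connected $n$-manifold cannot disconnect it. For a regular value $y \in Y \setminus f(\Sigma)$ every preimage is a smooth point lying in $X^{\mathrm{reg}}$, where $df(x)$ attains the maximal possible rank; in the only case where $f^{-1}(y)$ is finite and the count is meaningful this rank equals $n$, so $f$ is a local diffeomorphism near each such $x$, and properness then makes $f^{-1}(y)$ compact and discrete, hence finite. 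Sard's theorem applied to $f|_{X^{\mathrm{reg}}}$ guarantees that regular values are dense in $Y \setminus f(\Sigma)$.

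Next I would prove independence of the chosen regular value. Given two regular values $y_0, y_1 \in Y \setminus f(\Sigma)$, I would join them by a smooth path $\gamma \colon [0,1] \to Y \setminus f(\Sigma)$, which is possible precisely because this set is connected and because a generic $1$-dimensional path avoids the at-most-$(n-2)$-dimensional set $f(\Sigma)$; I would then perturb $\gamma$ to be transverse to $f|_{X^{\mathrm{reg}}}$ with fixed endpoints. Since $\gamma$ avoids $f(\Sigma)$, the preimage $W := (f|_{X^{\mathrm{reg}}})^{-1}(\gamma([0,1]))$ lies entirely in $X^{\mathrm{reg}}$ and, by transversality, is a smooth $1$-manifold with boundary $f^{-1}(y_0) \sqcup f^{-1}(y_1)$; properness of $f$ makes $W$ compact. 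A compact $1$-manifold has an even number of boundary points, whence $\# f^{-1}(y_0) \equiv \# f^{-1}(y_1) \pmod 2$, so $\deg f$ does not depend on the regular value.

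Finally, invariance under a proper isotopy $(f_t)_{t \in [0,1]}$ follows from the analogous cobordism argument applied to $F \colon X \times [0,1] \to Y$, $F(x,t) = f_t(x)$: choosing $y$ to be a common regular value in $Y \setminus \bigcup_{t} f_t(\Sigma)$, again available from the codimension bound and Sard's theorem, makes $F^{-1}(y) \cap (X^{\mathrm{reg}} \times [0,1])$ a compact $1$-manifold with boundary $f_0^{-1}(y) \sqcup f_1^{-1}(y)$, forcing $\deg f_0 = \deg f_1$. The main obstacle throughout is ensuring that all the classical constructions, namely the connecting paths, the transversal perturbations, and the cobounding $1$-manifolds, remain inside the smooth stratum $X^{\mathrm{reg}}$; this is exactly what the codimension-at-least-$2$ hypothesis on $\Sigma$ delivers, by keeping $Y \setminus f(\Sigma)$ connected so that generic paths never meet the singular image, while properness of $f$ supplies the compactness needed to count boundary points.
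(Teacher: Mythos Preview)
The paper does not supply its own proof of this statement: it is quoted directly as \cite[Corollary 2.36]{gortler2010characterizing} and used as a black box, so there is no in-paper argument to compare against.

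Your sketch is the standard mod-$2$ degree argument and is essentially correct; it is also the approach taken in the cited source. Two small points worth tightening. First, in the isotopy step the set $\bigcup_t f_t(\Sigma)=F(\Sigma\times[0,1])$ can have dimension as large as $n-1$, not $n-2$, so you cannot invoke connectedness of its complement; fortunately you only need one regular value $y$ outside this measure-zero set, which Sard provides. Second, when you assert that $W$ is compact, the point is that because $\gamma$ avoids $f(\Sigma)$ the full preimage $f^{-1}(\gamma([0,1]))$ already misses $\Sigma$, so $W$ coincides with that full preimage and inherits compactness from properness of $f$ on all of $X$; the restriction $f|_{X^{\mathrm{reg}}}$ itself need not be proper.
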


If a map $f$ satisfies the criteria given in \Cref{cor:2.36}, we say that $\deg f$ is the \emph{mod-two degree} of $f$.

The following two lemmas are analogs of \cite[Lemma 2.38]{gortler2010characterizing} and \cite[Lemma 2.39]{gortler2010characterizing} respectively.

\begin{lemma}\label{lem:2.38}
    Let $\omega$ be an equilibrium stress of $(G,p,L)$ that is generic in $\overline{f_G(\mathcal{R}(G))}^*$.
    Then $f_\omega$ has mod-two degree 0.
\end{lemma}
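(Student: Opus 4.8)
The plan is to show that $f_\omega$ satisfies the hypotheses of \Cref{cor:2.36} and then to locate a regular value with empty preimage, which forces $\deg f_\omega = 0$.

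First I would check that \Cref{cor:2.36} applies to $f_\omega \colon A(\omega)/\Euc(d) \to \mathcal{L}(\omega)$. Writing $k = \dim\ker\mathcal{L}_{\mathbb{Z}^d}(G,\omega)$, our standing assumption that every equilibrium stress of $(G,p,L)$ has nullity exceeding $d+1$ gives $k \geq d+2$, so by \Cref{prop:2.13} the domain $A(\omega)/\Euc(d)$ is a smooth stratified space whose singularities have codimension at least $2$. The codomain $\mathcal{L}(\omega)$ is a linear space, hence a smooth connected manifold, and $f_\omega$ is smooth (being a restriction of the polynomial map $f_G$) and proper. It remains to confirm that domain and codomain have equal dimension. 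Since $(G,p,L)$ is generic it is affinely spanning, hence a smooth point of $A(\omega)/\Euc(d)$, and since it is infinitesimally rigid the differential of $f_\omega$ is injective there; thus $f_\omega$ is generically finite and $\dim(A(\omega)/\Euc(d)) = \dim f_G(A(\omega))$. As $\omega$ is generic in $\overline{f_G(\mathcal{R}(G))}^*$, \Cref{proposition:2.23} identifies this common value with $\dim\mathcal{L}(\omega)$. Hence $\deg f_\omega \in \mathbb{Z}/2\mathbb{Z}$ is well defined and equals the number of preimages of any regular value, taken modulo $2$.

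The key step is then to produce a regular value with no preimages. Every coordinate of $f_G$ is a squared Euclidean distance, so the image of $f_\omega$, namely $f_G(A(\omega))$, is contained in the nonnegative orthant $\mathbb{R}^E_{\geq 0}$. On the other hand $\mathcal{L}(\omega)$ is a linear subspace of positive dimension $dk - \binom{d+1}{2}$ (positive because $k \geq d+2$), and a nonzero linear subspace is closed under negation and therefore cannot be contained in $\mathbb{R}^E_{\geq 0}$. I would therefore pick $\ell \in \mathcal{L}(\omega)$ with a strictly negative coordinate; such an $\ell$ lies outside $f_G(A(\omega))$, so $f_\omega^{-1}(\ell) = \emptyset$. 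An empty fiber is vacuously a regular value whose number of preimages is $0$, and \Cref{cor:2.36} then yields $\deg f_\omega = 0$.

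I expect the only genuinely delicate point to be the verification that $f_\omega$ meets every hypothesis of \Cref{cor:2.36}, and in particular the equality of dimensions between $A(\omega)/\Euc(d)$ and $\mathcal{L}(\omega)$; this rests on the interaction of the stratification bound (\Cref{prop:2.13}), the genericity of $\omega$ in the dual variety (\Cref{proposition:2.23}), and the infinitesimal rigidity of $(G,p,L)$. Once these are in place, the degree computation itself is immediate from the sign constraint on the measurement coordinates, exactly as in \cite[Lemma 2.38]{gortler2010characterizing}.
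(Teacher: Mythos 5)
Your proof is correct and takes essentially the same approach as the paper's: \Cref{prop:2.13} for the stratified domain, infinitesimal rigidity together with \Cref{proposition:2.23} to match $\dim A(\omega)/\Euc(d)$ with $\dim \mathcal{L}(\omega)$, \Cref{cor:2.36} for the well-defined mod-two degree, and a point of $\mathcal{L}(\omega)$ outside $\mathbb{R}^E_{\geq 0}$ giving an empty-fiber regular value. Your explicit remark that a positive-dimensional linear subspace is closed under negation, hence cannot lie in the nonnegative orthant, simply spells out the paper's one-line version of the same final step.
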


\begin{proof}
    By \Cref{prop:2.13}, the domain of $f_\omega$ is a smooth stratified space with singularities of codimension 2.
    Since the image of $f_\omega$ is $f_G(A(\omega))$,
    we have by \Cref{proposition:2.23} that the image of $f_\omega$, here denoted $\im f_\omega$, has the same dimension as $\mathcal{L}(\omega)$.
    As $(G,p,L)$ is infinitesimally rigid, the map $f_\omega$ is an immersion on some open dense subset of its domain.
    Hence, we have that
    \begin{equation*}
        \dim \mathcal{L}(\omega) = \dim \im f_\omega = \dim A(\omega)/\Euc (d).
    \end{equation*}
    By \Cref{cor:2.36},
    the map $f_\omega$ has a well-defined mod-two degree.
    Finally, since the image of $f_{\omega}$ is contained in $\mathbb{R}^E_{\geq 0}$ but $\mathcal{L}(\omega)$ contains points outside of $\mathbb{R}^E_{\geq 0}$,
    there exists a regular value $\lambda \in \mathcal{L}(\omega)$ such that $f_G^{-1}(\lambda) = \emptyset$.
    Hence, $f_\omega$ has mod-two degree 0.
\end{proof}

\begin{lemma}\label{lem:2.39}
    $f_G(p,L)$ is a regular value of both $f_G$ and $f_\omega$.
\end{lemma}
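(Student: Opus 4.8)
The plan is to verify the two defining conditions of a regular value --- that every preimage is a differentiable point and that the differential attains its maximal rank there --- separately for $f_G$ and for $f_\omega$, taking $\omega$ generic in $\overline{f_G(\mathcal{R}(G))}^*$ as in \Cref{lem:2.38}. The engine for both parts is \Cref{l:connelly lemma}. Since $(p,L)$ is generic, applying the lemma to $f_G$ shows that every $(q,L')$ with $f_G(q,L') = f_G(p,L)$ has exactly the same space of equilibrium stresses (the left kernel of the rigidity matrix) as $(p,L)$. Because the rigidity matrix always has $|E|$ rows, equality of the left kernels forces $\rank df_G(q,L') = \rank df_G(p,L)$, and genericity of $(p,L)$ guarantees that this common value is the maximum of $\rank df_G$ over all of $\mathcal{R}(G)$, namely $d|V|+\binom{d}{2}$.

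For $f_G$ this is essentially all that is required: $\mathcal{R}(G)$ is smooth, so every preimage is a differentiable point, and each attains the maximal rank by the paragraph above; hence $f_G(p,L)$ is a regular value of $f_G$. (If one instead works on the stratified quotient $\mathcal{R}(G)/\Euc(d)$, whose singular locus consists of non--affinely-spanning classes by \Cref{prop:2.12}, the same conclusion follows once the affine-spanning claim below is in hand.)

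For $f_\omega$ the preimage $f_\omega^{-1}(f_G(p,L))$ consists of those classes $[(q,L')]$ with $(q,L')\in A(\omega)$ and $f_G(q,L')=f_G(p,L)$, a subset of the $f_G$-preimages above, and two things must be checked. First, each such $[(q,L')]$ must be a differentiable point of the stratified domain $A(\omega)/\Euc(d)$, which by \Cref{prop:2.13} means $(q,L')$ must be affinely spanning. Second, at such a smooth point $df_\omega$ must have maximal rank. The second point is the easier one: by \Cref{l:connelly lemma}, $(G,q,L')$ shares all equilibrium stresses with $(p,L)$ and hence has the same maximal rigidity rank, so it is infinitesimally rigid. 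Since $A(\omega)$ is a $\Euc(d)$-invariant linear subspace (as $\hat{\mathbf{1}}\in\ker\mathcal{L}_{\mathbb{Z}^d}(G,\omega)$) and the trivial motions are precisely the tangent directions to the $\Euc(d)$-orbit, the kernel of $df_\omega$ at $[(q,L')]$ equals the infinitesimal motions lying in $A(\omega)$ modulo the trivial ones, which is zero. As $\dim A(\omega)/\Euc(d)=\dim\mathcal{L}(\omega)$ by \Cref{proposition:2.23}, injectivity of $df_\omega$ upgrades to full, hence maximal, rank.

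The main obstacle is the first point --- showing every preimage $(q,L')$ is affinely spanning --- and I would derive it directly from maximality of the rigidity rank. Writing $U:=\spa\{\,q(v)+L'\gamma-q(u):(u,v,\gamma)\in E\,\}$ for the span of the edge directions of $(q,L')$, one checks that, because $G$ is connected of rank $d$, one has $\im L'\subseteq U$, so that $(q,L')$ is affinely spanning exactly when $U=\mathbb{R}^d$. If instead $k:=\dim U<d$, then for every $m$ with $m(v)\in U^\perp$ and every matrix $M$ whose columns lie in $U^\perp$ the pair $(m,M)$ lies in $\ker df_G(q,L')$, since each $\nu(e)\in U$ is orthogonal to $m(v)+M\gamma-m(u)\in U^\perp$. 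These ``out-of-subspace'' flexes span a space of dimension $(d-k)(|V|+d)$, and together with the at least $\binom{k+1}{2}$ linearly independent in-subspace trivial motions a short dimension count shows the nullity strictly exceeds $\binom{d+1}{2}$ whenever $k<d$. This would make the rank fall below its maximum, contradicting the maximality established above; hence $U=\mathbb{R}^d$, every preimage is affinely spanning, and both regular-value conditions hold for $f_\omega$.
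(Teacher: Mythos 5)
Your proposal is correct and takes essentially the same route as the paper: via \Cref{l:connelly lemma}, every preimage of $f_G(p,L)$ shares the stress space and hence the maximal rigidity-matrix rank of the infinitesimally rigid generic framework, so each preimage is infinitesimally rigid, therefore affinely spanning and thus avoids the singular strata of \Cref{prop:2.12} and \Cref{prop:2.13}, while injectivity of $df_\omega$ at such classes gives maximal rank. The only difference is one of detail: where the paper simply asserts ``infinitesimally rigid (and hence affinely spanning),'' you justify that implication with an explicit $U^\perp$ dimension count, which is a correct elaboration of the same argument rather than a different approach.
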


\begin{proof}
    As $(G,p,L)$ is infinitesimally rigid, the rank of $df_G(p,L)$ is maximal over all points in $\mathcal{R}(G)$ (and hence also for $A(\omega)$).
    By \Cref{l:connelly lemma},
    every point $(p',L') \in \mathcal{R}(G)$ where $f_G(p',L') = f_G(p,L)$ is contained in $A(\omega)$ and also has maximal rank;
    moreover, since $(G,p',L')$ is also infinitesimally rigid (and hence affinely spanning), the equivalence class $[(p',L')]$ is not contained in a singularity of $\mathcal{R}(G)/\Euc(d)$ or $A(\omega)/\Euc(d)$, and the derivative $df_{\omega}(p',L)$ is injective (and hence has maximal rank).
    Hence,
    $f_G(p,L)$ is a regular value of both $f_G$ and $f_\omega$.
\end{proof}

With this final lemma, we are now ready to prove \Cref{thm:ght}

\begin{proof}[Proof of \Cref{thm:ght}]
    By \Cref{lem:2.24},
    there exists an equilibrium stress $\omega$ of $(G,p,L)$ that is a generic point of $\overline{f_G(\mathcal{R}(G))}^*$.
    By \Cref{lem:2.38},
    $f_\omega$ has mod-two degree 0.
    As $f_G(p,L)$ is a regular value of $f_\omega$ (\Cref{lem:2.39}),
    the number of points in $f^{-1}_\omega(f_G(p,L))$ is positive (as it contains the equivalence class $[(p,L)]$) and even.
    Hence, there exists a $\mathbb{Z}^d$-framework $(G,p',L')$ that is equivalent but not congruent to $(G,p,L)$ such that $(p',L') \in A(\omega)$.
    This now implies that $(G,p,L)$ is not globally rigid as required.
\end{proof}

\subsection{Fixed-lattice case}\label{subsec:fixednec}

Analogous to the previous section, to prove the necessary condition for \Cref{mainthm:fixedlattice} we only need to prove the following:

\begin{theorem}\label{thm:ghtfixed}
    Let $(G,p,L)$ be a generic $\mathbb{Z}^d$-framework.
    If $(G,p,L)$ has no fixed-lattice equilibrium stress $\omega$ with $\dim\ker \mathcal{L}(G,\omega) = 1$,
    then $(G,p,L)$ is not fixed-lattice globally rigid.
\end{theorem}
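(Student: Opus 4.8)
The plan is to reduce \Cref{thm:ghtfixed} to the already-proved flexible-lattice result \Cref{thm:ght}, using the loop-addition construction of \Cref{lem:extend} rather than redeveloping the whole Gortler--Healy--Thurston machinery a second time. Since $(G,p,L)$ is generic, its lattice $L$ is non-singular, so the framework is non-flat and \Cref{lem:extend} applies. First I would fix a set $F$ of $\binom{d+1}{2}$ loops at a single vertex whose gain dyads $\{\gamma\gamma^\top : (u,u,\gamma)\in F\}$ form a basis of the symmetric $d\times d$ matrices, and pass to $(G+F,p,L)$. Adding loops does not change $(p,L)$, so $(G+F,p,L)$ is again generic.

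The key step is a nullity correspondence between the two stress matrices. Let $(\omega,\mu)$ be any equilibrium stress of $(G+F,p,L)$. Because loops never appear in \cref{eq:1}, the restriction $\omega$ automatically satisfies \cref{eq:1} for $G$, i.e.\ it is a fixed-lattice equilibrium stress of $(G,p,L)$; in particular \Cref{lem:sub} applies. Combining this with the equation determining $\mu$ from the proof of \Cref{lem:extend} (which encodes \cref{eq:2} for $G+F$), a short computation shows that the lower-right $d\times d$ block of $\mathcal{L}_{\mathbb{Z}^d}(G+F,(\omega,\mu))$ equals $L^{-1}P\mathcal{L}(G,\omega)P^\top L^{-\top}$, so that
\[
\mathcal{L}_{\mathbb{Z}^d}(G+F,(\omega,\mu))
=
\begin{bmatrix}
\mathcal{L}(G,\omega) & -\mathcal{L}(G,\omega)P^\top L^{-\top}\\
-L^{-1}P\mathcal{L}(G,\omega) & L^{-1}P\mathcal{L}(G,\omega)P^\top L^{-\top}
\end{bmatrix}.
\]
Writing a kernel vector as $(a,b)$ with $a\in\mathbb{R}^{|V|}$ and $b\in\mathbb{R}^d$, the second block-row is automatically implied by the first, which reads $\mathcal{L}(G,\omega)(a-P^\top L^{-\top}b)=0$; hence the kernel is $\{(a,b):a-P^\top L^{-\top}b\in\ker\mathcal{L}(G,\omega)\}$ and
\[
\dim\ker\mathcal{L}_{\mathbb{Z}^d}(G+F,(\omega,\mu))=\dim\ker\mathcal{L}(G,\omega)+d.
\]
Consequently, if $(G,p,L)$ has no fixed-lattice equilibrium stress with $\dim\ker\mathcal{L}(G,\omega)=1$, then $(G+F,p,L)$ has no equilibrium stress with $\dim\ker\mathcal{L}_{\mathbb{Z}^d}(G+F,\cdot)=d+1$.

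By \Cref{thm:ght} applied to the generic framework $(G+F,p,L)$, there is a $\mathbb{Z}^d$-framework $(G+F,p',L')$ equivalent but not congruent to it. Equivalence forces $\|L\gamma\|=\|L'\gamma\|$ for every loop $(u,u,\gamma)\in F$; since the dyads $\gamma\gamma^\top$ span the symmetric matrices, this yields $L^\top L=L'^\top L'$, so $O:=L'L^{-1}$ is orthogonal and $L'=OL$. Replacing $(p',L')$ by the congruent pair $(O^{-1}\circ p',\,L)$ produces a framework with lattice exactly $L$ that is still equivalent to $(G+F,p,L)$; restricting to the edges of $G$ gives $(G,q,L)$ with $q=O^{-1}\circ p'$, equivalent to $(G,p,L)$. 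If $q$ were a translate of $p$, then $p'(v)=Op(v)+Ox$ and $L'=OL$, making $(G+F,p',L')$ congruent to $(G+F,p,L)$, a contradiction; hence $q$ is not a translate of $p$, and by the congruence notion of \Cref{prop:fltransl} the framework $(G,p,L)$ is not fixed-lattice globally rigid. (If $(G,p,L)$ is not fixed-lattice locally rigid the conclusion is immediate, so this case needs no separate treatment.)

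The main obstacle is the nullity correspondence: one must verify that the identity for the lower-right block holds for \emph{every} equilibrium stress of $(G+F,p,L)$, not merely for the particular extension produced in \Cref{lem:extend}, which requires re-deriving \cref{eq:2} for $G+F$ from \Cref{prop:suff1} together with \Cref{lem:sub}. A secondary point to handle with care is that the loops lock the lattice only up to an orthogonal factor, so the descent must absorb that factor while preserving non-congruence. As an alternative one could mirror the flexible-lattice argument directly, replacing $f_G$ by $f_{G,L}$, the group $\Euc(d)$ by $\Trans(d)$, and $\mathcal{L}_{\mathbb{Z}^d}$ by $\mathcal{L}$; here $A_L(\omega)/\Trans(d)$ is in fact a smooth manifold, since $\Trans(d)$ acts freely, so the stratification subtleties of \Cref{prop:2.13} vanish, but the reduction above is considerably shorter.
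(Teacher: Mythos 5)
Your proposal is correct and takes essentially the same route as the paper's proof: add $\binom{d+1}{2}$ loops at a vertex via \Cref{lem:extend}, show every equilibrium stress of $(G+F,p,L)$ has stress-matrix nullity exceeding $d+1$, apply \Cref{thm:ght}, and then use the loop length equations $\gamma^\top L^\top L\gamma = \gamma^\top L'^\top L'\gamma$ together with the spanning dyads to extract an orthogonal factor and descend to a non-congruent equivalent realization with lattice $L$. The only cosmetic difference is that you re-derive the nullity correspondence $\dim\ker\mathcal{L}_{\mathbb{Z}^d}(G+F,(\omega,\mu)) = \dim\ker\mathcal{L}(G,\omega)+d$ by a direct block computation using \Cref{lem:sub} and \Cref{prop:suff1}, where the paper packages the same fact as \Cref{lem:fixedhassamerank} (proved via \Cref{lem:galeduality}).
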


Our method for proving \Cref{thm:ghtfixed} is to shift our fixed-lattice framework problem into the more general flexible-lattice framework setting so as to utilize \Cref{thm:ght}.
For this, we first require the following result:

\begin{lemma}\label{lem:fixedhassamerank}
    Let $\omega$ be an equilibrium stress of a non-flat $\mathbb{Z}^d$-framework $(G,p,L)$,
    let $G'$ be the graph formed from $G$ by removing all loops and let $\omega'$ be the restriction of $\omega$ to $G'$.
    Then $\omega'$ is a fixed-lattice equilibrium stress of $(G',p,L)$ and 
    \begin{equation*}
        \rank \mathcal{L}_{\mathbb{Z}^d}(G,\omega) = \rank \mathcal{L}(G,\omega) = \rank \mathcal{L}(G',\omega').
    \end{equation*}
\end{lemma}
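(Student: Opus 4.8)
The plan is to establish the three assertions in increasing order of difficulty, the first two being essentially immediate. For the claim that $\omega'$ is a fixed-lattice equilibrium stress of $(G',p,L)$: the defining condition \cref{eq:1} at a vertex $x$ is a sum over $\delta_G(x)$, which by definition contains only non-loop edges. Deleting the loops of $G$ therefore leaves every instance of \cref{eq:1} untouched, so $\omega'$ satisfies \cref{eq:1} on $G'$ and is a fixed-lattice equilibrium stress. For the equality $\rank\mathcal{L}(G,\omega)=\rank\mathcal{L}(G',\omega')$: a loop contributes an all-zero row to $I(G)$, so writing $\mathcal{L}(G,\omega)=I(G)^\top{\rm diag}(\omega)I(G)=\sum_{e\in E}\omega(e)\,r_e^\top r_e$, with $r_e$ the row of $I(G)$ indexed by $e$, shows that every loop term vanishes. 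Since $G'$ has the same vertex set as $G$ and exactly its non-loop edges, this sum equals $\mathcal{L}(G',\omega')$; the two weighted Laplacians are literally the same matrix, so their ranks coincide.

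The substantive step is $\rank\mathcal{L}_{\mathbb{Z}^d}(G,\omega)=\rank\mathcal{L}(G,\omega)$. Because an equilibrium stress satisfies \cref{eq:1} it is in particular a fixed-lattice equilibrium stress, and since $(G,p,L)$ is non-flat the lattice $L$ is invertible, so \Cref{lem:sub} applies and gives
\[
\mathcal{L}_{\mathbb{Z}^d}(G,\omega)=
\begin{bmatrix}
\mathcal{L}(G,\omega) & -\mathcal{L}(G,\omega)P^\top L^{-\top}\\
-L^{-1}P\mathcal{L}(G,\omega) & M(G){\rm diag}(\omega)M(G)^\top
\end{bmatrix}.
\]
I would then clear both off-diagonal blocks by rank-preserving (unipotent block-triangular) multiplications: right-multiplying by $\left[\begin{smallmatrix}I & P^\top L^{-\top}\\ 0 & I\end{smallmatrix}\right]$ annihilates the top-right block using $\mathcal{L}(G,\omega)$, and left-multiplying by $\left[\begin{smallmatrix}I & 0\\ L^{-1}P & I\end{smallmatrix}\right]$ annihilates the bottom-left block, yielding the block-diagonal matrix ${\rm diag}\bigl(\mathcal{L}(G,\omega),\,S\bigr)$ with Schur complement $S=M(G){\rm diag}(\omega)M(G)^\top-L^{-1}P\mathcal{L}(G,\omega)P^\top L^{-\top}$. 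Hence $\rank\mathcal{L}_{\mathbb{Z}^d}(G,\omega)=\rank\mathcal{L}(G,\omega)+\rank S$, and it suffices to show $S=0$.

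Proving $S=0$ is the one place where the \emph{full} equilibrium condition \cref{eq:2}, rather than merely the fixed-lattice condition \cref{eq:1}, enters. By \Cref{prop:suff1}\ref{prop:suff1item1}, $\omega$ being an equilibrium stress means $[P~L]\,\mathcal{L}_{\mathbb{Z}^d}(G,\omega)=\mathbf{0}$; reading off the last $d$ columns gives $P\,I(G)^\top{\rm diag}(\omega)M(G)^\top+L\,M(G){\rm diag}(\omega)M(G)^\top=\mathbf{0}$. Substituting the top-right block identity $I(G)^\top{\rm diag}(\omega)M(G)^\top=-\mathcal{L}(G,\omega)P^\top L^{-\top}$ supplied by \Cref{lem:sub}, and then multiplying through by $L^{-1}$, yields exactly $M(G){\rm diag}(\omega)M(G)^\top=L^{-1}P\mathcal{L}(G,\omega)P^\top L^{-\top}$, i.e. $S=0$. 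This closes the chain $\rank\mathcal{L}_{\mathbb{Z}^d}(G,\omega)=\rank\mathcal{L}(G,\omega)=\rank\mathcal{L}(G',\omega')$. The argument is entirely mechanical; there is no deep obstacle, and the only points requiring care are tracking the transposes and the placement of $L^{-1}$, together with the conceptual observation that \cref{eq:2} is precisely what forces the lattice block $S$ to vanish.
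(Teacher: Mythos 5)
Your proof is correct, and it reaches the rank identity by a genuinely different route from the paper's. The paper never forms a Schur complement: it applies \Cref{prop:suff1}\ref{prop:suff1item1} to get $[P ~ L]\,\mathcal{L}_{\mathbb{Z}^d}(G,\omega)=\mathbf{0}$ and then invokes \Cref{lem:galeduality} twice --- once, using the invertibility of $L$, to conclude that the last $d$ rows of $\mathcal{L}_{\mathbb{Z}^d}(G,\omega)$ lie in the row span of the first $|V|$ rows (the ``$|V|-d$'' in the paper's proof is evidently a typo for $|V|$), and once more, after passing to the first $|V|$ columns via symmetry, to conclude that the last $d$ rows of $X=\left[\begin{smallmatrix}\mathcal{L}(G,\omega)\\ M(G'){\rm diag}(\omega')I(G')\end{smallmatrix}\right]$ lie in the row span of $\mathcal{L}(G,\omega)$; \Cref{lem:sub} is not used there at all. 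You instead invoke \Cref{lem:sub} (legitimately: an equilibrium stress satisfies \cref{eq:1} and hence is a fixed-lattice one, and $L$ is invertible by non-flatness) and clear the off-diagonal blocks; note that your two unipotent multipliers are transposes of one another, so your reduction is in fact a congruence $U^\top \mathcal{L}_{\mathbb{Z}^d}(G,\omega)\,U = {\rm diag}\bigl(\mathcal{L}(G,\omega), S\bigr)$, after which you correctly show $S=\mathbf{0}$ by reading off the last $d$ columns of the full equilibrium identity --- precisely where \cref{eq:2} enters, just as you observe. The two arguments consume the same inputs (\Cref{prop:suff1} and nonsingularity of $L$), but yours buys strictly more: being a congruence, it preserves inertia by Sylvester's law, so $\mathcal{L}_{\mathbb{Z}^d}(G,\omega)$ and $\mathcal{L}(G,\omega)$ share not just rank but signature (in particular one is positive semidefinite if and only if the other is), and the identity $M(G){\rm diag}(\omega)M(G)^\top = L^{-1}P\mathcal{L}(G,\omega)P^\top L^{-\top}$ is a sharper structural fact than the rank equality alone; the paper's route is shorter and needs no matrix algebra beyond the row-span observation. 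Your handling of the two preliminary claims (loops give zero rows of $I(G)$, so $\mathcal{L}(G,\omega)=\mathcal{L}(G',\omega')$, and $\delta_G(x)$ excludes loops, so \cref{eq:1} is untouched by deleting them) matches the paper's \cref{eq:gprime1} step, merely phrased pointwise rather than in matrix form.
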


To prove \Cref{lem:fixedhassamerank}, we require the following simple lemma:

\begin{lemma}\label{lem:galeduality}
    Let $A \in \mathbb{R}^{d \times m}$, $B \in \mathbb{R}^{d \times d}$, $C \in \mathbb{R}^{m \times n}$, $D \in \mathbb{R}^{d \times n}$ be matrices satisfying the following equation:
    \begin{equation*}
        \begin{bmatrix}
            A & B
        \end{bmatrix}
        \begin{bmatrix}
            C \\
            D
        \end{bmatrix}
        = A C + BD = \mathbf{0}_{d \times n}.
    \end{equation*}
    If the matrix $B$ is non-singular, then the rows of $D$ are contained in the row span of $C$.
\end{lemma}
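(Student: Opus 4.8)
The plan is to exploit the invertibility of $B$ to solve explicitly for $D$ and then read off the resulting identity row by row. First I would rewrite the given relation $AC + BD = \mathbf{0}_{d\times n}$ as $BD = -AC$. Since $B$ is non-singular by hypothesis, $B^{-1}$ exists, and left-multiplying by $B^{-1}$ yields $D = -B^{-1}AC$.

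Next I would set $M := -B^{-1}A \in \mathbb{R}^{d \times m}$, so that the previous step reads $D = MC$. The point is now purely about what left multiplication by $M$ does: the $i$-th row of $D = MC$ equals $\sum_{j=1}^m M_{ij}$ times the $j$-th row of $C$. Hence each row of $D$ is a linear combination of the rows of $C$, which is precisely the claim that the rows of $D$ lie in the row span of $C$.

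There is essentially no obstacle to this argument; it is a direct consequence of being able to invert $B$. The one point requiring care is the bookkeeping of left versus right multiplication: I would make sure that $M$ multiplies $C$ on the \emph{left}, so that the linear dependence is expressed at the level of rows (and not columns), matching the wording of the statement. With that check in place the proof is complete in a couple of lines.
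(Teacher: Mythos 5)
Your proof is correct and is essentially the same as the paper's, which simply notes the identity $D = (-B^{-1}A)C$ and leaves the row-span conclusion implicit. You have merely spelled out the left-multiplication bookkeeping that the paper omits.
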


\begin{proof}
    The result follows from the observation that $D= (-B^{-1}A) C$.
\end{proof}

\begin{proof}[Proof of \Cref{lem:fixedhassamerank}]
    We first observe that
    \begin{equation}\label{eq:gprime1}
        \mathcal{L}(G,\omega) = \mathcal{L}(G',\omega'), \qquad M(G) {\rm diag}(\omega) I(G) = M(G') {\rm diag}(\omega') I(G').
    \end{equation}
    Since $\omega$ satisfies the equation given in \Cref{prop:suff1}\ref{prop:suff1item2},
    it follows from \cref{eq:gprime1} that 
    \begin{equation*}
        P\mathcal{L}(G',\omega') + L M(G') {\rm diag}(\omega') I(G') = P\mathcal{L}(G,\omega) + L M(G) {\rm diag}(\omega) I(G) = \mathbf{0}_{d \times (|V|+d)}.
    \end{equation*}
    Hence $\omega'$ is a fixed-lattice equilibrium stress of $(G',p,L)$.
    
    By \Cref{prop:suff1}\ref{prop:suff1item1},
    we have $[P ~ L] \mathcal{L}_{\mathbb{Z}^d}(G,\omega) = \mathbf{0}_{d \times (|V|+d)}$.
    As $L$ is non-singular, it follows from \Cref{lem:galeduality} that the first $|V|-d$ rows of $\mathcal{L}_{\mathbb{Z}^d}(G,\omega)$ span the entire row space of the matrix.
    Since $\mathcal{L}_{\mathbb{Z}^d}(G,\omega)$ is symmetric, it follows that the rank of $\mathcal{L}_{\mathbb{Z}^d}(G,\omega)$ is equal to the rank of the matrix
    \begin{equation*}
        X =
        \begin{bmatrix}
            \mathcal{L}(G,\omega) \\
            M(G') {\rm diag}(\omega') I(G')
        \end{bmatrix}.
    \end{equation*}
    Since $[P ~ L] X = \mathbf{0}_{d \times |V|}$, it again follows from \Cref{lem:galeduality} that the first $|V|-d$ rows of $X$ span the entire row space of the matrix.
    Hence $\rank \mathcal{L}_{\mathbb{Z}^d}(G,\omega) = \rank X = \rank \mathcal{L}(G,\omega)$.   
\end{proof}

\begin{proof}[Proof of \Cref{thm:ghtfixed}]
    Pick an arbitrary vertex $u \in V$.
    From this, we construct a set of $\binom{d+1}{2}$ loops $F$ at $u$ such that the set
    \begin{equation*}
        \Gamma := \left\{ \gamma \gamma^\top : (u,u,\gamma) \in F\right\}
    \end{equation*}
    is a basis of the space of $d \times d$ symmetric matrices.
    By \Cref{lem:extend},
    there exists a bijection from the left kernel of $R(G+F,p,L)$ to $R_L(G,p)$ given by $(\omega_e)_{e \in E \cup F} \mapsto (\omega_e)_{e \in E}$.
    Hence by \Cref{lem:fixedhassamerank},
    we have $\dim \ker \mathcal{L}_{\mathbb{Z}^d}(G+F,\omega) > d+1$ for all equilibrium stresses $\omega$ of $(G+F,p,L)$.
    By \Cref{thm:ght},
    there exists a $\mathbb{Z}^d$-framework $(G+F,q,M)$ that is equivalent but non-congruent to $(G+F,p,L)$.
    Using that $\langle A, B \rangle := {\rm Tr}(A B)$ is an inner product on the space of real symmetric matrices, each loop $(u,u,\gamma) \in F$ describes the following equality:
    \begin{gather*}
        \gamma^\top L^\top L \gamma = \|p(u) + L \gamma - p(u) \|^2 = \|q(u) + M \gamma - q(u) \|^2 = \gamma^\top M^\top M \gamma \\
        \Updownarrow \\
        \gamma^\top (L^\top L - M^\top M) \gamma = 0 \\
        \Updownarrow \\
        \left\langle (L^\top L - M^\top M) ~ , ~ \gamma \gamma^\top \right\rangle = {\rm Tr}\left((L^\top L - M^\top M) \gamma \gamma^\top \right) = 0.
    \end{gather*}
    Since the set $\Gamma$ is a basis of the space of $d\times d$ symmetric matrices,
    it follows that $L^\top L = M^\top M$.
    This in turn implies that $L = A M$ for some orthogonal matrix $A$.
    Thus, if we set $p' = (A q(v))_{v \in V}$, then $(G,p',L)$ is equivalent but not congruent to $(G,p,L)$.
    Hence $(G,p,L)$ is not fixed-lattice globally rigid.    
\end{proof}

\section{Sufficient conditions for the global rigidity of tensegrities under volume constraints}\label{sec:boundedlat}

Global rigidity under lattice representations with  a volume constraint is, in general, not a generic property. For example, if there exists a generic (fully flexible lattice) locally rigid, but not globally rigid $\mathbb{Z}^d$-framework $(G,p,L)$ that becomes globally rigid once a volume lower bound is imposed on the lattice, then this framework is equivalent to a generic $\mathbb{Z}^d$-framework $(G,p',L')$ with a lattice of smaller volume, and so $(G,p',L')$ will not be globally rigid with the addition of a volume lower bound on the lattice. 

Nevertheless, in this section we establish a sufficient stress matrix condition for the global rigidity of periodic tensegrities where the volume of the lattice is given a lower bound.

\subsection{Rigidity of frameworks under volume constraints}

Recall that for a non-flat framework $(G,p,L)$, the lattice $L$ is nonsingular and that ${\cal R}^*(G)$ denotes the set of non-flat realizations of $G$. 
In order to study the (global) rigidity of frameworks subject to a volume constraint for the fundamental domain of the lattice, we consider the  \emph{extended measurement map}  
$f_G^{\rm vol}: {\cal R}^*(G)\rightarrow \mathbb{R}^{E} \times \mathbb{R}$ given by
\begin{equation*}
f_G^{\rm vol}(p,L)=
\begin{bmatrix}
    f_G(p,L) \\
    -\log |\det(L)|
\end{bmatrix}.
\end{equation*}
For a technical reason which we will see later, there is a minus sign in the last coordinate.
The extended rigidity matrix $R^{\rm vol}(G,p,L)$ is now defined as 
$\frac{1}{2}df_G^{\rm vol}(p,L)$, where $df_G^{\rm vol}(p,L)$ is the Jacobian of $f_G^{\rm vol}$ evaluated at $(p,L)$.

\begin{remark}
    For dimension $d=2$, the rigidity matrix $R^{\rm vol}(G,p,L)$ is the same as the one introduced by Malestein and Theran for  periodic frameworks in the plane under a unit-area lattice representation \cite{mtfixedarea}.
\end{remark}

To the best of our knowledge, the following concept of a $\lambda$-equilibrium stress is new.
Let $\lambda\in \mathbb{R}$, and assume that $L$ is nonsingular (i.e., $(G,p,L)$ is non-flat).
An edge weight $\omega:E\rightarrow \mathbb{R}$ is said to be a \emph{$\lambda$-equilibrium stress} of a $\mathbb{Z}^d$-framework $(G,p,L)$ if 
it satisfies \cref{eq:1} (i.e., it is a fixed-lattice equilibrium stress) and 
\begin{equation}\label{eq:3}
    \sum_{e = (u,v,\gamma)\in E} \omega(e) (p(v) + L\gamma -p(u)) \gamma^{\top}=\lambda L^{-\top},
\end{equation} 
where $L^{-\top}$ denotes the transpose of the inverse of $L$.
If $(G,p,L)$ is a tensegrity, we also require that $\omega$ must be a proper edge weighting.

\begin{proposition}\label{prop:lambda_stress}
An edge weight $\omega:E(G)\rightarrow \mathbb{R}$ is a $\lambda$-equilibrium stress of a non-flat $\mathbb{Z}^d$-framework $(G,p,L)$ if and only if $(\omega^{\top}, \lambda)$ is in the left kernel of  $R^{\rm vol}(G,p,L)=0$.
\end{proposition}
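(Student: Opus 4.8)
The plan is to compute $R^{\mathrm{vol}}(G,p,L)$ in block form and then read the condition $(\omega^\top,\lambda)R^{\mathrm{vol}}(G,p,L)=\mathbf 0$ off its columns, recovering exactly \eqref{eq:1} and \eqref{eq:3}. First I would record the rows of $R^{\mathrm{vol}}(G,p,L)=\tfrac12 df_G^{\mathrm{vol}}(p,L)$. Since the first $|E|$ coordinates of $f_G^{\mathrm{vol}}$ are $f_G$, the first $|E|$ rows of $R^{\mathrm{vol}}(G,p,L)$ are precisely the rows of $R(G,p,L)$. For the last row I would differentiate $-\log|\det L|$: it does not depend on $p$, so its $p$-part is $\mathbf 0$, and for the lattice part I would invoke the standard identity $\nabla_L \log|\det L| = L^{-\top}$. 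Translating this through the column-stacking convention used to build the lattice vector $\ell$ in \Cref{subsec:coord}, the last row equals a fixed scalar multiple of $[\,\mathbf 0_{1\times d|V|}\ \ -\mathrm{vec}(L^{-\top})^\top\,]$, where $\mathrm{vec}(\cdot)$ is column-stacking vectorization and the minus sign is exactly the one the authors inserted into the last coordinate of $f_G^{\mathrm{vol}}$.

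Next I would split the left-kernel equation into its $p$-block and its $\ell$-block. The $p$-block is $\omega^\top R_L(G,p)=\mathbf 0$, since the $p$-columns of $R(G,p,L)$ form precisely the $L$-rigidity matrix $R_L(G,p)$ and the last row contributes nothing to these columns; by the identification of fixed-lattice equilibrium stresses with the left kernel of $R_L(G,p)$ (the discussion following \eqref{eq:2.1}), this block holds if and only if $\omega$ satisfies \eqref{eq:1}. For the $\ell$-block I would use the identity $\sum_{e}\omega(e)\,(\gamma\otimes\nu(e)) = \mathrm{vec}\!\big(\sum_{e}\omega(e)\,\nu(e)\gamma^\top\big)$, valid under the same column-stacking convention. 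The $\ell$-block then states that $\mathrm{vec}(\sum_e \omega(e)\nu(e)\gamma^\top)$ is the corresponding scalar multiple of $\lambda\,\mathrm{vec}(L^{-\top})$; un-vectorizing recovers \eqref{eq:3} (with the sign supplied by the minus sign noted above and the scalar fixed by our normalization of $R^{\mathrm{vol}}$). Since each block is itself an ``if and only if'', both directions of the proposition follow simultaneously.

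The only real difficulty is careful bookkeeping: I must keep the three vectorization conventions mutually consistent -- the column-stacking that defines $\ell$, the explicit form of $\gamma\otimes\nu(e)$, and the vectorization of $\nabla_L\log|\det L|=L^{-\top}$ -- so that the $\ell$-block matches \eqref{eq:3} exactly in sign and normalization. No deeper input is required; the single nonroutine ingredient is the derivative of $\log|\det L|$, which is standard matrix calculus, and the positivity/properness caveat for tensegrities is inherited directly from the definition of a $\lambda$-equilibrium stress.
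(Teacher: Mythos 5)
Your proof is correct and is essentially the paper's argument: the paper's own proof is a one-line appeal to the gradient identity $\nabla_L \log|\det L| = L^{-\top}$, and your block decomposition of $R^{\rm vol}(G,p,L)$ into the $p$-columns (recovering $\omega^\top R_L(G,p)=\mathbf{0}$, i.e.\ \cref{eq:1}) and the $\ell$-columns (recovering \cref{eq:3} via $\gamma\otimes\nu(e)=\mathrm{vec}\bigl(\nu(e)\gamma^\top\bigr)$) is just that appeal written out in full. The only bookkeeping wrinkle you rightly flag but leave implicit -- that the $\tfrac12$ in $R^{\rm vol}=\tfrac12\,df_G^{\rm vol}$ rescales the multiplier (left-kernel vectors give $\sum_e\omega(e)\nu(e)\gamma^\top=\tfrac{\lambda}{2}L^{-\top}$ under the stated normalization) -- is equally glossed over in the paper and is harmless, since $\lambda$ ranges over all of $\mathbb{R}$ and positivity is preserved.
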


\begin{proof}
    This follows from the well-known fact that if $g(L):=\log |\det(L)|$ is considered  a function over ${\rm GL}(d)$, then its gradient is $L^{-\top}$.
\end{proof}

We now consider the global rigidity of $\mathbb{Z}^d$-frameworks under an additional volume constraint for the lattice, 
where the \emph{volume} of $L$ (or $(G,p,L)$) is defined to be $|\det(L)|$.
Our main theorem is the following. 

\begin{theorem}\label{thm:2}
    Let $(G,p,L)$ be a non-flat $\mathbb{Z}^d$-tensegrity with volume equal to one.
    Then $(G,p,L)$ is globally rigid over all tensegrities with volume at least one if there is a positive number $\lambda$ and a $\lambda$-equilibrium stress $\omega$ of $(G,p,L)$ such that 
    \begin{itemize}
    \item $\dim \ker \mathcal{L}_{\mathbb{Z}^d} (G,\omega) =1$, and 
    \item $\mathcal{L}_{\mathbb{Z}^d} (G,\omega)$ is positive semidefinite.
    \end{itemize}
\end{theorem}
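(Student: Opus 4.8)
The plan is to adapt the energy-minimization argument behind \Cref{thm:suffpsd} to the volume-constrained setting, working around the non-convex constraint $|\det L'|\ge 1$ by optimizing over rigid motions before invoking convexity. The conceptual starting point is that the $\lambda$-equilibrium stress condition is exactly the first-order stationarity (KKT) condition for minimizing $\mathcal{E}_{G,\omega}$ subject to $|\det L'|\ge 1$. Indeed, by \cref{eq:4.3} the gradient $\nabla\mathcal{E}_{G,\omega}(p,L)$ vanishes in the point-configuration directions (this is \cref{eq:1}), while in the lattice directions it equals the matrix $\sum_{e}\omega(e)\nu(e)\gamma^\top$, which by \cref{eq:3} is $\lambda L^{-\top}$. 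Since $\nabla \log|\det L| = L^{-\top}$ (as in the proof of \Cref{prop:lambda_stress}), this is precisely $\lambda$ times the gradient of the active constraint, with $\lambda>0$ and $|\det L|=1$ giving complementary slackness. Thus $(p,L)$ is a stationary point, and the real work is to promote this to a global minimizer up to congruence.

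First I would fix any $\mathbb{Z}^d$-tensegrity $(G,p',L')$ with $|\det L'|\ge 1$ that is dominated by $(G,p,L)$. Properness of $\omega$ together with \cref{eq:4.2} gives $\mathcal{E}_{G,\omega}(p',L')\le \mathcal{E}_{G,\omega}(p,L)$, exactly as in \Cref{thm:suffpsd}. The aim is to establish the reverse inequality, forcing equality. Because $\mathcal{L}_{\mathbb{Z}^d}(G,\omega)$ is positive semidefinite, $\mathcal{E}_{G,\omega}$ is a convex quadratic form, so for every $(p'',L'')\in\mathcal{R}(G)$ we have the gradient inequality $\mathcal{E}_{G,\omega}(p'',L'')\ge \mathcal{E}_{G,\omega}(p,L) + \langle \nabla\mathcal{E}_{G,\omega}(p,L),\,(p''-p,\,L''-L)\rangle$, with equality if and only if $(p''-p,\,L''-L)\in\ker(\mathcal{L}_{\mathbb{Z}^d}(G,\omega)\otimes I_d)$.

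The key step is to apply this inequality not to $(p',L')$ itself but to every congruent copy $(Qp'+t,\,QL')$ with $Q\in O(d)$ and $t\in\mathbb{R}^d$; each such copy has the same energy and the same volume as $(p',L')$. Since the point-configuration part of $\nabla\mathcal{E}_{G,\omega}(p,L)$ vanishes, the first-order term collapses to $\lambda\bigl({\rm Tr}(L^{-1}QL') - d\bigr)$. Maximizing over $Q$ and using the identity $\max_{Q\in O(d)}{\rm Tr}(L^{-1}QL') = \sum_i \sigma_i(L'L^{-1})$ (the sum of singular values, via the SVD), together with the AM--GM inequality and $\prod_i \sigma_i(L'L^{-1}) = |\det L'|\ge 1$, yields $\max_{Q\in O(d)}{\rm Tr}(L^{-1}QL')\ge d$. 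Hence $\mathcal{E}_{G,\omega}(p',L')\ge \mathcal{E}_{G,\omega}(p,L)$, and combined with domination every inequality in the chain is an equality.

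Finally I would extract congruence from the tightness. Equality in AM--GM forces all singular values of $L'L^{-1}$ to equal $1$, so $L'L^{-1}$ is orthogonal and $|\det L'|=1$; choosing the optimal $Q$ then makes the rotated lattice $QL'$ equal to $L$. For this $Q$ the gradient inequality is also tight, so the rotated realization differs from $(p,L)$ by an element of $\ker(\mathcal{L}_{\mathbb{Z}^d}(G,\omega)\otimes I_d)$. Because $\dim\ker\mathcal{L}_{\mathbb{Z}^d}(G,\omega)=1$, this kernel is spanned by $\hat{\mathbf{1}}$ and hence consists exactly of the pure translations (all points coincident, lattice zero), so the rotated realization is a translate of $(p,L)$; therefore $(G,p',L')$ is congruent to $(G,p,L)$. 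The main obstacle is precisely the non-convexity of the constraint $|\det L'|\ge 1$: the naive bound ${\rm Tr}(L^{-1}L')\ge d$ can fail when $L^{-1}L'$ has negative or non-real eigenvalues, and the device of pre-optimizing over $O(d)$ (equivalently, replacing the trace by the nuclear norm of $L'L^{-1}$) is exactly what rescues the argument and realizes the ``every local minimizer is a global minimizer'' phenomenon flagged in the introduction. The only remaining routine points are attainment/coercivity modulo translation and the elementary trace-optimization identity, neither of which presents difficulty.
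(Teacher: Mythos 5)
Your proof is correct, and it takes a genuinely different route from the paper's. The paper deduces \Cref{thm:2} from the standalone result \Cref{thm:3}: it first establishes that the non-convex program \prob{} attains a global minimizer (\Cref{lem:vol2}, a coercivity argument after pinning one vertex), then uses the kernel condition to show that any two KKT points are affinely related (\Cref{lem:vol5}) and satisfy $[P ~ L]\mathcal{L}_{\mathbb{Z}^d}(G,\omega)[P ~ L]^{\top} = \lambda I_d$ (\Cref{lem:vol4}), and finally compares the energies of two KKT points via ${\rm Tr}(A^*A^{*\top})$ and AM--GM applied to the eigenvalues of $A^*A^{*\top}$ under $\det A^* = 1$. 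You instead compare the stressed framework directly with an \emph{arbitrary} dominated competitor: the gradient inequality for the convex quadratic $\mathcal{E}_{G,\omega}$, applied to the best congruent copy $(Qp'+t,\,QL')$, reduces everything to $\max_{Q\in O(d)}{\rm Tr}(L^{-1}QL') = \sum_i \sigma_i(L'L^{-1}) \geq d\,|\det L'|^{1/d} \geq d$ --- the same AM--GM mechanism, but applied to the singular values of $L'L^{-1}$ rather than to the eigenvalues of $A^*A^{*\top}$ (which are the squared singular values of $A^*$, so the two inequalities are close cousins). Your route is leaner for \Cref{thm:2} itself: it needs no existence-of-minimizer lemma (the only optimization left is over the compact group $O(d)$, so your closing concern about ``attainment/coercivity'' is in fact moot), no affine-relation lemma, and no appeal to the KKT theorem beyond the stationarity identity of \Cref{lem:vol1}, which the hypothesis hands you together with $\lambda>0$; the pre-optimization over $O(d)$, i.e.\ replacing ${\rm Tr}(L^{-1}L')$ by the nuclear norm of $L'L^{-1}$, is exactly what neutralizes the non-convexity of the determinant constraint, as you observe. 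What the paper's longer route buys is \Cref{thm:3} as an independent statement about \prob{} (every KKT point of the non-convex program is a global minimizer, unique up to isometry), from which the Kotani--Sunada corollary also follows; your argument would recover that only by re-running it from an arbitrary KKT point, where one must additionally rule out the multiplier $\lambda = 0$, as the paper does in \Cref{lem:vol4}. One cosmetic correction: the elements of $\ker(\mathcal{L}_{\mathbb{Z}^d}(G,\omega)\otimes I_d)$, which is spanned by the vectors $\hat{\mathbf{1}}\otimes z$ for $z\in\mathbb{R}^d$, assign the \emph{same} displacement $z$ to every vertex and zero to the lattice --- they are pure translations, not configurations with ``all points coincident'' as your parenthetical suggests; the congruence conclusion is unaffected.
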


The proof is done by analyzing the following optimization problem over the realization space ${\cal R}(G)$
for a $\mathbb{Z}^d$-gain graph $G$ with edge weight $\omega:E\rightarrow \mathbb{R}$:
\[
\begin{array}{lll}
\prob & {\rm minimize} & {\cal E}_{G,\omega}(p,L) \\
& {\rm subject\ to} & \log |\det  L|\geq 0
\end{array}
\]
If $\mathcal{L}_{\mathbb{Z}^d} (G,\omega)$ is positive semidefinite then ${\cal E}_{G,\omega}$ is a convex function. 
However, the condition $ \log |\det  L|$ is not concave over ${\cal R}^*(G)$ or over the set of matrices with positive determinant. (It is concave over the set of positive semidefinite matrices but not over the set of matrices with positive determinant). 
So the optimization problem $\prob$ is not a convex programming problem, and there is no general theory to characterize its global minimizer. Nevertheless, we shall prove that any local minimizer of $\prob$ is a global minimum 
and that there is a unique optimal solution (up to isometries).

To give a precise statement, we shall introduce the notion of KKT points  
based on the  Karush-Kuhn-Tucker theorem, or KKT theorem in short.
\begin{theorem}[Karush-Kuhn-Tucker theorem]\label{thm:KKT}
Let $f:\mathbb{R}^k\rightarrow \mathbb{R}$ and $g:\mathbb{R}^k\rightarrow \mathbb{R}^m$ be continuously differentiable
at a point $x^*\in \mathbb{R}^k$, and suppose that the Jacobian $Jg(x^*)$ of $g$ at $x^*$ is full row rank.
If $x^*$ is a local minimizer of the optimization problem: 
\[
\begin{array}{lll}
{\rm (P)} & {\rm minimize} & f(x) \\
& {\rm subject\ to} & g(x)\leq 0,
\end{array}
\]
then there is a multiplier $\lambda\in \mathbb{R}^m$ satisfying the following conditions:
\begin{description}
\item[stationarity condition:] $\nabla f(x^*)+Jg(x^*)^{\top} \lambda=0$; 
\item[complementary slackness:] $\lambda_i g_i(x^*)=0$ for each $1\leq i\leq m$;
\item[sign condition:] $\lambda_i\geq 0$ for each $1\leq i\leq m$. 
\end{description} 
\end{theorem}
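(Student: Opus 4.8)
The plan is to prove this by the classical route: reduce to the active constraints, establish a first-order necessary condition that rules out feasible descent directions, and then convert that condition into the existence of multipliers by a theorem of the alternative (Farkas' lemma). First I would partition the index set. Let $A := \{i : g_i(x^*) = 0\}$ be the set of \emph{active} constraints. For each inactive index $i \notin A$ we have $g_i(x^*) < 0$, so I set $\lambda_i := 0$; this instantly satisfies complementary slackness and the sign condition for such $i$ and deletes $\nabla g_i(x^*)$ from the stationarity sum. It therefore suffices to produce nonnegative multipliers $(\lambda_i)_{i \in A}$ with $\nabla f(x^*) + \sum_{i \in A} \lambda_i \nabla g_i(x^*) = 0$, since complementary slackness for $i \in A$ is automatic from $g_i(x^*) = 0$. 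Note that the full-row-rank hypothesis on $Jg(x^*)$ guarantees in particular that the active gradients $\{\nabla g_i(x^*)\}_{i \in A}$ are linearly independent, which is the only consequence of the constraint qualification I will use.

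The key step is the first-order necessary condition: there is no direction $d \in \mathbb{R}^k$ with $\nabla f(x^*)^\top d < 0$ and $\nabla g_i(x^*)^\top d \le 0$ for all $i \in A$. I would prove this by contradiction. Given such a $d$, I would construct a feasible $C^1$ arc $x(\cdot)$ with $x(0) = x^*$ and $\dot{x}(0) = d$ along which $f$ strictly decreases to first order, contradicting local minimality. To build the arc I would use the linear independence of the active gradients: the map $G_A(x) := (g_i(x))_{i \in A}$ has surjective differential $Jg(x^*)$ restricted to $A$ at $x^*$, so by the implicit function theorem one can solve $G_A(x(t)) = t\,(\nabla g_i(x^*)^\top d)_{i \in A}$ for a smooth curve with the prescribed initial velocity $d$. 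Since each component satisfies $\nabla g_i(x^*)^\top d \le 0$, the active constraints obey $g_i(x(t)) \le 0$ for small $t \ge 0$, while the inactive constraints stay strictly negative by continuity; hence $x(t)$ is feasible. Local minimality then gives $f(x(t)) \ge f(x^*)$, so $\nabla f(x^*)^\top d = \frac{d}{dt}\big|_{t=0^+} f(x(t)) \ge 0$, contradicting $\nabla f(x^*)^\top d < 0$.

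With the necessary condition in hand I would finish by invoking Farkas' lemma in the following form: for a matrix $B$ (here the matrix with rows $\nabla g_i(x^*)^\top$, $i \in A$) and a vector $c$, exactly one of the alternatives (i) there exists $d$ with $Bd \le 0$ and $c^\top d > 0$, or (ii) there exists $y \ge 0$ with $B^\top y = c$, must hold. Taking $c = -\nabla f(x^*)$, alternative (i) is precisely the feasible descent direction just ruled out, so alternative (ii) holds: there are $\lambda_i \ge 0$ ($i \in A$) with $\sum_{i \in A} \lambda_i \nabla g_i(x^*) = -\nabla f(x^*)$. Together with the inactive multipliers already set to zero, this delivers the stationarity condition, complementary slackness, and the sign condition simultaneously.

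The main obstacle is the arc construction of the second paragraph: this is exactly the point where the constraint qualification is indispensable, since without linear independence of the active gradients the linearized feasible cone $\{d : \nabla g_i(x^*)^\top d \le 0,\ i \in A\}$ need not coincide with the genuine tangent cone of the feasible set, and the implicit function theorem argument breaks down. I would also take care to note that the differentiability hypothesis is read as $C^1$ regularity in a neighbourhood of $x^*$, which is what both the implicit function theorem and the derivative computation $\frac{d}{dt} f(x(t)) = \nabla f(x(t))^\top \dot x(t)$ require.
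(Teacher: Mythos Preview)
The paper does not prove this statement: the Karush--Kuhn--Tucker theorem is quoted as a classical result from nonlinear optimisation and used as a black box in the analysis of the volume-constrained problem $\prob$. Your argument is the standard textbook proof (reduce to the active constraints, use the linear-independence constraint qualification and the implicit function theorem to identify the linearised feasible cone with the tangent cone, then apply Farkas' lemma), and it is correct. One minor remark: your implicit-function-theorem step deserves the extra sentence you hint at---after splitting coordinates so that the active Jacobian is invertible in the first block, one checks that the resulting curve indeed has initial velocity $d$ in \emph{both} blocks---but this is routine and your sketch is accurate.
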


We will apply this theorem to $\prob$. Note that this requires a sign change in the condition for $\log |\det L|$ to put $\prob$ in standard form.

A point $x\in \mathbb{R}^k$ for which there exists a complementary $\lambda$ satisfying the three conditions of \Cref{thm:KKT} is called a \emph{KKT point}  in the problem (P). The KKT theorem says that any local minimizer is a KKT point provided that $Jg(x^*)$ is of  full row rank. 
We comment here that the converse is not true in general.

We now prove the following remarkable property of  $\prob$.

\begin{theorem}\label{thm:3}
Let $G$ be a $\mathbb{Z}^d$-gain graph with edge weight $\omega$. 
If $\dim \ker \mathcal{L}_{\mathbb{Z}^d}(G,\omega)=1$ and $\mathcal{L}_{\mathbb{Z}^d}(G,\omega)$ is positive semidefinite, then the optimization problem $\prob$ has a unique KKT point $(p^*,L^*)$ 
up to isometries, and it is a  global minimizer of $\prob$. 
\end{theorem}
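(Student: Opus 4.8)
The plan is to eliminate the point-configuration variable by partial minimization, thereby reducing $\prob$ to a clean determinant-constrained problem on the lattice alone, where the nonconvexity can be handled by an explicit matrix inequality. First I would reinterpret the KKT system of \Cref{thm:KKT} in the language of equilibrium stresses. By \Cref{prop:lambda_stress} together with the gradient formula \cref{eq:4.3}, a pair $(p^*,L^*)$ with multiplier $\lambda$ satisfies the stationarity condition for $\prob$ precisely when $\omega$ is a $\lambda$-equilibrium stress of $(G,p^*,L^*)$: the block of stationarity corresponding to the vertices is exactly \cref{eq:1}, and the block corresponding to $\ell$ is exactly \cref{eq:3}. (The constraint Jacobian is $-L^{-\top}\neq \mathbf 0$ on the feasible region, so it has full row rank and \Cref{thm:KKT} applies.) Since $\mathcal{L}_{\mathbb{Z}^d}(G,\omega)$ is positive semidefinite, ${\cal E}_{G,\omega}$ is a convex quadratic, and the vertex block of stationarity says precisely that $p^*$ minimizes ${\cal E}_{G,\omega}(\,\cdot\,,L^*)$.

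Next I would carry out the partial minimization in $p$. Writing $\mathcal{L}_{\mathbb{Z}^d}(G,\omega)=\left[\begin{smallmatrix}\mathcal{L}(G,\omega) & B^\top \\ B & D\end{smallmatrix}\right]$ with $B=M(G)\,{\rm diag}(\omega)\,I(G)$ and $D=M(G)\,{\rm diag}(\omega)\,M(G)^\top$, the key lemma is that the two hypotheses force the Schur complement $S:=D-B\,\mathcal{L}(G,\omega)^{+}B^\top$ to be positive definite. Indeed, $\mathcal{L}(G,\omega)$ is a principal block of a positive semidefinite matrix, hence positive semidefinite; a short argument using $\dim\ker\mathcal{L}_{\mathbb{Z}^d}(G,\omega)=1$ shows $\ker\mathcal{L}(G,\omega)=\mathrm{span}(\mathbf 1)$, so $\rank\mathcal{L}(G,\omega)=|V|-1$ and the rows of $B$ are orthogonal to $\mathbf 1$ (guaranteeing the range condition for the generalized Schur complement). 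Rank additivity then gives $\rank S=\rank\mathcal{L}_{\mathbb{Z}^d}(G,\omega)-(|V|-1)=d$, so the $d\times d$ positive semidefinite matrix $S$ has full rank and is therefore positive definite. Minimizing the convex quadratic \cref{eq:4.1} over $p$ with $L$ fixed yields $\min_{p}{\cal E}_{G,\omega}(p,L)=\tfrac12\,\mathrm{Tr}(LSL^\top)$, and by the envelope theorem the KKT points of $\prob$ correspond, up to translation of $p$, to the KKT points of the reduced problem of minimizing $\tfrac12\,\mathrm{Tr}(LSL^\top)$ subject to $|\det L|\ge 1$.

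Finally I would solve the reduced problem. The constraint is active at every KKT point: if it were inactive then complementary slackness forces $\lambda=0$, whence stationarity reads $LS=\lambda L^{-\top}=\mathbf 0$, impossible for nonsingular $L$ and positive definite $S$. Thus $\lambda>0$ and $|\det L|=1$. Rearranging the stationarity equation $LS=\lambda L^{-\top}$ gives $LSL^\top=\lambda I$, i.e. $(LS^{1/2})(LS^{1/2})^\top=\lambda I$, so $LS^{1/2}=\sqrt{\lambda}\,O$ for some orthogonal $O$; the active constraint then pins $\lambda=(\det S)^{1/d}$, so every KKT point has the form $L=(\det S)^{1/(2d)}\,O\,S^{-1/2}$ with $O$ orthogonal. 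Any two such lattices differ by left multiplication by an orthogonal matrix, and the associated optimal $p^*$ (unique up to translation, being the fixed-lattice equilibrium for $L^*$) transforms compatibly; hence all KKT points are congruent, which is uniqueness up to isometry. Global optimality, and existence of the minimizer, follow from the Frobenius--determinant inequality $\mathrm{Tr}(LSL^\top)=\|LS^{1/2}\|_F^2\ge d\,|\det(LS^{1/2})|^{2/d}\ge d\,(\det S)^{1/d}$, with equality exactly at the points above.

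The main obstacle is that $\prob$ is genuinely nonconvex, since the feasible set $\{|\det L|\ge 1\}$ is nonconvex and $\log|\det L|$ is not concave over matrices of positive determinant, so no standard theory guarantees that a KKT point (or local minimizer) is global. The device that resolves this is the two-stage reduction: partial minimization in $p$ collapses $\prob$ onto the lattice with a \emph{positive definite} quadratic cost, after which the arithmetic--geometric mean (Frobenius--determinant) inequality settles the remaining nonconvex problem exactly. Establishing that the Schur complement $S$ is positive definite is therefore the crux, and it is precisely here that both hypotheses---positive semidefiniteness and nullity one of $\mathcal{L}_{\mathbb{Z}^d}(G,\omega)$---are indispensable.
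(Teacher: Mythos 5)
Your proof is correct, but it takes a genuinely different route from the paper's. The paper never eliminates $p$: it establishes existence of a global minimizer by a compactness argument (\Cref{lem:vol2}, pinning $p(v_1)=\mathbf{0}$ so that the energy restricts to a positive definite quadratic form with compact level sets), shows that every KKT point satisfies $|\det L|=1$ and $[P~L]\,\mathcal{L}_{\mathbb{Z}^d}(G,\omega)\,[P~L]^\top=\lambda I_d$ (\Cref{lem:vol4}), proves that any two KKT points are affine images of one another via a kernel-dimension count on the left block of $\mathcal{L}_{\mathbb{Z}^d}(G,\omega)$ (\Cref{lem:vol5}), and then compares an arbitrary KKT point against the global minimizer through AM--GM applied to the eigenvalues of $A^*A^{*\top}$ under $\det A^*=1$. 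You instead integrate out $p$ via the generalized Schur complement $S=D-B\,\mathcal{L}(G,\omega)^{+}B^\top$. Your crux lemma that $S\succ 0$ is sound: positive semidefiniteness of the full matrix forces any $x\in\ker\mathcal{L}(G,\omega)$ to satisfy $(x,\mathbf{0})\in\ker\mathcal{L}_{\mathbb{Z}^d}(G,\omega)$, so nullity one gives $\ker\mathcal{L}(G,\omega)=\mathrm{span}(\mathbf{1})$, and the range condition needed for rank additivity holds automatically because $B\mathbf{1}=M(G)\,\mathrm{diag}(\omega)\,I(G)\,\mathbf{1}=\mathbf{0}$; your exact correspondence between the KKT systems, which reduces to $LS=\lambda L^{-\top}$ after substituting $P=-LB\,\mathcal{L}(G,\omega)^{+}+z\mathbf{1}^\top$, checks out by direct computation. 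Both arguments bottom out in the same AM--GM inequality, but the architectures differ: the paper's comparison argument needs a separately established global minimizer plus the affine-relation lemma, whereas your explicit solution $L=(\det S)^{1/(2d)}\,O\,S^{-1/2}$ of the reduced stationarity equation simultaneously delivers existence with no compactness argument, the multiplier $\lambda=(\det S)^{1/d}$, the optimal value $\tfrac{d}{2}(\det S)^{1/d}$, and the equality analysis in one stroke. The trade-off is that your route leans on heavier linear algebra (Albert's theorem and Guttman rank additivity for generalized Schur complements of singular blocks), while the paper's proof stays within elementary manipulations of the stress matrix; yours is the more constructive and quantitatively informative of the two.
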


The proof of \Cref{thm:3} will be given in the next subsection.
The special case when the edge weight is positive provides an alternative proof to (the Euclidean case of) Theorem 2 in \cite{KotaniSunada2001}.

\begin{corollary}[Euclidean case of Kotani and Sunada]
Let $G$ be a connected $\mathbb{Z}^d$-gain graph with positive edge weight $\omega$.
Suppose that the rank of $G$ is $d$. 
Then the optimization problem $\prob$ has a global minimizer  $(p^*,L^*)$ which is unique 
up to isometries.
\end{corollary}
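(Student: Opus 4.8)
The plan is to obtain the corollary as a direct consequence of \Cref{thm:3}: I would simply verify that, under the hypotheses that $G$ is connected, has rank $d$, and carries a positive edge weight $\omega$, the two conditions required by \Cref{thm:3} on the weighted $\mathbb{Z}^d$-Laplacian $\mathcal{L}_{\mathbb{Z}^d}(G,\omega)$ both hold.

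For positive semidefiniteness, I would write $\mathcal{L}_{\mathbb{Z}^d}(G,\omega) = \sum_{e \in E} \omega(e)\, x_e^\top x_e$, where $x_e$ denotes the row of $I_{\mathbb{Z}^d}(G)$ associated with $e$. Since $\omega(e) > 0$, each term is positive semidefinite, hence so is the sum. For the nullity, the positivity of $\omega$ gives $v \in \ker \mathcal{L}_{\mathbb{Z}^d}(G,\omega)$ if and only if $v^\top \mathcal{L}_{\mathbb{Z}^d}(G,\omega) v = \sum_{e \in E} \omega(e)(x_e v)^2 = 0$, which in turn holds if and only if $x_e v = 0$ for all $e \in E$; thus $\ker \mathcal{L}_{\mathbb{Z}^d}(G,\omega) = \ker I_{\mathbb{Z}^d}(G)$. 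Because $G$ is connected and has rank $d$, \Cref{prop:1} yields $\rank I_{\mathbb{Z}^d}(G) = |V| - 1 + d$, and therefore $\dim \ker \mathcal{L}_{\mathbb{Z}^d}(G,\omega) = (|V|+d) - (|V|-1+d) = 1$.

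With both hypotheses verified, I would invoke \Cref{thm:3} to conclude that $\prob$ has a KKT point $(p^*,L^*)$, unique up to isometries, which is a global minimizer; this is exactly the existence and uniqueness claim of the corollary. As the result is essentially immediate from \Cref{thm:3}, there is no genuine obstacle here -- the only step requiring attention is the translation of the combinatorial hypotheses (connectivity and rank $d$) into the algebraic condition $\dim \ker \mathcal{L}_{\mathbb{Z}^d}(G,\omega) = 1$, which the positivity of $\omega$ together with \Cref{prop:1} supplies.
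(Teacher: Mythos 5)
Your proposal is correct and follows essentially the same route as the paper: connectivity and rank $d$ give $\rank I_{\mathbb{Z}^d}(G) = |V|-1+d$ via \Cref{prop:1}, positivity of $\omega$ gives positive semidefiniteness and $\ker \mathcal{L}_{\mathbb{Z}^d}(G,\omega) = \ker I_{\mathbb{Z}^d}(G)$ so the nullity is $1$, and then \Cref{thm:3} applies. Your write-up is in fact slightly more careful than the paper's (which conflates $I(G)$ with $I_{\mathbb{Z}^d}(G)$ in its statement of the factorization), but the argument is identical in substance.
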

\begin{proof}
If the rank of $G$ is $d$, then its  incidence matrix $I(G)$ has rank $|V|-1+d$ by \Cref{prop:1}.
Hence, as the edge weight is positive, $\mathcal{L}_{\mathbb{Z}^d}(G,\omega)=I(G)^{\top}{\rm diag}(\omega)I(G)$ is positive semidefinite with rank $|V|-1+d$. Thus \Cref{thm:3} applies.
\end{proof}

Our sufficient condition for global rigidity (\Cref{thm:2}) easily follows from \Cref{thm:3} and the following lemma.
Note that $\prob$ has only one constraint, and hence a multiplier $\lambda$ will be a scalar.

\begin{lemma}\label{lem:vol1}
Let $(G,p,L)$ be a non-flat $\mathbb{Z}^d$-framework and let $\lambda\in \mathbb{R}$. The following are equivalent:
\begin{enumerate}
\item \label{lem:vol1item1} $(p,L)$ and $\lambda$ satisfy the stationarity condition of $\prob$.
\item \label{lem:vol1item2} $[P ~ L] \mathcal{L}_{\mathbb{Z}^d}(G,\omega) = \lambda [ \mathbf{0}_{d \times |V|} ~  L^{-\top}].$
\item \label{lem:vol1item3} $\omega$ is a $\lambda$-equilibrium stress of $(G,p,L)$.
\end{enumerate}
\end{lemma}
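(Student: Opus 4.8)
The plan is to establish the chain of equivalences by proving (ii)$\Leftrightarrow$(iii) and (i)$\Leftrightarrow$(ii) separately. The first is a direct reading of \Cref{prop:suff1}, and the second amounts to unwinding the KKT stationarity condition into the matrix form appearing in (ii). Throughout I would write $\operatorname{vec}(\cdot)$ for column-stacking vectorization, so that the vector representation $[p^\top~\ell^\top]^\top$ of $(p,L)$ equals $\operatorname{vec}([P~L])$ (see \Cref{subsec:coord}).

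For (ii)$\Leftrightarrow$(iii), I would recall from the proof of \Cref{prop:suff1}\ref{prop:suff1item1} that the column of $[P~L]\mathcal{L}_{\mathbb{Z}^d}(G,\omega)$ indexed by a vertex $x\in V$ is exactly the left-hand side of \cref{eq:1}, while its final $d\times d$ block equals $\sum_{e=(u,v,\gamma)\in E}\omega(e)(p(v)+L\gamma-p(u))\gamma^\top$, the left-hand side of \cref{eq:2}. Hence the single matrix equation in (ii) decomposes into two independent pieces: the first $|V|$ columns vanish precisely when $\omega$ satisfies \cref{eq:1} (that is, $\omega$ is a fixed-lattice equilibrium stress), and the final block equals $\lambda L^{-\top}$ precisely when \cref{eq:3} holds. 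Since a $\lambda$-equilibrium stress is by definition an edge weight satisfying both \cref{eq:1} and \cref{eq:3}, this yields (ii)$\Leftrightarrow$(iii) at once.

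For (i)$\Leftrightarrow$(ii), writing $\prob$ in the standard form of \Cref{thm:KKT}, the objective is $f={\cal E}_{G,\omega}$ and the constraint function is $g(p,L)=-\log|\det L|$. By \cref{eq:4.3} we have $\nabla{\cal E}_{G,\omega}(p,L)=(\mathcal{L}_{\mathbb{Z}^d}(G,\omega)\otimes I_d)\operatorname{vec}([P~L])$, and the fact recalled in the proof of \Cref{prop:lambda_stress} gives that the gradient of $\log|\det L|$ is $L^{-\top}$, so $\nabla g=-\operatorname{vec}([\mathbf{0}_{d\times|V|}~L^{-\top}])$. The stationarity condition $\nabla f+\lambda\nabla g=0$ therefore reads
\begin{equation*}
    (\mathcal{L}_{\mathbb{Z}^d}(G,\omega)\otimes I_d)\operatorname{vec}([P~L])=\lambda\operatorname{vec}([\mathbf{0}_{d\times|V|}~L^{-\top}]),
\end{equation*}
the minus sign of $g$ cancelling against the one in $\nabla g$ to produce the $+\lambda$.

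It then remains to check that this vectorized identity coincides with the matrix identity (ii). Using the symmetry of $\mathcal{L}_{\mathbb{Z}^d}(G,\omega)$ together with the standard identity $\operatorname{vec}(YB)=(B^\top\otimes I_d)\operatorname{vec}(Y)$ applied to $Y=[P~L]$ and $B=\mathcal{L}_{\mathbb{Z}^d}(G,\omega)$, one sees that $(\mathcal{L}_{\mathbb{Z}^d}(G,\omega)\otimes I_d)\operatorname{vec}([P~L])=\operatorname{vec}([P~L]\mathcal{L}_{\mathbb{Z}^d}(G,\omega))$. As $\operatorname{vec}$ is a bijection, applying it to (ii) reproduces exactly the displayed stationarity condition, giving (i)$\Leftrightarrow$(ii). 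I expect the only delicate point to be the bookkeeping of signs and vectorization conventions — in particular the minus sign introduced by putting the volume constraint into standard form — rather than any substantive obstacle.
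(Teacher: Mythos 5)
Your proposal is correct and follows essentially the same route as the paper, which disposes of (i)$\Leftrightarrow$(ii) as ``a direct calculation, using the fact that the gradient of $\log\det(\cdot)$ at $L$ is $L^{-\top}$'' and of (ii)$\Leftrightarrow$(iii) by ``exactly the same argument as in the proof of \Cref{prop:suff1}''. You have merely made the paper's direct calculation explicit — the vectorization identity $\operatorname{vec}(YB)=(B^\top\otimes I_d)\operatorname{vec}(Y)$ together with the symmetry of $\mathcal{L}_{\mathbb{Z}^d}(G,\omega)$ and the sign bookkeeping for the constraint $g(p,L)=-\log|\det L|$ are exactly the right details, and they check out.
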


\begin{proof}
The equivalence of \ref{lem:vol1item1} and \ref{lem:vol1item2} can be checked by a direct calculation, using the fact that 
the gradient of $\log \det(\cdot)$ at $L$ is $L^{-\top}$.

The equivalence of \ref{lem:vol1item2} and \ref{lem:vol1item3} can be checked by using exactly the same argument as in the proof of \Cref{prop:suff1}.
\end{proof}

\begin{proof}[Proof of \Cref{thm:2}]
Suppose that $(G,p,L)$ admits a $\lambda$-equilibrium stress $\omega$ for some $\lambda>0$ such that $\dim \ker \mathcal{L}_{\mathbb{Z}^d}(G,\omega)=1$ and $\mathcal{L}_{\mathbb{Z}^d}(G,\omega)$ is positive semidefinite.
Consider $\prob$ for $G$ and $\omega$.
By \Cref{lem:vol1}, $(p,L)$ and $\lambda$ satisfy the stationarity condition of $\prob$. Moreover, the complementary slackness condition holds for $\prob$ since $|\det(L)|=1$. Hence $(p,L)$ is a KKT point, and by \Cref{thm:3} $(p,L)$ is a global minimizer.

Consider any $(G,p',L')$ dominated by $(G,p,L)$.
Then (by the definition of equilibrium stresses of tensegrities) we have 
\begin{equation*}
    \omega(e)\|p'(v) + L' \gamma -p'(u)\|^2 \leq \omega(e)\|p(v) + L \gamma -p(u)\|^2
\end{equation*}
for each edge $e=(u,v,\gamma)\in E$.
Hence from \cref{eq:4.2} we have ${\cal E}_{G,\omega} (p',L')\leq {\cal E}_{G,\omega} (p,L)$.
As $(p,L)$ is a global minimizer, $(p', L')$ is also a global minimizer.
Therefore, \Cref{thm:3} further implies that $(G,p',L')$ is congruent to $(G,p,L)$.
\end{proof}
  

\subsection{Proof of \texorpdfstring{\Cref{thm:3}}{Theorem}}
The proof is done by a sequence of lemmas.
We focus on $\prob$ for a given $G$ and $\omega$.

\begin{lemma} \label{lem:vol2}
If $\dim \ker \mathcal{L}_{\mathbb{Z}^d}(G,\omega)=1$ and $\mathcal{L}_{\mathbb{Z}^d}(G,\omega)$ is positive semidefinite, then $\prob$ has a global minimizer.
\end{lemma}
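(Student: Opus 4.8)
The plan is to prove existence of a minimizer by the direct method, the only real work being to verify coercivity of ${\cal E}_{G,\omega}$ after quotienting out the unavoidable translational degeneracy, and closedness of the feasible region. First I would record the quadratic structure of the objective. Since $\mathcal{L}_{\mathbb{Z}^d}(G,\omega)$ is positive semidefinite, so is $\mathcal{L}_{\mathbb{Z}^d}(G,\omega)\otimes I_d$, and hence ${\cal E}_{G,\omega}$ is a positive semidefinite quadratic form whose kernel has dimension $d\cdot\dim\ker\mathcal{L}_{\mathbb{Z}^d}(G,\omega)=d$. Using the trace representation in \cref{eq:4.1} together with $\ker\mathcal{L}_{\mathbb{Z}^d}(G,\omega)=\spa\{\hat{\mathbf{1}}\}$ (which holds because $\hat{\mathbf{1}}\in\ker I_{\mathbb{Z}^d}(G)$ and $\dim\ker\mathcal{L}_{\mathbb{Z}^d}(G,\omega)=1$), I would identify this kernel explicitly: as ${\cal E}_{G,\omega}(p,L)=\frac12\sum_{i}(\text{row}_i[P\ L])\,\mathcal{L}_{\mathbb{Z}^d}(G,\omega)\,(\text{row}_i[P\ L])^\top$ is a sum of nonnegative terms, it vanishes precisely when every row of $[P\ L]$ is a scalar multiple of $\hat{\mathbf{1}}^\top$, i.e. when $p(v)=c$ for all $v$ (some $c\in\mathbb{R}^d$) and $L=\mathbf{0}$. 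Thus the kernel of ${\cal E}_{G,\omega}$ is exactly the $d$-dimensional space of translational directions.

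Next I would pass to the translation slice $S:=\{(p,L)\in\mathcal{R}(G): p(v_1)=\mathbf{0}\}$. Since $S$ is complementary to the translational kernel (their intersection is $\{0\}$ and their dimensions sum to $d|V|+d^2$), and a positive semidefinite form vanishes only on its kernel, the restriction ${\cal E}_{G,\omega}|_S$ is positive definite, hence coercive on $S$. The feasible region $F:=\{(p,L)\in S:|\det L|\geq 1\}$ is closed in $\mathbb{R}^{d|V|+d^2}$, because $L\mapsto|\det L|$ is continuous, and it is nonempty (take $p\equiv\mathbf{0}$ and $L=I_d$).

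Existence then follows from the direct method: given a minimizing sequence $(p_n,L_n)\in F$, coercivity of ${\cal E}_{G,\omega}|_S$ forces $(p_n,L_n)$ to be bounded, so by Bolzano--Weierstrass it has a subsequence converging to some $(p^*,L^*)$; since $F$ is closed we have $(p^*,L^*)\in F$, and in particular $|\det L^*|\geq 1$ so $L^*$ is nonsingular and $(G,p^*,L^*)\in\mathcal{R}^*(G)$; finally continuity of ${\cal E}_{G,\omega}$ gives ${\cal E}_{G,\omega}(p^*,L^*)=\inf_F{\cal E}_{G,\omega}$. Because both the objective and the constraint $|\det L|\geq 1$ are invariant under translations, every feasible realization can be translated into $S$ without changing either quantity, so $\inf_F{\cal E}_{G,\omega}=\inf_{(p,L)\in\mathcal{R}^*(G),\,|\det L|\geq1}{\cal E}_{G,\omega}(p,L)$, and $(p^*,L^*)$ is a global minimizer of $\prob$.

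The main obstacle is that the feasible region is genuinely noncompact (the constraint $|\det L|\geq 1$ permits $L$ to run off to infinity) and ${\cal E}_{G,\omega}$ is merely convex, not strictly convex, so a naive compactness argument fails in two ways at once. The decisive observation is that the hypothesis $\dim\ker\mathcal{L}_{\mathbb{Z}^d}(G,\omega)=1$ pins down the degeneracy directions of the energy to be \emph{exactly} the translations, and that these are precisely the symmetries under which the volume constraint is invariant; once they are factored out the energy becomes coercive and the standard argument closes. I expect verifying the explicit form of $\ker{\cal E}_{G,\omega}$ and its transversality to $S$ to be the only step requiring care.
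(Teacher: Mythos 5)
Your proposal is correct and follows essentially the same route as the paper: both fix the translational freedom by restricting to the slice $p(v_1)=\mathbf{0}$, show the restricted energy is a positive definite quadratic form (the paper via the principal submatrix $\Omega$ of $\mathcal{L}_{\mathbb{Z}^d}(G,\omega)$ obtained by deleting the row and column of $v_1$, you via the explicit computation that $\ker\mathcal{E}_{G,\omega}$ is exactly the $d$-dimensional translational space, transverse to the slice), and then conclude by a standard compactness argument (bounded sublevel sets in the paper, coercivity plus the direct method in yours). These differences are cosmetic; the substance of the two proofs is the same.
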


\begin{proof}
Since the energy function ${\cal E}_{G,\omega}$ as well as the volume is invariant by translations,  
we may restrict our attention to realizations $(p,L)$ with $p(v_1)= \mathbf{0}$ for some vertex $v_1 \in V$.
With this restriction, ${\cal E}_{G,\omega}$ becomes a positive definite quadratic form.
Indeed, let $\Omega$ be the matrix obtained from $\mathcal{L}_{\mathbb{Z}^d}(G,\omega)$ by removing the row and column associated to $v_1$.
Then the energy function ${\cal E}_{G,\omega}$ restricted to the realizations $(p,L)$ with $p(v_1)=\mathbf{0}$ is the quadratic form of $\Omega\otimes I_d$.
Since $\dim \ker \mathcal{L}_{\mathbb{Z}^d}(G,\omega)=1$ and $\mathcal{L}_{\mathbb{Z}^d}(G,\omega)$ is positive semidefinite, $\Omega$ is positive definite, implying that 
$\Omega\otimes I_d$ is positive definite.

Since the restriction of the energy function ${\cal E}_{G,\omega}$ to the realizations with $v_1$ at $\mathbf{0}$ is a quadratic form described by $\Omega\otimes I_d$, the lower level set $\{(p,L): {\cal E}_{G,\omega}(p,L)\leq \delta, ~ p(v_1) = \mathbf{0}\}$ is bounded for each $\delta \geq 0$. Note that the optimal value for $\prob$ does not change even if the feasible region of $\prob$ is restricted to a sufficiently large compact set.
So since the feasible region of the  restricted problem is compact, the optimal value is attained. 
\end{proof}

\begin{lemma}\label{lem:vol4}
Suppose that $\dim \ker \mathcal{L}_{\mathbb{Z}^d}(G,\omega)=1$ and $\mathcal{L}_{\mathbb{Z}^d}(G,\omega)$ is positive semidefinite.
Then for any KKT point $(p,L)$ with multiplier $\lambda\geq 0$, the following hold.
\begin{enumerate}
\item $|\det(L)|=1$;
\item for the coordinate matrix $P$ of $p$, 
$[P ~ L]  \mathcal{L}_{\mathbb{Z}^d}(G,\omega) [ P ~ L]^{\top}=\lambda I_d$, where $I_d$ is the $d\times d$ identity matrix.
\end{enumerate}
\end{lemma}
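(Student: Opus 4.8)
The plan is to extract both conclusions directly from the stationarity and complementary slackness conditions defining a KKT point, using the algebraic reformulation in \Cref{lem:vol1}. First I would note that any KKT point $(p,L)$ of $\prob$ is feasible, so $\log|\det L|\geq 0$ and in particular $L$ is nonsingular; thus $(p,L)$ is automatically non-flat and \Cref{lem:vol1} applies. By the equivalence of \ref{lem:vol1item1} and \ref{lem:vol1item2} in \Cref{lem:vol1}, the stationarity condition for $(p,L)$ with multiplier $\lambda$ is exactly
\begin{equation*}
    [P ~ L]\,\mathcal{L}_{\mathbb{Z}^d}(G,\omega) = \lambda\,[\mathbf{0}_{d\times|V|} ~ L^{-\top}]. \tag{$\ast$}
\end{equation*}

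For part (2) I would simply right-multiply $(\ast)$ by $[P~L]^\top$. Computing the right-hand side,
\begin{equation*}
    [\mathbf{0}_{d\times|V|} ~ L^{-\top}]\,[P~L]^\top = \mathbf{0}_{d\times|V|}P^\top + L^{-\top}L^\top = I_d,
\end{equation*}
since $L^{-\top}L^\top = (LL^{-1})^\top = I_d$. This yields $[P~L]\,\mathcal{L}_{\mathbb{Z}^d}(G,\omega)\,[P~L]^\top = \lambda I_d$ with no further input, establishing part (2).

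For part (1) the crux is to show that the multiplier is \emph{strictly} positive, i.e.\ that the volume constraint is active; this is where the hypotheses enter. Since $\hat{\mathbf{1}}\in\ker\mathcal{L}_{\mathbb{Z}^d}(G,\omega)$ always holds (see \Cref{subsec:incidence}) and $\dim\ker\mathcal{L}_{\mathbb{Z}^d}(G,\omega)=1$, the kernel (equivalently, by symmetry, the left kernel) is exactly $\spa(\hat{\mathbf{1}})$. If $\lambda=0$, then $(\ast)$ forces every row of $[P~L]$ to lie in $\spa(\hat{\mathbf{1}})$; as $\hat{\mathbf{1}}$ vanishes on its last $d$ coordinates, every row of $L$ must be zero, so $L=\mathbf{0}_{d\times d}$, contradicting $|\det L|\geq 1$. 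Hence $\lambda\neq 0$, and the sign condition $\lambda\geq 0$ forces $\lambda>0$. The complementary slackness condition $\lambda\cdot(-\log|\det L|)=0$ then gives $\log|\det L|=0$, that is, $|\det L|=1$, proving part (1).

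The main obstacle is precisely the $\lambda=0$ case: complementary slackness is vacuous there, so positive semidefiniteness together with the nullity-one assumption must be invoked to rule it out. Conceptually this is the step reflecting that the zero set of the convex energy consists only of translations (forcing $L=0$), which are incompatible with the feasibility requirement $|\det L|\geq 1$.
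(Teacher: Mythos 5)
Your proposal is correct and follows essentially the same route as the paper: part (2) is the same right-multiplication of the stationarity identity from \Cref{lem:vol1} by $[P~L]^\top$, and part (1) uses the same kernel argument (nullity one plus $\hat{\mathbf{1}}$ spanning $\ker \mathcal{L}_{\mathbb{Z}^d}(G,\omega)$ forces $L=\mathbf{0}_{d\times d}$ when $\lambda=0$) combined with complementary slackness. The only difference is presentational -- you rule out $\lambda=0$ directly and then invoke complementary slackness, whereas the paper supposes $|\det(L)|>1$, deduces $\lambda=0$ from complementary slackness, and reaches the same contradiction.
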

\begin{proof}
Suppose that $|\det(L)|>1$. Then the complementary slackness condition implies that $\lambda=0$.
As $(p,L)$ satisfies the stationarity condition, \Cref{lem:vol1} further implies 
$[P\ L] \mathcal{L}_{\mathbb{Z}^d}(G,\omega)=0$. Since $\dim \ker   \mathcal{L}_{\mathbb{Z}^d}(G,\omega)=1$ with $\hat{\bm 1}\in \ker  \mathcal{L}_{\mathbb{Z}^d}(G,\omega)$, 
this in turn implies that every row vector of $[P\ L]$ is a scalar multiple of $\hat{\bm 1}^{\top}$.
This in particular implies that $L=\mathbf{0}_{d \times d}$, contradicting $|\det(L)|\neq 0$.
Hence (i) holds.

(ii) directly follows from \Cref{lem:vol1} since 
\[
[P ~ L] \mathcal{L}_{\mathbb{Z}^d}(G,\omega) [P ~ L]^\top = \lambda \left[ \mathbf{0}_{d \times |V|} ~  L^{-\top} \right] 
\begin{bmatrix}P^{\top} \\ L^{\top}\end{bmatrix}=
\lambda I_d. \qedhere
\]
\end{proof}

\begin{lemma}\label{lem:vol5}
    Suppose that $\dim \ker \mathcal{L}_{\mathbb{Z}^d}(G,\omega)=1$.
    Any KKT points can be transformed to each other by an affine transformation.
\end{lemma}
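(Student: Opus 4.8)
The plan is to show that every KKT point $(p,L)$ of $\prob$ has the same ``extended row space'' $\operatorname{span}\big(\text{rows of }[P~L],\,\hat{\mathbf{1}}^\top\big)$, namely one fixed $(d+1)$-dimensional subspace determined only by $\mathcal{L}_{\mathbb{Z}^d}(G,\omega)$. By the characterisation of affine transformations used in the proof of \Cref{prop:afftrans} (membership of the rows of $[P'~L']$ in the span of the rows of $[P~L]$ together with $\hat{\mathbf{1}}^\top$), this at once gives that any two KKT points are affine transformations of one another.

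First I would fix a KKT point $(p,L)$ with multiplier $\lambda$ and abbreviate $\mathcal{L}:=\mathcal{L}_{\mathbb{Z}^d}(G,\omega)$. Since the gradient of $\log|\det(\cdot)|$ appears in the stationarity condition, $L$ must be nonsingular, so $[P~L]$ has full row rank $d$. By \Cref{lem:vol1}, stationarity reads
\[
[P~L]\,\mathcal{L} = \lambda\,[\mathbf{0}_{d\times|V|}~L^{-\top}].
\]
I would then rule out $\lambda=0$: in that case $[P~L]\mathcal{L}=\mathbf{0}$, so each row of $[P~L]$ lies in the left kernel of $\mathcal{L}$, which equals $\operatorname{span}(\hat{\mathbf{1}}^\top)$ since $\mathcal{L}$ is symmetric with $\dim\ker\mathcal{L}=1$ and $\hat{\mathbf{1}}\in\ker\mathcal{L}$; restricting to the last $d$ columns would then force $L=\mathbf{0}_{d\times d}$, contradicting nonsingularity. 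Hence $\lambda\neq 0$ at every KKT point.

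The core step is then purely linear-algebraic. Let $X\subseteq\mathbb{R}^{|V|+d}$ be the $d$-dimensional row space of $[P~L]$, and let $U:=\{\mathbf{0}\}^{|V|}\times\mathbb{R}^d$ be the ``lattice-coordinate'' subspace. Because $\lambda\neq 0$ and $L^{-\top}$ is nonsingular, the displayed equation says exactly that $\mathcal{L}(X)=U$. Since $\mathcal{L}$ is symmetric with $\ker\mathcal{L}=\operatorname{span}(\hat{\mathbf{1}})$ and $U\subseteq\operatorname{im}\mathcal{L}=\hat{\mathbf{1}}^\perp$, the preimage satisfies $\dim\mathcal{L}^{-1}(U)=\dim U+\dim\ker\mathcal{L}=d+1$. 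Now $X\subseteq\mathcal{L}^{-1}(U)$ and $\hat{\mathbf{1}}\in\ker\mathcal{L}\subseteq\mathcal{L}^{-1}(U)$, while $\hat{\mathbf{1}}\notin X$ (a combination of the rows of $[P~L]$ equal to $\hat{\mathbf{1}}^\top$ would have vanishing lattice-part, forcing the combining coefficients to be zero by nonsingularity of $L$). Thus $\operatorname{span}(X,\hat{\mathbf{1}})$ is $(d+1)$-dimensional and contained in the $(d+1)$-dimensional space $\mathcal{L}^{-1}(U)$, so
\[
\operatorname{span}\big(\text{rows of }[P~L],\ \hat{\mathbf{1}}^\top\big)=\mathcal{L}^{-1}(U).
\]

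The right-hand side depends only on $\mathcal{L}$, not on the chosen KKT point, so for any two KKT points $(p_1,L_1)$ and $(p_2,L_2)$ the rows of $[P_2~L_2]$ lie in $\operatorname{span}(\text{rows of }[P_1~L_1],\hat{\mathbf{1}}^\top)$, which by \Cref{prop:afftrans} is precisely the assertion that $(G,p_2,L_2)$ is an affine transformation of $(G,p_1,L_1)$. I expect the main obstacle to be not any single hard computation but making the dimension bookkeeping airtight --- in particular checking $\hat{\mathbf{1}}\notin X$ and $U\subseteq\operatorname{im}\mathcal{L}$ so that $\dim\mathcal{L}^{-1}(U)=d+1$ --- together with the elimination of the degenerate case $\lambda=0$; once these are secured, the identification of the fixed extended row space is immediate.
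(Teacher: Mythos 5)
Your proof is correct and is essentially the paper's argument in different clothing: the space $\mathcal{L}_{\mathbb{Z}^d}(G,\omega)^{-1}(U)$ you construct is exactly the left kernel of the block $\Omega_L$ formed by the first $|V|$ columns of $\mathcal{L}_{\mathbb{Z}^d}(G,\omega)$, which the paper likewise shows is a $(d+1)$-dimensional space, determined by $\omega$ alone, spanned by the rows of $[P~L]$ together with $\hat{\mathbf{1}}^\top$ at every KKT point. The only inessential difference is your elimination of the case $\lambda=0$, which is not actually needed: the containment $X\subseteq\mathcal{L}_{\mathbb{Z}^d}(G,\omega)^{-1}(U)$ (equivalently, that the first $|V|$ columns of $[P~L]\,\mathcal{L}_{\mathbb{Z}^d}(G,\omega)$ vanish) holds for every multiplier $\lambda$, including $\lambda=0$, and the rest of your dimension count uses only the nonsingularity of $L$.
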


\begin{proof}
    We split $\mathcal{L}_{\mathbb{Z}^d}(G,\omega)$ into two blocks as follows. Let $\mathcal{L}_{\mathbb{Z}^d}(G,\omega)=[\Omega_L\ \Omega_R]$, where 
    $\Omega_L$ has size $(|V|+d)\times n$ and $\Omega_R$ has size $(|V|+d)\times d$.
    
    Consider any KKT point $(p,L)$ with multiplier $\lambda$. 
    Then 
    $[P ~ L] \mathcal{L}_{\mathbb{Z}^d}(G,\omega) =\lambda [ \mathbf{0}_{d \times |V|} ~ L^{-\top}]$
    holds by \Cref{lem:vol1}.
    This in turn implies that each row vector of $[P ~ L]$ is in the left kernel of $\Omega_L$.
    Since $\rank \mathcal{L}_{\mathbb{Z}^d}(G,\omega) =|V|-1+d$, the rank of $\Omega_L$ is $|V|-1$. 
    (Note that $\hat{\bm 1}^{\top}$ is still in the left kernel of $\Omega_L$.)
    Hence the dimension of the left kernel of $\Omega_L$ is $d+1$.
    Note that the matrix
    \begin{equation*}
        M := \begin{bmatrix} P & L \\ {\bm 1}^{\top} & {\bm 0}^{\top} \end{bmatrix}
    \end{equation*}
    is row independent as $L$ is nonsingular.
    Hence the row vectors of $M$ span the left kernel of $\Omega_L$.
    Since this holds for any KKT point $(p, L)$, any KKT points can be transformed to each other by an affine transformation. 
\end{proof}

We are now ready to prove the key theorem of the section.

\begin{proof}[Proof of \Cref{thm:3}]
Since the energy function as well as the volume is invariant under isometries,  
we may restrict our attention to realizations $(p,L)$ with 
$p(v_1)=\mathbf{0}$ and $\det(L)\geq 1$, and in the subsequent discussion any realization is assumed to satisfy these properties.

By \Cref{lem:vol2}, $\prob$ has a global minimizer $(p^*, L^*)$. 
As $\det L^*\geq 1$, the gradient of $\log \det(\cdot)$ at $(p^*, L^*)$ is nonzero, which means that the assumption for the KKT theorem holds at $(p^*, L^*)$.
Hence, by the KKT theorem, $(p^*, L^*)$ is a KKT point with multiplier $\lambda^*$.

Take any KKT point $(p,L)$ with multiplier $\lambda$, 
and let $P^*$ and $P$ be the coordinate matrices of $p^*$ and $p$, respectively.
By \Cref{lem:vol4}, we have $\det(L)=1=\det(L^*)$.
By \Cref{lem:vol5}, $(G,p^*,L^*)$ is an affine image of $(G,p,L)$.
Since $p (v_1)=p^* (v_1)= \mathbf{0}$,  this in turn implies  $[P^*\ L^*]=A^*[P\ L]$ for some  $A\in \mathbb{R}^{d\times d}$.

By $\det(L)=\det(L^*)=\det(A^*L)$, we have $\det(A^*)=1$.
Let $\mu_1,\dots, \mu_d$ be the eigenvalues of $A^*A^{*\top}$.
Then we have 
\begin{align*}
{\cal E}_{G,\omega}(p^*, L^*) 
&= {\rm Tr} \left( \begin{bmatrix} P^* & L^* \end{bmatrix} \mathcal{L}_{\mathbb{Z}^d}(G,\omega) \begin{bmatrix}P^* & L^*\end{bmatrix}^{\top} \right) \\ 
&={\rm Tr}\left( A^*\begin{bmatrix}P & L\end{bmatrix}\mathcal{L}_{\mathbb{Z}^d}(G,\omega) \begin{bmatrix}P & L\end{bmatrix}^{\top} A^{*\top}\right) \\ 
&=\lambda{\rm Tr}\left( A^* A^{*\top}\right) & \text{(by \Cref{lem:vol4})}\\ 
&=\lambda(\mu_1+\dots+\mu_d) \\
&\geq \lambda(d \sqrt[d]{\mu_1\dots \mu_d}) & \text{(by the AM-GM inequality)} \\
&=\lambda d & \text{(by $\det(A^*)=1$)} \\
&={\rm Tr} \left( \begin{bmatrix} P & L \end{bmatrix} \mathcal{L}_{\mathbb{Z}^d}(G,\omega) \begin{bmatrix}P & L\end{bmatrix}^{\top} \right) & \text{(by \Cref{lem:vol4})} \\
&={\cal E}_{G,\omega}(p,L). 
\end{align*}
Since ${\cal E}_{G,\omega}(p,L)\geq {\cal E}_{G,\omega}(p^*, L^*)$, the equality holds throughout, and $(p,L)$ is also a global minimizer.
By the equality condition for the AM-GM inequality above, we have $\mu_1=\mu_2=\dots=\mu_d$, 
which together with $\det(A^*)=1$ implies that $A^*A^{*\top}$ is the identity matrix. 
In other words, $A^*$ is orthogonal, and $(G,p^*,L^*)$ is congruent to $(G,p,L)$.
\end{proof}

\section*{Acknowledgements} 
This project was initiated during the workshop on Rigidity and Flexibility of Microstructures held at the American Institute of Mathematics (San Jose, California) in November 2019. We also gratefully acknowledge the Fields Institute for Research in Mathematical Sciences for support during the Thematic Program on Geometric Constraint Systems, Framework Rigidity, and Distance Geometry (January 1 - June 30, 2021) and the Focus Program on Geometric Constraint Systems (July 1 - August 31, 2023). We thank Robert Connelly and Steven (Shlomo) Gortler for helpful discussions.

Sean Dewar was supported in part by the Heilbronn Institute for Mathematical Research.
Bernd Schulze was supported in part by UK Research and Innovation (grant number UKRI2397), under the EPSRC Mathematical Sciences Small Grant scheme. 
Shin-ichi Tanigawa was supported by JST CREST (grant number JPMJCR24Q2) and JSPS KAKENHI (grant number 24K14835).
Louis Theran was supported in part by UK Research and Innovation (grant number UKRI1112), under the EPSRC Mathematical Sciences Small Grant scheme.

\bibliographystyle{abbrvurl}
\bibliography{lit}

\begin{thebibliography}{10}

\bibitem{AsimowRothI}
L.~Asimow and B.~Roth.
\newblock {The rigidity of graphs}.
\newblock {\em {Transactions of the American Mathematical Society}},
  245:279--289, 1978.
\newblock \href {https://doi.org/10.2307/1998867} {\path{doi:10.2307/1998867}}.

\bibitem{AsimowRothII}
L.~Asimow and B.~Roth.
\newblock {The rigidity of graphs II}.
\newblock {\em {Journal of Mathematical Analysis and Applications}},
  68:171--190, 1979.
\newblock \href {https://doi.org/10.1016/0022-247X(79)90108-2}
  {\path{doi:10.1016/0022-247X(79)90108-2}}.

\bibitem{AspnesEGMWYAB2006}
J.~Aspnes, T.~Eren, D.~K. Goldenberg, A.~S. Morse, W.~Whiteley, Y.~R. Yang,
  B.~D.~O. Anderson, and P.~N. Belhumeur.
\newblock A theory of network localization.
\newblock {\em IEEE Transactions on Mobile Computing}, 5(12):1663--1678, 2006.
\newblock \href {https://doi.org/10.1109/TMC.2006.174}
  {\path{doi:10.1109/TMC.2006.174}}.

\bibitem{baker2025geometry}
W.~F. Baker and A.~McRobie, editors.
\newblock {\em The Geometry of Equilibrium: James Clerk Maxwell and
  21st-Century Structural Mechanics}.
\newblock Cambridge University Press, United Kingdom, 2025.

\bibitem{BochnakCosteRoy}
J.~Bochnak, M.~Coste, and M.-F. Roy.
\newblock {\em Real algebraic geometry}, volume~36 of {\em Ergebnisse der
  Mathematik und ihrer Grenzgebiete. 3. Folge}.
\newblock Berlin: Springer, 1998.

\bibitem{BS10}
C.~S. Borcea and I.~Streinu.
\newblock Periodic frameworks and flexibility.
\newblock {\em Proceedings of the Royal Society A: Mathematical, Physical and
  Engineering Sciences}, 466(2121):2633--2649, 2010.
\newblock \href {https://doi.org/10.1098/rspa.2009.0676}
  {\path{doi:10.1098/rspa.2009.0676}}.

\bibitem{bsbull}
C.~S. Borcea and I.~Streinu.
\newblock Minimally rigid periodic graphs.
\newblock {\em Bulletin of the London Mathematical Society}, 43(6):1093--1103,
  2011.
\newblock \href {https://doi.org/10.1112/blms/bdr044}
  {\path{doi:10.1112/blms/bdr044}}.

\bibitem{BorStreinuDCG15}
C.~S. Borcea and I.~Streinu.
\newblock Liftings and stresses for planar periodic frameworks.
\newblock {\em Discrete \& Computational Geometry}, 53(4):747--782, 2015.
\newblock \href {https://doi.org/10.1007/s00454-015-9689-7}
  {\path{doi:10.1007/s00454-015-9689-7}}.

\bibitem{Connelly1982}
R.~Connelly.
\newblock Rigidity and energy.
\newblock {\em Inventiones mathematicae}, 66(1):11--33, 1982.
\newblock \href {https://doi.org/10.1007/BF01404753}
  {\path{doi:10.1007/BF01404753}}.

\bibitem{conggr}
R.~Connelly.
\newblock Generic global rigidity.
\newblock {\em Discrete \& Computational Geometry}, 33(4):549--563, 2005.
\newblock \href {https://doi.org/10.1007/s00454-004-1124-4}
  {\path{doi:10.1007/s00454-004-1124-4}}.

\bibitem{Connelly2013}
R.~Connelly.
\newblock Tensegrities and global rigidity.
\newblock In M.~Senechal, editor, {\em Shaping Space: Exploring Polyhedra in
  Nature, Art, and the Geometrical Imagination}, pages 267--278. Springer New
  York, New York, NY, 2013.
\newblock \href {https://doi.org/10.1007/978-0-387-92714-5_21}
  {\path{doi:10.1007/978-0-387-92714-5_21}}.

\bibitem{conguest}
R.~Connelly and S.~D. Guest.
\newblock {\em Frameworks, tensegrities, and symmetry}.
\newblock Cambridge University Press, Cambridge, 2022.
\newblock \href {https://doi.org/10.1017/9780511843297}
  {\path{doi:10.1017/9780511843297}}.

\bibitem{ConnellyWhiteley2010}
R.~Connelly and W.~Whiteley.
\newblock Global rigidity: The effect of coning.
\newblock {\em Discrete {\&} Computational Geometry}, 43(4):717--735, 2010.
\newblock \href {https://doi.org/10.1007/s00454-009-9220-0}
  {\path{doi:10.1007/s00454-009-9220-0}}.

\bibitem{grsum}
J.~Cruickshank, B.~Jackson, T.~Jordán, and S.~Tanigawa.
\newblock Rigidity of graphs and frameworks: A matroid theoretic approach,
  2025.
\newblock \href {http://arxiv.org/abs/2508.11636} {\path{arXiv:2508.11636}}.

\bibitem{DF05}
O.~Delgado-Friedrichs.
\newblock Equilibrium placement of periodic graphs and convexity of plane
  tilings.
\newblock {\em Discrete \& Computational Geometry}, 33:67--81, 2005.
\newblock \href {https://doi.org/10.1007/s00454-004-1147-x}
  {\path{doi:10.1007/s00454-004-1147-x}}.

\bibitem{HandDCG}
J.~E. Goodman, J.~O'Rourke, and C.~D. T\'oth, editors.
\newblock {\em Handbook of discrete and computational geometry}.
\newblock Discrete Mathematics and its Applications (Boca Raton). CRC Press,
  Boca Raton, FL, third edition, 2018.

\bibitem{gortler2010characterizing}
S.~J. Gortler, A.~D. Healy, and D.~P. Thurston.
\newblock Characterizing generic global rigidity.
\newblock {\em American Journal of Mathematics}, 132(4):897--939, 2010.
\newblock \href {https://doi.org/10.1353/ajm.0.0132}
  {\path{doi:10.1353/ajm.0.0132}}.

\bibitem{hermans2017rigidity}
S.~M.~A. Hermans, C.~Pfleger, C.~Nutschel, C.~A. Hanke, and H.~Gohlke.
\newblock Rigidity theory for biomolecules: concepts, software, and
  applications.
\newblock {\em WIREs Computational Molecular Science}, 7(4):e1311, 2017.
\newblock \href {https://doi.org/10.1002/wcms.1311}
  {\path{doi:10.1002/wcms.1311}}.

\bibitem{jkt2016}
T.~Jord\'an, V.~E. Kaszanitzky, and S.~Tanigawa.
\newblock Gain-sparsity and symmetry-forced rigidity in the plane.
\newblock {\em Discrete \& Computational Geometry}, 55(2):314--372, 2016.
\newblock \href {https://doi.org/10.1007/s00454-015-9755-1}
  {\path{doi:10.1007/s00454-015-9755-1}}.

\bibitem{kks22}
V.~E. Kaszanitzky, C.~Kir\'aly, and B.~Schulze.
\newblock Sufficient conditions for the global rigidity of periodic graphs.
\newblock {\em Discrete \& Computational Geometry}, 67(1):1--16, 2022.
\newblock \href {https://doi.org/10.1007/s00454-021-00346-9}
  {\path{doi:10.1007/s00454-021-00346-9}}.

\bibitem{kst21}
V.~E. Kaszanitzky, B.~Schulze, and S.~Tanigawa.
\newblock Global rigidity of periodic graphs under fixed-lattice
  representations.
\newblock {\em Journal of Combinatorial Theory, Series B}, 146:176--218, 2021.
\newblock \href {https://doi.org/https://doi.org/10.1016/j.jctb.2020.09.009}
  {\path{doi:https://doi.org/10.1016/j.jctb.2020.09.009}}.

\bibitem{KotaniSunada2001}
M.~Kotani and T.~Sunada.
\newblock Standard realizations of crystal lattices via harmonic maps.
\newblock {\em Transactions of the American Mathematical Society},
  353(1):1--20, 2001.
\newblock \href {https://doi.org/10.2307/221961} {\path{doi:10.2307/221961}}.

\bibitem{MT13}
J.~Malestein and L.~Theran.
\newblock Generic combinatorial rigidity of periodic frameworks.
\newblock {\em Advances in Mathematics}, 233(1):291--331, 2013.
\newblock \href {https://doi.org/10.1016/j.aim.2012.10.007}
  {\path{doi:10.1016/j.aim.2012.10.007}}.

\bibitem{mtfixedarea}
J.~Malestein and L.~Theran.
\newblock Generic rigidity with forced symmetry and sparse colored graphs.
\newblock In {\em Rigidity and symmetry}, volume~70 of {\em Fields Institute
  Communications}, pages 227--252. Springer, New York, 2014.
\newblock \href {https://doi.org/10.1007/978-1-4939-0781-6\_12}
  {\path{doi:10.1007/978-1-4939-0781-6\_12}}.

\bibitem{Pflaum}
M.~J. Pflaum.
\newblock {\em Analytic and geometric study of stratified spaces}, volume 1768
  of {\em Lecture Notes in Mathematics}.
\newblock Berlin: Springer, 2001.
\newblock \href {https://doi.org/10.1007/3-540-45436-5}
  {\path{doi:10.1007/3-540-45436-5}}.

\bibitem{rossthesis}
E.~Ross.
\newblock {\em {Geometric and combinatiorial rigidity of periodic frameworks as
  graphs on the torus}}.
\newblock PhD thesis, York University (Canada), 2011.

\bibitem{Ross2014}
E.~Ross.
\newblock The rigidity of periodic frameworks as graphs on a fixed torus.
\newblock {\em Contributions to Discrete Mathematics}, 9(1), 2014.
\newblock \href {https://doi.org/10.11575/cdm.v9i1.62183}
  {\path{doi:10.11575/cdm.v9i1.62183}}.

\bibitem{RossDCG}
E.~Ross.
\newblock Inductive constructions for frameworks on a two-dimensional fixed
  torus.
\newblock {\em Discrete \& Computational Geometry}, 54(1):78--109, 2015.
\newblock \href {https://doi.org/10.1007/s00454-015-9697-7}
  {\path{doi:10.1007/s00454-015-9697-7}}.

\bibitem{rothwhi}
B.~Roth and W.~Whiteley.
\newblock Tensegrity frameworks.
\newblock {\em Transactions of the American Mathematical Society},
  265(2):419--446, 1981.
\newblock \href {https://doi.org/10.2307/1999743} {\path{doi:10.2307/1999743}}.

\bibitem{rudin}
W.~Rudin.
\newblock {\em Principles of mathematical analysis}.
\newblock International Series in Pure and Applied Mathematics. McGraw-Hill
  Book Co., New York-Auckland-D\"usseldorf, third edition, 1976.

\bibitem{HandMeera}
M.~Sitharam, A.~St.~John, and J.~Sidman, editors.
\newblock {\em Handbook of geometric constraint systems principles}.
\newblock Discrete Mathematics and its Applications (Boca Raton). CRC Press,
  Boca Raton, FL, 2019.

\bibitem{tanigawa15}
S.~Tanigawa.
\newblock Matroids of gain graphs in applied discrete geometry.
\newblock {\em Transactions of the American Mathematical Society},
  367(12):8597--8641, 2015.
\newblock \href {https://doi.org/10.1090/tran/6401}
  {\path{doi:10.1090/tran/6401}}.

\bibitem{ww88}
W.~Whiteley.
\newblock The union of matroids and the rigidity of frameworks.
\newblock {\em SIAM Journal on Discrete Mathematics}, 1(2):237--255, 1988.
\newblock \href {https://doi.org/10.1137/0401025} {\path{doi:10.1137/0401025}}.

\bibitem{Whiteley2005Counting}
W.~Whiteley.
\newblock Counting out to the flexibility of molecules.
\newblock {\em Physical Biology}, 2(4):S116, 2005.
\newblock \href {https://doi.org/10.1088/1478-3975/2/4/S06}
  {\path{doi:10.1088/1478-3975/2/4/S06}}.

\end{thebibliography}

\begin{appendices}

\section{Proof of \texorpdfstring{\Cref{l:connelly lemma}}{integral lemma}}
\label{app:int}

In this appendix we prove the following technical lemma.

\connellylemma*

We recall the following terminology.
Let $S \subset \mathbb{R}$ be a finite algebraically independent set over the rationals. We denote the real closure of  $\mathbb{Q}(S)$ by $K$. 
We define a polynomial map $f:\mathbb{R}^m \rightarrow \mathbb{R}^n$ to be \emph{$S$-integral} if all coefficients of its coordinate polynomials lie in $K$. A (real) semi-algebraic set is \emph{$S$-integral} if it can be defined by a Boolean combination of polynomial equations and inequalities with coefficients in $K$. 
In the case where $S = \emptyset$, we just say \emph{integral} in both cases.

We require the following variation of the famous Tarski-Seidenberg theorem.

\begin{theorem}[Tarski-Seidenberg theorem]\label{thm:ts}
    Let $A \subset \mathbb{R}^{n+1}$ be an $S$-integral semi-algebraic set.
    Then the projection of $A$ onto its first $n$ coordinates is also an $S$-integral semi-algebraic set.
\end{theorem}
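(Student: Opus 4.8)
The plan is to deduce the statement from the model theory of real closed fields, specifically from quantifier elimination for the theory $\mathrm{RCF}$ together with its model completeness. The point of the $S$-integral refinement over the classical real-number statement is entirely about tracking that the coefficients of the defining data stay inside $K$, so the argument must be set up to make that bookkeeping automatic.

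First I would record the two structural facts about $K$ (the real closure of $\mathbb{Q}(S)$) that drive everything. Since $K$ is a real closed field and $\mathbb{R}$ is a real closed field with $K \subseteq \mathbb{R}$, model completeness of $\mathrm{RCF}$ gives that the inclusion $K \hookrightarrow \mathbb{R}$ is an \emph{elementary} embedding: a first-order formula with parameters in $K$ holds in $\mathbb{R}$ if and only if it holds in $K$. This is the transfer principle I will use at the end.

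Next, since $A$ is $S$-integral I fix a first-order formula $\phi(x_1,\dots,x_{n+1})$, a Boolean combination of polynomial equalities and inequalities with coefficients in $K$, defining $A$. The projection $\pi(A)$ is the set satisfying $\exists x_{n+1}\,\phi(x_1,\dots,x_{n+1})$. By Tarski's quantifier elimination theorem for $\mathrm{RCF}$, this formula is equivalent over every real closed field to a quantifier-free formula $\psi(x_1,\dots,x_n)$, and the crucial feature is that the elimination procedure builds the polynomials occurring in $\psi$ from the coefficients of $\phi$ using only ring operations (subresultants and sign conditions). Hence the coefficients of $\psi$ again lie in $K$, so $\psi$ exhibits an $S$-integral set. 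I would then transfer: the sentence
\[
\forall x_1\cdots\forall x_n\,\bigl( (\exists x_{n+1}\,\phi)\leftrightarrow\psi \bigr),
\]
having parameters in $K$, holds in $K$, and therefore holds in $\mathbb{R}$ because $K \hookrightarrow \mathbb{R}$ is elementary. Thus $\pi(A)\subseteq\mathbb{R}^n$ is cut out by the $K$-coefficient formula $\psi$, i.e.\ it is $S$-integral.

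The main obstacle is precisely the claim that quantifier elimination keeps the coefficients inside $K$; this is where the elementary-embedding argument earns its keep, and it is why one cannot simply invoke the real-number Tarski--Seidenberg theorem as a black box. If one prefers to avoid appealing to the internals of the elimination procedure, an equivalent route is to apply the classical Tarski--Seidenberg theorem directly over the abstract real closed field $K$ to obtain $\psi$ defining $\pi(A_K)\subseteq K^n$, and then transfer the equivalence to $\mathbb{R}$ exactly as above; this is cleaner if one is willing to treat Tarski--Seidenberg over an arbitrary real closed field as the known input.
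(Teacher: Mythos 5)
Your proof is correct, but there is nothing in the paper to measure it against: the authors state \Cref{thm:ts} without proof, presenting it as a known ``variation of the famous Tarski--Seidenberg theorem,'' so you have supplied an actual argument where the paper simply appeals to the literature. Your model-theoretic route is sound, and you correctly identify the one point that needs care, namely that the projection must be defined with coefficients still in $K$ rather than merely in $\mathbb{R}$. The cleanest justification of that point is the substitution trick implicit in your sketch: treat the coefficients of $\phi$ as extra free variables, perform quantifier elimination for $\mathrm{RCF}$ in the pure language of ordered rings, and then substitute the actual parameters; the polynomials in the output $\psi$ then have coefficients that are integer polynomial expressions in those of $\phi$, hence lie in $K$. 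One small simplification: in your main route the final transfer step is heavier than necessary, since the equivalence $(\exists x_{n+1}\,\phi)\leftrightarrow\psi$ produced by quantifier elimination is provable in $\mathrm{RCF}$ uniformly in the coefficient variables, so it holds in $\mathbb{R}$ directly because $\mathbb{R}\models\mathrm{RCF}$ and contains the parameters; elementarity of the inclusion $K\hookrightarrow\mathbb{R}$ is genuinely needed only in your alternative route, where Tarski--Seidenberg is first applied over the abstract real closed field $K$ and the defining equivalence is then transferred up. That alternative is exactly the standard transfer-principle treatment (e.g., in Bochnak--Coste--Roy, which the paper cites elsewhere in \Cref{app:int}) and is very likely the justification the authors had in mind; either version of your argument is a valid proof of the statement.
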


\begin{lemma}\label{l:semialg0}
    The image or preimage of an $S$-integral semi-algebraic set by an $S$-integral polynomial map is also $S$-integral.
\end{lemma}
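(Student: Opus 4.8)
The plan is to treat the two cases separately, with the preimage being elementary and the image relying on \Cref{thm:ts}.

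For the preimage, suppose $B \subset \mathbb{R}^n$ is $S$-integral, so it is cut out by a Boolean combination of conditions of the form $P_i(y) \bowtie 0$ with each $P_i \in K[y_1,\dots,y_n]$ and $\bowtie \in \{=,<,\leq\}$. Writing $f = (f_1,\dots,f_n)$ with each $f_j \in K[x_1,\dots,x_m]$, I would substitute $y_j = f_j(x)$ into every defining condition. Since $K[x_1,\dots,x_m]$ is closed under this kind of composition, each $P_i(f_1(x),\dots,f_n(x))$ again lies in $K[x_1,\dots,x_m]$, so $f^{-1}(B)$ is defined by the same Boolean combination of polynomial conditions, now in the $x$ variables, with coefficients in $K$. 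Hence $f^{-1}(B)$ is $S$-integral, and no projection is needed here.

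For the image, let $A \subset \mathbb{R}^m$ be $S$-integral. The key step is to pass to the graph
\[
\Gamma := \left\{ (x,y) \in \mathbb{R}^m \times \mathbb{R}^n : x \in A \text{ and } y_j = f_j(x) \text{ for all } j \right\}.
\]
Since $A$ is $S$-integral and each equation $y_j - f_j(x) = 0$ has coefficients in $K$, the set $\Gamma$ is an $S$-integral semi-algebraic subset of $\mathbb{R}^{m+n}$. Now $f(A)$ is precisely the projection of $\Gamma$ onto the last $n$ coordinates. I would obtain this projection by eliminating the $m$ variables $x_1,\dots,x_m$ one at a time. Because \Cref{thm:ts} as stated eliminates the final coordinate, I would first reorder the coordinates (a permutation of variables, which is linear over $\mathbb{Q}$ and therefore preserves $S$-integrality) so that the variable to be eliminated sits last, then apply \Cref{thm:ts}. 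Iterating this $m$ times produces an $S$-integral semi-algebraic subset of $\mathbb{R}^n$ equal to $f(A)$.

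The only point requiring care is the bookkeeping in the image case: \Cref{thm:ts} is phrased for a single projection $\mathbb{R}^{k+1} \to \mathbb{R}^k$ onto the first $k$ coordinates, so one must confirm both that reordering coordinates preserves $S$-integrality (it does, being a rational linear change of variables) and that finitely many iterations suffice to eliminate all of $x_1,\dots,x_m$. There is no genuine obstacle here — the result is a direct consequence of Tarski–Seidenberg — but the stated form of the theorem forces this explicit reorder-and-iterate argument rather than a one-line appeal.
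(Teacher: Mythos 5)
Your proof is correct and takes essentially the same route as the paper's: the preimage case by substituting the coordinate polynomials of $f$ into the defining conditions (composition keeps coefficients in $K$), and the image case by forming the graph of $f$ over the given set and projecting via \Cref{thm:ts}. Your explicit reorder-and-iterate bookkeeping for the projection is a detail the paper's proof glosses over (it invokes the theorem directly for the projection $\mathbb{R}^m\times\mathbb{R}^n\to\mathbb{R}^n$ eliminating all $m$ variables at once), but this is a refinement of the same argument, not a different approach.
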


\begin{proof}
    Suppose $A \subset \mathbb{R}^n$ is $S$-integral. Then there exist polynomials 
$g_1,\dots,g_r \in K[y_1,\dots,y_n]$ and a Boolean formula $\Phi$ such that 
\[
A = \{ y \in \mathbb{R}^n : \Phi(g_1(y),\dots,g_r(y)) \}.
\]
If $f:\mathbb{R}^m \to \mathbb{R}^n$ is an $S$-integral polynomial map, then
\[
f^{-1}(A) = \{ x \in \mathbb{R}^m : f(x) \in A \}
= \{ x \in \mathbb{R}^m : \Phi(g_1(f(x)),\dots,g_r(f(x))) \}.
\]
Each $g_i \circ f$ is again a polynomial with coefficients in $K$,
so $f^{-1}(A)$ is $S$-integral.

Now suppose $B \subseteq \mathbb{R}^m$ is $S$-integral.  
Define
\[
C = \{ (x,y) \in \mathbb{R}^m \times \mathbb{R}^n : x \in B, \; y = f(x) \}.
\]
Then $C$ is $S$-integral semi-algebraic, since it is given by the defining inequalities of $B$ together with the polynomial equations $y_i = f_i(x)$.  
The image is \(f(B) = \pi_y(C)\),
where $\pi_y : \mathbb{R}^m \times \mathbb{R}^n \to \mathbb{R}^n$ is the projection.  
By the Tarski--Seidenberg theorem, $\pi_y(C)$ is an $S$-integral semi-algebraic set.  
Therefore $f(B)$ is $S$-integral.
\end{proof}

A point $x=(x_1,\ldots,x_m) \in \mathbb{R}^m$ is \emph{$S$-generic} if its coordinates are pairwise distinct and  algebraically independent over $\mathbb{Q}(S)$.
Equivalently, $x$ is $S$-generic if and only if its coordinates are pairwise distinct and  algebraically independent over $K$.
For a finite set $S$, the set of $S$-generic points is conull in $\mathbb{R}^m$.
Again, if $S= \emptyset$ then we just say a point is \emph{generic}.

\begin{lemma}\label{l:semialg1}
    An $S$-integral semi-algebraic set contains an $S$-integral generic point if and only if it has non-empty interior in the Euclidean metric topology.
\end{lemma}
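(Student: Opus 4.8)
The plan is to prove the two implications separately. Write $K$ for the real closure of $\mathbb{Q}(S)$, and let $A \subseteq \mathbb{R}^m$ be an $S$-integral semi-algebraic set, defined by a formula $\Phi$ with coefficients in $K$. The reverse implication is a soft density argument, whereas the forward implication (which I would prove in contrapositive form) is the substantive part and rests on descending a vanishing polynomial from $\mathbb{R}$ to $K$.

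For the direction ``non-empty interior $\Rightarrow$ contains an $S$-generic point'', I would argue that the $S$-generic points are conull, hence dense. Since $S$ is finite the field $\mathbb{Q}(S)$ is countable, so there are only countably many nonzero polynomials $p \in \mathbb{Q}(S)[x_1,\dots,x_m]$; each zero set $\{x : p(x)=0\}$ is a proper algebraic subset of $\mathbb{R}^m$ and thus Lebesgue-null, and the finitely many diagonal hyperplanes $\{x_i = x_j\}$ are null as well. A point fails to be $S$-generic precisely when it lies in one of these null sets, so the set of $S$-generic points is conull and therefore dense. A set with non-empty interior contains an open ball, which must meet this dense set, producing an $S$-generic point of $A$.

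For the converse I would prove the contrapositive: if $A$ has empty interior then it contains no $S$-generic point. Over $\mathbb{R}$, a semi-algebraic set with empty interior has dimension strictly less than $m$, so its Zariski closure is a proper algebraic set and some nonzero polynomial $f \in \mathbb{R}[x]$ vanishes on $A$; set $d = \deg f$. The key step is to replace $f$ by a polynomial with coefficients in $K$. For this I would invoke the transfer principle for real closed fields: since $K \subseteq \mathbb{R}$ are both real closed, the inclusion is elementary (model completeness of the theory of real closed fields, which also underlies \Cref{thm:ts}). The sentence asserting ``there exist coefficients of degree $\le d$, not all zero, such that the corresponding polynomial vanishes on $A$'' is first-order with parameters in $K$ and holds in $\mathbb{R}$; hence it holds in $K$, yielding a nonzero $g \in K[x]$ vanishing on the $K$-realization of $\Phi$. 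Transferring the universal sentence $\forall x\,(\Phi(x) \to g(x)=0)$ back from $K$ to $\mathbb{R}$ then shows that $g$ vanishes on $A$ itself.

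It remains to conclude. If some $x \in A$ were $S$-generic, then $g(x) = 0$ would be a nontrivial algebraic relation among $x_1,\dots,x_m$ with coefficients in $K$; since $K$ is algebraic over $\mathbb{Q}(S)$, algebraic independence of the coordinates over $\mathbb{Q}(S)$ coincides with algebraic independence over $K$, so this contradicts $x$ being $S$-generic. Thus $A$ has no $S$-generic point, completing the contrapositive. The main obstacle is precisely the descent of the vanishing polynomial from $\mathbb{R}$ to $K$: everything else is either soft measure theory or standard facts about the dimension of semi-algebraic sets, whereas this step genuinely needs the elementarity of the inclusion $K \subseteq \mathbb{R}$ (equivalently, Tarski's quantifier elimination) to guarantee that the coefficients can be chosen in the small field $K$.
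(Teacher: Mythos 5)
Your proof is correct, and your forward implication (non-empty interior implies an $S$-generic point, via countability of $\mathbb{Q}(S)$ and a measure-zero/density argument) is identical to the paper's. For the converse, however, you take a genuinely different route. The paper argues directly from the defining formula: an $S$-integral semi-algebraic set with empty interior is contained in finitely many \emph{proper $S$-integral} algebraic sets --- in a presentation of the set as a finite union of basic pieces, any piece cut out by strict inequalities alone is open, so empty interior forces each nonempty piece to carry a nontrivial equation with coefficients in $K$ --- and an $S$-generic point lies in no proper $S$-integral algebraic set by definition. You instead produce a single nonzero $f \in \mathbb{R}[x_1,\dots,x_m]$ vanishing on $A$ (since the Zariski closure of a semi-algebraic set with empty interior has dimension $< m$) and descend it to a nonzero $g \in K[x_1,\dots,x_m]$ by model completeness of real closed fields: the existential sentence asserting a nonzero vanishing polynomial of degree $\le d$ transfers down the elementary inclusion $K \subseteq \mathbb{R}$, and the resulting universal sentence $\forall x\,(\Phi(x) \to g(x)=0)$ transfers back up. The paper's argument is shorter and more elementary, invoking no transfer principle beyond what is already implicit in \Cref{thm:ts}; yours is insensitive to how the defining formula is presented and makes fully explicit the descent that the paper's one-line claim (``contained in finitely many proper algebraic sets, each being $S$-integral'') leaves tacit. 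Both routes close with the same observation, which the paper records just before the lemma, that algebraic independence over $\mathbb{Q}(S)$ coincides with algebraic independence over its real closure $K$; and your fixing of the degree bound $d = \deg f$ in advance is exactly what makes the coefficient quantification first-order, so no gap remains.
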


\begin{proof}
    If the set has nonempty interior, it contains an open ball (in the metric topology). Since $\mathbb{Q}(S)$ is countable, there is a countable number of nonzero polynomials with coefficients in $\mathbb{Q}(S)$. Therefore, the complement of the $S$-generic points is a countable union of proper algebraic varieties over $\mathbb{Q}(S)$, each of which is of (Lebesgue) measure zero. Hence the set of non-$S$-generic points has measure zero, which implies that $S$-generic points are dense and occur in every open ball.  

    Conversely, if an $S$-integral semi-algebraic set has empty interior then it is contained in finitely many proper algebraic sets, each being $S$-integral. But an $S$-generic point is not in any proper $S$-integral algebraic set by definition. 
\end{proof}

\begin{lemma}\label{l:semialg2}
    Let $f:\mathbb{R}^m \rightarrow \mathbb{R}^n$ be a $S$-integral polynomial map for some finite set $S$.
    If $y\in \mathbb{R}^m$ is $S$-generic and $k= \max_{x \in \mathbb{R}^m} \rank df(x)$,
    then $\rank df(z) = k$ if $f(y)=f(z)$.
\end{lemma}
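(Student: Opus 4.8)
The plan is to isolate the ``bad locus'' $W := \{z \in \mathbb{R}^m : \rank df(z) < k\}$ and to show that $f(y)$ avoids its image, from which the statement follows. Since $f$ is $S$-integral, every entry of the Jacobian $df$ lies in $K[x]$, where $K$ is the real closure of $\mathbb{Q}(S)$; consequently every $k \times k$ minor of $df$ is a polynomial with coefficients in $K$, and $W$, being their common zero set, is an $S$-integral algebraic set. Because $k = \max_x \rank df(x)$ is attained, $W$ is a \emph{proper} algebraic subset, so its complement $U = \{z : \rank df(z) = k\}$ is open and dense (a proper algebraic set has empty interior). The whole statement reduces to proving $f(y) \notin f(W)$: indeed, if $f(z) = f(y)$ and $z \in W$, then $f(y) \in f(W)$; so once $f(y) \notin f(W)$ is established, every such $z$ must lie in $U$, i.e. $\rank df(z) = k$, which is exactly the claim (and taking $z = y$ also recovers $\rank df(y) = k$).

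The crux is the dimension estimate $\dim f(W) < k$. I would prove this by decomposing the semi-algebraic set $W$ into finitely many smooth semi-algebraic strata $M_i$. On each stratum the restriction $f|_{M_i}$ is a smooth map whose differential factors through $df$, so $\rank d(f|_{M_i}) \le \rank df \le k-1$ throughout $M_i$; a Sard-type argument (equivalently, the constant-rank theorem applied on the open subsets of $M_i$ where $\rank d(f|_{M_i})$ is locally maximal) then gives $\dim f(M_i) \le k-1$. Taking the finite union yields $\dim f(W) \le k-1 < k$. This is where the real work lies, and I expect it to be the main obstacle, since it rests on the standard but non-trivial fact that a semi-algebraic map cannot increase dimension beyond the maximal rank of its differential.

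Finally I would conclude using the two semi-algebraic lemmas already established. By \Cref{l:semialg0}, the set $f^{-1}(f(W))$ is $S$-integral semi-algebraic. I claim it has empty interior: if it contained an open ball $B$, then by density of $U$ the set $B' := B \cap U$ would be a nonempty open set on which $f$ has constant rank $k$, so by the constant-rank theorem $f(B')$ would be a $k$-dimensional submanifold; but $B' \subseteq f^{-1}(f(W))$ gives $f(B') \subseteq f(f^{-1}(f(W))) \subseteq f(W)$, forcing $\dim f(W) \ge k$ and contradicting the previous paragraph. Hence $f^{-1}(f(W))$ has empty interior, so by \Cref{l:semialg1} it contains no $S$-generic point. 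As $y$ is $S$-generic, $y \notin f^{-1}(f(W))$, i.e. $f(y) \notin f(W)$, which completes the argument. (Alternatively, one can phrase the conclusion via transcendence degree: the Jacobian criterion gives $\mathrm{trdeg}_{\mathbb{Q}(S)}\mathbb{Q}(S)(f(y)) = k$ for $S$-generic $y$, whereas every point of the $S$-integral set $\overline{f(W)}$ of dimension $<k$ has transcendence degree $<k$ over $\mathbb{Q}(S)$, so $f(y) \notin \overline{f(W)}$.)
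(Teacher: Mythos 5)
Your proposal is correct and follows essentially the same route as the paper's proof: the same bad locus $C=\{x:\rank df(x)<k\}$, the same use of \Cref{l:semialg0} and \Cref{l:semialg1}, and the same contradiction obtained by intersecting a hypothetical open ball in $f^{-1}(f(C))$ with the dense open rank-$k$ locus and applying the constant rank theorem against the bound $\dim f(C)\leq k-1$. The only difference is that where you sketch a stratification argument for that dimension bound, the paper simply cites the real semi-algebraic version of Sard's theorem (Bochnak--Coste--Roy, Theorem 9.6.2), which is exactly the standard fact your sketch would re-derive.
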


\begin{proof}
    Define the proper $S$-integral algebraic set $C = \{ x \in \mathbb{R}^m : \rank df(x) < k\}$.
    By \Cref{l:semialg0},
    $f^{-1}(f(C))$ is an $S$-integral semi-algebraic set.
    Suppose for contradiction that $f^{-1}(f(C))$ contains a non-empty open subset.
    Since $\mathbb{R}^m \setminus C$ is an open dense set, there exists a non-empty open set $U \subset f^{-1}(f(C))$ where $U \cap C = \emptyset$.
    By the constant rank theorem (e.g., \cite[Theorem 9.32]{rudin}), the (real) Zariski closure of $f(C)$ is $k$-dimensional.
    However,
    by the real semi-algebraic version of Sard's theorem (e.g., \cite[Theorem 9.6.2]{BochnakCosteRoy}),
    this set must have dimension at most $k-1$, a contradiction.
    Hence $f^{-1}(f(C))$ has an empty interior and $y \notin f^{-1}(f(C))$ by \Cref{l:semialg1}.
\end{proof}

\begin{proof}[Proof of \Cref{l:connelly lemma}]
   Suppose for a contradiction that there exists $z$ where $f(z) = f(y)$ and $\ker df(z)^\top \neq \ker df(y)^\top$.
    By \Cref{l:semialg2} and the constant rank theorem,
    it follows that for sufficiently small open neighbourhoods $U_y,U_z$ of $y,z$ respectively,
    we have that the set $f(U_y) \cap f(U_z)$ has an empty interior in $f(U_y)$.
    By choosing $U_y$ to be suitably small, we will also have that the set $A := (f^{-1}(f(U_y) \cap f(U_z))) \cap U_y$ has empty interior and contains $y$.
   We now choose $U_y$ and $U_z$ to also be $S$-integral semi-algebraic sets by defining them by suitable $S$-integral linear inequalities.
    By \Cref{l:semialg0},
   the set $A$ is an $S$-integral semi-algebraic set that contains $y$,
    contradicting \Cref{l:semialg1}.
    \end{proof}

\section{From finite to periodic super stability}\label{appb}
Given a super stable finite tensegrity, it is natural to wonder if it can be converted to a super stable $\mathbb{Z}^d$-tensegrity. We show that this is always possible.

Given a $d$-dimensional finite framework (or tensegrity) $(G,p)$,
we consider the following construction of a $\mathbb{Z}^d$-framework $(G^{\circ},p^{\circ},L)$ (or a $\mathbb{Z}^d$-tensegrity by keeping the edge types):
\begin{itemize}
\item We pick $d$ pairs $(u_1,v_1), (u_2,v_2), \dots, (u_d,v_d)$ of vertices such that 
\begin{equation}\label{eq:construction1}
\text{$u_i\neq v_j$ for any $1\leq i,j\leq d$ and $v_i\neq v_j$ for any $1\leq i,j\leq d$ with $i\neq j$}.
\end{equation}
(Note that $u_i=u_j$ may hold and $u_iv_i$ may not be an edge of $G$.)
We further impose the condition that 
\begin{equation}\label{eq:construction2}
\text{$\{p(v_i)-p(u_i): 1\leq i\leq d\}$ is linearly independent.}
\end{equation}

\item We define a $\mathbb{Z}^d$-gain graph $G^{\circ}= (V^{\circ}, E^{\circ})$ from $G=(V,E)$ by the following procedure:
\begin{itemize}
    \item Let $V'=\{v_1,\dots, v_d\}$.
    Also, let $e_i$ be the unit vector in $\mathbb{R}^d$ whose $i$-th coordinate is one.
    \item If an edge $e=ab\in E$ satisfies  
    $a=v_i$ and $b=v_j$ with $i<j$,
    then assign a direction from $a$ to $b$ with the edge label $e_j-e_i$.
    \item If an edge $e=ab\in E$ satisfies  
    $a\in V\setminus V'$ and $b=v_i$,
    then assign a direction from $a$ to $b$ with the edge label $e_i$.
    \item If an edge $e=ab\in E$ satisfies  
    $a, b\in V\setminus V'$,
    then assign a direction from $a$ to $b$ with the trivial edge label.
    \item Finally, identify $u_i$ and $v_i$ for all $i=1,\dots, d$ keeping all directed edges (including self-loops). The resulting vertex set is $V^{\circ}=V\setminus V'$.
\end{itemize}
\item Let $L$ be the $d\times d$ matrix whose $i$-th column is $p(v_i)-p(u_i)$. By \cref{eq:construction2}, $L$ is non-singular.
\item Let $p^{\circ}$ be the restriction of $p$ to $V^{\circ}$.
\end{itemize}
We now call $(G^{\circ},p^{\circ},L)$ the 
{\em associated $\mathbb{Z}^d$-framework of $(G,p)$ with respect to $(u_1,v_1),\dots, (u_d,v_d)$} (or the {\em associated $\mathbb{Z}^d$-tensegrity} if $(G,p)$ is a tensegrity, keeping the edge types).

Let us look at an example.
Consider a 2-dimensional tensegrity $(G,p)$ with 
8 vertices obtained from the regular octagon by adding four disjoint diagonals as shown in \Cref{fig:octagon},
where $G$ is given by
$V(G)=\{0,1,2,3,4,5,6,7\}$ and 
\[
E(G)=\{(0,1), (1,2), (2,3), (3,4), (4,5), (5,6),(6,7),(7,0),(0,3),(4,7),(1,6),(2,5)\}.\]
We pick two pairs $(u_1,v_1)=(0,4)$ and $(u_2,v_2)=(2,6)$ of vertices.
Then, the associated $\mathbb{Z}^2$-tensegrity $(G^{\circ}, p^{\circ}, L)$ is given by
$V(G^{\circ})=\{0,1,2,3,5,7\}$ and
\[
\begin{split}E(G^{\circ})=&\{(0,1;(0,0)), (1,2;(0,0)), (2,3;(0,0)), (3,0;e_1), (5,0;e_1),(5,2;e_2),(7,2;e_2),(7,0;(0,0)),\\
&(0,3;(0,0)),(7,0;e_1),(1,2;e_2),(2,5;(0,0))\}
\end{split}
\]
and $L=\begin{pmatrix}2 & 0 \\ 0 & 2\end{pmatrix}$.
The covering of $(G^{\circ}, p^{\circ}, L)$ is as shown in \Cref{fig:octagon}.

\begin{figure}[ht]
\centering
\begin{minipage}{0.35\textwidth}
\includegraphics[scale=0.8]{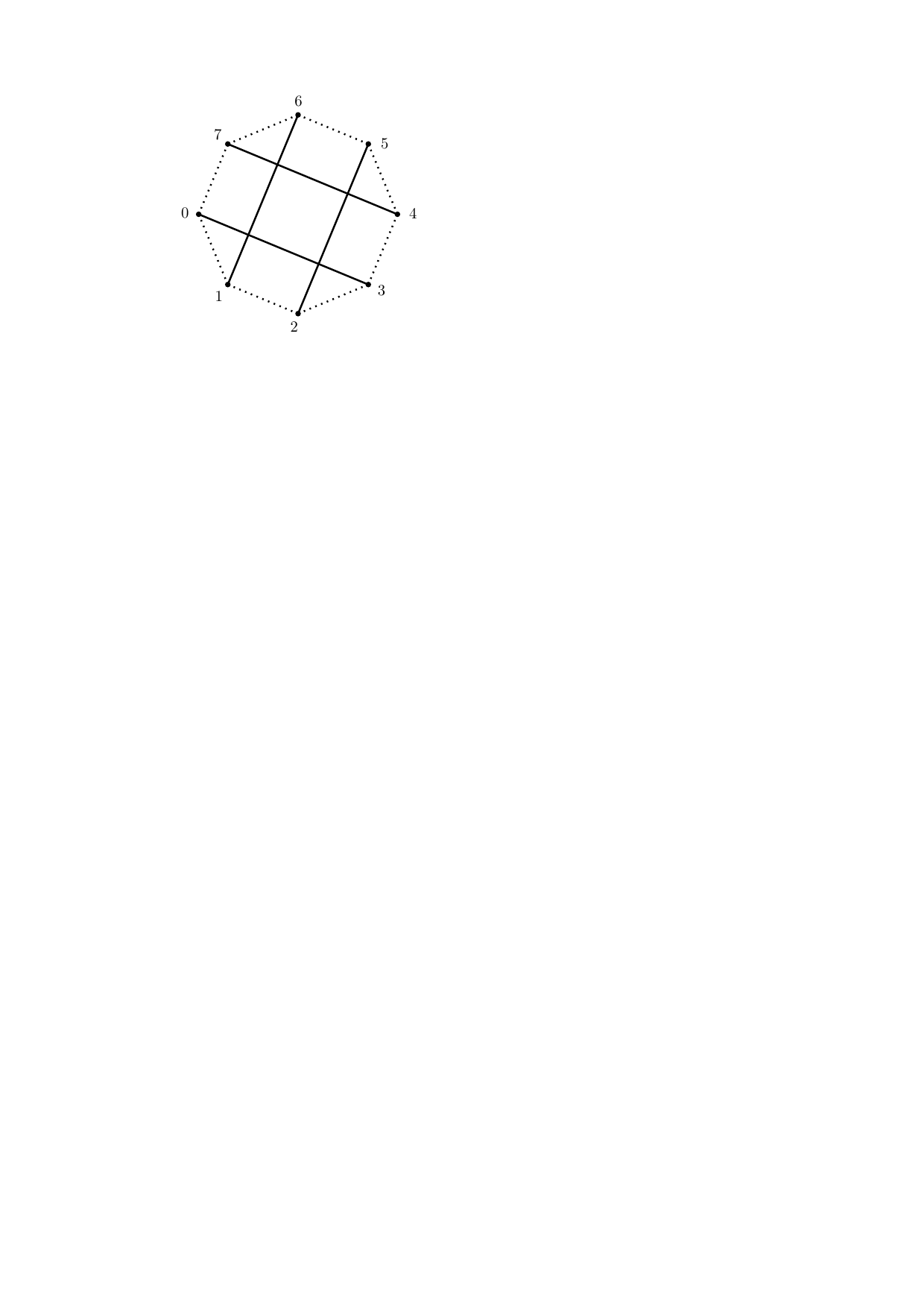}
\end{minipage}
\begin{minipage}{0.44\textwidth}
\includegraphics[scale=0.4]{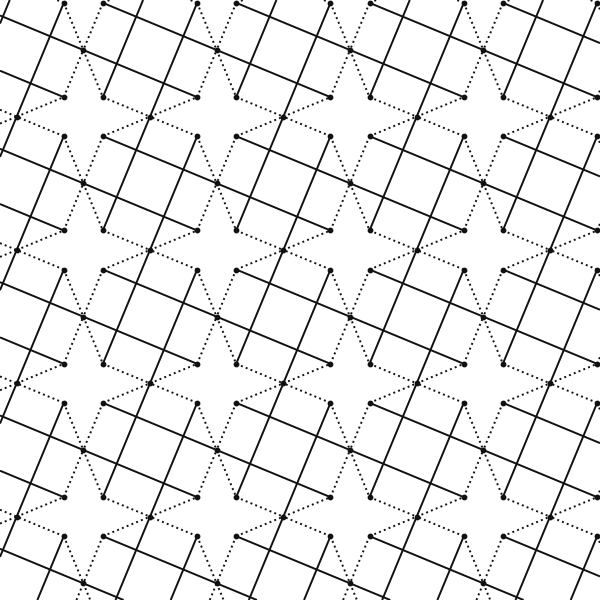}
\end{minipage}
\caption{
Shown on the left is a 2-dimensional finite octagon tensegrity with added diagonals, where cables are shown dotted and struts solid. This tensegrity is super stable and can be used to construct an associated super stable $\mathbb{Z}^2$-tensegrity whose covering is shown on the right.}
\label{fig:octagon}
\end{figure}

The finite octagon tensegrity $(G,p)$ in \Cref{fig:octagon} is super stable with equilibrium stress
\[
\begin{aligned}
\omega(01) &= 2 + \sqrt{2}, 
\omega(12) = \sqrt{2} + 1, 
\omega(23) = 2 + \sqrt{2}, 
\omega(34) = \sqrt{2} + 1, \\
\omega(45) &= 2 + \sqrt{2}, 
\omega(56) = \sqrt{2} + 1, 
\omega(67) = 2 + \sqrt{2}, 
\omega(70) = \sqrt{2} + 1, \\
\omega(03) &= -1, 
\omega(47) = -1, 
\omega(16) = -1, 
\omega(25) = -1.
\end{aligned}
\] 
Indeed, the corresponding weighted Laplacian $\mathcal{L}(G,\omega)$ is 
\[
\scalemath{0.8}{
\begin{bmatrix}
2\sqrt{2}+2 & -\sqrt{2}-2 & 0 & 1 & 0 & 0 & 0 & -1-\sqrt{2} \\
-\sqrt{2}-2 & 2\sqrt{2}+2 & -1-\sqrt{2} & 0 & 0 & 0 & 1 & 0 \\
0 & -1-\sqrt{2} & 2\sqrt{2}+2 & -\sqrt{2}-2 & 0 & 1 & 0 & 0 \\
1 & 0 & -\sqrt{2}-2 & 2\sqrt{2}+2 & -1-\sqrt{2} & 0 & 0 & 0 \\
0 & 0 & 0 & -1-\sqrt{2} & 2\sqrt{2}+2 & -\sqrt{2}-2 & 0 & 1 \\
0 & 0 & 1 & 0 & -\sqrt{2}-2 & 2\sqrt{2}+2 & -1-\sqrt{2} & 0 \\
0 & 1 & 0 & 0 & 0 & -1-\sqrt{2} & 2\sqrt{2}+2 & -\sqrt{2}-2 \\
-1-\sqrt{2} & 0 & 0 & 0 & 1 & 0 & -\sqrt{2}-2 & 2\sqrt{2}+2
\end{bmatrix}
}
\]
where the rows and the columns are ordered by $0,1,2,3,4,5,6,7$,
and it  has rank five and is positive semidefinite.

Since there is a one-to-one correspondence between $E(G)$ and $E(G^{\circ})$, we may consider $\omega$ as an edge-weight of $G^{\circ}$. 
Then 
the corresponding weighted $\mathbb{Z}^2$-Laplacian $\mathcal{L}_{\mathbb{Z}^2}(G^{\circ},\omega)$ is
\[
\scalemath{0.8}{
\begin{bmatrix}
-4\sqrt{2}-4 & \sqrt{2}+2 & 0 & \sqrt{2} & \sqrt{2}+2 & \sqrt{2} & -2\sqrt{2}-2 & 0 \\
\sqrt{2}+2 & -2\sqrt{2}-2 & \sqrt{2} & 0 & 0 & 0 & 0 & -1 \\
0 & \sqrt{2} & -4\sqrt{2}-4 & \sqrt{2}+2 & \sqrt{2} & \sqrt{2}+2 & 0 & -2\sqrt{2}-2 \\
\sqrt{2} & 0 & \sqrt{2}+2 & -2\sqrt{2}-2 & 0 & 0 & 1+\sqrt{2} & 0 \\
\sqrt{2}+2 & 0 & \sqrt{2} & 0 & -2\sqrt{2}-2 & 0 & \sqrt{2}+2 & 1+\sqrt{2} \\
\sqrt{2} & 0 & \sqrt{2}+2 & 0 & 0 & -2\sqrt{2}-2 & -1 & \sqrt{2}+2 \\
-2\sqrt{2}-2 & 0 & 0 & 1+\sqrt{2} & \sqrt{2}+2 & -1 & -2\sqrt{2}-2 & 0 \\
0 & -1 & -2\sqrt{2}-2 & 0 & 1+\sqrt{2} & \sqrt{2}+2 & 0 & -2\sqrt{2}-2
\end{bmatrix}
}
\]
where the rows and the columns are ordered by $0,1,2,3,5,7,\ell_x,\ell_y$.
Let $\tau$ be the non-singular matrix which subtracts the row of $\ell_x$ from the row of $0$ and subtracts the row of $\ell_y$ from the row of $2$,
and let $\rho$ be the permutation matrix that converts the ordering of the indices from $0,1,2,3,5,7,\ell_x,\ell_y$ to $0,1,2,3,\ell_x,5,\ell_y,7$.
Then, we have 
\begin{equation}\label{eq:construction3}
\mathcal{L}(G,\omega)=\rho \tau \mathcal{L}_{\mathbb{Z}^2}(G^{\circ},\omega) \tau^\top \rho^\top.
\end{equation}
Since $\mathcal{L}(G,\omega)$ has rank five and is positive semidefinite with $P \mathcal{L}(G,\omega)= \mathbf{0}_{d \times |V|}$, \cref{eq:construction3} implies that $\mathcal{L}_{\mathbb{Z}^2}(G^{\circ},\omega)$ has rank five and is positive semidefinite and $[P^{\circ},L]\mathcal{L}_{\mathbb{Z}^2}(G^{\circ},\omega)= \mathbf{0}_{d \times |V|}$.
As the set of edge directions for $(G,p)$ do not lie on a conic at infinity, the set of edge directions for $(G^{\circ},p^{\circ},L)$ do not lie on a conic at infinity.

This argument works in general and we have the following.
\begin{proposition}
Let $(G,p)$ be a $d$-dimensional tensegrity,
$(u_1,v_1),\dots, (u_d,v_d)$ be pairs of vertices satisfying 
\cref{eq:construction1} and \cref{eq:construction2},
and $(G^{\circ},p^{\circ},L)$ be the associated $\mathbb{Z}^d$-tensegrity.
If $(G,p)$ is super stable, then 
$(G^{\circ},p^{\circ},L)$ is super stable.
\end{proposition}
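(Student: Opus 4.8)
The plan is to transfer the finite super stable certificate through the construction essentially verbatim, exploiting the one-to-one correspondence between $E(G)$ and $E(G^{\circ})$ to regard the finite equilibrium stress $\omega$ as an edge weight on $G^{\circ}$; this weight is automatically proper because the construction keeps all edge types. The whole argument then reduces to promoting the single worked instance \eqref{eq:construction3} to a general statement relating the two incidence matrices. Concretely, I would introduce a nonsingular $|V|\times|V|$ matrix $T$, with rows indexed by $V=V^{\circ}\cup\{v_1,\dots,v_d\}$ and columns indexed by $V^{\circ}\cup\{\ell_1,\dots,\ell_d\}$, defined by $T_{w,w}=1$ for $w\in V^{\circ}$ and $T_{v_i,u_i}=T_{v_i,\ell_i}=1$, with all remaining entries zero. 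This $T$ encodes exactly the identification $v_i\equiv u_i$ together with the recording of the gain $e_i$ into the $\ell_i$-slot, and it is nonsingular since expanding along the columns $\ell_1,\dots,\ell_d$ (each of which has a single nonzero entry) leaves the identity block on $V^{\circ}$.

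The technical heart, and the step I expect to be the main obstacle, is the incidence identity
\[
I_{\mathbb{Z}^d}(G^{\circ}) = I(G)\,T ,
\]
which I would verify row by row according to the three edge types in the construction. The delicate cases are those producing loops in $G^{\circ}$: when $u_i=u_j$ an edge $v_iv_j$ becomes a loop at $u_i$, and when $a=u_i$ an edge $av_i$ becomes a loop; in both situations the vertex part of the $\mathbb{Z}^d$-incidence row collapses to zero, and one must check that the corresponding entries of $I(G)T$ also cancel — which they do, precisely because $T_{v_i,u_i}=1$. Granting this identity, the weighted Laplacians satisfy
\[
\mathcal{L}_{\mathbb{Z}^d}(G^{\circ},\omega)
= I_{\mathbb{Z}^d}(G^{\circ})^{\top}\,{\rm diag}(\omega)\,I_{\mathbb{Z}^d}(G^{\circ})
= T^{\top}\,\mathcal{L}(G,\omega)\,T ,
\]
which is the general form of \eqref{eq:construction3}. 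Since $T$ is nonsingular this is a congruence, so by Sylvester's law of inertia $\mathcal{L}_{\mathbb{Z}^d}(G^{\circ},\omega)$ is positive semidefinite and has the same rank as $\mathcal{L}(G,\omega)$. As both matrices are $|V|\times|V|$ and the finite stress matrix has nullity $d+1$ (by super stability of $(G,p)$), this yields $\dim\ker\mathcal{L}_{\mathbb{Z}^d}(G^{\circ},\omega)=d+1$.

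It then remains to check the equilibrium and conic conditions, after which \Cref{thm:suffpsd} closes the argument. Reading off the same column operations at the level of coordinates gives $[P^{\circ}\ L]=P\,T^{-\top}$, and combining this with the finite equilibrium condition $P\,\mathcal{L}(G,\omega)=\mathbf{0}_{d\times|V|}$ and the congruence above yields
\[
[P^{\circ}\ L]\,\mathcal{L}_{\mathbb{Z}^d}(G^{\circ},\omega)
= \left(P\,T^{-\top}\right)\left(T^{\top}\mathcal{L}(G,\omega)\,T\right)
= P\,\mathcal{L}(G,\omega)\,T
= \mathbf{0}_{d\times(|V^{\circ}|+d)} ,
\]
so $\omega$ is an equilibrium stress of $(G^{\circ},p^{\circ},L)$ by \Cref{prop:suff1}\ref{prop:suff1item1}. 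Finally, because $p^{\circ}(u_j)+L(e_j-e_i)-p^{\circ}(u_i)=p(v_j)-p(v_i)$ and the analogous identities hold for the remaining edge types, the set of edge direction vectors $\nu(e)$ of $(G^{\circ},p^{\circ},L)$ coincides with the edge directions of $(G,p)$; hence a single symmetric matrix $Q$ would witness a conic at infinity for both, and the finite hypothesis rules this out. Since $L$ is nonsingular, $(G^{\circ},p^{\circ},L)$ is non-flat and therefore affinely spanning, so all hypotheses of \Cref{thm:suffpsd} are met and $(G^{\circ},p^{\circ},L)$ is super stable.
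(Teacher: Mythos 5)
Your proposal is correct and follows essentially the same route as the paper: the paper exhibits the congruence $\mathcal{L}(G,\omega)=\rho\tau\,\mathcal{L}_{\mathbb{Z}^2}(G^{\circ},\omega)\,\tau^{\top}\rho^{\top}$ on the octagon example and asserts the argument works in general, and your factorization $I_{\mathbb{Z}^d}(G^{\circ})=I(G)\,T$ is exactly that congruence made explicit at the incidence-matrix level (your $T$ is the transpose-inverse of the paper's $\rho\tau$). Your checks of the loop cases, the identity $[P^{\circ}\ L]=P\,T^{-\top}$ transferring the equilibrium condition, and the coincidence of edge directions correctly supply the general details the paper leaves implicit before invoking \Cref{thm:suffpsd}.
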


\end{appendices}
\end{document}